\renewcommand\part{%
   \if@noskipsec \leavevmode \fi
   \par
   \addvspace{4ex}%
   \@afterindentfalse
   \secdef\@part\@spart}
\def\@part[#1]#2{%
    \ifnum \c@secnumdepth >\m@ne
      \refstepcounter{part}%
      \addcontentsline{toc}{part}{\thepart\hspace{1em}#1}%
    \else
      \addcontentsline{toc}{part}{#1}%
    \fi
    {\parindent \z@ \raggedright
     \interlinepenalty \@M
     \normalfont
     \ifnum \c@secnumdepth >\m@ne
       \Large\bfseries \partname\nobreakspace\thepart
       \par\nobreak
     \fi
     \huge \bfseries #2%
     \par}%
    \nobreak
    \vskip 3ex
    \@afterheading}
\def\@spart#1{%
    {\parindent \z@ \raggedright
     \interlinepenalty \@M
     \normalfont
     \huge \bfseries #1\par}%
     \nobreak
     \vskip 3ex
     \@afterheading}
\theoremstyle{definition}
\newtheorem{definition}{Definition}
\theoremstyle{definition}
\newtheorem{definition-proposition}{Definition-Proposition}
\theoremstyle{theorem}
\newtheorem{theorem}{Theorem}
\theoremstyle{theorem}
\newtheorem{conjecture}{Conjecture}
\theoremstyle{theorem}
\newtheorem{lemma}{Lemma}
\theoremstyle{theorem}
\newtheorem{corollary}{Corollary}
\theoremstyle{theorem}
\newtheorem{proposition}{Proposition}
\theoremstyle{remark}
\numberwithin{equation}{section}
\begin{document}

\title{Higher algebra of \Ainf\ and \ombas -algebras in Morse theory II}
\author{Thibaut Mazuir}

\newcommand{\Hom}{\ensuremath{\mathrm{Hom}}}
\newcommand{\N}{\ensuremath{\mathbb{N}}}
\newcommand{\R}{\ensuremath{\mathbb{R}}}
\newcommand{\Z}{\ensuremath{\mathbb{Z}}}
\newcommand{\Sp}{\ensuremath{\mathbb{S}}}
\newcommand{\Lac}{\ensuremath{\mathrm{L}}}
\newcommand{\ide}{\ensuremath{\mathrm{id}}}
\newcommand{\calA}{\ensuremath{\mathcal{A}}}
\newcommand{\Ainf}{\ensuremath{A_\infty}}
\newcommand{\infmor}{\ensuremath{\Ainf - \mathrm{Morph}}}
\newcommand{\infmorn}{\ensuremath{n - \Ainf - \mathrm{Morph}}}
\newcommand{\inprod}{$\infty$-inner product}
\newcommand{\infcat}{$\infty$-category}
\newcommand{\degr}{\ensuremath{\mathrm{deg}}}
\newcommand{\CM}{\ensuremath{\mathrm{CM}}}

\newcommand{\ombas}{\ensuremath{\Omega B As}}
\newcommand{\ombasm}{\ensuremath{\Omega B As - \mathrm{Morph}}}

\newcommand{\rouge}[1]{\textcolor{red}{#1}}
\newcommand{\bleu}[1]{\textcolor{blue}{#1}}
\newcommand{\verde}[1]{\textcolor{green}{#1}}
\newcommand{\violet}[1]{\textcolor{violet}{#1}}
\newcommand{\jaune}[1]{\textcolor{yellow}{#1}}

\newcommand*{\inserim}[1]{%
  \raisebox{-.3\baselineskip}{%
    \includegraphics[
      width=0.08\textwidth
    ]{#1}%
  }%
}

\newcommand*{\inserimi}[1]{%
  \raisebox{-.3\baselineskip}{%
    \includegraphics[
      width=0.15\textwidth
    ]{#1}%
  }%
}

\newcommand{\arbreop}[1]{
}

\maketitle

\begin{abstract}
This paper introduces the notion of $n$-morphisms between two \Ainf -algebras, such that 0-morphisms correspond to standard \Ainf -morphisms and 1-morphisms correspond to \Ainf -homotopies between \Ainf -morphisms. The set of higher morphisms between two \Ainf -algebras then defines a simplicial set which has the property of being a Kan complex, whose simplicial homotopy groups can be explicitly computed. The operadic structure of $n-\Ainf$-morphisms is also encoded by new families of polytopes, which we call the $n$-multiplihedra and which generalize the standard multiplihedra. These are constructed from the standard simplices and multiplihedra by lifting the Alexander-Whitney map to the level of simplices. Rich combinatorics arise in this context, as conveniently described in terms of overlapping partitions. Shifting from the \Ainf\ to the \ombas\ framework, we define the analogous notion of $n$-morphisms between \ombas -algebras, which are again encoded by the $n$-multiplihedra, endowed with a refined cell decomposition by stable gauged ribbon tree type. We then realize this higher algebra of \Ainf\ and \ombas -algebras in Morse theory. Given two Morse functions $f$ and $g$, we construct $n-\ombas$-morphisms between their respective Morse cochain complexes endowed with their \ombas -algebra structures, by counting perturbed Morse gradient trees associated to an admissible simplex of perturbation data. We moreover show that the simplicial set consisting of higher morphisms defined by a count of perturbed Morse gradient trees is a contractible Kan complex. 
\end{abstract}

\vspace{45pt}

\begin{figure}[h]
\includegraphics[scale=0.115]{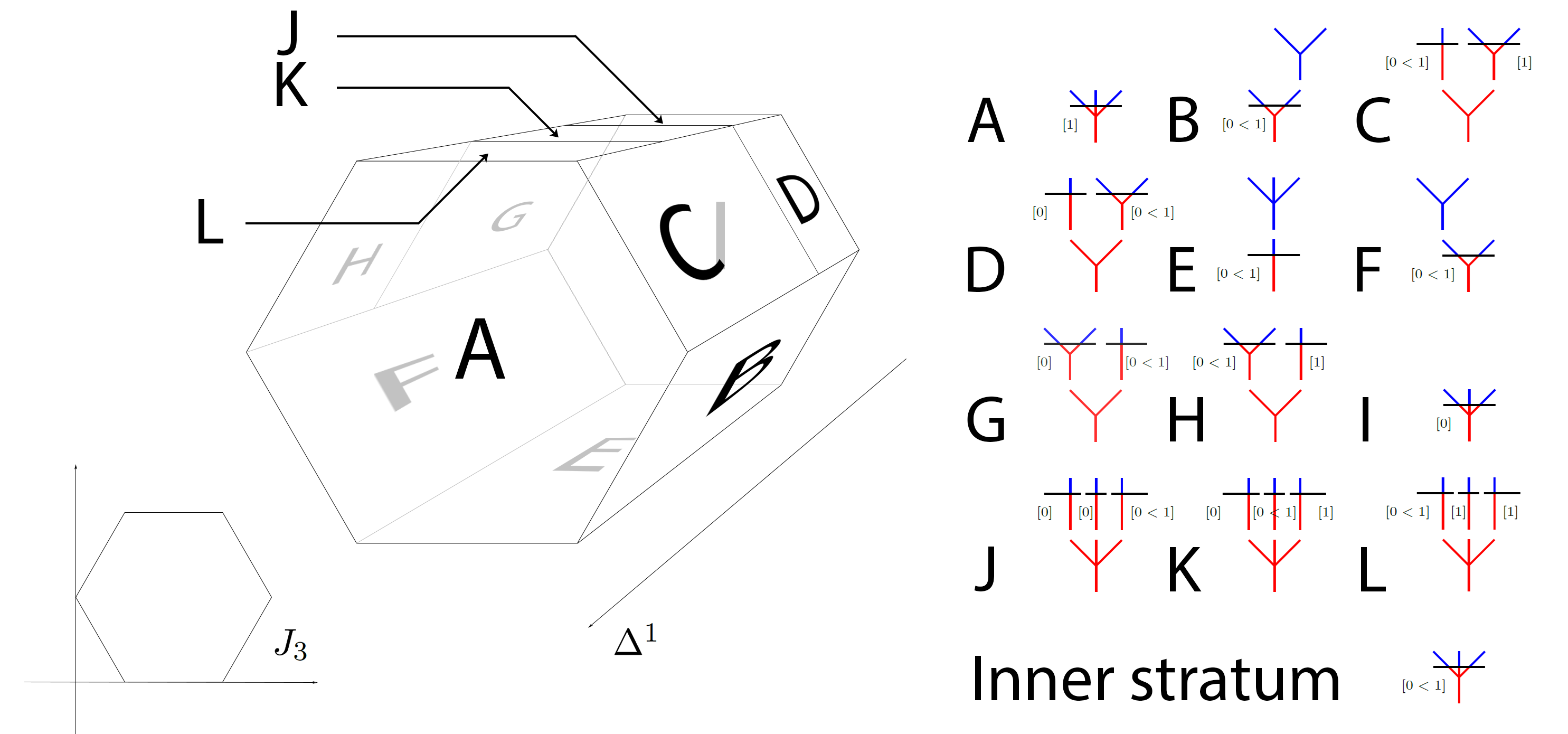}
\caption*{\textit{The $1$-multiplihedron $\Delta^1 \times J_3$ ...}}
\end{figure}
\newpage

\setcounter{tocdepth}{1}

\tableofcontents

\vspace{45pt}

\begin{figure}[h]
\includegraphics[scale=0.115]{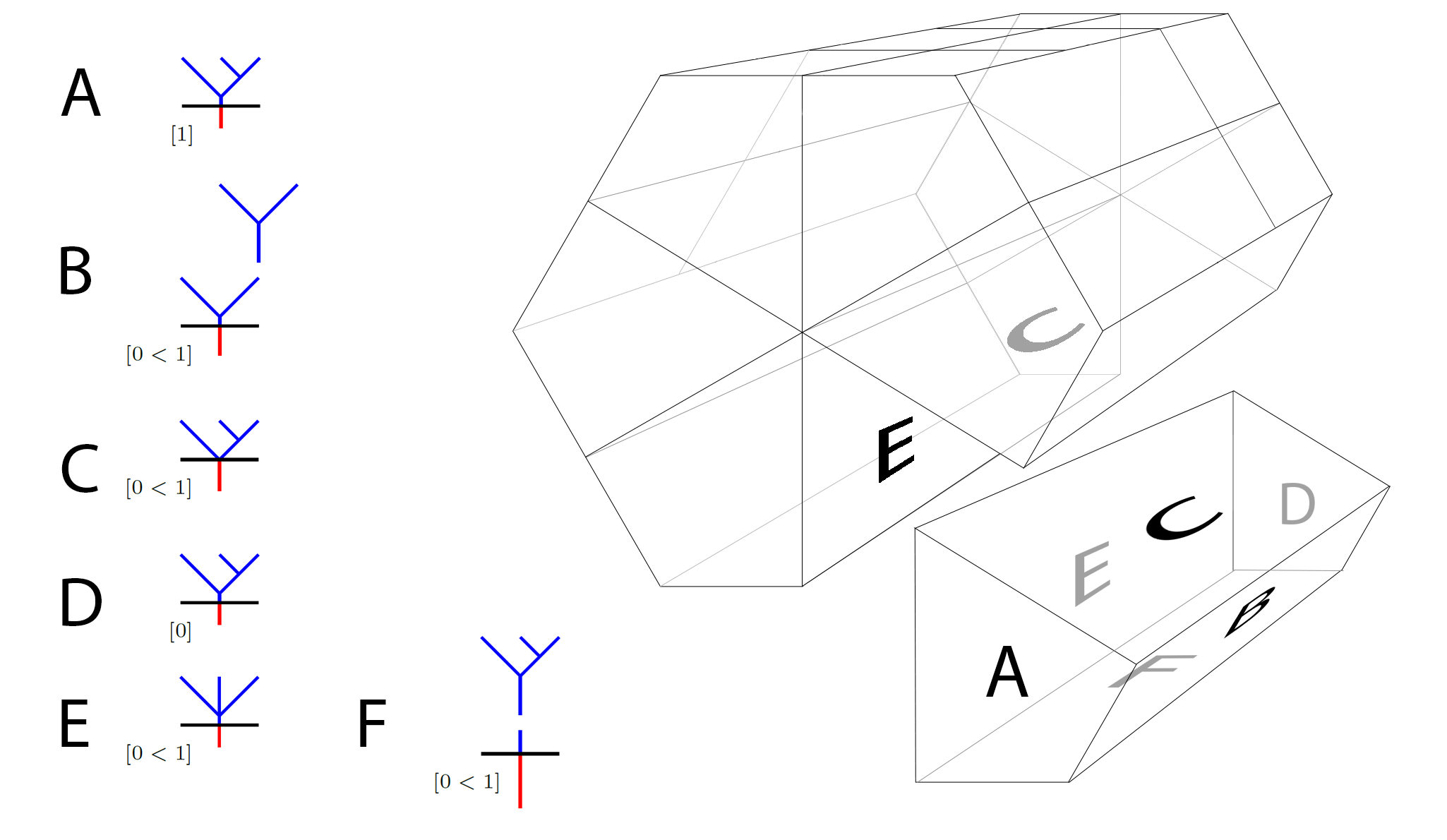}
\caption*{\textit{... and its $\Omega B As$-cell decomposition}}
\end{figure}

\newpage

\begin{leftbar}
\part*{Introduction}
\end{leftbar}

\paragraph{\textit{\textbf{Summary and results of article I}}}

This article is the direct sequel to~\cite{mazuir-I}. We thus begin by summarizing our first article, after which we outline the main results and constructions carried out in the present paper.

The structure of strong homotopy associative algebra, or equivalently \Ainf -algebra, was introduced in the seminal paper of Stasheff~\cite{stasheff-homotopy}. It provides an operadic model for the notion of differential graded algebra whose product is associative up to homotopy. It is defined as the datum of a set of operations $\{ m_m : A^{\otimes m} \rightarrow A \}_{m \geqslant 2}$ of degree $2-m$ on a dg-\Z -module $(A,\partial)$, which satisfy the sequence of equations
\[ \left[ \partial , m_m \right] = \sum_{\substack{i_1+i_2+i_3=m \\ 2 \leqslant i_2 \leqslant m-1}} \pm m_{i_1+1+i_3} (\ide^{\otimes i_1} \otimes m_{i_2} \otimes \ide^{\otimes i_3} ) . \]
The first two equations respectively ensure that $m_2$ is compatible with $\partial$ and that it is associative up to the homotopy $m_3$. This algebraic structure is encoded by an operad in dg-\Z -modules, called the operad \Ainf . As shown in~\cite{masuda-diagonal-assoc}, this operad stems in fact from an operad in the category of polytopes, whose arity $m$ space of operations is defined to be the $(m-2)$-dimensional associahedron~$K_m$.

Similarly, the notion of \Ainf -morphism between two \Ainf -algebras $A$ and $B$ offers an operadic model for the notion of morphism of strong homotopy associative algebras which preserves the product up to homotopy. It is defined as the datum of a set of operations $\{ f_m : A^{\otimes m} \rightarrow B \}_{m \geqslant 1}$ of degree $1-m$ which satisfy the sequence of equations 
\[ \left[ \partial , f_m \right] = \sum_{\substack{i_1+i_2+i_3=m \\ i_2 \geqslant 2}} \pm f_{i_1+1+i_3} (\ide^{\otimes i_1} \otimes m_{i_2} \otimes \ide^{\otimes i_3}) + \sum_{\substack{i_1 + \cdots + i_s = m \\ s \geqslant 2 }} \pm m_s ( f_{i_1} \otimes \cdots \otimes f_{i_s}) \ . \]
The first two equations show this time that $f_1$ commutes with the differentials and that it preserves the product up to the homotopy $f_2$. From the point of view of operadic algebra, \Ainf -morphisms are encoded by an operadic bimodule in dg-\Z -modules : the operadic bimodule \infmor . It occurs from an operadic bimodule in polytopes, whose arity $m$ space of operations is the $(m-1)$-dimensional multiplihedron $J_m$ as shown in~\cite{masuda-diagonal-multipl}.

\Ainf -algebras and \Ainf -morphisms between them provide a satisfactory framework for homotopy theory. The most famous instance of this statement is the homotopy transfer theorem : given $(A,\partial_A)$ and $(H,\partial_H)$ two cochain complexes and a homotopy retract diagram
\begin{center} 
\begin{tikzcd}
\arrow[loop left,"h"](A,d_A) \arrow[r, shift left, "p"] & \arrow[l,shift left,"i"] (H,d_H) \ ,
\end{tikzcd}
\end{center}
if $(A,\partial_A)$ is endowed with an \Ainf -algebra structure, then $H$ can be made into an \Ainf -algebra such that $i$ and $p$ extend to \Ainf -morphisms. See also~\cite{vallette-homotopy}~and~\cite{lefevre-hasegawa} for an extensive study on the homotopy theory of \Ainf -algebras.

The associahedra and multiplihedra, respectively encoding the operad \Ainf\ and the operadic bimodule \infmor , can in fact be both realized as moduli spaces of metric trees. The associahedron $K_m$ is isomorphic as a CW-complex to the compactified moduli space of stable metric ribbon trees $\overline{\mathcal{T}}_m$ as first pointed out in~\cite{boardman-vogt}. The multiplihedron $J_m$ is isomorphic as a CW-complex to the compactified moduli space of stable gauged metric ribbon trees $\overline{\mathcal{CT}}_m$ as shown in~\cite{forcey-multipl}~and~\cite{mau-woodward}. These moduli spaces come in fact with refined cell decompositions, called their \ombas -cell decompositions~: the cell decomposition by stable ribbon tree type for $\overline{\mathcal{T}}_m$, and the cell decomposition by stable gauged ribbon tree type for $\overline{\mathcal{CT}}_m$. These refined decompositions provide another operadic model for strong homotopy associative algebras with morphisms preserving the product up to homotopy between them : the standard operad \ombas\ and the operadic bimodule \ombasm\ introduced in~\cite{mazuir-I}. We show moreover in~\cite{mazuir-I} that one can naturally shift from the \ombas\ to the \Ainf\ framework via a geometric morphism of operads $\Ainf \rightarrow \ombas$ and a geometric morphism of operadic bimodules $\infmor \rightarrow \ombasm$. 

Consider now a Morse function $f$ on a closed oriented Riemannian manifold $M$ together with a Morse-Smale metric. Following~\cite{hutchings-floer}, the Morse cochain complex $C^*(f)$ is a homotopy retract of the singular cochain complex $C^*_{sing}(M)$ which is a dg-algebra with respect to the standard cup product. The dg-algebra structure on $C^*_{sing}(M)$ can thus be transferred to an \Ainf -algebra structure on $C^*(f)$ using the homotopy transfer theorem. We show in~\cite{mazuir-I} that one can in fact directly define an \ombas -algebra structure on the Morse cochains $C^*(f)$ by realizing the moduli spaces of stable metric ribbon trees $\mathcal{T}_m$ in Morse theory. Given a choice of perturbation data $\{ \mathbb{X}_{m} \}_{m \geqslant 2}$ on the moduli spaces $\mathcal{T}_m$ as introduced by Abouzaid in~\cite{abouzaid-plumbings} and further studied by Mescher in~\cite{mescher-morse}, we define the moduli spaces of perturbed Morse gradient trees modeled on a stable ribbon tree type $t$ and connecting the critical points $x_1 , \dots , x_m \in \mathrm{Crit}(f)$ to the critical point $y \in  \mathrm{Crit}(f)$, denoted $\mathcal{T}_t^{\mathbb{X}} (y ; x_1, \dots , x_m)$.
We prove in~\cite{mazuir-I} that under generic assumptions on the choice of perturbation data, these moduli spaces are in fact orientable manifolds of finite dimension. If they have dimension 1, they can moreover be compactified to 1-dimensional manifolds with boundary, whose boundary is modeled on the top dimensional strata in the boudary of the compactified moduli space $\overline{\mathcal{T}}_m$. The \ombas -algebra structure on the Morse cochains $C^*(f)$ is finally defined by counting the points of the 0-dimensional moduli spaces $\mathcal{T}_t^{\mathbb{X}} (y ; x_1, \dots , x_m)$. The induced geometric \Ainf -algebra structure on $C^*(f)$ is then quasi-isomorphic to the \Ainf -algebra structure on $C^*(f)$ given by the homotopy transfer theorem.

Consider now two Morse functions $f$ and $g$ on $M$ together with generic choices of perturbation data $\mathbb{X}_f$ and $\mathbb{X}_g$. Endow the Morse cochains $C^*(f)$ and $C^*(g)$ with their associated \ombas -algebra structures. We prove in~\cite{mazuir-I} that one can adapt the construction of the previous paragraph, to define an \ombas -morphism from the \ombas -algebra $C^*(f)$ to the \ombas -algebra $C^*(g)$. We count this time 0-dimensional moduli spaces of perturbed Morse stable gauged trees modeled on a stable gauged ribbon tree type $t_g$ and connecting the critical points $x_1 , \dots , x_m \in \mathrm{Crit}(f)$ to the critical point $y \in  \mathrm{Crit}(g)$, denoted $\mathcal{CT}_{t_g}^{\mathbb{Y}} (y ; x_1, \dots , x_m)$,
after making a generic choice of perturbation data $\mathbb{Y}$ on the moduli spaces $\mathcal{CT}_m$.

\paragraph{\textit{\textbf{Motivational question}}} Let $\mathbb{Y}$ and $\mathbb{Y}'$ be two admissible choices of perturbations data on the moduli spaces $\mathcal{CT}_m$. Writing $\mu^{\mathbb{Y}}$ resp. $\mu^{\mathbb{Y}'}$ for the \ombas -morphisms they define, the question which motivates this paper is to know whether $\mu^{\mathbb{Y}}$ and $\mu^{\mathbb{Y}'}$ are always homotopic or not
\[ \begin{tikzcd}[row sep=large, column sep = large]
C^*(f) \arrow[bend left=20]{r}[above]{\mu^{\mathbb{Y}}}[name=U,below,pos=0.5]{}
\arrow[bend right=20]{r}[below]{\mu^{\mathbb{Y}'}}[name=D,pos=0.5]{}
& C^*(g)
\arrow[Rightarrow, from=U, to=D]
\end{tikzcd} \ . \]
In particular, one needs to determine what is the correct notion of a homotopy between two \ombas -morphisms.

\paragraph{\textit{\textbf{Outline of the present paper and main results}}}

The first step towards answering this problem is carried out on the algebraic side in part~\ref{p:algebra}, where we define the notion of $n$-morphisms between \Ainf -algebras and $n$-morphisms between \ombas -algebras. In section~\ref{alg:s:n-ainf-morph}, we recall at first the suspended bar construction point of view on \Ainf -algebras and the definition of an \Ainf -homotopy between \Ainf -morphisms from~\cite{lefevre-hasegawa}. After introducing the cosimplicial dg-coalgebra $\pmb{\Delta}^n$ together with the language of overlapping partitions, we can finally define a $n$-morphism between two \Ainf -algebras $A$ and~$B$ :

\theoremstyle{definition}
\newtheorem*{alg:def:n-morph-ainf-susp}{Definition~\ref{alg:def:n-morph-ainf-susp}}
\begin{alg:def:n-morph-ainf-susp}
Let $A$ and $B$ be two \Ainf -algebras. A \emph{$n$-morphism} from $A$ to $B$ is defined to be a morphism of dg-coalgebras 
\[ F : \pmb{\Delta}^n \otimes \overline{T}(sA) \longrightarrow \overline{T}(sB) \ , \]
where $\overline{T}(sA)$ denotes the suspended bar construction of $A$ (see subsection~\ref{alg:ss:recoll-def}).
\end{alg:def:n-morph-ainf-susp}

\noindent Using the universal property of the bar construction, this definition is equivalent to the following one in terms of operations :

\theoremstyle{definition}
\newtheorem*{alg:def:n-morph-ainf}{Definition~\ref{alg:def:n-morph-ainf}}
\begin{alg:def:n-morph-ainf}
Let $A$ and $B$ be two \Ainf -algebras. A \emph{$n$-morphism} from $A$ to $B$ is defined to be a collection of maps $f_{I}^{(m)} : A^{\otimes m} \longrightarrow B$ of degree $1 - m - \mathrm{dim} (I)$ for $I \subset \Delta^n$ and $m \geqslant 1$, that satisfy 
\[ \resizebox{\hsize}{!}{$\displaystyle{
\left[ \partial , f^{(m)}_I \right] =  \sum_{j=0}^{\mathrm{dim}(I)} (-1)^j f^{(m)}_{\partial_jI} + (-1)^{|I|} \sum_{\substack{i_1+i_2+i_3=m \\ i_2 \geqslant 2}} \pm f^{(i_1+1+i_3)}_I (\ide^{\otimes i_1} \otimes m_{i_2} \otimes \ide^{\otimes i_3}) + \sum_{\substack{i_1 + \cdots + i_s = m \\ I_1 \cup \cdots \cup I_s = I \\ s \geqslant 2 }} \pm m_s ( f^{(i_1)}_{I_1} \otimes \cdots \otimes f^{(i_s)}_{I_s}) \ .  }$} \]
\end{alg:def:n-morph-ainf}

\noindent We show in Proposition~\ref{alg:prop:equivalent-def} that the datum of a $n$-morphism is also equivalent to the datum of a morphism of \Ainf -algebras $A \rightarrow \pmb{\Delta}_n \otimes B$, where $\pmb{\Delta}_n$ is the dg-algebra dual to the dg-coalgebra $\pmb{\Delta}^n$. While the operad \Ainf\ stems from the associahedra $K_m$ and the operadic bimodule \infmor\ stems from the multiplihedra $J_m$, we introduce in section~\ref{alg:s:polytopes} a family of polytopes encoding the \Ainf -equations for $n$-morphisms~: \emph{the $n$-multiplihedra $n-J_m$}. In this regard, we begin by introducing a lift of the Alexander-Whitney coproduct $\mathrm{AW}$ at the level of the polytopes $\Delta^n$, following~\cite{masuda-diagonal-assoc}. The map $\mathrm{AW}^{\circ s} := (\ide^{\times (s-1)} \times \mathrm{AW}) \circ \cdots \circ (\ide \times \mathrm{AW}) \circ \mathrm{AW}$ then induces a refined polytopal subdivision of $\Delta^n$, whose top dimensional cells can be labeled by all overlapping $(s+1)$-partitions of $\Delta^n$. 
After introducing the maps $\mathrm{AW}_{\pmb{a}}$, which generalize the maps $\mathrm{AW}^{\circ s}$ and still induce the previous subdivisions on the simplices $\Delta^n$, we construct a refined polytopal subdivision of the polytopes $\Delta^n \times J_m$ :

\theoremstyle{definition}
\newtheorem*{alg:def:n-multipl}{Definition~\ref{alg:def:n-multipl}}
\begin{alg:def:n-multipl}
The polytopes $\Delta^n \times J_m$ endowed with the polytopal subdivisions induced by the maps $\mathrm{AW}_{\pmb{a}}$ will be called the \emph{$n$-multiplihedra} and denoted $n-J_m$.
\end{alg:def:n-multipl}

\noindent The boundaries of the $n$-multiplihedra $n-J_m$ yield the $n-\Ainf$-equations : 

\theoremstyle{theorem}
\newtheorem*{alg:prop:n-multipl}{Proposition~\ref{alg:prop:n-multipl}}
\begin{alg:prop:n-multipl}
The boundary of the top dimensional cell $[n-J_m]$ of the $n$-multiplihedron $n-J_m$ is given by 
\[ \resizebox{\hsize}{!}{$\displaystyle{\partial^{sing} [n-J_m] \cup \bigcup_{\substack{h+k=m+1 \\ 1 \leqslant i \leqslant k \\ h \geqslant 2}} [n-J_k] \times_i [K_h] \cup \bigcup_{\substack{ i_1 + \dots + i_s = m \\ I_1 \cup \dots \cup I_s = \Delta^n \\ s \geqslant 2}} [K_s] \times [\mathrm{dim}(I_1) - J_{i_1}] \times \cdots \times [\mathrm{dim}(I_s) - J_{i_s}] \ ,}$} \]
where $I_1 \cup \dots \cup I_s = \Delta^n$ is an overlapping partition of $\Delta^n$. In other words, the $n$-multiplihedra encode the \Ainf -equations for $n$-morphisms.
\end{alg:prop:n-multipl}

\noindent We then show in section~\ref{alg:s:n-ombas-morph} that these constructions can be transported from the \Ainf\ to the \ombas\ realm. We define $n$-morphisms between \ombas -algebras as follows~:

\theoremstyle{definition}
\newtheorem*{alg:def:op-bimod-ombas-n}{Definition~\ref{alg:def:op-bimod-ombas-n}}
\begin{alg:def:op-bimod-ombas-n} 
\emph{$n-\Omega B As$-morphisms} are the higher morphisms between $\Omega B As$-algebras encoded by the quasi-free operadic bimodule generated by all pairs (face $I \subset \Delta^n$ , two-colored stable ribbon tree), 
\[ n-\Omega B As - \mathrm{Morph} := \mathcal{F}^{\Omega B As, \Omega B As}( \arbreopunmorphn , \arbrebicoloreLn , \arbrebicoloreMn , \arbrebicoloreNn , \cdots , (I , SCRT_n) ,\cdots ; I \subset \Delta^n) \ .  \]
An operation $t_{I,g} := (I,t_g)$, whose underlying stable ribbon tree $t$ has $e(t)$ inner edges, and such that its gauge crosses $j$ vertices of $t$, is defined to have degree $| t_{I,g} | := j - 1 - e(t) - \mathrm{dim}(I) = |I| + |t_g|$.
The differential of $t_{I,g}$ is given by the rule prescribed by the top dimensional strata in the boundary of $\overline{\mathcal{CT}}_m(t_g)$ combined with the algebraic combinatorics of overlapping partitions, added to the simplicial differential of $I$, i.e. 
\[ \partial t_{I,g} = t_{\partial^{sing}I,g} + \pm (\partial^{\overline{\mathcal{CT}}_m} t_g)_{I} \ . \]
\end{alg:def:op-bimod-ombas-n}

\noindent We show that the $n-\ombas$-equations are also encoded by the $n$-multiplihedra, endowed this time with a refined cell decomposition taking the \ombas -decomposition of the multiplihedra $J_m$ into account.
What's more, a $n$-morphism between \ombas -algebras naturally yields a $n$-morphism between \Ainf -algebras :

\theoremstyle{theorem}
\newtheorem*{alg:prop:morph-op-bimod}{Proposition~\ref{alg:prop:morph-op-bimod}}
\begin{alg:prop:morph-op-bimod} 
There exists a morphism of $(\Ainf , \Ainf )$-operadic bimodules
$\infmorn \rightarrow n-\Omega B As - \mathrm{Morph}$.
\end{alg:prop:morph-op-bimod}

\noindent Using the same tools as in~\cite{mazuir-I}, we finally unravel all sign conventions in section~\ref{alg:s:signs-n-morph}.

In part~\ref{p:simplicial}, we study the simplicial set $\mathrm{HOM}_{\mathsf{\Ainf -Alg}}(A,B)_{\bullet}$ of higher morphisms from $A$ to $B$, whose $n$-simplices are the $n$-morphisms from $A$ to $B$. We recall at first basic results on $\infty$-categories and Kan complexes, which are simplicial sets having the left-lifting property with respect to the inner horn inclusions resp. to all horn inclusions $\mathsf{\Lambda}^k_n \subset \Delta^n$. We also introduce the convenient setting of cosimplicial resolutions in model categories, following~\cite{hirschhorn}. We can then prove the following theorem in section~\ref{alg:s:hom-are-inf-gr}~:

\theoremstyle{theorem}
\newtheorem*{alg:th:infinity-gr}{Theorem~\ref{alg:th:infinity-gr}}
\begin{alg:th:infinity-gr}
For $A$ and $B$ two \Ainf -algebras, the simplicial set $\mathrm{HOM}_{\Ainf}(A,B)_\bullet$ is a Kan complex.
\end{alg:th:infinity-gr}

\noindent This Kan complex is in particular an algebraic $\infty$-category as explained in Proposition~\ref{alg:prop:algebraic-infty}. Fix now $F: A \rightarrow B$ an \Ainf -morphism, i.e. a point of the simplicial set $\mathrm{HOM}_{\Ainf}(A,B)_\bullet$. We proceed to compute the simplicial homotopy groups with basepoint $F$ of this Kan complex in subsection~\ref{alg:ss:homotopy-groups-HOM}~: 

\theoremstyle{theorem}
\newtheorem*{alg:th:simpl-hom-groups}{Theorem~\ref{alg:th:simpl-hom-groups}}
\begin{alg:th:simpl-hom-groups}
\begin{enumerate}[label=(\roman*)]
\item For $n \geqslant 1$, the set $\pi_n \left( \mathrm{HOM}_{\mathsf{\Ainf -Alg}}(A,B)_{\bullet} , F \right)$ consists of the equivalence classes of collections of degree $-n$ maps $F_{\Delta^n}^{(m)} : (sA)^{\otimes m} \rightarrow sB$ satisfying the following equations 
\begin{align*} 
&(-1)^{n} \sum_{i_1+i_2+i_3=m} F^{(i_1+1+i_3)}_{\Delta^n} \left( \ide^{\otimes i_1} \otimes b_{i_2} \otimes \ide^{\otimes i_3} \right) \\
= &\sum_{\substack{i_1 + \cdots + i_s  + l \\ + j_1 + \cdots + j_t = m}} b_{s+1+t} \left( F^{(i_1)} \otimes \cdots \otimes F^{(i_s)} \otimes F_{\Delta^n}^{(l)} \otimes F^{(j_1)} \otimes \cdots \otimes F^{(j_t)} \right) \ , 
\end{align*}
where two such collections of maps $( F_{\Delta^n}^{(m)} )^{m \geqslant 1}$ and $( G_{\Delta^n}^{(m)} )^{m \geqslant 1}$ are equivalent if and only if there exists a collection of degree $-(n+1)$ maps $H^{(m)} : (sA)^{\otimes m} \rightarrow sB$ such that
\begin{align*}
&G_{\Delta^n}^{(m)} - F_{\Delta^n}^{(m)} + (-1)^{n+1} \sum_{i_1+i_2+i_3=m} H^{(i_1+1+i_3)} (\ide^{\otimes i_1} \otimes b_{i_2} \otimes \ide^{\otimes i_3}) \\
= &\sum_{\substack{i_1 + \cdots + i_s  + l \\ + j_1 + \cdots + j_t = m}} b_{s+1+t} ( F^{(i_1)} \otimes \cdots \otimes F^{(i_s)} \otimes H^{(l)} \otimes F^{(j_1)} \otimes \cdots \otimes F^{(j_t)} ) \ .
\end{align*}
\item If $n = 1$, given two such collection of maps $( F_{\Delta^1}^{(m)} )^{m \geqslant 1}$ and $( G_{\Delta^1}^{(m)} )^{m \geqslant 1}$, the composition law on $\pi_1 \left( \mathrm{HOM}_{\mathsf{\Ainf -Alg}}(A,B)_{\bullet} , F \right)$ is given by the formula
\[ \resizebox{\hsize}{!}{\begin{math} \begin{aligned}
G_{\Delta^1}^{(m)} + F_{\Delta^1}^{(m)} - \sum_{\substack{i_1 + \cdots + i_s  + l_1 \\ + j_1 + \cdots + j_t + l_2 \\ + k_1 + \cdots + k_u = m}} b_{s+t+u+2} ( F^{(i_1)} \otimes \cdots \otimes F^{(i_s)} \otimes F_{\Delta^1}^{(l_1)} \otimes F^{(j_1)} \otimes \cdots \otimes F^{(j_t)}  \otimes G_{\Delta^1}^{(l_2)} \otimes F^{(k_1)} \otimes \cdots \otimes F^{(k_u)}) \ .
\end{aligned}
\end{math}} \]
\item If $n \geqslant 2$, given two such collection of maps $( F_{\Delta^n}^{(m)} )^{m \geqslant 1}$ and $( G_{\Delta^n}^{(m)} )^{m \geqslant 1}$, the composition law on $\pi_n \left( \mathrm{HOM}_{\mathsf{\Ainf -Alg}}(A,B)_{\bullet} , F \right)$ is given by the formula
\[ G_{\Delta^n}^{(m)} + F_{\Delta^n}^{(m)} \ . \]
\end{enumerate}
\end{alg:th:simpl-hom-groups}

\noindent In section~\ref{alg:s:a-inf-ainf-a-b}, we begin by generalizing the notion of a $n$-morphism between \Ainf -algebras to that of a $n$-functor between \Ainf -categories. We define the simplicial set $\mathrm{HOM}_{\mathsf{\Ainf -Cat}}(\mathcal{A},\mathcal{B})_{\bullet}$ of higher functors between two \Ainf -categories, which we expect to also be a Kan complex. We then recall the definition of the \Ainf -category of \Ainf -functors $\mathrm{Func}_{\mathcal{A},\mathcal{B}}$ of \cite{fukaya-floer}, as well as the simplicial nerve functor $N_{\Ainf}$ of \cite{faonte-simplicial}. These constructions yield a new simplicial set $N_{\Ainf}(\mathrm{Func}_{\mathcal{A},\mathcal{B}})$ which has the property of being an $\infty$-category. 
Although the simplicial sets $\mathrm{HOM}_{\mathsf{\Ainf -Cat}}(\mathcal{A},\mathcal{B})_{\bullet}$ and $N_{\Ainf}(\mathrm{Func}_{\mathcal{A},\mathcal{B}})$ bear many similarities, they actually differ fundamentally : while the simplices of $\mathrm{HOM}_{\mathsf{\Ainf -Cat}}(\mathcal{A},\mathcal{B})_{\bullet}$ correspond to higher homotopies between \Ainf -functors, the simplices of $N_{\Ainf}(\mathrm{Func}_{\mathcal{A},\mathcal{B}})$ correspond to higher natural transformations between \Ainf -functors $\mathcal{A} \rightarrow \mathcal{B}$. Heuristically, the simplicial set $N_{\Ainf}(\mathrm{Func}_{\mathcal{A},\mathcal{B}})$ has thereby no reason to be a Kan complex, as homotopies are reversible whether functors are not. Nevertheless, the Kan complex $\mathrm{HOM}_{\mathsf{\Ainf -Cat}}(\mathcal{A},\mathcal{B})_{\bullet}$ and the \Ainf -category $\mathrm{Func}_{\mathcal{A},\mathcal{B}}$ each define a notion of homotopy between \Ainf -functors, that we compare when the \Ainf -category $\mathcal{B}$ is unital by recalling a proposition of \cite{fukaya-unobstructed}. In section~\ref{fd:s:simplic-enrich}, we finally explore two approaches to lift the composition of \Ainf -morphisms to a composition between $n-\Ainf$-morphisms. We fall however short of defining a natural simplicial enrichment of the category $\mathsf{\Ainf -Alg}$. We also discuss the results of Faonte, Lyubashenko, Fukaya and Bottman concerning a statement of a similar nature involving the \Ainf -categories $\mathrm{Func}_{\mathcal{A},\mathcal{B}}$.

In part~\ref{p:geo} we illustrate how $n$-morphisms naturally arise in geometry, here in the context of Morse theory, solving our motivational question at the same time. In section~\ref{geo:s:n-morph-morse} we detail the construction of $n$-morphisms between \ombas -algebras in Morse theory. Given two Morse functions $f$ and $g$ on a closed oriented manifold $M$, endow their Morse cochains with their \ombas -algebra structure coming from a choice of perturbation data on the moduli spaces $\mathcal{T}_m$. A $n$-morphism between $C^*(f)$ and $C^*(g)$ can be constructed by adapting the techniques of~\cite{abouzaid-plumbings} and~\cite{mescher-morse} that we used in~\cite{mazuir-I} for moduli spaces of perturbed Morse gradient trees. We define to this extent the notion of $n$-simplices of perturbation data $\mathbb{Y}_{\Delta^n}$ :

\theoremstyle{definition}
\newtheorem*{geo:def:n-simpl-perturb-data}{Definition~\ref{geo:def:n-simpl-perturb-data}}
\begin{geo:def:n-simpl-perturb-data}
A \emph{$n$-simplex of perturbation data} for a gauged metric stable ribbon tree $T_g$ is defined to be a choice of perturbation data $\mathbb{Y}_{\delta,T_g}$ for $T_g$ for every $\delta \in \mathring{\Delta}^n$.
\end{geo:def:n-simpl-perturb-data}

\noindent Given a smooth $n$-simplex of perturbation data $\mathbb{Y}_{\Delta^n,t_g}$ on the moduli space $\mathcal{CT}_m(t_g)$, we introduce the following moduli spaces of perturbed Morse gradient trees :

\theoremstyle{definition}
\newtheorem*{geo:def:moduli-space-n-simplex}{Definition~\ref{geo:def:moduli-space-n-simplex}}
\begin{geo:def:moduli-space-n-simplex}
Let $y \in \mathrm{Crit}(g)$ and $x_1,\dots ,x_m\in \mathrm{Crit}(f)$, we define the moduli spaces
\[ \mathcal{CT}_{\Delta^n,t_g}^{\mathbb{Y}_{\Delta^n,t_g}}(y ; x_1,\dots,x_m) := \bigcup_{\delta \in \mathring{\Delta}^n} \mathcal{CT}_{t_g}^{\mathbb{Y}_{\delta,t_g}}(y;x_1,\dots,x_m) \ . \]
\end{geo:def:moduli-space-n-simplex}
 
\noindent As in~\cite{mazuir-I}, these moduli spaces are orientable manifolds under some generic transversality assumptions on the perturbation data~:

\theoremstyle{theorem}
\newtheorem*{geo:th:exist-compact-mod-space}{Theorems~\ref{geo:th:existence}~and~\ref{geo:th:compactification}}
\begin{geo:th:exist-compact-mod-space}
Under some generic assumptions on the choice of perturbation data $(\mathbb{Y}_{I,m})^{m \geqslant 1}_{I \subset \Delta^n}$, the moduli spaces $\mathcal{CT}_{I,t_g}(y;x_1,\dots,x_m)$ are orientable manifolds. If they have dimension 0 they are moreover compact. If they have dimension 1 they can be compactified to 1-dimensional manifolds with boundary, whose boundary is modeled on the boundary of the $n$-multiplihedron $n-J_m$ endowed with its $n-\ombas$ -cell decomposition.
\end{geo:th:exist-compact-mod-space}

\noindent Perturbation data $(\mathbb{Y}_{I,m})^{m \geqslant 1}_{I \subset \Delta^n}$ satisfying the generic assumptions under which Theorems~\ref{geo:th:existence}~and~\ref{geo:th:compactification} hold will be called admissible. Given admissible choices of perturbation data $\mathbb{X}^f$ and $\mathbb{X}^g$, we construct a $n-\ombas$-morphism between the \ombas -algebras $C^*(f)$ and $C^*(g)$ by counting 0-dimensional moduli spaces of Morse gradient trees~: 

\theoremstyle{theorem}
\newtheorem*{geo:th:n-ombas-Morse}{Theorem~\ref{geo:th:n-ombas-Morse}}
\begin{geo:th:n-ombas-Morse}
Let $(\mathbb{Y}_{I,m})^{m \geqslant 1}_{I \subset \Delta^n}$ be an admissible choice of perturbation data.
For every $m$ and $t_g \in SCRT_m$, and every $I \subset \Delta^n$ we define the operation $\mu_{I,t_g}$ as
\begin{align*}
\mu_{I,t_g} : C^*(f) \otimes \cdots \otimes C^*(f) &\longrightarrow C^*(g) \\
x_1 \otimes \cdots \otimes x_m &\longmapsto \sum_{|y|= \sum_{i=1}^m|x_i| + |t_{I,g}|} \# \mathcal{CT}_{I,t_g}^{\mathbb{Y}_{I,t_g}}(y ; x_1,\cdots,x_m) \cdot y \ .
\end{align*} 
This set of operations then defines a $n - \ombas$-morphism $(C^*(f),m_t^{\mathbb{X}^f}) \rightarrow (C^*(g),m_t^{\mathbb{X}^g})$.
\end{geo:th:n-ombas-Morse}

\noindent This $n$-morphism is in fact a twisted $n$-morphism as defined in~\cite{mazuir-I}. 
We subsequently prove a filling theorem for simplicial complexes of perturbation data :

\theoremstyle{theorem}
\newtheorem*{geo:th:filler}{Theorem~\ref{geo:th:filler}}
\begin{geo:th:filler}
For every admissible choice of perturbation data $\mathbb{Y}_{S}$ parametrized by a simplicial subcomplex $S \subset \Delta^n$, there exists an admissible $n$-simplex of perturbation data $\mathbb{Y}_{\Delta^n}$ extending $\mathbb{Y}_{S}$.
\end{geo:th:filler}

\noindent Defining $\mathrm{HOM}^{geom}_{\ombas}(C^*(f),C^*(g))_\bullet$ to be the simplicial subset of $\mathrm{HOM}_{\ombas}(C^*(f),C^*(g))_\bullet$ consisting of higher morphisms defined by a count of perturbed Morse gradient trees, we prove that Theorem~\ref{geo:th:filler} implies the following theorem~:

\theoremstyle{theorem}
\newtheorem*{geo:th:contractible}{Theorem~\ref{geo:th:contractible}}
\begin{geo:th:contractible}
The simplicial set $\mathrm{HOM}^{geom}_{\ombas}(C^*(f),C^*(g))_\bullet$ is a Kan complex which is contractible.
\end{geo:th:contractible}

\noindent This solves in particular the motivational question to this paper.
It is quite clear that given two compact symplectic manifolds $M$ and $N$, one should be able to construct $n$-functors between their Fukaya categories $\mathrm{Fuk}(M)$ and $\mathrm{Fuk}(N)$ by counting pseudo-holomorphic quilted disks with Lagrangian correspondence seam condition, as suggested by the construction of geometric \Ainf -functors between Fukaya categories in \cite{mau-wehrheim-woodward}.

All transversality arguments and sign computations are performed in section~\ref{geo:s:trans-or-signs}~: they are mere adaptations of the analogous constructions in~\cite{mazuir-I}. We finally recall the second question stated at the end of~\cite{mazuir-I} in section~\ref{geo:s:towards}, which is going to be tackled in an upcoming article.

\paragraph{\textit{\textbf{Acknowledgements}}} My first thanks go to my advisor Alexandru Oancea, for his continuous help and support through the settling of this series of papers. I also express my gratitude to Bruno Vallette for his constant reachability and his suggestions and ideas on the algebra underlying this work. I specially thank Jean-Michel Fischer and Guillaume Laplante-Anfossi who repeatedly took the time to offer explanations on higher algebra and $\infty$-categories. I finally adress my thanks to Florian Bertuol, Thomas Massoni, Amiel Peiffer-Smadja, Victor Roca Lucio, Geoffroy Horel, Brice Le Grignou, Nate Bottman and the members of the Roberta seminar for useful discussions.

\newpage

\begin{leftbar}
\part{Higher morphisms between \Ainf\ and \ombas -algebras} \label{p:algebra}
\end{leftbar}

\section{$n-\Ainf$-morphisms} \label{alg:s:n-ainf-morph}

This section is dedicated to the study of the \emph{higher algebra of \Ainf -algebras}. Our starting point is the study of homotopy theory in the category of \Ainf -algebras. Putting it simply, considering two \Ainf -morphisms $F,G$ between \Ainf -algebras, we would like to determine which notion would give a satisfactory meaning to the sentence "$F$ and $G$ are homotopic". This question is solved in section~\ref{alg:ss:ainf-hom} following~\cite{lefevre-hasegawa}, where we define the notion of an \emph{\Ainf -homotopy}. 

Studying higher algebra of \Ainf -algebras means that we will be concerned with the higher homotopy theory of \Ainf -algebras. Typically, the questions arising are the following ones. Homotopies being defined, what is now a good notion of a homotopy between homotopies ? And of a homotopy between two homotopies between homotopies ? And so on. Higher algebra is a general term standing for all problems that involve defining coherent sets of \emph{higher homotopies} (also called \emph{$n$-morphisms}) when starting from a basic homotopy setting.

The sections following the definition of \Ainf -homotopies will then be concerned with defining a good notion of $n$-morphisms between \Ainf -algebras, i.e. such that \Ainf -morphisms correspond to 0-morphisms and \Ainf -homotopies to 1-morphisms. This will be done using the viewpoint of section~\ref{alg:ss:recoll-def}, which defines the category of \Ainf -algebras as a full subcategory of the category of dg-coalgebras. Sections~\ref{alg:ss:some-def}~and~\ref{alg:ss:n-morph-ainf} consist in a pedestrian approach to the construction of these $n$-morphisms, and section~\ref{alg:ss:resume-n-ainf} sums it all up. In section~\ref{alg:ss:equivalent-definition} we moreover introduce an equivalent definition of $n$-morphisms, that we will need in section~\ref{fd:ss:open-question-plus-result} of part~\ref{p:simplicial}. We postpone all sign computations to section~\ref{alg:ss:signs-n-ainf-morph}.

\subsection{Recollections and definitions} \label{alg:ss:recoll-def}

Let $A$ be a graded \Z -module. We introduce its suspension $sA$ defined as the graded \Z -module $(sA)^i := A^{i + 1}$. In other words, $|sa|=|a|-1$. This is merely a notation that gives a convenient way to handle certain degrees. Note for instance that a degree $2-n$ map $A^{\otimes n} \rightarrow A$ is simply a degree $+1$ map $(sA)^{\otimes n} \rightarrow sA$.

Our main category of interest will be the category whose objects are \Ainf -algebras and whose morphisms are \Ainf -morphisms. It will be written as $\mathsf{\Ainf -Alg}$. Recall that a structure of \Ainf -algebra on a dg-\Z -module $A$ can equivalently be defined as a collection of operations $m_n : A^{\otimes n} \rightarrow A$ satisfying the \Ainf -equations, or as a codifferential $D_A$ on its shifted bar construction $\overline{T}(sA)$. Similarly, an \Ainf -morphism is equivalently defined as a collection of operations $f_n : A^{\otimes n} \rightarrow B$ satisfying the \Ainf -equations, or as a morphism of dg-coalgebras $(\overline{T}(sA),D_A) \rightarrow (\overline{T}(sB),D_B)$. We refer to the first article of this series~\cite{mazuir-I} for a detailed discussion on these results. 

As a consequence, the shifted bar construction functor identifies the category $\mathsf{\Ainf -Alg}$ with a full subcategory of the category of dg-coalgebras $\mathsf{dg-Cog}$, that is
\[ \mathsf{\Ainf -Alg} \subset \mathsf{dg-Cog} \ . \]
This basic idea is the key to our first construction of $n$-morphisms in this section. We will perform some natural constructions in the category $\mathsf{dg-Cog}$, and then specialize them to the category $\mathsf{\Ainf -Alg}$ using the above inclusion. As before, these natural constructions will then admit an interpretation in terms of operations $A^{\otimes n} \rightarrow B$, using the universal property of the bar construction.

\subsection{\Ainf -homotopies} \label{alg:ss:ainf-hom}

The material presented in this section is taken from the thesis of Lefèvre-Hasegawa~\cite{lefevre-hasegawa}.

\subsubsection{Homotopies between morphisms of dg-coalgebras} \label{alg:sss:hom-dg-cog}

\begin{definition}[\cite{lefevre-hasegawa}]
Let $C$ and $C'$ be two dg-coalgebras. Let $F$ and $G$ be morphisms $C \rightarrow C'$ of dg-coalgebras. A \emph{$(F,G)$-coderivation} is defined to be a map $H : C \rightarrow C'$ such that 
\[ \Delta_{C'} H = (F \otimes H + H \otimes G) \Delta_{C} \ . \]
The morphisms $F$ and $G$ are then said to be \emph{homotopic} if there exists a $(F,G)$-coderivation $H$ of degree -1 such that 
\[ [ \partial , H ] = G - F \ . \]
\end{definition}

Introduce the dg-coalgebra
\[ \pmb{\Delta}^1 := \Z  [ 0 ] \oplus \Z  [ 1 ] \oplus \Z  [0<1]  \ . \]
Its differential is the singular differential $\partial^{sing}$
\begin{align*}
\partial^{sing} ( [0<1] ) = [1] - [0] && \partial^{sing} ( [ 0 ] ) = 0 && \partial^{sing} ( [ 1 ] ) = 0 \ ,
\end{align*}
its coproduct is the Alexander-Whitney coproduct
\begin{align*}
\Delta_{\pmb{\Delta}^1} ( [0<1] ) =  [0] \otimes [0<1] + [0<1] \otimes [1] && \Delta_{\pmb{\Delta}^1} ( [ 0 ] ) = [ 0 ] \otimes [ 0 ] && \Delta_{\pmb{\Delta}^1} ( [ 1 ] ) = [ 1 ] \otimes [ 1 ] \ ,
\end{align*}
the elements $[0]$ and $[1]$ have degree $0$, and the element $[0<1]$ has degree $-1$. We refer to subsection~\ref{alg:sss:cosimpl-dg-cog} for a broader interpretation of $\pmb{\Delta}^1$.

\begin{proposition}[\cite{lefevre-hasegawa}]
There is a one-to-one correspondence between $(F,G)$-coderivations and morphisms of dg-coalgebras $\pmb{\Delta}^1 \otimes C \longrightarrow C'$.
\end{proposition}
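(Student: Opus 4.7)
The plan is to decompose any candidate morphism $\Phi : \pmb{\Delta}^1 \otimes C \to C'$ along the three generators $[0], [1], [0<1]$ of $\pmb{\Delta}^1$, and then translate the dg-coalgebra conditions on $\Phi$ into algebraic conditions on the three resulting maps. Set
\[ F(c) := \Phi([0] \otimes c), \qquad G(c) := \Phi([1] \otimes c), \qquad H(c) := \Phi([0<1] \otimes c). \]
Since $[0]$ and $[1]$ have degree $0$ and $[0<1]$ has degree $-1$, the maps $F,G$ are of degree $0$ and $H$ is of degree $-1$. A general element of $\pmb{\Delta}^1 \otimes C$ is a sum of three such terms, so $\Phi$ is determined by the triple $(F,G,H)$, and conversely every triple extends uniquely by linearity.

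First I would treat the coalgebra condition. Using that $[0]$ and $[1]$ are group-like in $\pmb{\Delta}^1$ and that the coproduct on a tensor product of coalgebras is obtained by shuffling factors with Koszul signs, the equality $\Delta_{C'} \Phi = (\Phi \otimes \Phi) \Delta_{\pmb{\Delta}^1 \otimes C}$ evaluated on $[0] \otimes c$ and $[1] \otimes c$ says exactly that $F$ and $G$ are coalgebra morphisms $C \to C'$. Evaluated on $[0<1] \otimes c$, and using
\[ \Delta_{\pmb{\Delta}^1}([0<1]) = [0] \otimes [0<1] + [0<1] \otimes [1], \]
the same equation unfolds — after pushing $[0<1]$ past the first tensor factor of $\Delta_C(c)$ with the appropriate sign — to
\[ \Delta_{C'} H = (F \otimes H + H \otimes G) \Delta_C, \]
which is precisely the defining relation of an $(F,G)$-coderivation.

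Next I would treat the differential condition. Since $\partial^{sing}[0] = \partial^{sing}[1] = 0$, the identity $\partial_{C'}\Phi = \Phi \partial_{\pmb{\Delta}^1 \otimes C}$ on $[0]\otimes c$ and $[1]\otimes c$ simply asserts that $F$ and $G$ commute with differentials, so together with the previous paragraph they are dg-coalgebra morphisms. On $[0<1] \otimes c$, using $\partial^{sing}[0<1] = [1] - [0]$ and the Koszul sign rule for the tensor differential,
\[ \partial_{\pmb{\Delta}^1 \otimes C}([0<1] \otimes c) = ([1] - [0]) \otimes c \;-\; [0<1] \otimes \partial_C c, \]
so the condition becomes $\partial_{C'} H(c) = G(c) - F(c) - H(\partial_C c)$, i.e.\ $[\partial, H] = G - F$.

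Finally, I would check that the assignments $\Phi \mapsto (F,G,H)$ and $(F,G,H) \mapsto \Phi$ are mutually inverse. Linearity of $\Phi$ makes this immediate in one direction, and the computations above show that if $(F,G,H)$ satisfies the coderivation and homotopy relations, then the $\Phi$ built from it satisfies the dg-coalgebra morphism equations on each of the three generating pieces, hence on all of $\pmb{\Delta}^1 \otimes C$. The only delicate point is the careful bookkeeping of signs coming from the Koszul rule when swapping $[0<1]$ with elements of $C$, but these are exactly the signs that also appear in the definition of an $(F,G)$-coderivation, so they cancel correctly.
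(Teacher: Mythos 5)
Your proposal is correct and follows essentially the same route as the paper, which merely sketches the check: decompose $\Phi$ along the summands $\Z[0]\otimes C$, $\Z[1]\otimes C$, $\Z[0<1]\otimes C$, read the coalgebra compatibility as the coderivation relation and the differential compatibility as $[\partial,H]=G-F$ (with $F,G$ dg-coalgebra morphisms). Your sign bookkeeping on the $[0<1]$ summand is accurate, so nothing is missing.
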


\begin{proof}
One checks indeed that :
\begin{enumerate}[label=(\roman*)]
\item $F$ and $G$ are the restrictions to the summands $\Z  [ 0 ] \otimes C$ and $\Z  [ 1 ]\otimes C$, $H$ is the restriction to the summand $\Z  [ 0 < 1 ] \otimes C$ ;
\item the coderivation relation is given by the compatibility with the coproduct ;
\item the homotopy relation is given by the compatibility with the differential.
\end{enumerate} 
\end{proof}

\subsubsection{\Ainf -homotopies} \label{alg:sss:ainf-hom}

Using the inclusion $\mathsf{\Ainf -Alg} \subset \mathsf{dg-Cog}$, this yields a notion of homotopy between two \Ainf -morphisms, which we call a \emph{\Ainf -homotopy} :

\begin{definition}[\cite{lefevre-hasegawa}]
Let $(\overline{T}(sA),D_A)$ and $(\overline{T}(sB),D_B)$ be two \Ainf -algebras. Given two \Ainf -morphisms $F,G : (\overline{T}(sA),D_A) \rightarrow (\overline{T}(sB),D_B)$, an \emph{\Ainf -homotopy from $F$ to $G$} is defined to be a morphism of dg-coalgebras
\[ H : \pmb{\Delta}^1 \otimes \overline{T}(sA) \longrightarrow \overline{T}(sB) \ , \]
whose restriction to the $[0]$ summand is $F$ and whose restriction to the $[1]$ summand is $G$.
\end{definition}

An alternative and equivalent definition ensues then as follows (see subsection~\ref{alg:sss:n-morph-ainf} for a more general proof of the equivalence between the two definitions) :

\begin{definition}[\cite{lefevre-hasegawa}]
An \emph{\Ainf -homotopy} between two \Ainf -morphisms $(f_n)_{n \geqslant 1}$ and $(g_n)_{n \geqslant 1}$ of $\Ainf$-algebras $A$ and $B$ is defined to be a collection of maps
\[ h_n : A^{\otimes n} \longrightarrow B \ , \]
 of degree $-n$, which satisfy the equations
\begin{align*}
[ \partial , h_n ] =  &g_n - f_n + \sum_{\substack{i_1+i_2+i_3=m \\ i_2 \geqslant 2}} \pm h_{i_1 + 1 + i_3} (\ide^{\otimes i_1} \otimes m_{i_2} \otimes \ide^{\otimes i_3}) \\
&+ \sum_{\substack{i_1 + \cdots + i_s  + l \\ + j_1 + \cdots + j_t = n \\ s + 1 + t \geqslant 2}} \pm m_{s+1+t} ( f_{i_1} \otimes \cdots \otimes f_{i_s} \otimes h_l \otimes g_{j_1} \otimes \cdots \otimes g_{j_t} ) \ .
\end{align*}
\end{definition}

The signs will be made explicit in section~\ref{alg:ss:signs-n-ainf-morph}. Using the same symbolic formalism as in~\cite{mazuir-I}, this can be represented as
\[ [ \partial , \ainfhomdeuxprime ] = \eqainfhomdeux - \eqainfhomun + \sum \pm \eqainfhomtrois + \sum \pm \eqainfhomquatre \ , \]
where we denote \ainfhomun , \ainfhomdeux\ and \ainfhomtrois\ respectively for the $f_n$, the $h_n$ and the $g_n$.

\subsubsection{On this notion of homotopy} \label{alg:sss:on-this-homotopy}

The relation \emph{being \Ainf -homotopic} on the class of \Ainf -morphisms is in fact an equivalence relation. It is moreover stable under composition. These results cannot be proven using naive tools, and are obtained through considerations of model categories. We refer to Lefèvre-Hasegawa~\cite{lefevre-hasegawa} for the reader interested in the proof of these two results.

\subsection{Some definitions} \label{alg:ss:some-def}

\subsubsection{The cosimplicial dg-coalgebra $\pmb{\Delta}^n$} \label{alg:sss:cosimpl-dg-cog}

\begin{definition}
Define $\pmb{\Delta}^n$ to be the graded \Z -module generated by the faces of the standard $n$-simplex $\Delta^n$,
\[ \pmb{\Delta}^n = \bigoplus_{0 \leqslant i_0 < \dots < i_k \leqslant n} \Z [i_0 < \dots < i_k ] \ , \]
where the grading is $| I | := - \mathrm{dim}(I)$ for $I$ a face of $\Delta^n$.
We endow this graded \Z -module with a dg-coalgebra structure, whose differential is the simplicial differential
\[ \partial_{\pmb{\Delta}^n} ([i_0 < \dots < i_k ]) := \sum_{j=0}^k (-1)^j [i_0 < \dots < \widehat{i_j} < \dots < i_k] \ , \]
and whose coproduct is the Alexander-Whitney coproduct
\[ \Delta_{\pmb{\Delta}^n} ([i_0 < \dots < i_k ]) := \sum_{j=0}^k [i_0 < \dots < i_j] \otimes [i_j < \dots < i_k] \ . \]
\end{definition}

These dg-coalgebras are to be seen as the realizations of the simplices $\Delta^n$ in the world of dg-coalgebras.
The collection of dg-coalgebras $\pmb{\Delta}^\bullet := \{ \pmb{\Delta}^n \}_{n \geqslant 0}$ is then naturally a cosimplicial dg-coalgebra. The coface map
\[ \delta_i : \pmb{\Delta}^{n-1} \longrightarrow \pmb{\Delta}^{n} \ , 0 \leqslant i \leqslant n \ , \]
is obtained by seeing the simplex $\Delta^{n-1}$ as the $i$-th face of the simplex $\Delta^n$. The codegeneracy map 
\[ \sigma_i : \pmb{\Delta}^{n+1} \longrightarrow \pmb{\Delta}^{n} \ , 0 \leqslant i \leqslant n \ , \]
is defined as 
\begin{align*}
[ j_0 < \cdots < j_r < \hat{i} < j_{r+1} < \cdots < j_s ] &\longmapsto [ j_0 < \cdots < j_r < j_{r+1} -1 < \cdots < j_s -1 ] \ , \\
[ j_0 < \cdots < j_r < \widehat{i+1} < j_{r+1} < \cdots < j_s ] &\longmapsto [ j_0 < \cdots < j_r < j_{r+1} -1 < \cdots < j_s -1 ] \ , \\
[ j_0 < \cdots < j_s ] &\longmapsto 0 \ \ \ \ \ \text{if} \ [ i < i +1] \subset [ j_0 < \cdots < j_s ] \ .
\end{align*}
In other words, the face $[ 0 < \cdots < \hat{i} < \cdots < n+1]$ and its subfaces are identified with $\Delta^n$ and its subfaces. The same goes for $[ 0 < \cdots < \widehat{i+1} < \cdots < n+1]$ and its subfaces. All faces of $\Delta^{n+1}$ that contain $[ i < i +1]$ are taken to 0.

Heuristically, the coface and codegeneracy maps are obtained by applying the functor
\[ C_{-*}^{sing} : \mathsf{Spaces} \longrightarrow \mathsf{dg-Cog} \]
to the cosimplicial space $\Delta^n$, and then quotienting out each $C_{-*}^{sing}(\Delta^n)$ by the subcomplex generated by all degenerate singular simplices. For instance, the codegeneracy map $\sigma_i : \Delta^{n+1} \rightarrow \Delta^n$ is obtained by contracting the edge $[ i < i+1]$ of $\Delta^{n+1}$, which yields the above codegeneracy map $\sigma_i : \pmb{\Delta}^{n+1} \rightarrow \pmb{\Delta}^{n}$. We refer to~\cite{goerss-simplicial} for more details on the matter.

\subsubsection{Overlapping partitions} \label{alg:sss:overlapping-part}

\begin{definition}[\cite{mcclure-smith}] \label{alg:def:def-mcclure-smith}
Let $I$ be a face of $\Delta^n$. An \emph{overlapping partition} of $I$ is defined to be a sequence of faces $(I_l)_{1 \leqslant \ell \leqslant s}$ of $I$ such that
\begin{enumerate}[label=(\roman*)]
\item the union of this sequence of faces is $I$, i.e. $\cup_{1 \leqslant \ell \leqslant s} I_l = I$ ;
\item for all $1 \leqslant \ell < s$, $\mathrm{max} ( I_{\ell} ) = \mathrm{min} ( I_{\ell+1} )$.
\end{enumerate}
\end{definition}

These two requirements then imply in particular that $\mathrm{min} ( I_{1} ) = \mathrm{min} ( I )$ and $\mathrm{max} ( I_{s} )=\mathrm{max} ( I )$. If the overlapping partition has $s$ components $I_\ell$, we will refer to it as an \emph{overlapping $s$-partition}.
These sequences of faces are those which naturally arise when applying several times the Alexander-Whitney coproduct to a face $I$. For instance, the Alexander-Whitney coproduct corresponds to the sum of all overlapping 2-partitions of $I$. Iterating $n$ times the Alexander-Whitney coproduct, we get the sum of all overlapping $(n+1)$-partitions of $I$. An overlapping 6-partition for $[ 0 < 1 <2]$ is for instance
\[
[ 0 < 1 < 2] = [0] \cup [0] \cup [0 < 1] \cup [1] \cup [1<2] \cup [2]
\ . \]

\subsection{$n$-morphisms between \Ainf -algebras} \label{alg:ss:n-morph-ainf}

We now want to define a notion of \emph{higher homotopies}, or \emph{$n$-morphisms}, between \Ainf -algebras, such that 0-morphisms are \Ainf -morphisms and 1-morphisms are \Ainf -homotopies. Since \Ainf -morphisms correspond to the set
\[ \mathrm{Hom}_{\mathsf{dg-Cog}}(\overline{T}(sA),\overline{T}(sB)) \]
and \Ainf -homotopies correspond to the set 
\[ \mathrm{Hom}_{\mathsf{dg-Cog}}(\pmb{\Delta}^1 \otimes \overline{T}(sA),\overline{T}(sB)) \ , \]
a natural candidate for the set of $n$-morphisms is
\[ \mathrm{HOM}_{\mathsf{\Ainf -Alg}}(A,B)_{n} := \mathrm{Hom}_{\mathsf{dg-Cog}}(\pmb{\Delta}^n \otimes \overline{T}(sA) ,\overline{T}(sB)) \ . \]

\subsubsection{$n$-morphisms between dg -coalgebras} \label{alg:sss:n-morph-dg-cog}

We begin by making explicit the $n$-simplices of the $\mathrm{HOM}$-simplicial sets 
\[ \mathrm{HOM}_{\mathsf{dg-Cog}}(C,C')_{n} := \mathrm{Hom}_{\mathsf{dg-Cog}}(\pmb{\Delta}^n \otimes C ,C') \ . \]
Take a morphism of dg-coalgebras 
\[ f  :  \pmb{\Delta}^n \otimes C \longrightarrow C' \ . \]
Write $f_{[i_0 < \dots < i_k]} : C \rightarrow C'$ for its restriction to the~$\Z [i_0 < \dots < i_k] \otimes C$ summand. Then the property that $f$ is a morphism of dg-\Z -modules is equivalent to the system of equations
\begin{equation} \label{alg:eq:diff}
[ \partial , f_{[i_0 < \dots < i_k]} ] = \sum_{j=0}^k (-1)^j f_{[i_0 < \dots < \widehat{i_j} < \dots < i_k]} \ , 
\end{equation} 
while the property that $f$ is a morphism of coalgebras is equivalent to the system of equations
\begin{equation} \label{alg:eq:coprod}
\Delta_{C'} f_{[i_0 < \dots < i_k]} = \sum_{j=0}^k (f_{[i_0 < \dots < i_j]} \otimes f_{[i_j < \dots < i_k]}) \Delta_C \ . 
\end{equation} 

These two sets of equations of morphisms hence characterize the $n$-simplices of the HOM-simplicial sets $\mathrm{HOM}_{\mathsf{dg-Cog}}(C,C')_{\bullet}$, i.e. the $n$-morphisms between the dg-coalgebras $C$ and $C'$.

\subsubsection{$n$-morphisms between \Ainf -algebras} \label{alg:sss:n-morph-ainf}

We now use the previous characterization of $n$-morphisms between dg-coalgebras to obtain a simpler definition for $n$-morphisms between two \Ainf -algebras :
\begin{definition} \label{alg:def:n-morph-ainf-susp}
Let $A$ and $B$ be two \Ainf -algebras. A \emph{$n$-morphism} from $A$ to $B$ is defined to be a morphism of dg-coalgebras 
\[ F : \pmb{\Delta}^n \otimes \overline{T}(sA) \longrightarrow \overline{T}(sB) \ . \]
\end{definition}

We will write $b_n$ for the degree $+1$ maps associated to the \Ainf -operations $m_n$, which define the codifferentials on $\overline{T}(sA)$ and $\overline{T}(sB)$.
The property of being a morphism of coalgebras is equivalent to the property of satisfying equations~\ref{alg:eq:coprod}. Using the universal property of the bar construction, this is equivalent to saying that the $n$-morphism is given by a collection of maps of degree $|I|$, 
\[ F_{I}^{(m)} : (sA)^{\otimes m} \longrightarrow sB \ , \]
where $I$ is a face of $\Delta^n$ and $m \geqslant 1$.
The restriction of the map $F_{I} : \overline{T}(sA) \rightarrow \overline{T}(sB)$ to $(sA)^{\otimes m}$ is then given by
\[
F_{I}^{(m)} + \sum_{\substack{i_1 + i_2 = m \\ I_1 \cup I_2 = I}} F_{I_1}^{(i_1)} \otimes F_{I_2}^{(i_2)} + \dots + \sum_{\substack{i_1 + \cdots + i_s = m \\ I_1 \cup \cdots \cup I_s = I}} F_{I_1}^{(i_1)} \otimes \cdots \otimes F_{I_s}^{(i_s)} + \dots + \sum_{I_1 \cup \cdots \cup I_m = I} F_{I_1}^{(1)} \otimes \cdots \otimes F_{I_m}^{(1)} \ ,
\]
where $I_1 \cup \cdots \cup I_s = I$ stands for an overlapping partition of $I$.
Corestricting to $B^{\otimes s}$ yields the morphism
\[ \sum_{\substack{i_1 + \cdots + i_s = m \\ I_1 \cup \cdots \cup I_s = I}} F_{I_1}^{(i_1)} \otimes \cdots \otimes F_{I_s}^{(i_s)} : (sA)^{\otimes m} \longrightarrow (sB)^{\otimes s} \ . \]

The property of being compatible with the differentials is equivalent to the property of satisfying equations~\ref{alg:eq:diff}. This is itself equivalent to the fact that the collection of morphisms $F_{I}^{(m)}$ satisfies the following family of equations involving morphisms $(sA)^{\otimes m} \rightarrow sB$,
\[ \resizebox{\hsize}{!}{$ \displaystyle{\sum_{j=0}^{\mathrm{dim}(I)} (-1)^j F^{(m)}_{\partial_j I} + (-1)^{|I|} \sum_{i_1+i_2+i_3=m} F^{(i_1+1+i_3)}_I (\ide^{\otimes i_1} \otimes b_{i_2} \otimes \ide^{\otimes i_3}) \\
= \sum_{\substack{i_1 + \cdots + i_s = m \\ I_1 \cup \cdots \cup I_s = I}} b_s ( F^{(i_1)}_{I_1} \otimes \cdots \otimes F^{(i_s)}_{I_s}) \ .}$} \]

We unwind the signs obtained by changing the $b_n$ into the $m_n$ and the degree $|I|$ maps $ F_{I}^{(m)} : (sA)^{\otimes m} \longrightarrow sB$ into degree $1 - m + |I|$ maps $ f_{I}^{(m)} : A^{\otimes m} \longrightarrow B $ in subsection~\ref{alg:sss:choice-convention-ainf}. The final equations read as 
\begin{align*} \label{alg:eq:n-ainf-morph}
\left[ \partial , f^{(m)}_I \right] =  \sum_{j=0}^{\mathrm{dim}(I)} (-1)^j f^{(m)}_{\partial_jI} &+ (-1)^{|I|} \sum_{\substack{i_1+i_2+i_3=m \\ i_2 \geqslant 2}} \pm f^{(i_1+1+i_3)}_I (\ide^{\otimes i_1} \otimes m_{i_2} \otimes \ide^{\otimes i_3}) \tag{$\star$} \\ &+ \sum_{\substack{i_1 + \cdots + i_s = m \\ I_1 \cup \cdots \cup I_s = I \\ s \geqslant 2 }} \pm m_s ( f^{(i_1)}_{I_1} \otimes \cdots \otimes f^{(i_s)}_{I_s}) \ ,  \\
\end{align*}
or equivalently and more visually,
\[ [ \partial , \ainfnmorphun ] = \sum_{j=0}^{\mathrm{dim}(I)} (-1)^j \eqainfnmorphun + \sum_{I_1 \cup \cdots \cup I_s = I} \pm \eqainfnmorphtrois
 + \sum \pm \eqainfnmorphdeux \ . \]
 
\begin{definition} \label{alg:def:n-morph-ainf}
Let $A$ and $B$ be two \Ainf -algebras. A \emph{$n$-morphism} from $A$ to $B$ is defined to be a collection of maps $f_{I}^{(m)} : A^{\otimes m} \longrightarrow B$ of degree $1 - m + |I|$ for $I \subset \Delta^n$ and $m \geqslant 1$, that satisfy equations~\ref{alg:eq:n-ainf-morph}.
\end{definition}

\subsection{An equivalent definition for $n$-morphisms} \label{alg:ss:equivalent-definition}

We show in this section that given $A$ and $B$ two \Ainf -algebras, the datum of a $n$-morphism from $A$ to $B$ is equivalent to the datum of a morphism of \Ainf -algebras $A \rightarrow \pmb{\Delta}_n \otimes B$. 

Consider first $C$ a dg-algebra and $B$ an \Ainf -algebra. Then the tensor product $C \otimes B$ can be naturally endowed with an \Ainf -algebra structure by defining $m_n : (A \otimes B)^{\otimes n} \rightarrow A \otimes B$ as
\[ m_n := ((m_2^A)^{\circ n-1} \otimes m_n^B ) \circ \tau_n \ , \]
where $\tau_n$ denotes the map rearranging an element $a_1 b_1 \dots a_n b_n$ of $(A \otimes B)^{\otimes n}$ into an element $a_1 \dots a_n b_1 \dots b_n$ of $A^{\otimes n} \otimes B^{\otimes n}$ and $(m_2^A)^{\circ n-1} : A^{\otimes n} \rightarrow A$ denotes the $(n-1)$-th iterate of the multiplication $m_2^A$ on $A$.
We moreover define $\pmb{\Delta}_n$ to be the simplicial dg-algebra $\pmb{\Delta}_n := \mathrm{Hom}(\pmb{\Delta}^n,\Z)$ dual to the cosimplicial dg-coalgebra $\pmb{\Delta}^n$. Its underlying graded module is in particular 
\[ \pmb{\Delta}_n = \bigoplus_{0 \leqslant i_0 < \dots < i_k \leqslant n} \Z [i_0 < \dots < i_k ] \]
where the grading is $| I |_{\pmb{\Delta}_n} := \mathrm{dim}(I)$ for $I$ a face of $\Delta^n$. It is endowed with the standard cup product.

An \Ainf -morphism $F : A \rightarrow \pmb{\Delta}_n \otimes B$ then corresponds to a collection of degree $1-m$ maps
$F^{(m)} : A^{\otimes m} \rightarrow \pmb{\Delta}_n \otimes B$ which can be rewritten as a collection of degree $1-m-\mathrm{dim}(I)$ maps $f_I^{(m)} : A^{\otimes m} \rightarrow B$ such that 
\[ F^{(m)} = \bigoplus_{I \subset \Delta^n} I \otimes f_I^{(m)} \ . \]
We denote $\pi_I : \pmb{\Delta}_n \rightarrow \Z \cdot I$ the projection from $\pmb{\Delta}_n$ to its summand labeled by $I$. Then, the \Ainf -equations for the \Ainf -morphism $F : A \rightarrow \pmb{\Delta}_n \otimes B$ read as 
\[ \left[ \partial , F^{(m)} \right] = \sum_{\substack{i_1+i_2+i_3=m \\ i_2 \geqslant 2}} \pm F^{(i_1+1+i_3)} (\ide^{\otimes i_1} \otimes m_{i_2}^A \otimes \ide^{\otimes i_3}) + \sum_{\substack{i_1 + \cdots + i_s = m \\ s \geqslant 2 }} \pm m_s^{\pmb{\Delta}_n \otimes B} ( F^{(i_1)} \otimes \cdots \otimes F^{(i_s)}) \ , \]
and their images under the map $\pi_I$ yield exactly the \Ainf -equations~\ref{alg:eq:n-ainf-morph} for the collection of morphisms $f_I^{(m)}$.

\begin{proposition} \label{alg:prop:equivalent-def}
Let $A$ and $B$ be two \Ainf -algebras. A \emph{$n$-morphism} from $A$ to $B$ can be equivalently defined as an \Ainf -morphism $ A \rightarrow \pmb{\Delta}_n \otimes B$.
\end{proposition}

We will only need this equivalent definition of $n$-morphisms in subsection~\ref{fd:ss:open-question-plus-result} of part \ref{p:simplicial}, and will stick to the definition $\pmb{\Delta}^n \otimes \overline{T}(sA) \rightarrow \overline{T}(sB)$ and to the definition in terms of operations in the rest of this paper. We moreover point out that the natural sign convention for $n$-morphisms arising from this new definition differs slightly from the one arising from the two previous definitions, as we explain in subsection~\ref{alg:sss:sign-conv-equivalent}.

\subsection{Résumé} \label{alg:ss:resume-n-ainf}

Given $A$ and $B$ two \Ainf -algebras, we define a \emph{$n$-morphism} between $A$ and $B$ to be an element of the simplicial set
\[ \mathrm{HOM}_{\mathsf{\Ainf -Alg}}(A,B)_{n} := \mathrm{Hom}_{\mathsf{dg-Cog}}( \pmb{\Delta}^n \otimes \overline{T}(sA),\overline{T}(sB)) \ , \]
or equivalently a collection of operations $ \ainfnmorphun : A^{\otimes m} \rightarrow B$ of degree $1-m-\mathrm{dim}(I)$ for all faces $I$ of $\Delta^n$ and all $m \geqslant 1$, satisfying the \Ainf -equations
\[ [ \partial , \ainfnmorphun ] = \sum_{j=0}^{\mathrm{dim}(I)} (-1)^j \eqainfnmorphun + \sum_{I_1 \cup \cdots \cup I_s = I} \pm \eqainfnmorphtrois
 + \sum \pm \eqainfnmorphdeux \ , \]
where we refer to subsection~\ref{alg:sss:choice-convention-ainf} for signs.

\section{The $n$-multiplihedra} \label{alg:s:polytopes}

Recall from~\cite{mazuir-I} that, in the language of operadic algebra, \Ainf -algebras are governed by the operad \Ainf , and \Ainf -morphisms are governed by the $(\Ainf , \Ainf)$-operadic bimodule \infmor . These two operadic objects actually stem from collections of polytopes. Under the functor $C_{-*}^{cell}$ the associahedra $\{ K_m \}$ realise the operad \Ainf , while the multiplihedra $ \{ J_m \}$ form a $(\{ K_m \},\{ K_m \})$-operadic bimodule realising \infmor .

The first section shows that the operadic bimodule formalism for \Ainf -morphisms can be generalised to the setting of $n-\Ainf$ -morphisms : for each $n \geqslant 0$ there exists an $(\Ainf , \Ainf)$-operadic bimodule \infmorn , which encodes $n$-morphisms between \Ainf -algebras. In fact, they fit into a cosimplicial operadic bimodule $\{ \infmorn \}_{n \geqslant 0}$.
Reproducing the previous progression, we would like to realise the combinatorics of $n$-morphisms at the level of polytopes. The first step in this direction is performed in section~\ref{alg:ss:cell-decompo-AW} : we explain how to lift the Alexander-Whitney coproduct to the level of the standard simplices $\Delta^n$ and study the rich combinatorics that arise in this problem. Section~\ref{alg:ss:poly-n-Jm} subsequently introduces the \emph{$n$-multiplihedra} $n-J_m$, which are the polytopes $\Delta^n \times J_m$ endowed with a refined polytopal subdivision. These polytopes do not form a $(\{ K_m \},\{ K_m \})$-operadic bimodule, but they suffice to recover all the combinatorics of $n$-morphisms.

\subsection{The cosimplicial $(\Ainf , \Ainf )$-operadic bimodule encoding higher morphisms} \label{alg:ss:cosimpl-op-bimod}

\subsubsection{The $(\Ainf , \Ainf )$-operadic bimodules \infmorn} \label{alg:sss:informn}

The $(\Ainf , \Ainf)$-operadic bimodule encoding \Ainf -morphisms is the quasi-free $(\Ainf , \Ainf)$-operadic bimodule generated in arity $n$ by one operation \arbreopmorph{0.15}\ of degree $1-n$,
\[ \infmor = \mathcal{F}^{\Ainf , \Ainf}(\arbreopunmorph , \arbreopdeuxmorph , \arbreoptroismorph , \arbreopquatremorph , \cdots ) \ .  \]
Representing the generating operations of the operad \Ainf\ acting on the right in blue \arbreopbleu{0.15} and the ones of the operad \Ainf\ acting on the left in red \arbreoprouge{0.15}, its differential is defined by
\[ \partial ( \arbreopmorphformule ) = \sum_{\substack{h+k = m+1 \\ 1 \leqslant i \leqslant k \\ h \geqslant 2 }} \pm \eqainfmorphun + \sum_{\substack{i_1 + \cdots + i_s = m \\ s \geqslant 2 }} \pm \eqainfmorphdeux . \] 

\begin{definition}
The $(\Ainf , \Ainf)$-operadic bimodule encoding $n-\Ainf$-morphisms is the quasi-free $(\Ainf , \Ainf)$-operadic bimodule generated in arity $m$ by the operations $f^{(m)}_I$ of degree $1-m+|I|$, for all faces $I$ of $\Delta^n$, and whose differential is defined by
\[ \resizebox{\hsize}{!}{$ \displaystyle{\partial (f^{(m)}_I) = \sum_{j=0}^{\mathrm{dim}I} (-1)^j f^{(m)}_{\partial^{sing}_jI} + \sum_{\substack{i_1+i_2+i_3=m \\ i_2 \geqslant 2}} \pm f^{(i_1+1+i_3)}_I (\ide^{\otimes i_1} \otimes m_{i_2} \otimes \ide^{\otimes i_3}) + \sum_{\substack{i_1 + \cdots + i_s = m \\ I_1 \cup \cdots I_s = I \\ s \geqslant 2 }} \pm m_s ( f^{(i_1)}_{I_1} \otimes \cdots \otimes f^{(i_s)}_{I_s}) \ .}$} \]
\end{definition}

Representing the operations $f^{(m)}_I$ as \ainfnmorphun , this can be rewritten as 
\[ \infmorn = \mathcal{F}^{\Ainf , \Ainf}(\arbreopunmorphn , \arbreopdeuxmorphn , \arbreoptroismorphn , \arbreopquatremorphn , \cdots ; I \subset \Delta^n ) \ .  \]
where
\[ \partial ( \ainfnmorphun ) = \sum_{j=0}^{\mathrm{dim}I} (-1)^j \eqainfnmorphun + \sum_{I_1 \cup \cdots \cup I_s = I} \pm \eqainfnmorphtrois
 + \sum \pm \eqainfnmorphdeux \ . \]

The collection of $(\Ainf ,\Ainf)$-operadic bimodules $\{ \infmorn \}_{n \geqslant 0}$ forms a cosimplicial $(\Ainf ,\Ainf)$-operadic bimodule whose coface and codegeneracy maps are built out of those of section~\ref{alg:ss:some-def}. Given two \Ainf -algebras $\Ainf \rightarrow \mathrm{Hom}(A)$ and $\Ainf \rightarrow \mathrm{Hom}(B)$, the set of $n$-morphisms is then simply given by
\[ \mathrm{HOM}_{\mathsf{\Ainf -Alg}} (A,B)_n = \mathrm{Hom}_{\mathsf{(\Ainf , \Ainf )-op.
bimod.}} (\infmorn , \mathrm{Hom}(A,B) ) \ . \]

\subsubsection{The two-colored operadic viewpoint} \label{alg:sss:two-col-op-viewpoint}

Recall that \Ainf -algebras and \Ainf -morphisms between them are naturally encoded by the quasi-free two-colored operad
\[ A_\infty^2 := \mathcal{F} (\arbreopdeuxcol{red} , \arbreoptroiscol{red} , \arbreopquatrecol{red}, \cdots, \arbreopdeuxcol{blue} , \arbreoptroiscol{blue} , \arbreopquatrecol{blue} , \cdots, \arbreopunmorph , \arbreopdeuxmorph , \arbreoptroismorph , \arbreopquatremorph , \cdots ) \ , \]
with differential given by the \Ainf -algebra relations on the one-colored operations, and the \Ainf -morphism relations on the two-colored operations.

Similarly, \Ainf -algebras and $n-\Ainf$-morphisms between them are naturally encoded by the quasi-free two-colored operad
\[ n-A_\infty^2 := \mathcal{F} (\arbreopdeuxcol{red} , \arbreoptroiscol{red} , \arbreopquatrecol{red}, \cdots, \arbreopdeuxcol{blue} , \arbreoptroiscol{blue} , \arbreopquatrecol{blue} , \cdots, (\arbreopunmorphn , \arbreopdeuxmorphn , \arbreoptroismorphn , \arbreopquatremorphn , \cdots ; I \subset \Delta^n ) ) \ , \]
with differential given by the \Ainf -algebra relations on the one-colored operations, and the $n-\Ainf$-morphism relations on the two-colored operations. The collection of two-colored operads $\{ n-A_\infty^2 \}_{n \geqslant 0}$ constitutes again a cosimplicial two-colored operad. 

\subsection{Polytopal subdivisions on $\Delta^n$ induced by the Alexander-Whitney coproduct} \label{alg:ss:cell-decompo-AW}

One way of interpreting the Alexander-Whitney coproduct 
\[ \Delta_{\pmb{\Delta}^n} : \pmb{\Delta}^n \longrightarrow \pmb{\Delta}^n \otimes \pmb{\Delta}^n \  \]
is to say that it is a diagonal on the dg-\Z -module $\pmb{\Delta}^n$. The following natural question then arises. \emph{Does there exist a diagonal (i.e. a polytopal map that is homotopic to the usual diagonal - the usual diagonal map failing to be polytopal in general) on the standard $n$-simplex $\Delta^n$,  
\[ \mathrm{AW} : \Delta^n \longrightarrow \Delta^n \times \Delta^n \ , \]
such that its image under the functor $C^{cell}_{-*}$ is $\mathrm{AW}_{-*} = \Delta_{\pmb{\Delta}^n}$ ?}

\noindent The answer to this question is positive, and contains rich combinatorics that we now lay out.

\subsubsection{The map $\mathrm{AW}$} \label{alg:sss:map-AW}

We recall in this section the construction of a diagonal on the standard simplices explained in~\cite{masuda-diagonal-assoc} (example 1 of section 2.3.). 

\begin{definition}[\cite{masuda-diagonal-assoc}]
Consider the realizations of the standard $n$-simplices
\[ \Delta^n := \mathrm{conv} \{ (1,\dots,1,0,\dots,0) \in \R^n \} = \{ (z_1,\dots,z_n) \in \R^n | 1 \geqslant z_1 \geqslant \cdots \geqslant z_n \geqslant 0 \} \ . \]
We define the map $\mathrm{AW}$ by the formula 
\[ \mathrm{AW}(z_1,\cdots,z_n) = ((2z_1-1,\dots,2z_i-1,0,\dots,0),(1,\cdots,1,2z_{i+1},\dots,2z_n)) \ , \]
for $1 \geqslant z_1 \geqslant \cdots \geqslant z_i \geqslant 1/2 \geqslant z_{i+1} \geqslant \cdots \geqslant z_n \geqslant 0$.
\end{definition}

In particular, the map $\mathrm{AW}$ comes with a refined polytopal subdivision of $\Delta^n$, whose $n+1$ top dimensional strata are given by the subsets
\[ \{ (z_1,\dots,z_n) \in \R^n | 1 > z_1 > \cdots > z_i > 1/2 > z_{i+1} > \cdots > z_n > 0 \} \subset \Delta^n \ , \]
and whose $i$-codimensional strata are simply obtained by replacing $i$ symbols "$>$" by a symbol "$=$" in the previous sequence of inequalities.
This refined subdivision is represented on the figures~\ref{alg:fig:delta-subdivision-AW},~\ref{alg:fig:values-AW-delta-un}~and~\ref{alg:fig:values-AW-delta-deux}, together with the value of $\mathrm{AW}$ on each stratum of the subdivision.

\begin{figure}[h] 
    \centering
    \begin{subfigure}{0.45\textwidth}
    \centering
       \includegraphics[scale=0.25]{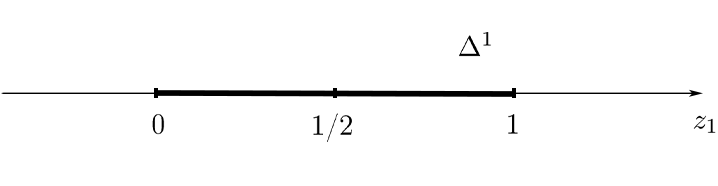}
    \end{subfigure} ~
    \begin{subfigure}{0.45\textwidth}
    \centering
        \includegraphics[scale=0.2]{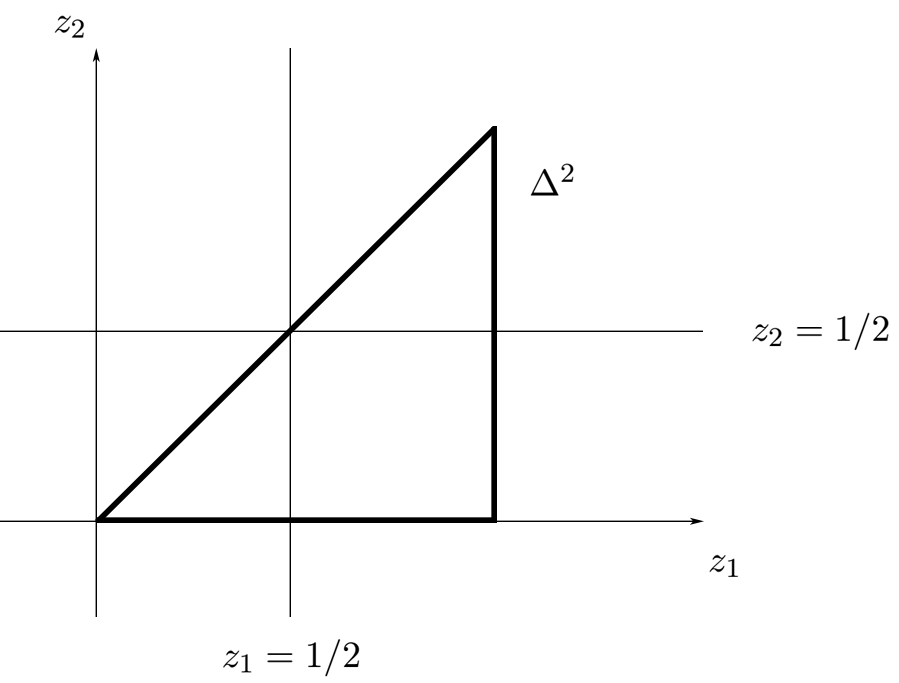}
    \end{subfigure} 
    \caption{The AW-subdivision of $\Delta^1$ and $\Delta^2$} \label{alg:fig:delta-subdivision-AW}
\end{figure}

\begin{figure}[h]
\[ \raisebox{1pt}{\includegraphics[scale=0.3]{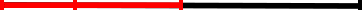}} \longmapsto \raisebox{1pt}{\includegraphics[scale=0.3]{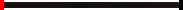}} \times \raisebox{1pt}{\includegraphics[scale=0.3]{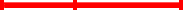}} \ , \]
~
\[ \raisebox{1pt}{\includegraphics[scale=0.3]{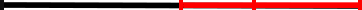}} \longmapsto \raisebox{1pt}{\includegraphics[scale=0.3]{delta-un-valeur-AW-3}} \times \raisebox{1pt}{\includegraphics[scale=0.3]{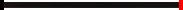}}  \ . \]
\caption{Values of AW on $\Delta^1$ : the stratum to which AW is applied is colored in red} \label{alg:fig:values-AW-delta-un}
\end{figure}

\begin{figure}[h]
\[ \raisebox{-27pt}{\includegraphics[scale=0.15]{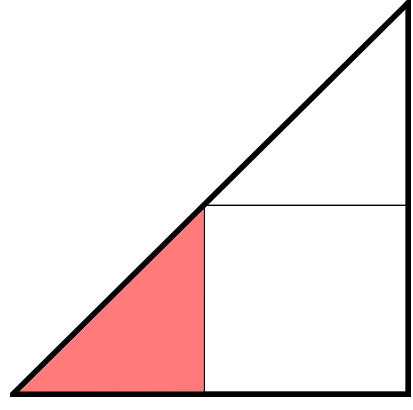}} \ \ \ \longmapsto \raisebox{-27pt}{\includegraphics[scale=0.15]{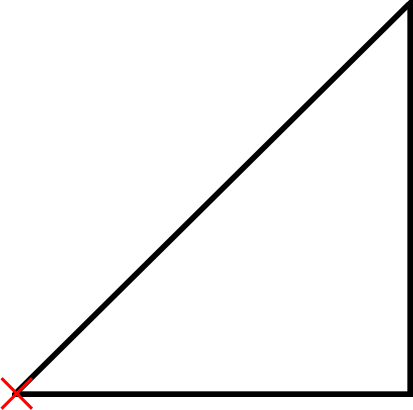}} \ \ \times \raisebox{-27pt}{\includegraphics[scale=0.15]{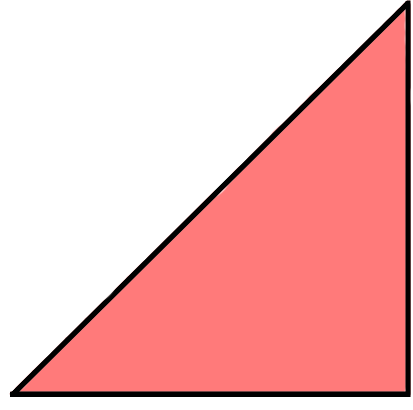}}  \ , \]
~
\[ \raisebox{-27pt}{\includegraphics[scale=0.15]{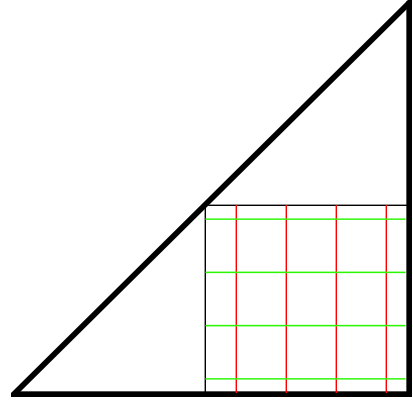}} \ \ \ \longmapsto \raisebox{-27pt}{\includegraphics[scale=0.15]{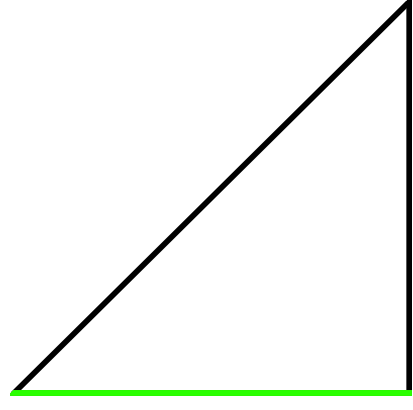}} \ \ \times \raisebox{-27pt}{\includegraphics[scale=0.15]{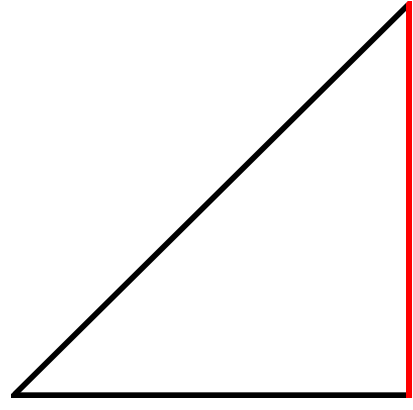}}  \ , \]
~
\[ \raisebox{-27pt}{\includegraphics[scale=0.15]{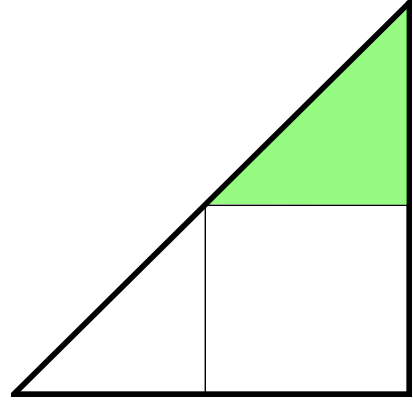}} \ \ \ \longmapsto \raisebox{-27pt}{\includegraphics[scale=0.15]{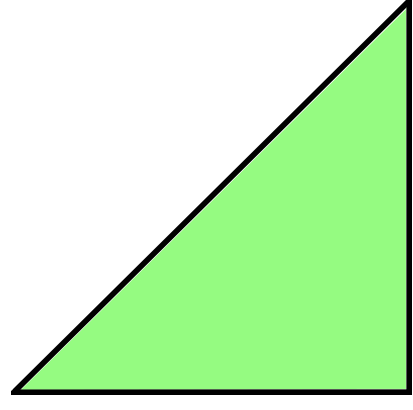}} \ \ \times \raisebox{-27pt}{\includegraphics[scale=0.15]{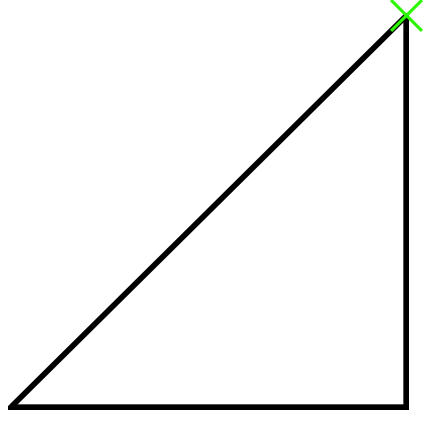}}  \ . \]
\caption{Values of AW on $\Delta^2$}\label{alg:fig:values-AW-delta-deux}
\end{figure}

\subsubsection{The polytopal map $\mathrm{AW}$ is not coassociative} \label{alg:sss:AW-not-coass}

The Alexander-Whitney coproduct $\Delta_{\pmb{\Delta}^n}$ on the dg-level is coassociative. However, the diagonal map $\mathrm{AW}$ is not ! This can be checked for the 1-simplex $\Delta^1$ :
\begin{align*} (\mathrm{AW} \times \ide) \circ \mathrm{AW} (2/5) &= \mathrm{AW} \times \ide (0,4/5) = (0,0,4/5) \\
(\ide \times \mathrm{AW}) \circ \mathrm{AW} (2/5) &= \ide \times \mathrm{AW} (0,4/5) = (0,3/5,1) \ . 
\end{align*}

\begin{proposition} \label{alg:prop:pas-coassoc}
The polytopal map $\mathrm{AW}$ is not coassociative.
\end{proposition}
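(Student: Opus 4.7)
The plan is to exhibit an explicit counterexample in the lowest-dimensional nontrivial case, namely $\Delta^1$. Since the statement is a purely negative one, it suffices to find a single point where the two iterated compositions disagree. The computation shown in the paragraph preceding the proposition already does exactly this at the point $z = 2/5 \in \Delta^1$, so my proof would simply repackage that calculation as a formal argument, with a brief explanation of why it really does defeat coassociativity.

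The steps are as follows. First, I would record the explicit formula for $\mathrm{AW} : \Delta^1 \to \Delta^1 \times \Delta^1$ derived from the general definition: for $z \geqslant 1/2$, $\mathrm{AW}(z) = (2z-1, 1)$, and for $z \leqslant 1/2$, $\mathrm{AW}(z) = (0, 2z)$. Next, I would evaluate both iterated maps on the chosen point. Applying $\mathrm{AW}$ to $2/5$ gives $(0, 4/5)$. Then $(\mathrm{AW} \times \ide)(0, 4/5) = (\mathrm{AW}(0), 4/5) = (0, 0, 4/5)$, while $(\ide \times \mathrm{AW})(0, 4/5) = (0, \mathrm{AW}(4/5)) = (0, 3/5, 1)$. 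Since the two results are distinct points of $\Delta^1 \times \Delta^1 \times \Delta^1$, the two maps $\Delta^1 \to (\Delta^1)^{\times 3}$ are not equal, proving non-coassociativity in dimension $1$. Coassociativity in higher dimensions would restrict to coassociativity on the edge $[0,1] \hookrightarrow \Delta^n$ via the face inclusion, so the $\Delta^1$ counterexample already rules out coassociativity in every dimension $n \geqslant 1$.

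There is no real obstacle here: the argument is a one-line verification. The only conceptual point worth emphasizing is the contrast with the coassociativity of $\Delta_{\pmb{\Delta}^n}$ on the chain level, which tells us that the failure of coassociativity for $\mathrm{AW}$ must happen only up to a polytopal homotopy. This is precisely what motivates the introduction of the iterated maps $\mathrm{AW}^{\circ s}$ and the associated thinner polytopal subdivisions of $\Delta^n$ used in the sequel to define the $n$-multiplihedra.
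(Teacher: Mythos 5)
Your proposal is correct and is essentially the paper's own argument: the paper proves the proposition by exactly this computation at $z=2/5$ on $\Delta^1$, obtaining $(0,0,4/5)$ versus $(0,3/5,1)$. Your added remark that the $\Delta^1$ counterexample propagates to all $n$ via the edge inclusion is a harmless elaboration of the same approach.
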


\noindent The polytopal subdivisions that the polytopal maps 
\begin{align*}
(\mathrm{AW} \times \ide) \circ \mathrm{AW} : \Delta^n \longrightarrow \Delta^n \times \Delta^n \times \Delta^n , \\
(\ide \times \mathrm{AW}) \circ \mathrm{AW} : \Delta^n \longrightarrow \Delta^n \times \Delta^n \times \Delta^n \ 
\end{align*} 
induce on $\Delta^n$ are also different. See an instance on figure~\ref{alg:fig:AW-s-subdivision}.

\begin{figure}[h]
    \centering
    \begin{subfigure}{0.3\textwidth}
    \centering
       \includegraphics[scale=0.2]{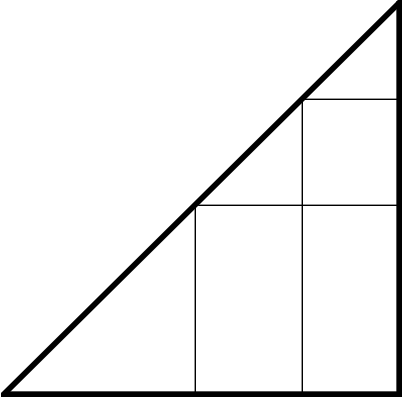}
    \end{subfigure} ~
    \begin{subfigure}{0.3\textwidth}
    \centering
        \includegraphics[scale=0.2]{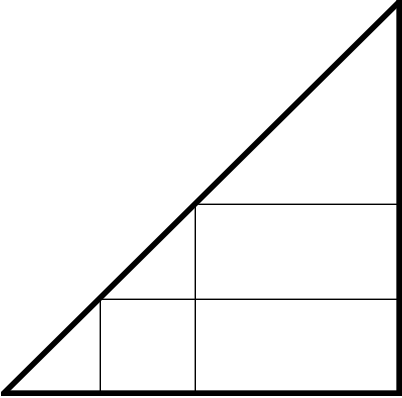}
    \end{subfigure} 
    \caption{The ($\mathrm{AW} \times \ide) \circ \mathrm{AW}$-subdivision and the $(\ide \times \mathrm{AW}) \circ \mathrm{AW}$-subdivision of $\Delta^2$} \label{alg:fig:AW-s-subdivision}
\end{figure}

\subsubsection{$i$-overlapping $s$-partitions} \label{alg:sss:i-over-s-part}

We defined in subsection~\ref{alg:sss:overlapping-part} the notion of an overlapping $s$-partition of a face $I$ of $\Delta^n$. We refine it now :

\begin{definition} 
An \emph{$i$-overlapping $s$-partition of $I$} is a sequence of faces $(I_\ell)_{1 \leqslant \ell \leqslant s}$ of $I$ such that
\begin{enumerate}[label=(\roman*)]
\item the union of this sequence of faces is $I$, i.e. $\cup_{1 \leqslant \ell \leqslant s} I_\ell = I$ ;
\item there are exactly $i$ integers $\ell$ such that $1 \leqslant \ell < s$ and $\mathrm{max} ( I_{\ell} ) = \mathrm{min} ( I_{\ell+1} )$.
\end{enumerate}
\end{definition}

\noindent An overlapping $s$-partition as defined in definition~\ref{alg:def:def-mcclure-smith} is then simply a $(s-1)$-overlapping $s$-partition. A 1-overlapping 3-partition for $[ 0 < 1 < 2]$ is for instance
\[
[ 0 < 1 < 2] = [0] \cup [0] \cup [1 < 2]
\ . \]

\subsubsection{Polytopal subdivisions of $\Delta^n$ induced by iterations of $\mathrm{AW}$} \label{alg:sss:poly-subd-delta-n}

\begin{definition}
Define the \emph{$s$-th right iterate} of the map AW as
\[ \mathrm{AW}^{\circ s} := (\ide^{\times (s-1)} \times \mathrm{AW}) \circ \cdots \circ (\ide \times \mathrm{AW}) \circ \mathrm{AW} : \Delta^n \longrightarrow (\Delta^n)^{\times s+1} \ . \]
\end{definition}

For each $s \geqslant 1$, the map $\mathrm{AW}^{\circ s}$ induces a refined polytopal subdivision of $\Delta^n$. These subdivisions will be called the \emph{$\mathrm{AW}^{\circ s}$-subdivisions} of $\Delta^n$. They can be described rather simply. While the $\mathrm{AW}$-subdivision is obtained by dividing $\Delta^n$ into pieces using all hyperplanes $z_i = 1/2$ for $1 \leqslant i \leqslant n$, the $\mathrm{AW}^{\circ s}$-subdivision can be constructed as follows :
\begin{proposition}
The $\mathrm{AW}^{\circ s}$-subdivision of $\Delta^n$ is the subdivision obtained by dividing $\Delta^n$ using all hyperplanes $z_i = (1/2)^k$, for $1 \leqslant i \leqslant n$ and $1 \leqslant k \leqslant s$. 
\end{proposition}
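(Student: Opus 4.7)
The natural approach is induction on $s$, with base case $s=1$ holding by definition. For the inductive step we exploit the right-iteration convention, writing
\[ \mathrm{AW}^{\circ (s+1)} = (\ide^{\times s} \times \mathrm{AW}) \circ \mathrm{AW}^{\circ s} \ . \]
Because only the last factor is subdivided further, the refinement from step $s$ to step $s+1$ can be analyzed by restricting attention to the coordinates of that last factor on each stratum of the $\mathrm{AW}^{\circ s}$-subdivision.

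The key computation is to track how the last factor of $\mathrm{AW}^{\circ s}$ depends on $(z_1,\dots,z_n)$. I would prove by induction that on the stratum associated to a sequence of split indices $0 \leq i_1 \leq \cdots \leq i_s \leq n$, this last factor has the explicit form
\[ (\underbrace{1,\dots,1}_{i_s},\, 2^s z_{i_s+1} - c_{i_s+1},\, \dots,\, 2^s z_n - c_n) \ , \]
for appropriate integer constants coming from the iterated formula
\[ \mathrm{AW}(w_1,\dots,w_n) = (\dots,(1,\dots,1,2w_{j+1},\dots,2w_n)) \ . \]
In particular the rescaling on the pertinent block is by exactly $2^s$. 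Hence applying $\mathrm{AW}$ once more splits this stratum precisely along the hyperplanes $2^s z_i = 1/2$, i.e. $z_i = (1/2)^{s+1}$. This shows that passing from $\mathrm{AW}^{\circ s}$ to $\mathrm{AW}^{\circ (s+1)}$ refines the subdivision exactly by adding the new batch of hyperplanes prescribed by the proposition.

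To conclude I would check the converse inclusion: any stratum cut out by the inequalities $z_{i_k} \geq (1/2)^k \geq z_{i_k + 1}$ for $1 \leq k \leq s$ (necessarily with $i_1 \leq \cdots \leq i_s$ because the sequence $z_1 \geq \cdots \geq z_n$ is decreasing) is exactly the stratum of $\mathrm{AW}^{\circ s}$ corresponding to the tuple $(i_1,\dots,i_s)$. This is a direct bookkeeping argument once the explicit formula above is established, since the stratification of $\mathrm{AW}^{\circ s}$ is parametrized by precisely such nondecreasing $s$-tuples---these being the same sequences that index iterated applications of the Alexander-Whitney coproduct, and coincide with overlapping $(s{+}1)$-partitions of $\Delta^n$ as discussed in subsection~\ref{alg:sss:overlapping-part}.

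The main technical obstacle is the explicit form of the last factor in the inductive step: one needs to verify carefully that iterating $\mathrm{AW}$ on the right does not introduce spurious hyperplanes (such as $z_i = 3/4$ coming from subdividing the first factor), which is guaranteed by the right-iteration convention, and that the $2^s$ scaling factor is exact on each stratum so that the new cuts land precisely at $(1/2)^{s+1}$ in the original coordinates rather than at some shifted value. Everything else is straightforward polytopal combinatorics.
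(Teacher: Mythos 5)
Your proposal is correct, and it fills in a verification that the paper itself leaves implicit: the paper states this proposition without a written proof, treating it as a direct consequence of the explicit formula for $\mathrm{AW}$, so your induction on $s$ along the right-iteration $\mathrm{AW}^{\circ (s+1)} = (\ide^{\times s} \times \mathrm{AW}) \circ \mathrm{AW}^{\circ s}$ is exactly the check the paper is relying on. One small correction to your key computation: on the stratum $C_{i_1,\dots,i_s}$ the last factor of $\mathrm{AW}^{\circ s}$ is precisely $(1,\dots,1,\,2^s z_{i_s+1},\dots,2^s z_n)$ with \emph{no} subtracted constants, i.e.\ your $c_j$ are all zero. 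The affine shifts $2w_j - 1$ only ever occur in the \emph{first} output of $\mathrm{AW}$, and under the right-iteration convention that factor is never subdivided again, so no constants propagate into the factor that gets cut at the next step. This is not a cosmetic point: if the $c_j$ could be nonzero, the new cuts would sit at $z_i = (1/2)^{s+1} + c_i/2^s$ rather than at $(1/2)^{s+1}$, and the statement would fail; you correctly flag this as the technical item to verify, and the verification goes through as above. With that settled, your converse bookkeeping is also fine: the hyperplanes $z_i = (1/2)^{s+1}$ with $i \leqslant i_s$ miss the interior of the stratum (there $z_i \geqslant z_{i_s} > (1/2)^s$), so cutting by \emph{all} hyperplanes $z_i = (1/2)^k$, $1 \leqslant k \leqslant s$, produces exactly the strata indexed by nondecreasing tuples $i_1 \leqslant \cdots \leqslant i_s$, matching the $\mathrm{AW}^{\circ s}$-strata $C_{i_1,\dots,i_s}$ and hence the overlapping $(s+1)$-partitions.
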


\begin{figure}
\includegraphics[scale=0.25]{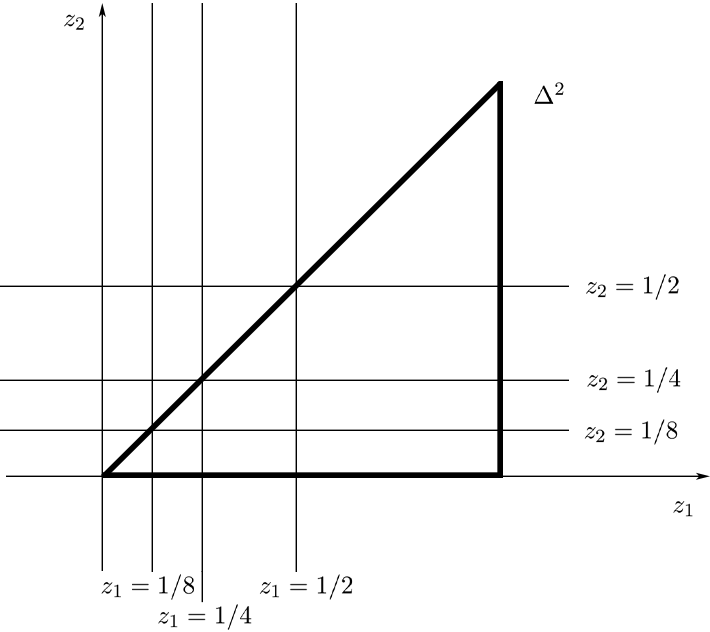}
\caption{The first three subdivisions of $\Delta^2$} \label{alg:fig:first-three-subd-delta-deux}
\end{figure}

\noindent The first three subdivisions of $\Delta^2$ are represented in figure~\ref{alg:fig:first-three-subd-delta-deux}. Note that a different choice for $\mathrm{AW}^{\circ s}$, for instance $\mathrm{AW}^{\circ 2} = (\mathrm{AW} \times \ide) \circ \mathrm{AW}$, would have yielded a different subdivision of $\Delta^n$. Choices have to be made, because $\mathrm{AW}$ is not coassociative.

The $n$-dimensional cells of $\Delta^n$ endowed with its $\mathrm{AW}^{\circ s}$-subdivision are then defined by inequalities
\[  \cdots \geqslant z_{i_k} \geqslant (1/2)^k \geqslant z_{i_k+1} \geqslant \cdots \]
for $1 \leqslant k \leqslant s$. We write $C^{i_1,\dots,i_s}$ for such a cell. 
An explicit formula for the map $\mathrm{AW}^{\circ s} : \Delta^n \rightarrow (\Delta^n)^{\times s + 1}$ can then be computed as follows. Its projection on the $k$-th factor $\Delta^n$ of $(\Delta^n)^{\times s+1}$ restricted to $C^{i_1,\dots,i_s} \subset \Delta^n$ is
\begin{align*}
(z_1 , \dots , z_n) &\longmapsto ( 1 , \dots , 1 , 2^k z_{i_{k-1}+1} - 1 , \dots , 2^k z_{i_k} - 1 , 0 , \dots , 0 ) \ \text{for $1 \leqslant k \leqslant s$,} \\
(z_1 , \dots , z_n) &\longmapsto ( 1 , \dots , 1 , 2^s z_{i_s+1} , \dots , 2^s z_n ) \ \text{for $k=s+1$.}
\end{align*}
This explicit formula for the map $\mathrm{AW}^{\circ s}$ implies the following proposition :
  
\begin{proposition} \label{alg:prop:cell-ci1ik}
The map $\mathrm{AW}^{\circ s}$ sends the cell $C^{i_1,\dots,i_s} \subset \Delta^n$ homeomorphically to the face
\[ [0 < \cdots < i_1 ] \times [i_1 < \cdots < i_2] \times \cdots \times [ i_s < \cdots < n] \subset (\Delta^n)^{\times s+1} \ . \]  
\end{proposition}

Hence not only does the map $\mathrm{AW}^{\circ s}$ determine a subdivision of the simplex $\Delta^n$ but it also determines a labeling of its strata. They are labeled by the term of $(\pmb{\Delta}^n)^{\otimes s+1}$ which they determine after taking the image of $\mathrm{AW}^{\circ s}$ under the functor $C_{-*}^{cell}$. Proposition~\ref{alg:prop:cell-ci1ik} implies that the top-dimensional strata defined by the inequalities
\[  \cdots > z_{i_k} > (1/2)^k > z_{i_k+1} > \cdots \]
are labeled by
\[ [0 < \cdots < i_1 ] \otimes [i_1 < \cdots < i_2] \otimes \cdots \otimes [ i_s < \cdots < n] \ . \]

\begin{proposition} \label{alg:prop:descri-subdivi-delta}
\begin{enumerate}[label=(\roman*)]
\item The codimension $i$ strata of the $\mathrm{AW}^{\circ s}$-subdivision of $\Delta^n$ lying in the interior of $\Delta^n$ are in one-to-one correspondence with the $(s-i)$-overlapping $(s+1)$-partitions of $\Delta^n$. More generally, given a face $I \subset \Delta^n$, the strata of the $\mathrm{AW}^{\circ s}$-subdivision of $\Delta^n$ which are lying in the interior of $I$ and have codimension $i$ w.r.t. the dimension of $I$ are in one-to-one correspondence with the $(s-i)$-overlapping $(s+1)$-partitions of $I$.
\item Consider a codimension $i$ stratum of the $\mathrm{AW}^{\circ s}$-subdivision of $\Delta^n$ lying in the interior of $\Delta^n$. This stratum is defined by $s-i$ inequalities of the form
\[  \cdots > z_{i_k} > (1/2)^k > z_{i_k+1} > \cdots \ ,\]
and $i$ equalities of the form 
\[ \cdots > z_{i_k} = (1/2)^k > z_{i_k+1} > \cdots \ . \]
The labeling of this stratum can then be obtained under the following simple transformation rules~:
\begin{align*}
\cdots > z_{i_k} > (1/2)^k > z_{i_k+1} > \cdots &\longmapsto \cdots < i_k] \otimes [i_k < \cdots \ , \\
\cdots > z_{i_k} = (1/2)^k > z_{i_k+1} > \cdots &\longmapsto \cdots < i_k-1] \otimes [i_k < \cdots \ .
\end{align*}
\end{enumerate}
\end{proposition}

\noindent This recipe easily carries over to the case of strata lying in the boundary of $\Delta^n$. The $\mathrm{AW}$ and $\mathrm{AW}^{\circ 2}$ subdivisions of $\Delta^2$ are represented in figure~\ref{alg:fig:AW-AW-deux-delta-deux}.

\begin{figure}[h]
    \centering
    \begin{subfigure}{0.45\textwidth}
    \centering
       \includegraphics[scale=0.2]{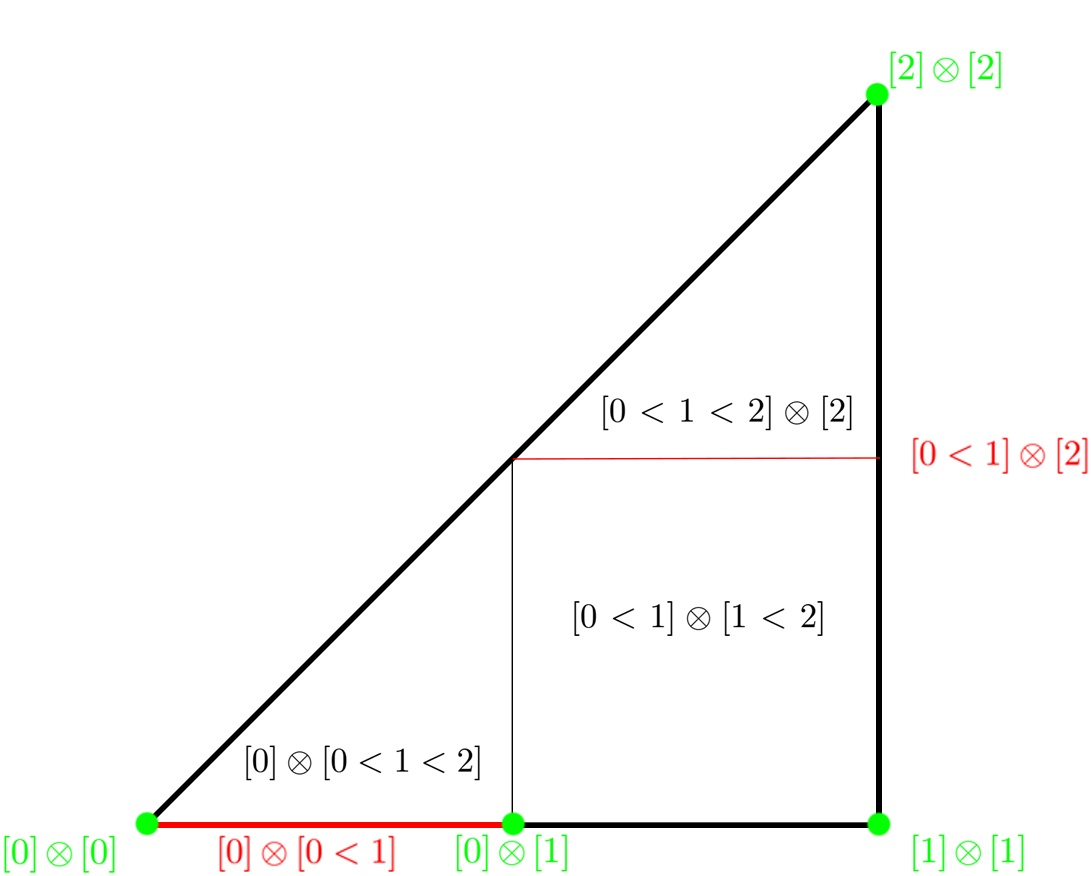}
    \end{subfigure} ~
    \begin{subfigure}{0.45\textwidth}
    \centering
        \includegraphics[scale=0.2]{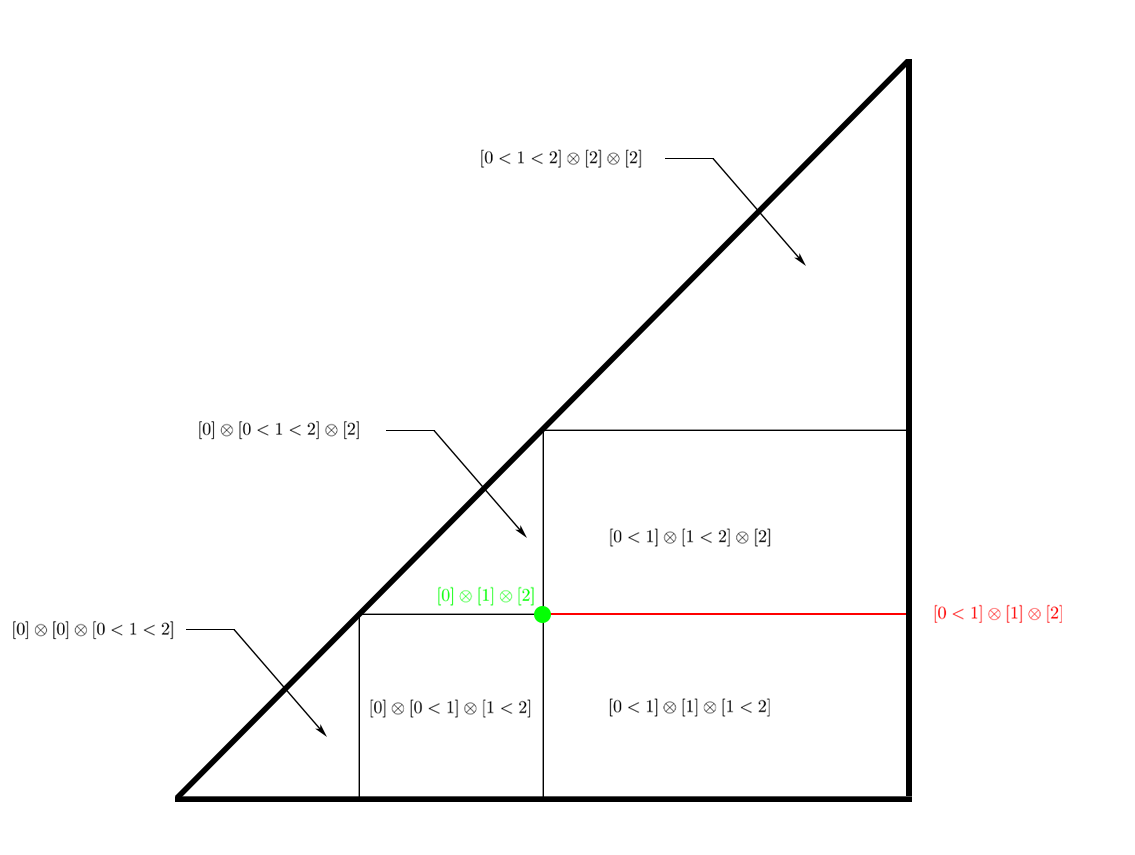}
    \end{subfigure} 
    \caption{The $\mathrm{AW}$ and $\mathrm{AW}^{\circ 2}$ subdivisions of $\Delta^2$} \label{alg:fig:AW-AW-deux-delta-deux}
\end{figure}

\subsubsection{The $\mathrm{AW}_{\pmb{a}}$-subdivisions of $\Delta^n$}

Let now $\pmb{a}$ be a sequence of real numbers $1 > a_1 > \cdots > a_{s} > 0$, where we denote $|\pmb{a}| := s$ the \emph{length of $\pmb{a}$}. We call such a sequence a \emph{dividing sequence}. We define \emph{the $\mathrm{AW}_{\pmb{a}}$-subdivision of $\Delta^n$} to be the subdivision obtained after dividing $\Delta^n$ by all hyperplanes $z_i = a_k$, for $1 \leqslant i \leqslant n$ and $1 \leqslant k \leqslant |\pmb{a}|$. We denote $\Delta^n_{\pmb{a}}$ for $\Delta^n$ endowed with its $\mathrm{AW}_{\pmb{a}}$-subdivision. The cells $C^{i_1,\dots,i_s}_{\pmb{a}}$ of $\Delta^n_{\pmb{a}}$ are again defined by the inequalities
\[  \cdots \geqslant z_{i_k} \geqslant a_k \geqslant z_{i_k+1} \geqslant \cdots \ ,\]
for $1 \leqslant k \leqslant |\pmb{a}|$. We define moreover the map $\mathrm{AW}_{\pmb{a}} : \Delta^n \rightarrow (\Delta^n)^{\times |\pmb{a}| + 1}$ as follows. Its projection on the $k$-th factor $\Delta^n$ of $(\Delta^n)^{\times |\pmb{a}|+1}$ restricted to the cell $C^{i_1,\dots,i_s}_{\pmb{a}} \subset \Delta^n_{\pmb{a}}$ is defined by the formula 
\begin{align*}
(z_1 , \dots , z_n) &\longmapsto ( 1 , \dots , 1 , \frac{(z_{i_{k-1}} - a_k)}{a_{k-1}-a_k} , \dots , \frac{(z_{i_{k}} - a_k)}{a_{k-1}-a_k} , 0 , \dots , 0 ) \ \text{for $1 \leqslant k \leqslant |\pmb{a}|$,}  \\
(z_1 , \dots , z_n) &\longmapsto ( 1 , \dots , 1 , z_{i_{|\pmb{a}|}+1}/a_{|\pmb{a}|} , \dots , z_n/a_{|\pmb{a}|} ) \ \text{for $k=|\pmb{a}|+1$,} 
\end{align*}
where we have set $a_0:= 1$.
We check in particular that for $\pmb{a} = 1/2 > \cdots > (1/2)^s$ we have $\mathrm{AW}_{\pmb{a}} := \mathrm{AW}^{\circ s}$. The maps $\mathrm{AW}_{\pmb{a}}$ are to be understood as generalizations of the maps $ \mathrm{AW}^{\circ s}$, that still realize the $|\pmb{a}|$-th iterate of the Alexander-Whitney coproduct under the functor $C_{-*}^{cell}$. In particular, the analogous statements of Propositions~\ref{alg:prop:pas-coassoc}, \ref{alg:prop:cell-ci1ik} and \ref{alg:prop:descri-subdivi-delta} still hold for the maps $\mathrm{AW}_{\pmb{a}}$.

We can now state a coassociativity-like property that the maps $\mathrm{AW}_{\pmb{a}}$ satisfy, which did not hold when only using the map $\mathrm{AW}$ as proven in Proposition~\ref{alg:prop:pas-coassoc}. For two dividing sequences $\pmb{a}$ and $\pmb{b}$, we write $\pmb{a} > \pmb{b}$ if $a_{|{\pmb{a}}|} > b_1$, and we then denote $\pmb{a} \cdot \pmb{b}$ the concatenation $a_1 > \cdots > a_{|{\pmb{a}}|} > b_1 > \cdots > b_{|{\pmb{b}}|}$.
\begin{proposition} \label{alg:prop:coassoc-a-b-c}
Let $\pmb{a}$, $\pmb{b}$ and $\pmb{c}$ be three dividing sequences such that $\pmb{a}>\pmb{b}>\pmb{c}$. Then, 
\[ \mathrm{AW}_{\pmb{a} \cdot \pmb{b} \cdot \pmb{c}} = ( \ide^{\times |\pmb{a}|} \times \mathrm{AW}_{{\pmb{b}'}} \times \ide^{\times |\pmb{c}|} ) \circ \mathrm{AW}_{{\pmb{a}} \cdot {\pmb{c}}} \ , \]
where $\pmb{b}'$ is the dividing sequence $1 > (b_1 - c_1)/(a_{\pmb{a}} - c_1) > \cdots > (b_{\pmb{b}} - c_1)/(a_{\pmb{a}} - c_1) > 0 $ which is obtained from $\pmb{b}$ by shifting by $c_1$ and then rescaling by $1/(a_{\pmb{a}} - c_1)$.
\end{proposition}
\noindent This proposition will be used in subsection \ref{geo:sss:below-break-bound} of part \ref{p:geo}. We illustrate it on the simplex $\Delta^2$ in figure~\ref{alg:fig:subdivisions-coassoci}, where $\pmb{a} := 6/7,5/7$, $\pmb{b}:= 4/7,3/7$ and $\pmb{c}:= 2/7,1/7$, which implies that $\pmb{b}':= 2/3,1/3$. On the left is represented the $\mathrm{AW}_{\pmb{a} \cdot \pmb{b} \cdot \pmb{c}}$-subdivision of $\Delta^2$, in the middle its  $\mathrm{AW}_{{\pmb{a}} \cdot {\pmb{c}}}$-subdivision and on the right the subdivision induced by the map $( \ide^{\times |\pmb{a}|} \times \mathrm{AW}_{{\pmb{b}'}} \times \ide^{\times |\pmb{c}|} ) \circ \mathrm{AW}_{{\pmb{a}} \cdot {\pmb{c}}}$, where the red lines represent the subdivision induced by $\mathrm{AW}_{{\pmb{b}'}}$. The left and right subdivisions then coincide.
\begin{figure}[h] 
    \centering
    \begin{subfigure}{0.3\textwidth}
    \centering
       \includegraphics[scale=0.2]{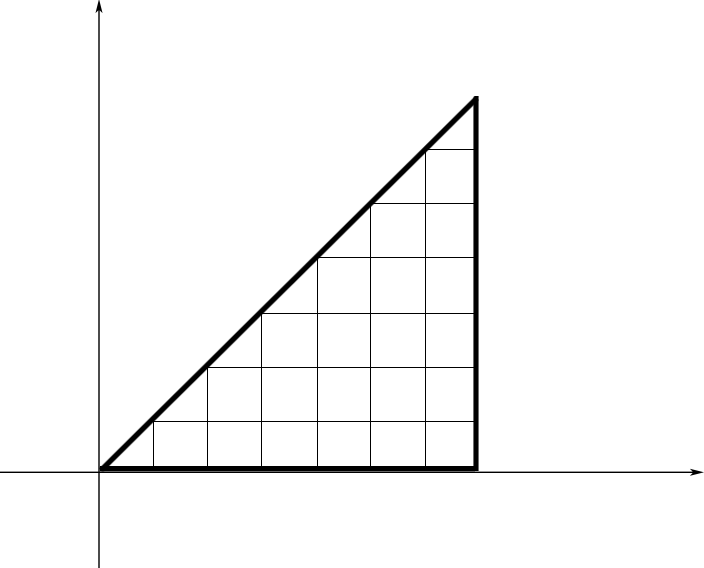}
    \end{subfigure} ~
    \begin{subfigure}{0.3\textwidth}
    \centering
        \includegraphics[scale=0.2]{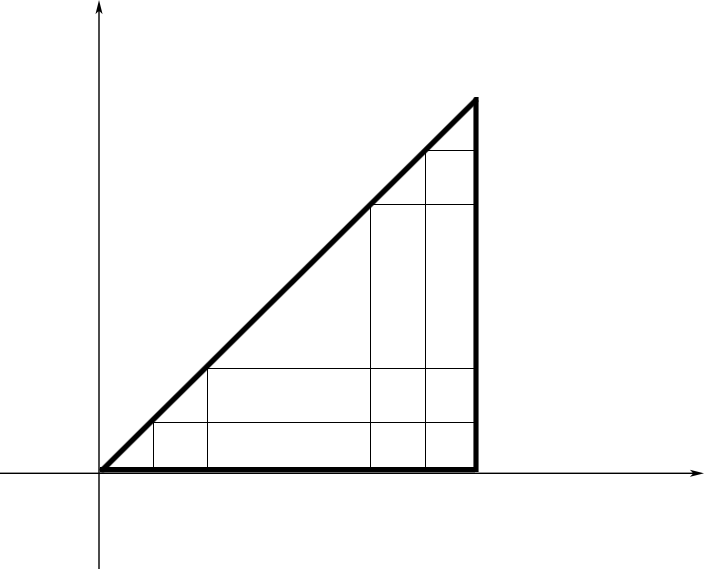}
    \end{subfigure} ~
    \begin{subfigure}{0.3\textwidth}
    \centering
        \includegraphics[scale=0.2]{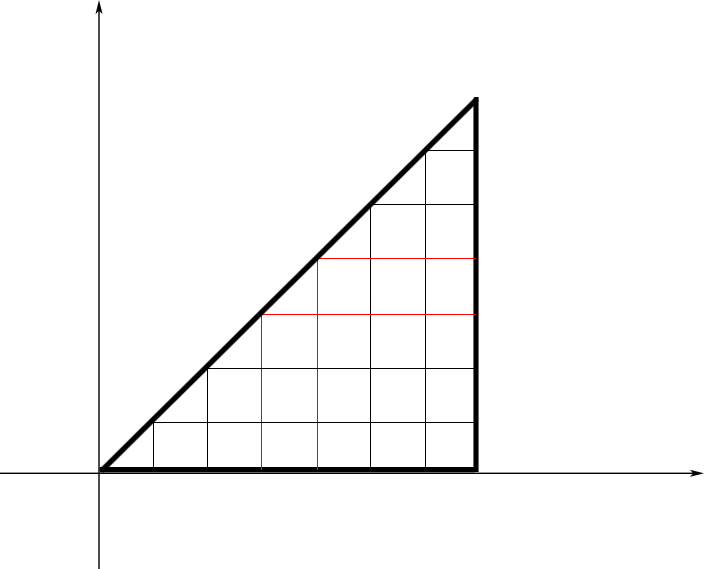}
    \end{subfigure}
    \caption{} \label{alg:fig:subdivisions-coassoci}
\end{figure}

\subsection{The $n$-multiplihedra $n-J_m$} \label{alg:ss:poly-n-Jm}

\subsubsection{The multiplihedra} \label{alg:sss:multiplihedra}

The polytopes encoding \Ainf -morphisms between \Ainf -algebras are the multiplihedra $J_m, \ m\geqslant 1$ : they form a collection $\{ J_m \}_{m \geqslant 1}$ which is a $( \{ K_m \} , \{ K_m \})$-operadic bimodule whose image under the functor $C_{-*}^{cell}$ is the $(\Ainf , \Ainf )$-operadic bimodule \infmor . The faces of codimension $i$ of $J_m$ are labeled by all possible broken two-colored trees obtained by blowing-up $i$ times the two-colored $m$-corolla. See for instance~\cite{mazuir-I} for pictures of the multiplihedra $J_1$, $J_2$ and $J_3$.
The multiplihedra $J_m$ can moreover be realized as the compactifications of moduli spaces of stable two-colored metric ribbon trees $\overline{\mathcal{CT}}_m$, where each $\mathcal{CT}_m$ is seen as the unique $(m-1)$-dimensional stratum of $\overline{\mathcal{CT}}_m$. 

\subsubsection{The $n$-multiplihedra $n-J_m$} \label{alg:sss:n-multiplihedra}

Consider the polytope $\Delta^n \times J_m$ for $n \geqslant 0$ and $m \geqslant 1$. It is the most natural candidate for a polytope encoding $n$-morphisms between \Ainf -algebras. However, it does not fulfill that property as it is. Indeed, its faces correspond to the data of a face of $\Delta^n$, that is of some $I \subset \Delta^n$, and of a face of $J_m$, that is of a broken two-colored tree obtained by blowing-up several times the two-colored $m$-corolla. This labeling is too coarse, as it does not contain the following trees, that appear in the \Ainf -equations for $n$-morphisms
\[
 \eqainfnmorphtrois \ .
\]

We resolve this issue by constructing a refined polytopal subdivision of $\Delta^n \times J_m$. Consider a face $F$ of $J_m$ labeled by a broken two-colored tree $t_{br,c}$ such that exactly $s$ unbroken two-colored trees $t_c^i$ for $r=1,\dots,s$ appear in $t_{br,c}$. We see the trees $t_c^r$ as ordered from left to right in $t_{br,c}$, write $i_r$ for the number of incoming edges of $t_c^{br}$ located above $t_c^r$ in $t_{br,c}$, and recall that $t_{br,c}$ has arity $m$. We have in particular that $i_1 + \cdots + i_s = m$. Define the dividing sequence $\pmb{a}_{t_{br,c}}$ of length $s-1$ as
\[ \frac{i_1 + \cdots + i_{s-1}}{m} > \frac{i_1 + \cdots + i_{s-2}}{m} > \cdots > \frac{i_1}{m} \ . \]
We then refine the polytopal subdivision of $\Delta^n \times F$ into $\Delta^n_{\pmb{a}_{t_{br,c}}} \times F$, where $\Delta^n_{\pmb{a}_{t_{br,c}}}$ denotes $\Delta^n$ endowed with its $\mathrm{AW}_{\pmb{a}_{t_{br,c}}}$-subdivision. This refinement process is moreover consistent : for two faces $F' \subset F$, the subdivision on $\Delta^n$ defined by the face $F'$ is a refinement of the subdivision on $\Delta^n$ defined by the face $F$.

\begin{definition} \label{alg:def:n-multipl}
The \emph{$n$-multiplihedra} are defined to be the polytopes $\Delta^n \times J_m$ endowed with the previous polytopal subdivision. We denote them $n-J_m$.
\end{definition}

\noindent See some examples in figures~\ref{alg:fig:delta-un-j-deux},~\ref{alg:fig:delta-deux-j-deux}~and~\ref{alg:fig:delta-un-j-trois}. We illustrate definition \ref{alg:def:n-multipl} with the construction of the $2$-multiplihedron $\Delta^2 \times J_2$ depicted on figure~\ref{alg:fig:delta-deux-j-deux}. The polytope $\Delta^2$ has one 2-dimensional face labeled by $[0<1<2]$ and three 1-dimensional faces labeled by $[0<1]$, $[1<2]$ and $[0<2]$. The polytope $J_2$ has one 1-dimensional face labeled by \arbreopdeuxmorph\ and has two 0-dimensional faces labeled by \arbreopdeuxunmorphbis\ and \arbreopdeuxdeuxmorphbis ~. Consider now the product polytope $\Delta^2 \times J_2$. Its has one unique 3-dimensional face labeled by $[0<1<2] \times \arbreopdeuxmorph$ and five 2-dimensional faces. The faces $[0<1] \times \arbreopdeuxmorph$, $[1<2] \times \arbreopdeuxmorph$, $[0<2] \times \arbreopdeuxmorph$ and $[0<1<2] \times \arbreopdeuxunmorphbis$ that are left unchanged under the construction of the previous paragraph, as they each feature only 1 unbroken two-colored tree. They respectively correspond to the faces A, B, F and G on figure~\ref{alg:fig:delta-deux-j-deux}. The fifth face is the face $[0<1<2] \times \arbreopdeuxdeuxmorphbis$. It features 2 unbroken two-colored trees : we thus have to refine the polytopal subdivision of $\Delta^2 \times \arbreopdeuxdeuxmorphbis$ into $\Delta^2_\mathrm{AW} \times \arbreopdeuxdeuxmorphbis$. This refinement produces the strata $([0] \otimes [0<1<2]) \times \arbreopdeuxdeuxmorphbis$, $([0<1] \otimes [1<2]) \times \arbreopdeuxdeuxmorphbis$ and $([0<1<2] \otimes [2]) \times \arbreopdeuxdeuxmorphbis$, which respectively correspond to the labels C, D and E on figure~\ref{alg:fig:delta-deux-j-deux}. This concludes the construction of the $2$-multiplihedron $\Delta^2 \times J_2$.

\begin{figure}[h]
\includegraphics[scale=0.03]{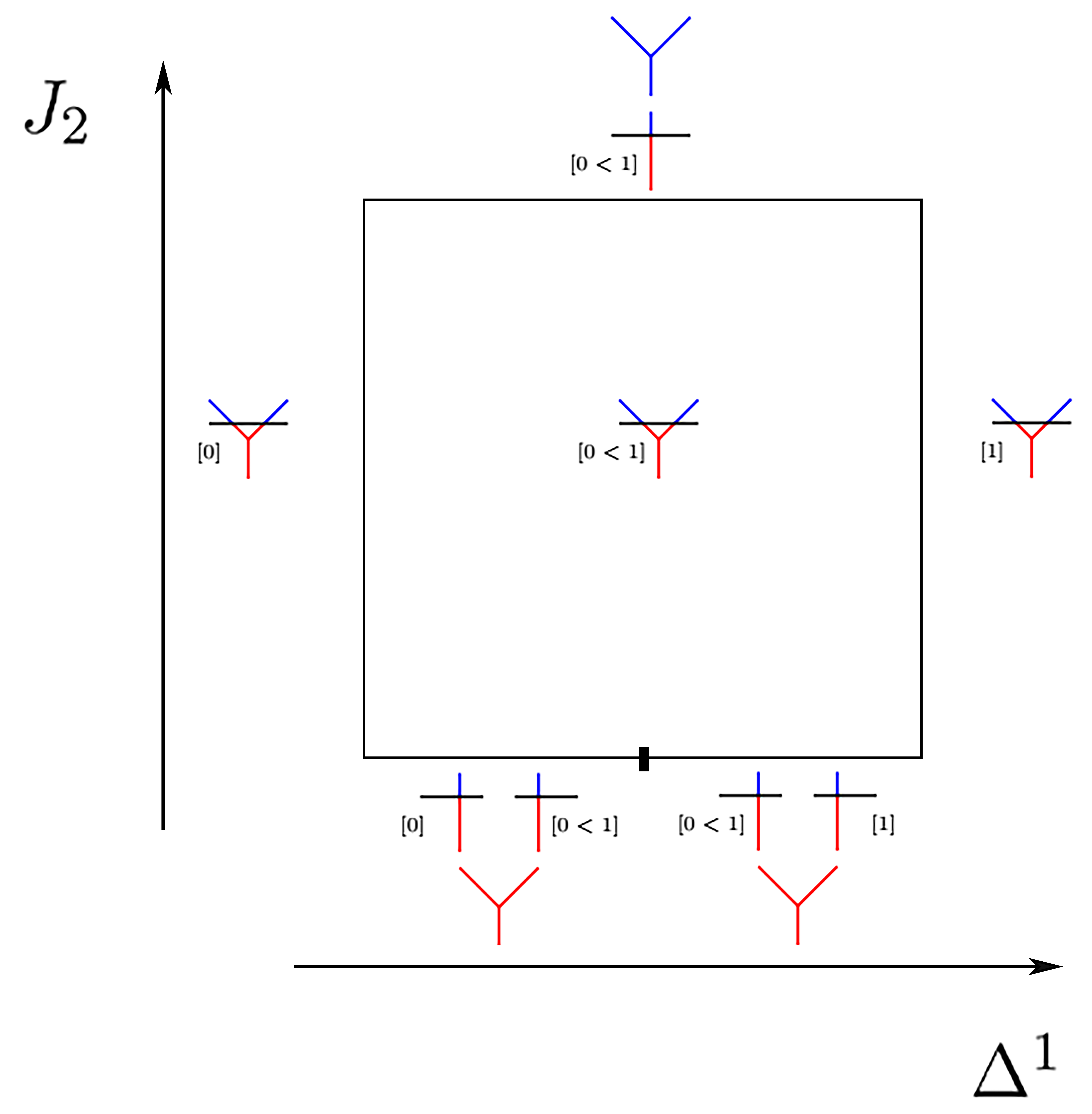}
\caption{The $1$-multiplihedron $\Delta^1 \times J_2$} \label{alg:fig:delta-un-j-deux}
\end{figure} 

\begin{figure}[h]
\includegraphics[scale=0.1]{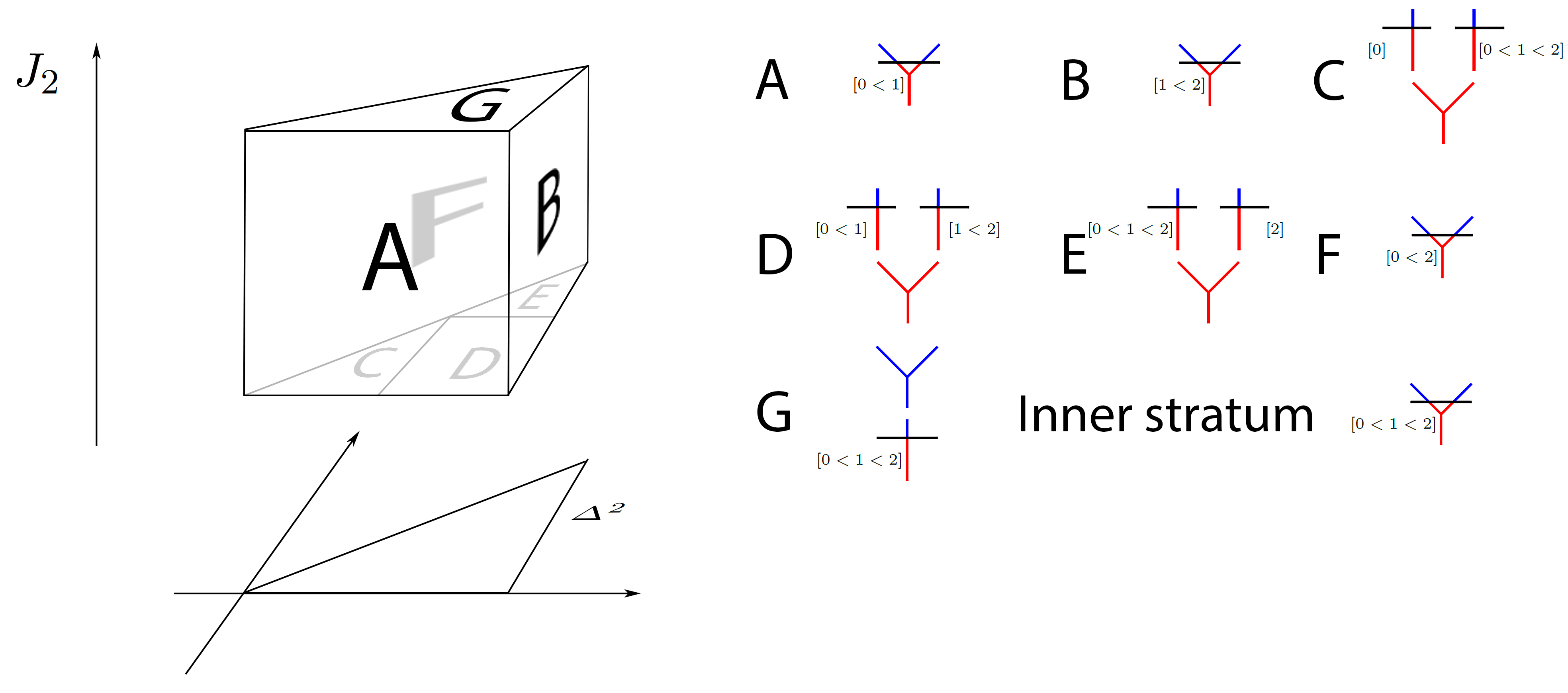}
\caption{The $2$-multiplihedron $\Delta^2 \times J_2$} \label{alg:fig:delta-deux-j-deux}
\end{figure}

\begin{figure}[h]
\includegraphics[scale=0.13]{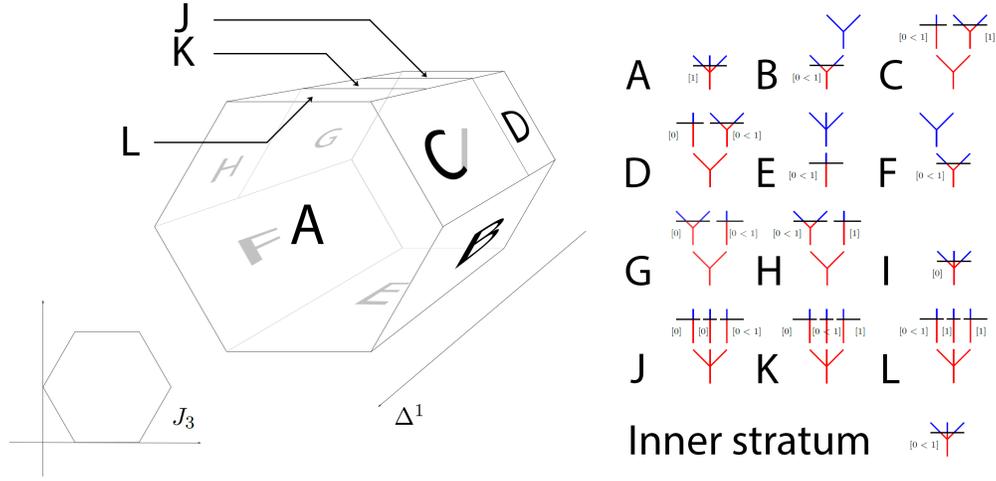}
\caption{The $1$-multiplihedron $\Delta^1 \times J_3$} \label{alg:fig:delta-un-j-trois}
\end{figure}

\subsubsection{The $n$-multiplihedra encode $n-\Ainf$-morphisms} \label{alg:sss:n-multiplihedra-encode}

Now in which sense do these polytopes encode $n-\Ainf$-morphisms ? Note first that the collection $\{ n -J_m \}_{m \geqslant 1}$ is not a $( \{ K_m \} , \{ K_m \})$-operadic bimodule ! Indeed, a $( \{ K_m \} , \{ K_m \})$-operadic bimodule structure would for instance make appear a stratum labeled by
\[
 \eqainfnmorphquatre \ ,
\]
where $I_1 \cup \dots \cup I_s = \Delta^n$ is an overlapping partition of $\Delta^n$. This stratum does not appear in the polytopal subdivision of $n -J_m$.
Hence these polytopes do not recover the $(\Ainf , \Ainf )$-operadic bimodule \infmorn .

However, the polytopal subdivision of $n-J_m$ still contains enough combinatorics to recover a $n$-morphism. This polytope has a unique $(n + m-1)$-dimensional cell $[n-J_m]$, which is labeled by \ainfnmorphundeltan . By construction :

\begin{proposition} \label{alg:prop:n-multipl}
The boundary of the cell $[n-J_m]$ is given by 
\[ \resizebox{\hsize}{!}{$\displaystyle{\partial^{sing} [n-J_m] \cup \bigcup_{\substack{h+k=m+1 \\ 1 \leqslant i \leqslant k \\ h \geqslant 2}} [n-J_k] \times_i [K_h] \cup \bigcup_{\substack{ i_1 + \dots + i_s = m \\ I_1 \cup \dots \cup I_s = \Delta^n \\ s \geqslant 2}} [K_s] \times [\mathrm{dim}(I_1) - J_{i_1}] \times \cdots \times [\mathrm{dim}(I_s) - J_{i_s}] \ ,}$} \]
where $I_1 \cup \dots \cup I_s = \Delta^n$ is an overlapping partition of $\Delta^n$.
\end{proposition}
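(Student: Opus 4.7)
The plan is to unravel the polytopal subdivision defining $n-J_m$ and track, cell by cell, the boundary of the unique top-dimensional cell. Since the top face of $J_m$ is labeled by the two-colored $m$-corolla, which is a single unbroken two-colored tree ($s=1$), the refinement procedure of Definition~\ref{alg:def:n-multipl} leaves the interior of $\Delta^n \times J_m$ untouched. Thus $[n-J_m]$ coincides with the open interior of the product polytope $\Delta^n \times J_m$, and its boundary splits as the usual product boundary
\[ \partial(\Delta^n \times J_m) = (\partial \Delta^n \times J_m) \cup (\Delta^n \times \partial J_m) \ . \]
The contribution $\partial \Delta^n \times J_m$ is, by construction of the $\mathrm{AW}^{\circ \bullet}$-subdivision on subfaces of $\Delta^n$, exactly what the singular term $\partial^{sing}[n-J_m]$ is meant to label.

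For the second piece, I would invoke the $(\{K_m\},\{K_m\})$-operadic bimodule structure on the multiplihedra recalled in subsection~\ref{alg:sss:multiplihedra}, which decomposes $\partial J_m$ into codimension-$1$ strata of two types: the \emph{inner} strata $J_k \times_i K_h$ (with $h+k=m+1$, $1 \leqslant i \leqslant k$, $h\geqslant 2$), labeled by broken two-colored trees containing a single unbroken two-colored subtree with a red associahedron grafted at a red vertex; and the \emph{outer} strata $K_s \circ (J_{i_1},\dots,J_{i_s})$, labeled by $s$ two-colored corollas grafted under a blue $s$-corolla. The number of unbroken two-colored trees in each stratum is what controls whether the refinement of Definition~\ref{alg:def:n-multipl} kicks in, so the two types must be handled separately.

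For the inner strata $F = J_k \times_i K_h$, the count is $s=1$, so the refinement is trivial and $\Delta^n \times F$ contributes the cell $[n-J_k]\times_i [K_h]$ directly. For the outer strata $F = K_s \circ (J_{i_1},\dots,J_{i_s})$, the count is exactly $s \geqslant 2$, so $\Delta^n \times F$ is refined into $\Delta^n_{\mathrm{AW}^{\circ(s-1)}} \times F$. By the labeling result of subsection~\ref{alg:sss:label-AW-subd}, the top-dimensional strata of $\Delta^n_{\mathrm{AW}^{\circ(s-1)}}$ are in bijection with overlapping $s$-partitions $I_1 \cup \dots \cup I_s = \Delta^n$, and the $\ell$-th factor $I_\ell$ is naturally paired with the $\ell$-th two-colored corolla $J_{i_\ell}$ in $F$. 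Reorganizing the resulting product $K_s \times I_1 \times J_{i_1} \times \cdots \times I_s \times J_{i_s}$ yields precisely the stratum $[K_s]\times [\mathrm{dim}(I_1)-J_{i_1}] \times \cdots \times [\mathrm{dim}(I_s)-J_{i_s}]$.

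The main technical obstacle is the last pairing argument: one must check carefully that the bijection between top cells of $\Delta^n_{\mathrm{AW}^{\circ(s-1)}}$ and overlapping $s$-partitions is compatible with the grafting pattern of $F = K_s \circ (J_{i_1},\dots,J_{i_s})$, i.e. that the $\ell$-th overlapping piece $I_\ell$ really pairs with the $\ell$-th grafted two-colored corolla and not with some other enumeration. This is ultimately a bookkeeping verification using the description of the $\mathrm{AW}^{\circ(s-1)}$-strata by the inequality recipe of subsection~\ref{alg:sss:label-AW-subd}, together with a dimension check that $\sum_\ell \mathrm{dim}(I_\ell) = n$ matches $\mathrm{dim}[n-J_m] - 1 - \mathrm{dim}[K_s] - \sum_\ell(i_\ell - 1)$. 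Once this is in place, aggregating the three contributions yields exactly the decomposition claimed in the proposition.
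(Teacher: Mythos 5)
Your proposal is correct and follows essentially the same route as the paper: the paper treats the statement as holding "by construction" of Definition~\ref{alg:def:n-multipl}, splitting $\partial(\Delta^n \times J_m)$ into $\partial\Delta^n \times J_m$ and $\Delta^n \times \partial J_m$, leaving the inner strata $J_k \times_i K_h$ unrefined and refining the outer strata $K_s \circ (J_{i_1},\dots,J_{i_s})$ by $\mathrm{AW}^{\circ(s-1)}$, exactly as you do (the identification $\Delta^n_{\mathrm{AW}^{\circ(s-1)}} = \bigcup_{I_1\cup\cdots\cup I_s=\Delta^n} I_1\times\cdots\times I_s$ and the factor rearrangement you flag as the technical point are carried out in the paper's orientation computation in subsection~\ref{alg:sss:codim-one-boundary-n-Jm}).
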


Details on the orientation of the top dimensional strata in this boundary are worked out in section~\ref{alg:ss:signs-poly-n-Jm}.
Note moreover that the collection $\{ n -J_m \}_{n \geqslant 0}$ is a cosimplicial polytope. This implies that the image of each cell $[\mathrm{dim}(I)-J_m]$ under the functor $C^{cell}_{-*}$ yields an element whose boundary is exactly given by the \Ainf -equations for $n$-morphisms. It is in that sense that the $n-J_m$ encode $n$-morphisms.
The previous boundary formula also implies that the $n-J_m$ will constitute a good parametrizing space for constructing moduli spaces in symplectic topology, whose count should give rise to $n$-morphisms between Floer complexes.

\section{$n-\Omega B As$-morphisms} \label{alg:s:n-ombas-morph}

The multiplihedra $J_m$ can be realized by compactifying the moduli spaces of stable two-colored metric ribbon trees $\overline{\mathcal{CT}}_m$ and come with two cell decompositions. The first one consists in considering each $\mathcal{CT}_m$ as a $(m-1)$-dimensional stratum and encodes the operadic bimodule \infmor . The second one is obtained by considering the stratification of the moduli spaces $\mathcal{CT}_m$ by two-colored stable ribbon tree types, and encodes the operadic bimodule $\Omega B As - \mathrm{Morph}$. The $\ombas$-cell decomposition is moreover a refinement of the \Ainf -cell decomposition. As a consequence, there exists a morphism of operadic bimodules $\infmor \rightarrow \Omega B As - \mathrm{Morph}$, as shown in~\cite{mazuir-I}.
It is hence sufficient to construct an $\Omega B As$-morphism between \ombas -algebras to then naturally get an \Ainf -morphism between \Ainf -algebras.

We define in this section \emph{$n-\Omega B As$-morphisms} between \ombas -algebras. Building on the viewpoint of the previous paragraph, we then explain how, by refining the cell decomposition of the polytope $n-J_m$, we get a new cell decomposition encoding $n-\Omega B As$-morphisms. This construction yields in particular a morphism of operadic bimodules $\infmorn \rightarrow n-\Omega B As - \mathrm{Morph}$. All sign computations are moreover postponed to section~\ref{alg:ss:signs-n-ombas}.

\subsection{$n-\Omega B As$-morphisms} \label{alg:ss:n-ombas-morph}

\subsubsection{Recollections on $\Omega B As$-morphisms} \label{alg:sss:recoll-ombas-morph}

$\Omega B As$-morphisms are the morphisms between $\Omega B As$-algebras encoded by the quasi-free operadic bimodule generated by all two-colored stable ribbon trees
\[ \Omega B As - \mathrm{Morph} := \mathcal{F}^{\Omega B As, \Omega B As}( \arbreopunmorph , \arbrebicoloreL , \arbrebicoloreM , \arbrebicoloreN , \cdots , SCRT_n,\cdots) \ .  \]
A two-colored stable ribbon tree $t_g$ whose underlying stable ribbon tree $t$ has $e(t)$ inner edges, and such that its gauge crosses $j$ vertices of $t$, has degree $|t_g| :=  j-1-e(t)$.

The differential of a two-colored stable ribbon tree $t_g$ is given by the signed sum of all two-colored stable ribbon trees obtained from $t_g$ under the rule prescribed by the top dimensional strata in the boundary of $\overline{\mathcal{CT}}_n(t_g)$. : the gauge moves to cross exactly one additional vertex of the underlying stable ribbon tree (gauge-vertex) ; an internal edge located above the gauge or intersecting it breaks or, when the gauge is below the root, the outgoing edge breaks between the gauge and the root (above-break) ; edges (internal or incoming) that are possibly intersecting the gauge, break below it, such that there is exactly one edge breaking in each non-self crossing path from an incoming edge to the root (below-break) ; an internal edge that does not intersect the gauge collapses (int-collapse). 

\subsubsection{$n-\Omega B As$-morphisms} \label{alg:sss:n-ombas-morph}

\begin{definition} \label{alg:def:op-bimod-ombas-n}
\emph{$n-\Omega B As$-morphisms} are the higher morphisms between $\Omega B As$-algebras encoded by the quasi-free operadic bimodule generated by all pairs (face $I \subset \Delta^n$ , two-colored stable ribbon tree), 
\[ n-\Omega B As - \mathrm{Morph} := \mathcal{F}^{\Omega B As, \Omega B As}( \arbreopunmorphn , \arbrebicoloreLn , \arbrebicoloreMn , \arbrebicoloreNn , \cdots , (I , SCRT_n) ,\cdots ; I \subset \Delta^n) \ .  \]
An operation $t_{I,g} := (I,t_g)$ is defined to have degree $| t_{I,g} | := |I| + |t_g|$.
The differential of $t_{I,g}$ is given by the rule prescribed by the top dimensional strata in the boundary of $\overline{\mathcal{CT}}_m(t_g)$ combined with the algebraic combinatorics of overlapping partitions, added to the simplicial differential of $I$, i.e. 
\[ \partial t_{I,g} = t_{\partial^{sing}I,g} + \pm (\partial^{\overline{\mathcal{CT}}_m} t_g)_{I} \ . \]
\end{definition}

We refer to section~\ref{alg:ss:signs-n-ombas} for a more complete definition and sign conventions. The sign computations are in particular more involved, as we did not describe an ad hoc construction analogous to the shifted bar construction as in the \Ainf\ case. We also point out that the symbol \arbrebicoloreLn\ used here is the same as the one used for the arity 2 generating operation of \infmorn . It will however be clear from the context what \arbrebicoloreLn\ stands for in the rest of this paper.
We moreover compute the differential in the following instance
\begin{align*}
 | \exarbrebicoloreA | = &-5 \ , \\
 \partial (\exarbrebicoloreA) = &\pm \exarbrebicoloreB \pm \exarbrebicoloreC \pm \exarbrebicoloreD \\ 
 &\pm \exarbrebicoloreE \pm \exarbrebicoloreF \pm \exarbrebicoloreG \pm \exarbrebicoloreH \pm \exarbrebicoloreI \\
 & \pm \exarbrebicoloreJ \pm \exarbrebicoloreK \pm \exarbrebicoloreL \ .
\end{align*}

\subsubsection{From $n-\Omega B As$-morphisms to $n-\Ainf$-morphisms} \label{alg:sss:n-ombas-to-n-ainf}

A $n-\Omega B As$-morphism between two $\Omega B As$-algebras naturally yields a $n-\Ainf$-morphism between the induced \Ainf -algebras :
\begin{proposition} \label{alg:prop:morph-op-bimod}
There exists a morphism of $(\Ainf , \Ainf )$-operadic bimodules
$\infmorn \rightarrow n-\Omega B As - \mathrm{Morph}$
given on the generating operations of \infmorn\ by
\[ f_{I,m} \longmapsto \sum_{t_g \in CBRT_m} \pm f_{I,t_g} \ , \]
where $CBRT_m$ denotes the set of two-colored binary ribbon trees of arity $m$.
\end{proposition}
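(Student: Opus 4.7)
The plan is to define the proposed map on the generators $f_{I,m}$ of \infmorn\ as stated, extend it by freeness to a morphism of $(\Ainf,\Ainf)$-operadic bimodules (which is automatic since the target is quasi-free), and then check that it is compatible with the differentials. This generalizes the morphism $\infmor \to \Omega B As - \mathrm{Morph}$ constructed in~\cite{mazuir-I} corresponding to the case $n=0$, and the strategy is to reduce the compatibility check to that case together with a new ingredient accounting for the simplicial face index $I \subset \Delta^n$ and the overlapping partitions.

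First I would recall from~\cite{mazuir-I} the three combinatorial identities underlying the $n=0$ case: summing signed binary two-colored ribbon trees and taking the $\Omega B As$-differential, the int-collapse strata glue into the $m_{i_2}$-terms on the right input, the above-break strata glue into the $f_{i_1+1+i_3}(\ide^{\otimes i_1}\otimes m_{i_2}\otimes\ide^{\otimes i_3})$ terms, and the below-break strata glue into the terms $m_s(f_{i_1}\otimes\cdots\otimes f_{i_s})$. These identities follow from $\sum_{t \in CBRT_n}\pm t = m_n$ in $\Omega B As$. For a general face $I \subset \Delta^n$, the $n-\Ainf$ differential of $f_I^{(m)}$ carries two additional contributions compared to $n=0$: the simplicial differential $\sum_j (-1)^j f^{(m)}_{\partial_j I}$, and the overlapping partition sum $\sum_{I_1\cup\cdots\cup I_s=I}\pm m_s(f^{(i_1)}_{I_1}\otimes\cdots\otimes f^{(i_s)}_{I_s})$ replacing the plain below-break sum of the $n=0$ case. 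On the $n-\Omega B As$ side, the analogous additional contributions are the simplicial differential $t_{\partial^{sing}I,g}$ and the decoration of upper subtrees by the faces $I_1,\dots,I_s$ appearing in the below-break strata of the enriched differential.

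The simplicial part commutes with the map on the nose: since $I$ is merely a passive label, applying $\sum_{t_g \in CBRT_m}\pm(\cdot)_{t_g}$ and $\partial^{sing}$ clearly commute. The int-collapse and above-break parts reduce verbatim to the $n=0$ identities of~\cite{mazuir-I}, applied with the face $I$ carried along unchanged. The main obstacle, which is where I expect all the content of the proof to concentrate, is to match the overlapping partition terms with the below-break strata. Concretely, for each fixed decomposition $i_1+\cdots+i_s=m$ and each fixed overlapping partition $I_1\cup\cdots\cup I_s=I$, one must show that the signed sum of those below-break strata of binary trees $t_g \in CBRT_m$ whose bottom piece is the $s$-corolla and whose $s$ upper binary subtrees are decorated by the faces $I_1,\dots,I_s$ equals the image of $m_s(f^{(i_1)}_{I_1}\otimes\cdots\otimes f^{(i_s)}_{I_s})$ under the map. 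This follows from iterating $\sum_{t^\ell_g \in CBRT_{i_\ell}}\pm t^\ell_g$ on each branch together with the key observation that, combinatorially, the overlapping partitions of $I$ are precisely what labels the $s$-fold product decomposition obtained from the $\mathrm{AW}^{\circ(s-1)}$-subdivision of $\Delta^n$ introduced in section~\ref{alg:ss:cell-decompo-AW}, which in turn is the combinatorics governing the below-break strata of the $n$-multiplihedron $n-J_m$.

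Finally I would verify the signs using the conventions set up in section~\ref{alg:ss:signs-n-ombas} together with those recalled in section~\ref{alg:ss:signs-n-ainf-morph}. The sign choices for the generators $f_{I,t_g}$ in the target have to be calibrated so that the Koszul contribution $(-1)^{|I|}$ in front of the $m_{i_2}$-terms of the $n-\Ainf$ equations, as well as the signs attached to the overlapping partitions inherited from the $\mathrm{AW}^{\circ(s-1)}$-subdivision, match the signs attached to the corresponding boundary strata of $\overline{\mathcal{CT}}_m$ decorated by faces of $\Delta^n$. This step is routine but technical; it is dictated entirely by the orientation conventions on the $n$-multiplihedra $n-J_m$ worked out in section~\ref{alg:ss:signs-poly-n-Jm}, and amounts to an enrichment of the $n=0$ sign verification from~\cite{mazuir-I} by the simplicial sign factor $(-1)^{|I|}$ and the overlapping partition signs.
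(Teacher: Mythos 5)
Your overall strategy coincides with the paper's: the paper defines the map on generators by $f_{I,m} \mapsto \sum_{t_g \in CBRT_m} (I,t_g,\omega_{can})$, extends by quasi-freeness, and disposes of the compatibility with the differentials by exactly the two arguments you invoke — an explicit sign computation reducing to the $n=0$ verification of~\cite{mazuir-I} enriched by the simplicial label and the overlapping partitions, or the observation that the morphism is nothing but the refinement of the $n-\Ainf$-cell decomposition of $n-J_m$ to its $n-\ombas$-cell decomposition, which is also how you propose to fix the signs (section~\ref{alg:ss:signs-poly-n-Jm}, resp. the canonical orientations $\omega_{can}$). Your identification of the genuinely new ingredients for general $n$ — the simplicial differential commuting trivially because $I$ is a passive label, and the matching of the overlapping partitions $I_1 \cup \cdots \cup I_s = I$ with the $\Delta^n$-decorations of the upper subtrees in the (below-break) strata — is correct and is where the paper locates the (small amount of) content as well.

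There is, however, a bookkeeping slip in your recollection of the $n=0$ mechanism which, taken literally, would prevent the verification from closing. The differential of a generator $(I,t_g)$ of $n-\ombas-\mathrm{Morph}$ has \emph{five} types of terms besides the simplicial one: (int-collapse), (gauge-vertex), (above-break) and (below-break). In the image of $\sum_{t_g \in CBRT_m} \pm (I,t_g)$, it is the (above-break) strata alone that assemble into the terms $f^{(i_1+1+i_3)}_I(\ide^{\otimes i_1}\otimes m_{i_2}\otimes \ide^{\otimes i_3})$ (the upper broken-off piece being a binary ribbon tree, and $m_{i_2}$ mapping to the sum of such trees under $\Ainf \rightarrow \ombas$), while the (below-break) strata assemble into the $m_s(f^{(i_1)}_{I_1}\otimes\cdots\otimes f^{(i_s)}_{I_s})$ terms as you say. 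The (int-collapse) strata do \emph{not} contribute to any term of the $\infmorn$-differential: a gauged tree with one trivalent vertex, resp. a gauge crossing one vertex, is itself a generator of $n-\ombas-\mathrm{Morph}$ and arises as a codimension-one face of exactly two binary gauged trees of $CBRT_m$, so the (int-collapse) terms cancel pairwise, as do the (gauge-vertex) terms — which your proposal never mentions. As written, your matching assigns (int-collapse) to the $m_{i_2}$-terms already accounted for by (above-break), and leaves the (gauge-vertex) terms unaccounted for; once these two families are instead paired off and cancelled (with $I$ carried along unchanged), the rest of your argument — including the sign calibration dictated by the orientations of $n-J_m$ — goes through as in the paper.
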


This proposition is proven in subsection~\ref{alg:sss:morph-op-bimod-infmorn}.
Note that the collection of operadic bimodules $\{ n-\Omega B As - \mathrm{Morph} \}_{n \geqslant 0}$ is once again a cosimplicial operadic bimodule, where the cofaces and codegeneracies are as in subsection~\ref{alg:sss:cosimpl-dg-cog}. This sequence of morphisms of operadic bimodules defines then in fact a morphism of cosimplicial operadic bimodules
\[ \{ \infmorn \}_{n \geqslant 0} \longrightarrow \{ n-\Omega B As - \mathrm{Morph} \}_{n \geqslant 0} \ . \]

\subsection{The $n$-multiplihedra encode $n-\Omega B As$-morphisms} \label{alg:ss:poly-encode-n-ombas}

\subsubsection{The $n-\Omega B As$-cell decomposition of $\Delta^n \times \overline{\mathcal{CT}}_m$} \label{alg:sss:n-ombas-subd-delta-n}

The polytopes encoding $n-\Ainf$-morphisms have been defined to be the polytopes $\Delta^n\times J_m$ endowed with a refined polytopal subdivision induced by the maps $\mathrm{AW}_{\pmb{a}}$. These refined subdivisions incorporate the combinatorics of $i$-overlapping $s$-partitions in the boundary of the polytopes $\Delta^n\times J_m$.  Consider now the multiplihedra $J_m = \overline{\mathcal{CT}}_m$ endowed with its \ombas -cell decomposition, i.e. its cell decomposition by broken stable two-colored ribbon tree type. We can define a refined cell decomposition on the product CW-complex $\Delta^n \times \overline{\mathcal{CT}}_m$ following the construction of subsection \ref{alg:sss:n-multiplihedra}. Each stratum $\mathcal{CT}_m(t_{br,c})$ of the moduli space $\overline{\mathcal{CT}}_m$ determines again a dividing sequence $\pmb{a}_{t_{br,c}}$ obtained from the unbroken two-colored trees of the two-colored tree $t_{br,c}$ labeling it. We then refine the cell decomposition of $\Delta^n \times \mathcal{CT}_m(t_{br,c})$ into $\Delta^n_{\pmb{a}_{t_{br,c}}} \times \mathcal{CT}_m(t_{br,c})$. This refinement process can again be done consistently in order to obtain a refined cell decomposition of $\Delta^n \times \overline{\mathcal{CT}}_m$. 

\begin{definition}
We define the \emph{$n-\Omega B As$-cell decomposition} of the $n$-multiplihedron $\Delta^n \times \overline{\mathcal{CT}}_m$ to be the cell decomposition described in the previous paragraph.
\end{definition}

\noindent See some examples in figures~\ref{alg:fig:delta-un-j-deux-fin}~and~\ref{alg:fig:delta-un-j-trois-fin}. By construction, the $n-\Omega B As$-cell decomposition of $\Delta^n \times \overline{\mathcal{CT}}_m$ is moreover a refinement of the $n-\Ainf$-cell decomposition of $\Delta^n \times \overline{\mathcal{CT}}_m$.

\begin{figure}[h]
\includegraphics[scale=0.03]{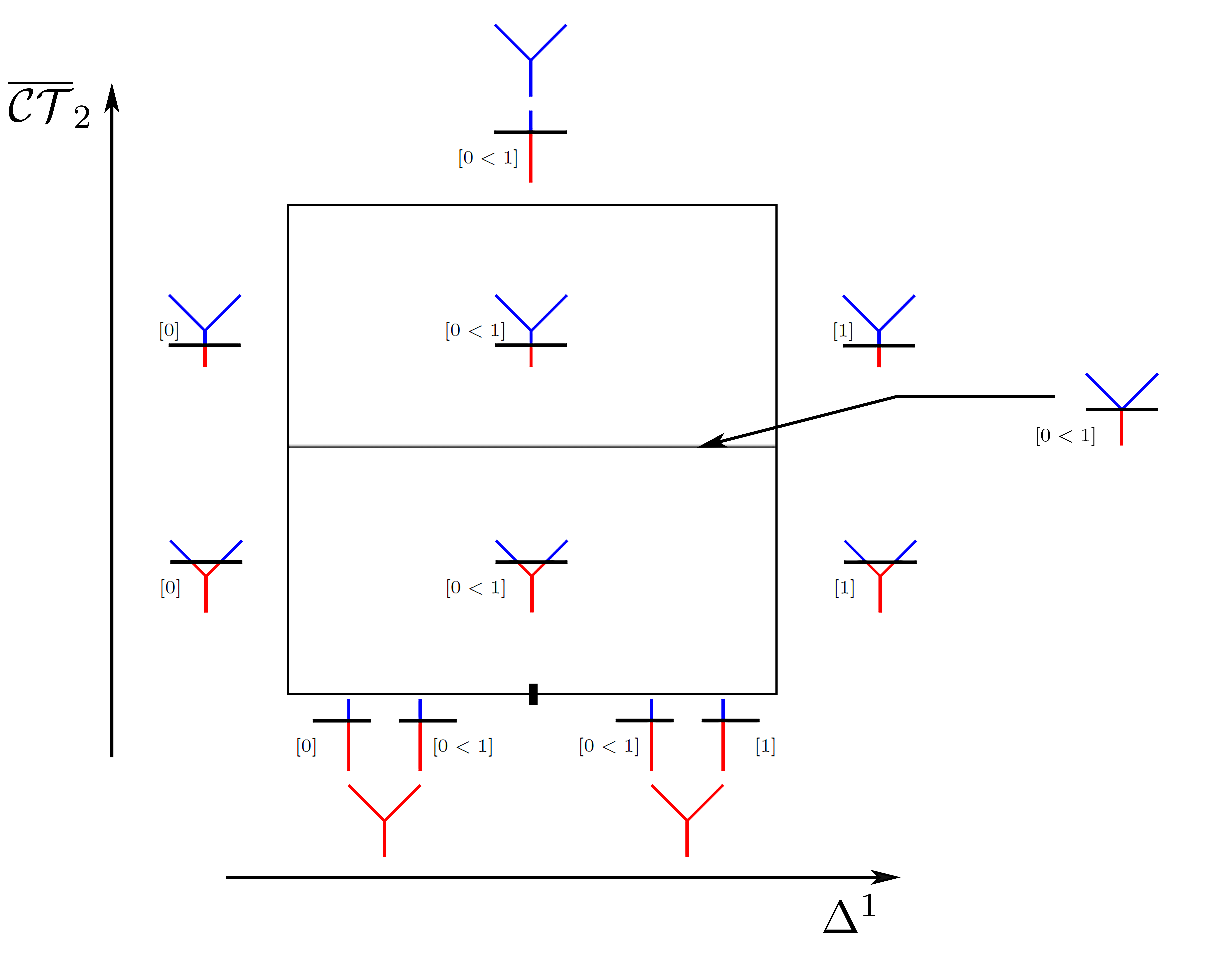}
\caption{The $1-\Omega B As$-cell decomposition of $\Delta^1 \times \overline{\mathcal{CT}}_2$}
\label{alg:fig:delta-un-j-deux-fin}
\end{figure} 

\begin{figure}[h]
\includegraphics[scale=0.15]{delta-un-j-trois-fin}
\caption{The $1-\Omega B As$-cell decomposition of $\Delta^1 \times \overline{\mathcal{CT}}_3$} \label{alg:fig:delta-un-j-trois-fin}
\end{figure}

\subsubsection{These CW-complexes encode $n-\Omega B As$-morphisms} \label{alg:sss:these-poly-encode-ombas}

Consider the associahedra $K_m = \overline{\mathcal{T}}_m$ endowed with their \ombas -cell decompositions. We endow moreover the spaces $\Delta^n \times \overline{\mathcal{CT}}_m$ with their $n-\Omega B As$-cell decompositions. As in the \Ainf\ case, the collection of CW-complexes $\{ \Delta^n \times \overline{\mathcal{CT}}_m \}_{m \geqslant 1}$ is not a $( \{ \overline{\mathcal{T}}_m \} , \{ \overline{\mathcal{T}}_m \})$-operadic bimodule. Carrying over the details of subsection~\ref{alg:sss:n-multiplihedra-encode}, it contains however enough combinatorics to recover a $n-\Omega B As$-morphism. What's more, the collection $\{ \Delta^n \times \overline{\mathcal{CT}}_m \}_{n \geqslant 0}$ is again a cosimplicial CW-complex.

\subsection{Résumé} \label{alg:ss:resume-n-ombas}

The higher homotopies or $n$-morphisms extending the notion of \Ainf -morphisms and \Ainf -homotopies between \Ainf -algebras are defined to be the morphisms of dg-coalgebras
\[ \pmb{\Delta}^n \otimes T(sA) \longrightarrow T(sB) \ . \]
From an operadic viewpoint, they are naturally encoded by the operadic bimodule,
\[ \infmorn = \mathcal{F}^{\Ainf , \Ainf}(\arbreopunmorphn , \arbreopdeuxmorphn , \arbreoptroismorphn , \arbreopquatremorphn , \cdots ; I \subset \Delta^n ) \ .  \]
where the differential is defined as 
\[ [ \partial , \ainfnmorphun ] = \sum_{j=0}^{\mathrm{dim}(I)} (-1)^j \eqainfnmorphun + \sum_{I_1 \cup \cdots \cup I_s = I} \pm \eqainfnmorphtrois
 + \sum \pm \eqainfnmorphdeux \ . \]
The combinatorics of this differential are encoded by new families of polytopes called the $n$-multipli\-hedra, which are the data of the polytopes $\Delta^n \times J_m$ together with a polytopal subdivision induced by the maps $\mathrm{AW}_{\pmb{a}}$. They will constitute a good parametrizing space for constructing moduli spaces in symplectic topology, whose count should recover a $n$-morphism between Floer complexes.

On the other side, the natural $n$-morphisms extending the notion of $\Omega B As$-morphisms are defined by adapting the operadic viewpoint on $n-\Ainf$-morphisms. They are naturally encoded by the operadic bimodule,
\[ n-\Omega B As - \mathrm{Morph} = \mathcal{F}^{\Omega B As, \Omega B As}( \arbreopunmorphn , \arbrebicoloreLn , \arbrebicoloreMn , \arbrebicoloreNn , \cdots ,( I , SCRT_m ) ,\cdots ; I \subset \Delta^n) \ ,  \]
where the differential is again defined as a signed sum prescribed by a rule on two-colored trees combinatorics combined with the algebraic combinatorics of overlapping partitions, added to the simplicial differential. This differential is encoded in the data of the polytopes $\Delta^n \times J_m$ endowed with a refined cell decomposition induced by two-colored stable ribbon tree types and the maps $\mathrm{AW}_{\pmb{a}}$. It is moreover sufficient to construct a $n-\Omega B As$-morphism between $\Omega B As$-algebras in order to recover a $n-\Ainf$-morphism between the induced \Ainf -algebras, thanks to the morphism of operadic bimodules
\[ \infmorn \longrightarrow n-\Omega B As - \mathrm{Morph} \ . \]
We show in part~\ref{p:geo} that the previous CW-complexes constitute a good parametrizing space for moduli spaces in Morse theory, whose count will recover a $n-\ombas$-morphism between Morse cochain complexes.

\section{Signs for $n$-morphisms} \label{alg:s:signs-n-morph}

We now work out all the signs left uncomputed in the previous sections of this part. These computations will be done resorting to the basic conventions on signs and orientations that we were already using in~\cite{mazuir-I}, and that we briefly recall in the first section. In the next two sections, we display and explain the two natural sign conventions for $n - \Ainf$-morphisms ensuing from the bar construction viewpoint, and then show that one of these conventions is in fact contained in the polytopes $n - J_m$. We finally give a complete definition of the operadic bimodule $n-\Omega B As - \mathrm{Morph}$ and build the morphism of operadic bimodules $\infmorn \rightarrow n-\Omega B As - \mathrm{Morph}$ of Proposition~\ref{alg:prop:morph-op-bimod}.

\subsection{Conventions for signs and orientations} \label{alg:ss:conv-signs-or}

\subsubsection{Koszul sign rule} \label{alg:sss:Koszul-sign}

The formulae in this section will be written using the Koszul sign rule. We will moreover work exclusively with cohomological conventions.

Given $A$ and $B$ two dg \Z -modules, the differential on $A \otimes B$ is defined as 
\[ \partial_{A \otimes B} (a \otimes b) = \partial_A a \otimes b + (-1)^{|a|} a \otimes \partial_B b \ . \]
Given $A$ and $B$ two dg \Z -modules, we consider the graded \Z -module $\mathrm{Hom}(A,B)$ whose degree $r$ component is given by all maps $A \rightarrow B$ of degree $r$. We endow it with the differential 
\[ \partial_{\mathrm{Hom}(A,B)}(f) := \partial_B \circ f - (-1)^{|f|} f \circ \partial_A =: [\partial , f] \ . \]
Given $f : A \rightarrow A'$ and $g : B \rightarrow B'$ two graded maps between dg-\Z -modules, we set
\[ (f \otimes g) (a \otimes b) = (-1)^{|g| |a|} f(a) \otimes g(b) \ . \]
Finally, given $f : A \rightarrow A'$, $f' : A' \rightarrow A''$, $g : B \rightarrow B'$ and $g' : B' \rightarrow B''$, we define
\[ (f' \otimes g') \circ (f \otimes g) = (-1)^{|g'||f|}(f' \circ f) \otimes (g' \circ g) \ . \]
We check in particular that with this sign rule, the differential on a tensor product $A_1 \otimes \cdots \otimes A_n$ is given by
\[ \partial_{A_1 \otimes \cdots \otimes A_n} = \sum_{i=1}^n \ide_{A_1} \otimes \cdots \otimes \partial_{A_i} \otimes \cdots \otimes \ide_{A_n} \ . \]

\subsubsection{Tensor product of dg-coalgebras} \label{alg:sss:tensor-product-dg-cog}

Given $A$ and $B$ two dg \Z -modules, define the twist map $\tau : A \otimes B \rightarrow B \otimes A$,
\[ \tau (a \otimes b) = (-1)^{|a| |b|} b \otimes a \ . \]
Suppose now that $A$ and $B$ are dg-coalgebras, with respective coproducts $\Delta_A$ and $\Delta_B$. The tensor product $A \otimes B$ can then be endowed with a structure of dg-coalgebra whose coproduct is defined as 
\[ \Delta_{A \otimes B} := A \otimes B \underset{\Delta_A \otimes \Delta_B}{\longrightarrow} A \otimes A \otimes B \otimes B \underset{\ide_A \otimes \tau \otimes \ide_B}{\longrightarrow} (A \otimes B) \otimes (A \otimes B) \ , \]
and whose differential is the product differential 
\[ \partial_{A \otimes B} = \partial_A \otimes \ide_B + \ide_A \otimes \partial_B \ . \]

\subsubsection{Orientation of the boundary of a manifold with boundary} \label{alg:sss:or-boundary}

Let $(M,\partial M)$ be an oriented $n$-manifold with boundary. We choose to orient its boundary $\partial M$ as follows : given $x \in \partial M$, a basis $e_1,\dots,e_{n-1}$ of $T_x(\partial M)$, and an outward pointing vector $\nu \in T_xM$, the basis $e_1,\dots,e_{n-1}$ is positively oriented if and only if the basis $\nu,e_1,\dots,e_{n-1}$ is a positively oriented basis of $T_xM$. 

Under this convention, given two manifolds with boundary $K$ and $L$, the boundary of the product manifold $K \times L$ is then
\[ \partial (K \times L ) = \partial K \times L \cup (-1)^{\mathrm{dim} (K)} K \times \partial L \ , \]
where the $(-1)^{\mathrm{dim} (K)}$ sign means that the product orientation of $K \times \partial L$ differs from its orientation as the boundary of $K \times L$ by a $(-1)^{\mathrm{dim} (K)}$ sign.
This convention also recovers the classical singular and cubical differentials as detailed in~\cite{mazuir-I}~: 
\[ 
\partial \Delta^n = \bigcup_{i=0}^n (-1)^i \Delta^{n-1}_i \ \text{ and } \
\partial I^n = \bigcup_{i=1}^n (-1)^i ( I^{n-1}_{i,0} \cup - I^{n-1}_{i,1} ) \ . \]

\subsection{Signs for $n-\Ainf$-morphisms} \label{alg:ss:signs-n-ainf-morph}

We now work out the signs in the \Ainf -equations for $n-\Ainf$-morphisms, thus completing definition~\ref{alg:def:n-morph-ainf}. More precisely, we will unwind two sign conventions using the bar construction viewpoint. The impatient reader can straightaway jump to subsection~\ref{alg:sss:choice-convention-ainf} where the signs used in the rest of this paper are made explicit.

\subsubsection{Recollections on the bar construction and \Ainf -algebras} \label{alg:sss:recoll-bar-constr}

Let $A$ be a dg-\Z -module.  Define the suspension and desuspension maps 
\begin{align*}
s : A &\longrightarrow sA & w : sA &\rightarrow A \\
a &\longmapsto sa & sa &\longmapsto a  \ ,
\end{align*} 
which are respectively of degree $-1$ and $+1$. We verify that with the Koszul sign rule, 
\[ w^{\otimes m} \circ s^{\otimes m}  = (-1)^{\binom{m}{2}} \ide_{A^{\otimes m}} \ . \]
Then, note for instance that a degree $2-m$ map $m_m : A^{\otimes m} \rightarrow A$ yields a degree $+1$ map $b_m := s m_m w^{\otimes m} : (sA)^{\otimes m} \rightarrow sA$.

To the set of operations $b_m$ one can associate a unique coderivation $D$ on $\overline{T}(sA)$. We proved in~\cite{mazuir-I} using this viewpoint that the equation $D^2 = 0$ yields two sign conventions for the \Ainf -equations
\begin{align*} 
\left[ m_1 , m_m \right] &=  - \sum_{\substack{i_1+i_2+i_3=m \\ 2 \leqslant i_2 \leqslant n-1}} (-1)^{i_1i_2 + i_3} m_{i_1+1+i_3} (\ide^{\otimes i_1} \otimes m_{i_2} \otimes \ide^{\otimes i_3} ) \ , \tag{A}  \\
\left[ m_1 , m_m \right] &= - \sum_{\substack{i_1+i_2+i_3=m \\ 2 \leqslant i_2 \leqslant n-1}} (-1)^{i_1 + i_2i_3} m_{i_1+1+i_3} (\ide^{\otimes i_1} \otimes m_{i_2} \otimes \ide^{\otimes i_3} ) \ , \tag{B} 
\end{align*}
and that these conventions are related by a $(-1)^{\binom{m}{2}}$ twist applied to the operation $m_m$, which comes from the formula $w^{\otimes m} \circ s^{\otimes m}  = (-1)^{\binom{m}{2}} \ide_{A^{\otimes m}}$. 

We will adopt the exact same approach to work out two sign conventions for $n - \Ainf$-morphisms in the following subsection : first by writing \Ainf -equations without signs using the viewpoint of a morphism between bar constructions $F : \pmb{\Delta}^n \otimes \overline{T}(sA) \rightarrow \overline{T}(sB)$, and secondly by unfolding the signs coming from the suspension and desuspension maps.

\subsubsection{The two conventions coming from the bar construction} \label{alg:sss:two-conv-bar}

The two conventions for the \Ainf -equations for $n-\Ainf$-morphisms are
\begin{align*} 
\left[ m_1 , f^{(m)}_I \right] =  \sum_{j=0}^{\mathrm{dim}(I)} (-1)^j f^{(m)}_{\partial_jI} &+ (-1)^{|I|} \sum_{\substack{i_1+i_2+i_3=m \\ i_2 \geqslant 2}} (-1)^{i_1i_2 + i_3} f^{(i_1+1+i_3)}_I (\ide^{\otimes i_1} \otimes m_{i_2} \otimes \ide^{\otimes i_3}) \tag{A} \\ &- \sum_{\substack{i_1 + \cdots + i_s = m \\ I_1 \cup \cdots \cup I_s = I \\ s \geqslant 2 }} (-1)^{\epsilon_A} m_s ( f^{(i_1)}_{I_1} \otimes \cdots \otimes f^{(i_s)}_{I_s}) \ ,  \\
\left[ m_1 , f^{(m)}_I \right] = \sum_{j=0}^{\mathrm{dim}(I)} (-1)^j f^{(m)}_{\partial_jI} &+ (-1)^{|I|} \sum_{\substack{i_1+i_2+i_3=m \\ i_2 \geqslant 2}} (-1)^{i_1 + i_2i_3} f^{(i_1+1+i_3)}_I (\ide^{\otimes i_1} \otimes m_{i_2} \otimes \ide^{\otimes i_3})  \tag{B} \\ &- \sum_{\substack{i_1 + \cdots + i_s = m \\ I_1 \cup \cdots \cup I_s = I \\ s \geqslant 2 }} (-1)^{\epsilon_B} m_s ( f^{(i_1)}_{I_1} \otimes \cdots \otimes f^{(i_s)}_{I_s}) \ ,  
\end{align*}
which can we rewritten as
\begin{align*}
&\sum_{j=0}^{\mathrm{dim}(I)} (-1)^j f^{(m)}_{\partial_j I} + (-1)^{|I|} \sum_{i_1+i_2+i_3=m} (-1)^{i_1i_2 + i_3} f^{(i_1+1+i_3)}_I (\ide^{\otimes i_1} \otimes m_{i_2} \otimes \ide^{\otimes i_3}) \tag{A} \\ 
= &\sum_{ \substack{ i_1 + \cdots + i_s = m \\ I_1 \cup \cdots \cup I_s = I } } (-1)^{\epsilon_A} m_s ( f^{(i_1)}_{I_1} \otimes \cdots \otimes f^{(i_s)}_{I_s}) \ , \\
&\sum_{j=0}^{\mathrm{dim}(I)} (-1)^j f^{(m)}_{\partial_j I} + (-1)^{|I|} \sum_{i_1+i_2+i_3=m} (-1)^{i_1 + i_2i_3} f^{(i_1+1+i_3)}_I (\ide^{\otimes i_1} \otimes m_{i_2} \otimes \ide^{\otimes i_3}) \tag{B} \\ 
= &\sum_{ \substack{ i_1 + \cdots + i_s = m \\ I_1 \cup \cdots \cup I_s = I } } (-1)^{\epsilon_B} m_s ( f^{(i_1)}_{I_1} \otimes \cdots \otimes f^{(i_s)}_{I_s}) \ , \\
\end{align*} 
where 
\begin{align*}
\epsilon_A &= \sum_{j=1}^s (s-j)|I_j| + \sum_{j = 1}^s i_j \left( \sum_{k=j+1}^s (1-i_k - |I_k|) \right) \ , \\
\epsilon_B &= \sum_{j=1}^s \left(i_j \sum_{k=j+1}^s|I_k| \right) + \sum_{j=1}^s (s-j) (1-i_j - |I_j|) \ .
\end{align*}
These two sign conventions are equivalent : given a sequence of operations $m_m$ and $f^{(m)}_I$ satisfying equations (A), we check that the operations $m_m^\prime := (-1)^{\binom{m}{2}} m_m$ and 
$f_I^{\prime (m) } := (-1)^{\binom{m}{2}} f_I^{(m)}$ satisfy equations (B).

Consider now two dg-\Z -modules $A$ and $B$, together with a collection of degree $2-m$ maps $m_m : A^{\otimes m } \rightarrow A$ and $m_m : B^{\otimes m } \rightarrow B$ (we use the same notation for sake of readability), and a collection of degree $1-m+|I|$ maps $f^{(m)}_I : A^{\otimes m } \rightarrow B$. We associate to the maps $m_m$ the degree $+1$ maps $b_m := sm_m w^{\otimes m}$, and also associate to the maps $f^{(m)}_I$ the degree $|I|$ maps $F^{(m)}_I := sf^{(m)}_Iw^{\otimes m} : (sA)^{\otimes m} \rightarrow sB$. We denote $D_A$ and $D_B$ the unique coderivations coming from the maps $b_m$ acting respectively on $\overline{T}(sA)$ and $\overline{T}(sB)$, and $F : \pmb{\Delta}^n \otimes \overline{T}(sA) \rightarrow \overline{T}(sB)$ the unique morphism of coalgebras associated to the maps $F^{(m)}_I$. The equation $$F(\partial_{sing} \otimes \ide_{\overline{T}(sA)} + \ide_{\pmb{\Delta}^n} \otimes D_A)=D_BF$$ is then equivalent to the equations
\[ \sum_{j=0}^{\mathrm{dim}(I)} (-1)^j F^{(m)}_{\partial_j I} + (-1)^{|I|} \sum_{i_1+i_2+i_3=m} F^{(i_1+1+i_3)}_I (\ide^{\otimes i_1} \otimes b_{i_2} \otimes \ide^{\otimes i_3})
= \sum_{\substack{i_1 + \cdots + i_s = m \\ I_1 \cup \cdots \cup I_s = I}} b_s ( F^{(i_1)}_{I_1} \otimes \cdots \otimes F^{(i_s)}_{I_s}) \ . \] 
There are now two ways to unravel the signs from these equations, which will lead to conventions (A) and (B).

The first way consists in simply replacing the $b_m$ and the $F^{(m)}_I$ by their definition. It yields sign conventions (A). The left-hand side transforms as 
\begin{align*}
&\sum_{j=0}^{\mathrm{dim}(I)} (-1)^j F^{(m)}_{\partial_j I} + (-1)^{|I|} \sum_{i_1+i_2+i_3=m} F^{(i_1+1+i_3)}_I (\ide^{\otimes i_1} \otimes b_{i_2} \otimes \ide^{\otimes i_3})  \\ 
= &\sum_{j=0}^{\mathrm{dim}(I)} (-1)^j sf^{(m)}_{\partial_j I}w^{\otimes m} + (-1)^{|I|} \sum_{i_1+i_2+i_3=m} sf^{(i_1+1+i_3)}_I w^{\otimes i_1 + 1 + i_3} (\ide^{\otimes i_1} \otimes sm_{i_2}w^{\otimes i_2} \otimes \ide^{\otimes i_3}) \\
= &\sum_{j=0}^{\mathrm{dim}(I)} (-1)^j sf^{(m)}_{\partial_j I}w^{\otimes m} + (-1)^{|I|} \sum_{i_1+i_2+i_3=m} (-1)^{i_3} sf^{(i_1+1+i_3)}_I  (w^{\otimes i_1} \otimes wsm_{i_2}w^{\otimes i_2} \otimes w^{\otimes i_3}) \\
= &\sum_{j=0}^{\mathrm{dim}(I)} (-1)^j sf^{(m)}_{\partial_j I}w^{\otimes m} + (-1)^{|I|} \sum_{i_1+i_2+i_3=m} (-1)^{i_1 i _2 + i_3} sf^{(i_1+1+i_3)}_I  (\ide^{\otimes i_1} \otimes m_{i_2} \otimes \ide^{\otimes i_3}) (w^{\otimes i_1} \otimes w^{\otimes i_2} \otimes w^{\otimes i_3}) \\
= &s \left( \sum_{j=0}^{\mathrm{dim}(I)} (-1)^j f^{(m)}_{\partial_j I} + (-1)^{|I|} \sum_{i_1+i_2+i_3=m} (-1)^{i_1 i _2 + i_3} f^{(i_1+1+i_3)}_I (\ide^{\otimes i_1} \otimes m_{i_2} \otimes \ide^{\otimes i_3}) \right) w^{\otimes m} \ ,
\end{align*}
while the right-hand side transforms as  
\begin{align*}
&\sum_{\substack{i_1 + \cdots + i_s = m \\ I_1 \cup \cdots \cup I_s = I}} b_s ( F^{(i_1)}_{I_1} \otimes \cdots \otimes F^{(i_s)}_{I_s})  \\ 
= &\sum_{\substack{i_1 + \cdots + i_s = m \\ I_1 \cup \cdots \cup I_s = I}} sm_sw^{\otimes s} ( sf^{(i_1)}_{I_1}w^{\otimes i_1} \otimes \cdots \otimes sf^{(i_s)}_{I_s}w^{\otimes i_s}) \\
= &\sum_{\substack{i_1 + \cdots + i_s = m \\ I_1 \cup \cdots \cup I_s = I}} (-1)^{\sum_{j=1}^s (s-j)|I_j|}sm_s ( wsf^{(i_1)}_{I_1}w^{\otimes i_1} \otimes \cdots \otimes wsf^{(i_s)}_{I_s}w^{\otimes i_s}) \\
= &s \left( \sum_{\substack{i_1 + \cdots + i_s = m \\ I_1 \cup \cdots \cup I_s = I}} (-1)^{\epsilon_A}m_s ( f^{(i_1)}_{I_1} \otimes \cdots \otimes f^{(i_s)}_{I_s}) \right) w^{\otimes m} \ ,
\end{align*}
where $\epsilon_A = \sum_{j=1}^s (s-j)|I_j| + \sum_{j = 1}^s i_j \left( \sum_{k=j+1}^s (1-i_k - |I_k|) \right)$.

The second way consists in first composing and post-composing by $w$ and $s^{\otimes m}$ and then replacing the $b_m$ and $F^{(m)}_I$ by their definition. It yields the (B) sign conventions. We will denote $m_m^\prime:=(-1)^{\binom{m}{2}}m_m$ and $f^{\prime (m) }_I:=(-1)^{\binom{m}{2}}f^{(m)}_I$. The left-hand side then transforms as
\begin{align*}
&\sum_{j=0}^{\mathrm{dim}(I)} (-1)^j wF^{(m)}_{\partial_j I}s^{\otimes m} + (-1)^{|I|} \sum_{i_1+i_2+i_3=m} wF^{(i_1+1+i_3)}_I (\ide^{\otimes i_1} \otimes b_{i_2} \otimes \ide^{\otimes i_3})s^{\otimes m} \\ 
= &\sum_{j=0}^{\mathrm{dim}(I)} (-1)^j f^{ \prime(m) }_{\partial_j I} + (-1)^{|I|} \sum_{i_1+i_2+i_3=m} (-1)^{i_1} f^{(i_1+1+i_3)}_I w^{\otimes i_1 + 1 + i_3} (s^{\otimes i_1} \otimes sm_{i_2}w^{\otimes i_2}s^{\otimes i_2} \otimes s^{\otimes i_3}) \\
= &\sum_{j=0}^{\mathrm{dim}(I)} (-1)^j f^{ \prime(m) }_{\partial_j I} + (-1)^{|I|} \sum_{i_1+i_2+i_3=m} (-1)^{i_1+i_2 i_3} f^{(i_1+1+i_3)}_I w^{\otimes i_1 + 1 +i_3} s^{\otimes i_1 + 1 + i_3}  (\ide^{\otimes i_1} \otimes m_{i_2}^\prime \otimes \ide^{\otimes i_3}) \\
= &\sum_{j=0}^{\mathrm{dim}(I)} (-1)^j f^{\prime (m) }_{\partial_j I} + (-1)^{|I|} \sum_{i_1+i_2+i_3=m} (-1)^{i_1 + i_2 i_3} f^{\prime (i_1+1+i_3)}_I  (\ide^{\otimes i_1} \otimes m_{i_2}^\prime \otimes \ide^{\otimes i_3}) \ ,
\end{align*}
while the right-hand side transforms as 
\begin{align*}
&\sum_{\substack{i_1 + \cdots + i_s = m \\ I_1 \cup \cdots \cup I_s = I}} wb_s ( F^{(i_1)}_{I_1} \otimes \cdots \otimes F^{(i_s)}_{I_s})s^{\otimes m}  \\ 
= &\sum_{\substack{i_1 + \cdots + i_s = m \\ I_1 \cup \cdots \cup I_s = I}} m_sw^{\otimes s} ( sf^{(i_1)}_{I_1}w^{\otimes i_1} \otimes \cdots \otimes sf^{(i_s)}_{I_s}w^{\otimes i_s})s^{\otimes m} \\
= &\sum_{\substack{i_1 + \cdots + i_s = m \\ I_1 \cup \cdots \cup I_s = I}} (-1)^{\sum_{j=1}^s \left(i_j \sum_{k=j+1}^s|I_k| \right)}m_s w^{\otimes s} ( sf^{(i_1)}_{I_1}w^{\otimes i_1}s^{\otimes i_1} \otimes \cdots \otimes sf^{(i_s)}_{I_s}w^{\otimes i_s} s^{\otimes i_s}) \\
= &\sum_{\substack{i_1 + \cdots + i_s = m \\ I_1 \cup \cdots \cup I_s = I}} (-1)^{\epsilon_B}m_s w^{\otimes s} s^{\otimes s} ( f^{\prime (i_1)}_{I_1} \otimes \cdots \otimes f^{ \prime (i_s)}_{I_s}) \\
= &\sum_{\substack{i_1 + \cdots + i_s = m \\ I_1 \cup \cdots \cup I_s = I}} (-1)^{\epsilon_B}m_s^\prime ( f^{\prime (i_1)}_{I_1} \otimes \cdots \otimes f^{ \prime (i_s)}_{I_s}) \ ,
\end{align*}
where $\epsilon_B = \sum_{j=1}^s \left(i_j \sum_{k=j+1}^s|I_k| \right) + \sum_{j=1}^s (s-j) (1-i_j - |I_j|)$.

\subsubsection{Choice of convention in this paper} \label{alg:sss:choice-convention-ainf}

We will work in the rest of this paper with the set of conventions (B). The operations $m_m$ of an \Ainf -algebra will satisfy equations
\[ \left[ \partial , m_m \right] = - \sum_{\substack{i_1+i_2+i_3=m \\ 2 \leqslant i_2 \leqslant n-1}} (-1)^{i_1 + i_2i_3} m_{i_1+1+i_3} (\ide^{\otimes i_1} \otimes m_{i_2} \otimes \ide^{\otimes i_3} ) \ , \]
and a  $n-\Ainf$-morphism between two \Ainf -algebras will satisfy equations
\begin{align*}
\left[ \partial , f^{(m)}_I \right] = \sum_{j=0}^{\mathrm{dim}(I)} (-1)^j f^{(m)}_{\partial_jI} &+ (-1)^{|I|} \sum_{\substack{i_1+i_2+i_3=n \\ i_2 \geqslant 2}} (-1)^{i_1 + i_2i_3} f^{(i_1+1+i_3)}_I (\ide^{\otimes i_1} \otimes m_{i_2} \otimes \ide^{\otimes i_3})  \\ &- \sum_{\substack{i_1 + \cdots + i_s = m \\ I_1 \cup \cdots \cup I_s = I \\ s \geqslant 2 }} (-1)^{\epsilon_B} m_s ( f^{(i_1)}_{I_1} \otimes \cdots \otimes f^{(i_s)}_{I_s}) \ ,
\end{align*}
where $\epsilon_B = \sum_{j=1}^s \left(i_j \sum_{k=j+1}^s|I_k| \right) + \sum_{j=1}^s (s-j) (1-i_j - |I_j|)$.

In~\cite{mazuir-I} we had chosen conventions (B) for \Ainf -algebras and \Ainf -morphisms because they were the ones naturally arising in the realizations of the associahedra and the multiplihedra à la Loday. We prove a similar result in the following section : these sign conventions are contained in the polytopes $n-J_m = \Delta^n \times J_m$ where $J_m$ is a Forcey-Loday realization of the multiplihedron. 

\subsubsection{The sign conventions coming from Proposition~\ref{alg:prop:equivalent-def}} \label{alg:sss:sign-conv-equivalent}

We proved in Proposition~\ref{alg:prop:equivalent-def} that the datum of a $n$-morphism from $A$ to $B$ is equivalent to the datum of an \Ainf -morphism $A \rightarrow \pmb{\Delta}_n \otimes B$. In fact, the two sign conventions arising from this equivalent definition differ slightly from the two conventions (A) and (B) for $n$-morphisms computed from the bar construction formulation.

Indeed, we can check that if we work with convention (A) (resp. (B)) for \Ainf -morphisms (not higher morphisms !) and if we write as in subsection~\ref{alg:ss:equivalent-definition} the \Ainf -morphism $F : A \rightarrow \pmb{\Delta}_n \otimes B$ as
\[ F^{(m)} = \bigoplus_{I \subset \Delta^n} I \otimes f_I^{(m)} \ , \]
then the signs for the \Ainf -equations for $F$ read exactly as the signs for the \Ainf -equations for $n$-morphisms computed in the previous subsection, apart from the simplicial differential terms which read this time as 
\[ \sum_{j=0}^{\mathrm{dim}(I)} (-1)^{j+|I|+1} f^{(m)}_{\partial_jI} \ . \]

\subsection{Signs and the polytopes $n-J_m$} \label{alg:ss:signs-poly-n-Jm}

\subsubsection{Loday associahedra and Forcey-Loday multiplihedra} \label{alg:sss:forcey-loday}

In~\cite{mazuir-I} we introduced explicit polytopal realizations of the associahedra and the multiplihedra : the weighted Loday realizations $K_\omega$ of the associahedra from~\cite{masuda-diagonal-assoc} and the weighted Forcey-Loday realizations $J_\omega$ of the multiplihedra from~\cite{masuda-diagonal-multipl}. We then proved using basic considerations on affine geometry that, under the convention of section~\ref{alg:ss:conv-signs-or}, their boundaries were  equal to
\begin{align*}
\partial K_\omega &= - \bigcup_{\substack{i_1+i_2+i_3=n \\ 2 \leqslant i_2 \leqslant n-1}} (-1)^{i_1 + i_2 i_3} K_{\overline{\omega}} \times K_{\widetilde{\omega}} \ , \\
\partial J_\omega &= \bigcup_{\substack{i_1+i_2+i_3=n \\ i_2 \geqslant 2}} (-1)^{i_1 + i_2 i_3} J_{\overline{\omega}} \times K_{\widetilde{\omega}} \cup - \bigcup_{\substack{i_1 + \cdots + i_s = m \\ s \geqslant 2}} (-1)^{\varepsilon_B} K_{\overline{\omega}} \times J_{\widetilde{\omega}_1} \times \cdots \times J_{\widetilde{\omega}_s}
\end{align*}
where the weights $\overline{\omega}$, $\widetilde{\omega}$ and $\widetilde{\omega}_t$ are derived from the weights $\omega$, and $$\varepsilon_B = \sum_{j=1}^s (s-j) (1-i_j) \ .$$ 
In particular, these polytopes contain sign conventions (B) for \Ainf -algebras and \Ainf -morphisms.

\subsubsection{The boundary of $n-J_m$} \label{alg:sss:codim-one-boundary-n-Jm}

Consider now a $n$-multiplihedron $\Delta^n \times J_\omega$, where $J_\omega$ is a Forcey-Loday realization of the multiplihedron $J_m$. Forgetting for now about its refined polytopal subdivision, its boundary reads as 
\[  \partial (\Delta^n \times J_\omega) = \partial \Delta^n \times J_\omega \cup (-1)^n \Delta^n \times \partial J_\omega \ . \]
Recall moreover that given any dividing sequence $\pmb{a}$ of length $s$, each top dimensional cell in the $\mathrm{AW}_{\pmb{a}}$-polytopal subdivision of $\Delta^n$ labeled by an overlapping partition $I_1 \cup \cdots \cup I_{s+1} = \Delta^n$ is in fact isomorphic to the product $I_1 \times \cdots \times I_{s+1}$. We write this as
\[ \Delta^n_{\pmb{a}} = \bigcup_{I_1 \cup \cdots \cup I_s = \Delta^n} I_1 \times \cdots \times I_s \ . \]

\begin{proposition}
The n-multiplihedra $\Delta^n \times J_\omega$ endowed with their $n-\Ainf$-polytopal subdivision contain sign conventions (B) for $n-\Ainf$-morphisms. 
\end{proposition}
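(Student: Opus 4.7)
The plan is to apply the Leibniz rule
\[ \partial(\Delta^n \times J_\omega) = \partial\Delta^n \times J_\omega \ \cup \ (-1)^n \Delta^n \times \partial J_\omega \]
and then decompose each of the two pieces according to the refined polytopal subdivision, checking that the signs that arise coincide with those of convention (B) for $n-\Ainf$-morphisms.

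First, the contribution $\partial \Delta^n \times J_\omega = \bigcup_{j=0}^n (-1)^j \Delta^{n-1}_j \times J_\omega$ is obtained directly from the standard orientation of the boundary of a simplex, and reproduces the simplicial differential term $\sum_{j=0}^{\mathrm{dim}(I)}(-1)^j f_{\partial_j I}^{(m)}$ without any permutation. For the second piece, substitute the formula recalled in subsection~\ref{alg:sss:forcey-loday} for $\partial J_\omega$. The terms of type $J_{\overline{\omega}} \times K_{\widetilde{\omega}}$ come with sign $(-1)^{n + i_1 + i_2 i_3}$, which matches $(-1)^{|I| + i_1 + i_2 i_3}$ since $|\Delta^n| = n$, so they yield the expected $J_{i_1+1+i_3}$-part of the $n-\Ainf$-equations.

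The interesting terms are those of type $K_{\overline{\omega}} \times J_{\widetilde{\omega}_1} \times \cdots \times J_{\widetilde{\omega}_s}$, which come with sign $-(-1)^{n+\varepsilon_B}$ where $\varepsilon_B = \sum_{j=1}^s (s-j)(1-i_j)$. For each such term I use the $\mathrm{AW}^{\circ(s-1)}$-subdivision of $\Delta^n$ to write
\[ \Delta^n \times K_{\overline{\omega}} \times J_{\widetilde{\omega}_1} \times \cdots \times J_{\widetilde{\omega}_s} = \bigcup_{I_1 \cup \cdots \cup I_s = \Delta^n} I_1 \times \cdots \times I_s \times K_{\overline{\omega}} \times J_{\widetilde{\omega}_1} \times \cdots \times J_{\widetilde{\omega}_s}, \]
the union running over overlapping $s$-partitions, and then reorder the factors into
\[ K_{\overline{\omega}} \times (I_1 \times J_{\widetilde{\omega}_1}) \times \cdots \times (I_s \times J_{\widetilde{\omega}_s}). \]
Pulling $K_{\overline{\omega}}$ (dimension $s-2$) to the far left and then interleaving each $J_{\widetilde{\omega}_j}$ (dimension $i_j - 1$) with the corresponding $I_j$ produces a permutation sign
\[ \sigma \ = \ (s-2)\sum_{j=1}^s \dim(I_j) + \sum_{j=1}^{s-1}(i_j-1)\sum_{k=j+1}^s \dim(I_k), \]
by the convention of subsection~\ref{alg:sss:or-boundary}.

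The main point is then the congruence $n + \varepsilon_B + \sigma \equiv \epsilon_B \pmod 2$, which I verify by direct computation: expanding $\epsilon_B - \varepsilon_B - \sigma$ collapses (using $\sum_{j<k} \dim(I_k) = \sum_k(k-1)\dim(I_k)$) to $\sum_j (2s-1-2j)\dim(I_j) \equiv \sum_j \dim(I_j) \pmod 2$. The key combinatorial input, which is the place where the identification with overlapping partitions really does the work, is that for any overlapping $s$-partition $I_1 \cup \cdots \cup I_s = \Delta^n$ one has $\sum_{j=1}^s \dim(I_j) = n$: indeed, counting vertices with multiplicity gives $\sum_j(\dim(I_j)+1) = (n+1) + (s-1)$ since there are exactly $s-1$ overlap points. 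This yields $\sigma + n \equiv \epsilon_B - \varepsilon_B \pmod 2$, so the total sign of the term is $-(-1)^{\epsilon_B}$, which is precisely the coefficient of $m_s(f_{I_1}^{(i_1)} \otimes \cdots \otimes f_{I_s}^{(i_s)})$ in convention (B). The main (mild) obstacle is keeping track of the Koszul signs from the permutation of factors, which is where the vertex-counting identity for overlapping partitions is essential.
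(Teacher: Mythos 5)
Your proposal is correct and follows essentially the same route as the paper: apply the Leibniz rule, substitute the Forcey--Loday boundary of $J_\omega$, decompose $\Delta^n$ via the $\mathrm{AW}^{\circ(s-1)}$-subdivision into overlapping $s$-partitions, reorder the factors with the Koszul-type orientation signs, and check the resulting congruence against $\epsilon_B$. The only (welcome) difference is that you make explicit the vertex-counting identity $\sum_j \dim(I_j) = n$ for overlapping partitions, which the paper's proof uses implicitly when replacing $s\sum_j |I_j|$ by $sn$.
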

\begin{proof}
The first component of the boundary of $\Delta^n \times J_\omega$ is given by 
\[ \partial \Delta^n \times J_\omega = \bigcup_{i=0}^n (-1)^i \Delta^{n-1}_i \times J_\omega \ . \]
The second, by the first part of the boundary of $ \partial J_\omega$,
\[ (-1)^n \bigcup_{\substack{i_1+i_2+i_3=m \\ i_2 \geqslant 2}} (-1)^{i_1 + i_2 i_3} (\Delta^n \times J_{\overline{\omega}} ) \times K_{\widetilde{\omega}} \ . \]
The third and last component transforms as follows :
\begin{align*}
&(-1)^n \Delta^n \times (-1) \bigcup_{\substack{i_1 + \cdots + i_s = m \\ s \geqslant 2}} (-1)^{\varepsilon_B} K_{\overline{\omega}} \times J_{\widetilde{\omega}_1} \times \cdots \times J_{\widetilde{\omega}_s} \\
= & (-1)^{n+1} \bigcup_{\substack{i_1 + \cdots + i_s = m \\ s \geqslant 2}} (-1)^{\varepsilon_B} \Delta^n \times K_{\overline{\omega}} \times J_{\widetilde{\omega}_1} \times \cdots \times J_{\widetilde{\omega}_s} \\
= &(-1)^{n+1}\bigcup_{\substack{i_1 + \cdots + i_s = m \\ s \geqslant 2}} (-1)^{\varepsilon_B} \bigcup_{I_1 \cup \cdots \cup I_s = \Delta^n} I_1 \times \cdots \times I_s \times K_{\overline{\omega}} \times J_{\widetilde{\omega}_1} \times \cdots \times J_{\widetilde{\omega}_s} \\
= &(-1)^{n+1}\bigcup_{\substack{i_1 + \cdots + i_s = m \\ s \geqslant 2}} \bigcup_{I_1 \cup \cdots \cup I_s = \Delta^n} (-1)^{\varepsilon_B + s \sum_{j=1}^s|I_j|}  K_{\overline{\omega}} \times I_1 \times \cdots \times I_s \times  J_{\widetilde{\omega}_1} \times \cdots \times J_{\widetilde{\omega}_s} \\
= &- (-1)^{n}\bigcup_{\substack{ i_1 + \cdots + i_s = m \\ I_1 \cup \cdots \cup I_s = \Delta^n \\ s \geqslant 2}} (-1)^{\varepsilon_B + s n + \sum_{j=1}^s (i_j - 1) \left( \sum_{k=j+1}^s | I_k| \right)}  K_{\overline{\omega}} \times (I_1 \times J_{\widetilde{\omega}_1}) \times \cdots \times ( I_s \times J_{\widetilde{\omega}_s} ) \ .
\end{align*}
We then check that $\epsilon_B = n + \varepsilon_B + s n + \sum_{j=1}^s (i_j - 1) \left( \sum_{k=j+1}^s | I_k| \right)$ modulo 2. Hence, the polytopes $n-J_m$ contain indeed sign conventions (B) for $n-\Ainf$-morphisms.
\end{proof}

\subsection{The operadic bimodule $n-\Omega B As - \mathrm{Morph}$} \label{alg:ss:signs-n-ombas}

In~\cite{mazuir-I}, we computed the signs for $\Omega B As$-morphisms as follows. Endowing the compactified moduli spaces $\overline{\mathcal{CT}}_m$ with their \ombas -cell decompositions, we define the operadic bimodule $\Omega B As - \mathrm{Morph}$ to be the realization under the functor $C_{-*}^{cell}$ of the operadic bimodule $\{ \overline{\mathcal{CT}}_m \}_{m \geqslant 1}$. The signs in the differential are then computed as the signs arising in the top dimensional strata in the boundary of the moduli spaces $\overline{\mathcal{CT}}_m(t_g)$. The signs for the action-composition maps are the signs ensuing from the image under the functor $C_{-*}^{cell}$ of the action-composition maps for the moduli spaces $\overline{\mathcal{CT}}_m(t_g)$.

The goal of this section is to completely state definition~\ref{alg:def:op-bimod-ombas-n}, with explicit signs and formulae. We have however seen in subsection~\ref{alg:sss:these-poly-encode-ombas} that there is no operadic bimodule in compactified moduli spaces whose image under the functor $C_{-*}^{cell}$ could realize the operadic bimodule $n - \Omega B As - \mathrm{Morph}$. We will still compute the signs for the action-composition maps by introducing some suitable spaces of metric trees, which do not define an operadic bimodule but will however carry enough structure for our computations. The differential will simply be defined by reading the signs arising in the top dimensional strata of the boundary of the CW-complex $\Delta^n \times \overline{\mathcal{CT}}_m$ endowed with its $n - \ombas$-cell decomposition.

\subsubsection{Notation} \label{alg:sss:recoll-signs-n-ombas}

As in~\cite{mazuir-I}, we choose to use the formalism of orientations on trees to define the operadic bimodule $n - \Omega B As - \mathrm{Morph}$. Recall that this formalism originates from~\cite{markl-assoc}.

\begin{definition}
Given a broken stable ribbon tree $t_{br}$, an \emph{ordering} of $t_{br}$ is defined to be an ordering of its $i$ finite internal edges $e_{1} , \dots , e_{i}$. Two orderings are said to be \emph{equivalent} if one passes from one ordering to the other by an even permutation. An \emph{orientation} of $t_{br}$ is then defined to be an equivalence class of orderings, and written $\omega := e_{1} \wedge \cdots \wedge e_{i}$. Each tree $t_{br}$ has exactly two orientations. Given an orientation $\omega$ of $t_{br}$ we will write $-\omega$ for the second orientation on $t_{br}$, called its \emph{opposite orientation}.
\end{definition}

In this section, we write $t_{br,g}$ for a broken gauged stable ribbon tree, and $t_g$ for an unbroken gauged stable ribbon tree.

\begin{definition}
We set \arbreopunmorph\ to be the unique stable gauged tree of arity 1 and call it the \emph{trivial gauged tree}. We define the \emph{underlying broken stable ribbon tree} $t_{br}$ of a $t_{br,g}$ to be the ribbon tree obtained by first deleting all the \arbreopunmorph\ in $t_{br,g}$, and then forgetting all the remaining gauges of $t_{br,g}$. We will moreover refer to a gauge in $t_{br,g}$ which is associated to a non-trivial gauged tree, as a \emph{non-trivial gauge} of $t_{br,g}$. An \emph{orientation} on a broken gauged stable ribbon tree $t_{br,g}$ is then defined to be an orientation $\omega$ on $t_{br}$.
\end{definition}

\begin{figure}[h]
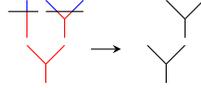
 
\exampleunderlyingbroken
\caption*{An instance of association $t_{br,g} \mapsto t_{br}$}
\end{figure} \label{alg:fig:underlying-broken}

\begin{definition}
Consider a gauged tree $t_{br,g}$ which has $b$ gauges, trivial or not. A list $\mathbb{I} := (I_1 , \dots , I_b)$ of faces $I_a \subset \Delta^n$ will be called a \emph{$\Delta^n$-labeling of $t_{br,g}$}. The tree $t_{br,g}$ endowed with its labeling will be written 
$(\mathbb{I} , t_{br,g} )$.
\end{definition}
\noindent We think of $(\mathbb{I} , t_{br,g} )$ as depicted in the figure below, where trees are represented as corollae for the sake of readability.
\[ \exampleCTmItg  \]

\subsubsection{Definition of the spaces of operations} \label{alg:sss:def-space-op-n-ombas}

\begin{definition}[Spaces of operations]
Consider the \Z -module freely generated by the pairs $(\mathbb{I} ,t_{br,g},\omega)$, where $\omega$ is an orientation on $t_{br,g}$ and $\mathbb{I}$ is a $\Delta^n$-labeling of $t_{br,g}$. We define the arity $m$ space of operations $n-\Omega B As - \mathrm{Morph}(m)_*$ to be the quotient of this \Z -module under the relation
\[ (\mathbb{I} , t_{br,g} , - \omega) = - (\mathbb{I},t_{br,g} , \omega) \ . \]
Introducing the notation $| \mathbb{I} | := \sum_{a=1}^b | I_a |$, a pair $(\mathbb{I}, t_{br,g} , \omega)$ is then defined to have degree $$|(\mathbb{I}, t_{br,g} , \omega)| :=  | \mathbb{I} | + |t_{br,g}| \ .$$
\end{definition}

\subsubsection{The oriented spaces $\mathcal{CT}_m(\mathbb{I},t_{br,g},\omega)$} \label{alg:sss:spaces-I-CTm}

Consider a $\Delta^n$-labeled gauged tree $(\mathbb{I} , t_{br,g} )$, together with a choice of orientation $\omega$ on $t_{br,g}$. We define the spaces
\[ \mathcal{CT}_m(\mathbb{I},t_{br,g},\omega ) := I_1 \times \cdots \times I_b \times \mathcal{CT}_m(t_{br,g},\omega ) \ . \]
An element of $\mathcal{CT}_m(\mathbb{I},t_{br,g},\omega )$ is thus of the form
\[ ( \delta_1 , \dots , \delta_b , \lambda_1, \dots , \lambda_g , l_{e_1} , \dots , l_{e(t_{br})} ) \in I_1 \times \cdots \times I_b \times ] - \infty , + \infty [^{g} \times ] 0 , +\infty[^{e(t_{br})}  \ , \]
where the $\lambda_i$ are the non-trivial gauges of $t_{br,g}$ ordered from left to right, and the $l_{e_i}$ are the lengths of the finite internal edges of $t_{br}$ ordered according to $\omega$. These spaces are then simply oriented by taking the product orientation of their factors.

\subsubsection{Definition of the action-compositions maps}  \label{alg:sss:def-action-comp-maps-n-ombas}

We may now introduce the "action-composition" maps on the spaces $\mathcal{CT}_m(\mathbb{I},t_{br,g})$, that we will use to define the signs of the action-composition maps for $n - \Omega B As - \mathrm{Morph}$. 
Define the maps
\begin{align*}
O_i : \ &\mathcal{CT}(\mathbb{I},t_{br,g},\omega) \times \mathcal{T}(t_{br}',\omega ') = \mathbb{I} \times \mathcal{CT}(t_{br,g},\omega) \times \mathcal{T}(t_{br}',\omega ') \\
&\longrightarrow \mathbb{I} \times \mathcal{CT}(t_{br,g} \circ_i t_{br}' , \omega \wedge \omega ') = \mathcal{CT}(\mathbb{I},t_{br,g} \circ_i t_{br}' , \omega \wedge \omega ')
\end{align*}
where $\mathbb{I}$ stands for the product $I_1 \times \cdots \times I_b$, and the arrow corresponds to the action-composition map 
\[ \mathcal{CT}(t_{br,g},\omega) \times \mathcal{T}(t_{br}',\omega ') \longrightarrow \mathcal{CT}(t_{br,g} \circ_i t_{br}' , \omega \wedge \omega ') \ , \]
of the operadic bimodule $\{ \mathcal{CT}_m \}_{m \geqslant 1}$.
Define also the maps
\begin{align*}
M : \ &\mathcal{T}(t_{br},\omega) \times \mathcal{CT}(\mathbb{I}_1,t_{br,g}^1,\omega_1) \times \cdots \times \mathcal{CT}(\mathbb{I}_s,t_{br,g}^s,\omega_s) \\
&\longrightarrow \mathbb{I}_1 \times \cdots \times \mathbb{I}_s \times \mathcal{T}(t_{br},\omega) \times \mathcal{CT}(t_{br,g}^1,\omega_1) \times \cdots \times \mathcal{CT}(t_{br,g}^s,\omega_s) \\
 &\longrightarrow \mathcal{CT}(\mathbb{I}_1 \cup \cdots \cup \mathbb{I}_s, \mu (t_{br} , t_{br,g}^1 \dots , t_{br,g}^s) , \omega \wedge \omega_1 \wedge \cdots \wedge \omega_s )
\end{align*}
where the second arrow corresponds to the action-composition map
\[ \mathcal{T}(t_{br},\omega) \times \mathcal{CT}(t_{br,g}^1,\omega_1) \times \cdots \times \mathcal{CT} (t_{br,g}^s,\omega_s) \longrightarrow \mathcal{CT} ( \mu (t_{br} , t_{br,g}^1 \dots , t_{br,g}^s) , \omega \wedge \omega_1 \wedge \cdots \wedge \omega_s ) \ .\]
The maps $O_i$ have sign $+1$. The maps $M$ have sign $(-1)^\dagger$, where $\dagger$ is defined as follows. Writing $g_i$ for the number of non-trivial gauges and $j_i$ for the number of gauge-vertex intersections of $t_{br,g}^i$, $i= 1 , \dots , s$, and setting $t_{br}^0 := t_{br}$ and $g_0 = j_0 = \mathrm{dim}(\mathbb{I}_0) = 0$, 
\[ \dagger := \sum_{i=1}^s |\mathbb{I}_i| \left( |t_{br}| + \sum_{l=1}^{i-1} |t_{br,g}^l | \right) + \sum_{i=1}^s g_i  \left( |t_{br}| + \sum_{l=1}^{i-1} |t_{br}^l | \right)  + \sum_{i=1}^s j_i \left( |t_{br}| + \sum_{l=1}^{i-1} |t_{br,g}^l| \right) \ . \]

\begin{definition}[Action-composition maps]
The action of the operad $\Omega B As$ on $ n - \Omega B As -\mathrm{Morph}$ is defined as
\[ \resizebox{\hsize}{!}{\begin{math} \begin{aligned}
(\mathbb{I} , t_{br,g},\omega) \circ_i (t_{br}',\omega ') &= ( \mathbb{I} , t_{br,g} \circ_i t_{br}' , \omega \wedge \omega ') \ , \\
\mu ((t_{br},\omega),( \mathbb{I}_1 , t_{br,g}^1,\omega_1),\dots,(\mathbb{I}_s , t_{br,g}^s,\omega_s)) &= (-1)^{\dagger} (\mathbb{I}_1 \cup \cdots \cup \mathbb{I}_s , \mu (t_{br} , t_{br,g}^1 \dots , t_{br,g}^s) , \omega \wedge \omega_1 \wedge \cdots \wedge \omega_s ) \ .
\end{aligned}
\end{math}} \]
\end{definition}

Using for instance the maps $O_i$ and $M$, and remembering the Koszul sign rules, we can check that these action-composition maps satisfy indeed all the associativity conditions for an operadic bimodule.
What's more, choosing a distinguished orientation for every gauged stable ribbon tree $t_g \in SCRT$, this definition of the operadic bimodule $n-\Omega B As -\mathrm{Morph}$ amounts to defining it as the free operadic bimodule in graded \Z -modules
\[ n-\Omega B As - \mathrm{Morph} = \mathcal{F}^{\Omega B As, \Omega B As}( \arbreopunmorphn , \arbrebicoloreLn , \arbrebicoloreMn , \arbrebicoloreNn , \cdots ,( I , SCRT_m ) ,\cdots ; I \subset \Delta^n) \ .  \]
It remains to define a differential on the generating operations $(I , t_g , \omega )$ to recover definition~\ref{alg:def:op-bimod-ombas-n}. 

\subsubsection{The boundary of the compactified moduli spaces $\overline{\mathcal{CT}}_m(t_g)$} \label{alg:sss:recoll-codim-1}

Before defining the differential on the operadic bimodule $n-\Omega B As - \mathrm{Morph}$, we recall the signs for the top dimensional strata in the boundary of the compactified moduli spaces $\overline{\mathcal{CT}}_m(t_g)$ that were computed in section I.5.2 in~\cite{mazuir-I}. 

We fix for the rest of this subsection a gauged stable ribbon tree $t_g$ whose gauge intersects $j$ of its vertices. We also choose an orientation $e_1 \wedge \cdots \wedge e_i$ on $t_g$ and order the $j$ gauge-vertex intersections from left to right 
\[ \ordreunbrokengaugedtreeintersectionsA \ . \]

The (int-collapse) boundary corresponds to the collapsing of an internal edge that does not intersect the gauge of the tree $t$. Suppose that it is the $p$-th edge $e_p$ of $t$ which collapses. Write moreover $(t/e_p)_g$ for the resulting gauged tree and $\omega_p := e_1 \wedge \cdots \wedge \widehat{e_p} \wedge \cdots \wedge e_i$ for the induced orientation on the edges of $t/e_p$. The boundary component $\mathcal{CT}_m((t/e_p)_{g}, \omega_p )$ bears a sign
\begin{align*}
(-1)^{p+1+j} \tag{\textit{int-collapse}}
\end{align*}
in the boundary of $\overline{\mathcal{CT}}_m(t_{g}, \omega )$.

The (gauge-vertex) boundary corresponds to the gauge crossing exactly one additional vertex of~$t$. We suppose that this intersection takes place between the $k$-th and $(k+1)$-th intersections of $t_g$ and write $t_g^{0}$ for the resulting gauged tree.
If the crossing results from a move
\[ \transformationgaugevertexA \ , \]
the boundary component $\mathcal{CT}_m(t_g^{0}, \omega )$ has sign
\begin{align*}
(-1)^{j+k} \tag{\textit{gauge-vertex A}}
\end{align*}
in the boundary of $\overline{\mathcal{CT}}_m(t_{g}, \omega )$.
If the crossing results from a move
\[ \transformationgaugevertexB \ , \]
the boundary component $\mathcal{CT}_m(t_g^{0}, \omega )$ has sign
\begin{align*}
(-1)^{j+k+1} \tag{\textit{gauge-vertex B}}
\end{align*}
in the boundary of $\overline{\mathcal{CT}}_m(t_{g}, \omega )$.

The (above-break) boundary corresponds either to the breaking of an internal edge of $t$, that is located above the gauge or intersects the gauge, or, when the gauge is below the root, to the outgoing edge breaking between the gauge and the root. Denote $e_0$ the outgoing edge of $t$. Suppose that it is the $p$-th edge $e_p$ of $t$ which breaks and write moreover $(t_p)_g$ for the resulting broken gauged tree.
The boundary component $\mathcal{CT}_m((t_p)_{g}, \omega_p )$ bears a sign
\begin{align*}
(-1)^{p+j} \tag{\textit{above-break}}
\end{align*}
in the boundary of $\overline{\mathcal{CT}}_m(t_{g}, \omega )$.

The (below-break) boundary corresponds to the breaking of edges of $t$ that are located below the gauge or intersect it, such that there is exactly one edge breaking in each non-self crossing path from an incoming edge to the root.  Write $(t_{br})_g$ for the resulting broken gauged tree. We order from left to right the $s$ non-trivial unbroken gauged trees $t_g^1 , \dots , t_g^s$ of $(t_{br})_g$ and denote $e_{j_1} , \dots , e_{j_s}$ the internal edges of $t$ whose breaking produces the trees $t_g^1 , \dots , t_g^s$. Beware that we do not necessarily have that $j_1 < \cdots < j_s$. To this extent, we denote $\varepsilon ( j_1 , \dots , j_s  ; \omega) $ the sign obtained after modifying $\omega$ by moving $e_{j_k}$ to the $k$-th spot in $\omega$. We write $\omega_{br}$ for the induced orientation on $(t_{br})_g$, which is obtained by deleting the edges $e_{j_k}$ in $\omega$. The boundary component $\mathcal{CT}_m((t_{br})_{g}, \omega_{br} )$ has sign
\begin{align*}
(-1)^{\varepsilon ( j_1 , \dots , j_s ; \omega) + 1 + j} \tag{\textit{below-break}}
\end{align*}
in the boundary of $\overline{\mathcal{CT}}_m(t_{g}, \omega )$.

\subsubsection{Definition of the differential} \label{alg:sss:def-diff-n-ombas}

\begin{definition}[Differential]
The differential of a generating operation $(I,t_{g},\omega)$ is defined by reading the signs of the top dimensional strata in the boundary of the space $I \times \overline{\mathcal{CT}}_m(t_{g},\omega)$, endowed with its $\mathrm{dim}(I) - \ombas$ cell decomposition. It reads as
\[ \resizebox{\hsize}{!}{\begin{math} \begin{aligned}
\partial (I,t_{g},\omega) := &\sum_{l=0}^{\mathrm{dim}(I)} (-1)^l ( \partial^{sing}_l I ,t_{g},\omega) + (-1)^{|I|} \sum (-1)^{\dagger_{\Omega B As}} (I , \mathrm{int-collapse}(t_g,\omega )) \\
&+ (-1)^{|I|} \sum (-1)^{\dagger_{\Omega B As}} (I,\mathrm{gauge-vertex}(t_g,\omega )) + (-1)^{|I|} \sum (-1)^{\dagger_{\Omega B As}} (I,\mathrm{above-break}(t_g,\omega)) \\
&+ (-1)^{|I|} \sum_{I_1 \cup \cdots \cup I_b = I} (-1)^{\dagger_{\Omega B As}} ((I_1,\dots,I_b),\mathrm{below-break}(t_g,\omega)) \ ,
\end{aligned}
\end{math}} \]
where $b$ denotes the number of gauges of $\mathrm{below-break}(t_g)$ and the signs $(-1)^{\dagger_{\Omega B As}}$ denote the $\Omega B As - \mathrm{Morph}$ signs listed in the previous subsection. 
\end{definition}

For instance, choosing the orientation $e_1 \wedge e_2$ on 
\[ \orderingarbrebicoloresignesA \ , \] 
the signs in the computation of subsection~\ref{alg:sss:n-ombas-morph} are
\begin{align*}
 \partial \left( \exarbrebicoloreA , e_1 \wedge e_2 \right) = & \left( \exarbrebicoloreB , e_1 \wedge e_2 \right) - \left( \exarbrebicoloreC , e_1 \wedge e_2 \right) + \left( \exarbrebicoloreD , e_1 \wedge e_2 \right) \\ 
 &- \left( \exarbrebicoloreE , e_1 \wedge e_2 \right) - \left( \exarbrebicoloreF , e_1 \wedge e_2 \right) + \left( \exarbrebicoloreG , e_1 \wedge e_2 \right) \\
  & - \left( \exarbrebicoloreJ , \emptyset \right) - \left( \exarbrebicoloreK , \emptyset \right) - \left( \exarbrebicoloreL , \emptyset \right) \\
 & + \left( \exarbrebicoloreH , e_1 \right) - \left( \exarbrebicoloreI , e_2 \right) \ .
\end{align*}
This concludes the construction of the operadic bimodule $n - \Omega B As - \mathrm{Morph}$.

\subsubsection{The morphism of operadic bimodules $\infmorn \rightarrow n - \Omega B As - \mathrm{Morph}$} \label{alg:sss:morph-op-bimod-infmorn}

To conclude, it remains to define the morphism of operadic bimodules $\infmorn \rightarrow n - \Omega B As - \mathrm{Morph}$. It is enough to define this morphism on the generating operations of \infmorn\ and to check that it is compatible with the differentials. 

\begin{alg:prop:morph-op-bimod} 
The map $\infmorn \rightarrow n - \Omega B As - \mathrm{Morph}$ defined on the generating operations of \infmorn\ as
\[ f_{I,m} \longmapsto \sum_{t_g \in CBRT_m} (I , t_g, \omega_{can}) \]
is a morphism of $(\Ainf , \Ainf)$-operadic bimodules.
\end{alg:prop:morph-op-bimod} 
We refer to section I.5.3 of~\cite{mazuir-I} for the definition of the canonical orientations $\omega_{can}$. It is easy to check that this map is indeed compatible with the differentials : either making explicit signs computations, or noting that this morphism corresponds to the refinement of the $n-\Ainf$-cell decomposition of $n - J_m$ to its $n-\Omega B As$-cell decomposition.

\newpage

\begin{leftbar}
\part{The simplicial sets $\mathrm{HOM}_{\mathsf{\Ainf -Alg}}(A,B)_{\bullet}$} \label{p:simplicial}
\end{leftbar}

\setcounter{section}{0}

\section{$\infty$-categories, Kan complexes and cosimplicial resolutions}

\subsection{$\infty$-categories and Kan complexes} \label{alg:ss:inf-cat}

\subsubsection{Motivation} \label{alg:sss:motiv-inf-cat}

The operads \Ainf\ and $\Omega B As$ provide two equivalent frameworks to study the notion of "dg-algebras which are associative up to homotopy". See section III.2 of~\cite{mazuir-I} for a detailed account on the matter. In fact, the operad \Ainf\ can also be used to define the notion of "dg-categories whose composition is associative up to homotopy" : these categories are called \emph{\Ainf -categories}. We recall their definition in subsection~\ref{ss:n-functors}. They are of prime interest in symplectic topology for instance, where they appear as the Fukaya categories of symplectic manifolds. The notion of $\Omega B As$-categories could be defined similarly, but it has never appeared in the litterature to the author's knowledge.

\Ainf -categories are thus "categories" which are endowed with a collection of operations corresponding to all the higher coherent homotopies arising from the associativity up to homotopy of their composition. They are thus \emph{operadic in essence}. The notion of $\infty$-category that we are going to define below, provides another framework to study "categories whose composition is associative up to homotopy" but is, on the other hand, not operadic : it does not come with a specific set of operations encoding rigidly all the higher coherent homotopies.

\subsubsection{Intuition} \label{alg:sss:int-inf-cat}

A category can be seen as the data of a set of points, its objects, together with a set of arrows between them, the morphisms. The composition is then simply an operation which produces from two arrows $A \rightarrow B$ and $B \rightarrow C$ a new arrow $A \rightarrow C$. 

Part of the data of an $\infty$-category will also consist in a set of objects and arrows between them. The difference will lie in the notion of composition. Given two arrows $u : A \rightarrow B$ and $v : B \rightarrow C$, an $\infty$-category will have the property that there always exists a new arrow $A \rightarrow C$, which can be called \emph{a composition} of $u$ and $v$. But this arrow is not necessarily unique, and above all, it results from a property of the "category" and is not produced by an operation of composition. It is in this sense that an $\infty$-category is not operadic.

\subsubsection{Definition} \label{alg:sss:def-inf-cat}

The correct framework to formulate this paradigm is the one of simplicial sets. We write $\Delta^n$ for the simplicial set naturally realizing the standard $n$-simplex $\Delta^n$, and $\mathsf{\Lambda}^k_n$ for the simplicial set realizing the simplicial subcomplex obtained from $\Delta^n$ by removing the faces $[0 < \cdots < n]$ and  $[0 < \cdots <\widehat{k} < \cdots < n]$. The simplicial set $\mathsf{\Lambda}^k_n$ is called a \emph{horn}, if $ 0 < k <n$ it is called an \emph{inner horn}, and if $k=0$ or $k=n$ it is called an \emph{outer horn}.

An \emph{$\infty$-category} is then defined to be a simplicial set $X$ which has the left-lifting property with respect to all inner horn inclusions $\mathsf{\Lambda}^k_n \rightarrow \Delta^n$ : for each $n \geqslant 2$ and each $0 < k < n$, every simplicial map $ u : \mathsf{\Lambda}^k_n \rightarrow X$ extends to a simplicial map $\overline{u} : \Delta^n \rightarrow X$ whose restriction to $\mathsf{\Lambda}^k_n$ is $u$. This is illustrated in the diagram below.
\[ \begin{tikzcd}[row sep=large, column sep = large]
\mathsf{\Lambda}^k_n \arrow[hookrightarrow]{d} \arrow[r,"u"{above}] & X \\
\Delta^n \arrow[ur, dashed , "\exists \ \overline{u}" {below right}] &
\end{tikzcd} \]
The vertices of $X$ are then to be seen as objects, while its edges correspond to morphisms. An \emph{$\infty$-groupoid}, also called \emph{Kan complex}, is defined to be a simplicial set $X$ which has the left-lifting property with respect to all horn inclusions.

For an $\infty$-category, the left-lifting property with respect to $\mathsf{\Lambda}^1_2 \rightarrow \Delta^2$ ensures that the following diagram can always be filled by the dashed arrows
\[ \begin{tikzcd}[row sep=large, column sep = large]
0 \arrow[dashed]{dr}[name=U,above right,pos=0.6]{} \arrow{r} & 1 \arrow{d} \arrow[dashed,Rightarrow, to=U] \\
& 2
\end{tikzcd} \ . \] 
The $[ 0 < 2]$ edge will represent a composition of the morphisms associated to $[0<1]$ and $[1<2]$. For an $\infty$-groupoid, the left-lifting property with respect to the outer horns $\mathsf{\Lambda}^0_2 \rightarrow \Delta^2$ and $\mathsf{\Lambda}^2_2 \rightarrow \Delta^2$ ensures that every morphism is invertible up to homotopy (hence the name \emph{$\infty$-groupoid}). The intuition of subsection~\ref{alg:sss:int-inf-cat} is thus realized, and gives rise to a wide range of higher homotopies controlled by the combinatorics of simplicial algebra.

\subsubsection{Simplicial homotopy groups of a Kan complex (\cite{goerss-jardine})} \label{sss:simpl-hom-groups}

Let $\pmb{X} := \{ X_n \}_{n \geqslant 0}$ be a simplicial set. It is straighforward to define its \emph{set of path components} $\pi_0(X)$.
We define a \emph{simplicial homotopy} between two simplicial maps $f , g : \Delta^n \rightarrow X$ to be a simplicial map $h :\Delta^1 \times \Delta^n \rightarrow X$ such that $h \circ (\ide \times d_1) = g$ and $h \circ (\ide \times d_0) = f$, i.e. such that the following diagram commutes
\[ \begin{tikzcd}[column sep = large]
\Delta^n \arrow[d,right,"\ide \times d_0"] \arrow[dr,"f"] & \\
\Delta^1 \times \Delta^n \arrow[r,"h"] & X \\
\Delta^n \arrow[u,left,"\ide \times d_1"] \arrow[ur,below,"g"] & 
 \end{tikzcd} \ . \]

Suppose now that $\pmb{X}$ is a Kan complex and choose a vertex $x \in X_0$. One can associate to the pair $(\pmb{X},x)$ a sequence of groups called its \emph{simplicial homotopy groups}. For $n \geqslant 1$, consider the set of simplicial maps $\Delta^n \rightarrow \pmb{X}$ taking $\partial \Delta^n$ to $x$. We say that two such maps $f,g : \Delta^n \rightarrow \pmb{X}$ are equivalent if there exists a simplicial homotopy $h$ from $f$ to $g$, that maps $\Delta^1 \times \partial \Delta^n$ to $x$. We define $\pi_n(\pmb{X},x)$ to be the set of equivalence classes of such maps under this equivalence relation. It can be endowed with a composition law as follows. Given two representatives $f$ and $g$ in $\pi_n(\pmb{X},x)$, define the inner horn $\phi_{f,g} : \mathsf{\Lambda}^{n}_{n+1} \rightarrow \pmb{X}$ to send the $i$-th face to $x$ for $i = 0 , \dots , n-2$, the $(n-1)$-th face to $f$ and the $(n+1)$-th face to $g$. The simplicial set $\pmb{X}$ being a Kan complex, this horn can be filled to a $(n+1)$-simplex $\Phi : \Delta^{n+1} \rightarrow \pmb{X}$. We then define $[f] \cdot [g] \in \pi_n(\pmb{X},x)$ to be the equivalence class of the $n$-th face of $\Phi$. 

The assumption that $\pmb{X}$ is a Kan complex then ensures that this composition law is well-defined, and that the set $\pi_n(X,x)$ endowed with this composition law is indeed a group, called the \emph{$n$-th (simplicial) homotopy group of $\pmb{X}$ at $x$}. This group is abelian when $n \geqslant 2$. Moreover, it is naturally isomorphic to the classical homotopy group $\pi_n(|\pmb{X}|,x)$ of the geometric realization $|\pmb{X}|$ of $\pmb{X}$.

\subsection{Cosimplicial resolutions in model categories} \label{alg:ss:cosimplicial-resolutions}

One way to produce Kan complexes is through \emph{cosimplicial resolutions} in model categories. All the results stated in this section are drawn from~\cite{hirschhorn}. We refer to chapters 7 and 8 for basics on model categories, and will only list the technical details that we will need in the proof of Theorem~\ref{alg:th:infinity-gr}. 

We define the simplex category $\Delta$ to be the category whose objets are nonnegative integers $[n]$ and whose sets of morphisms $\Delta([n],[m])$ consists of the increasing maps from $\{ 0 , \dots , n \}$ to $\{ 0 , \dots , m \}$. This is the category encoding cosimplicial objects : a cosimplicial object in a category $\mathcal{C}$ corresponds to a functor $\Delta \rightarrow \mathcal{C}$. We denote $\mathcal{C}^\Delta$ the category of cosimplicial objects, whose morphisms are the morphisms of cosimplicial objects, i.e. the natural transformations between the associated functors $\Delta \rightarrow \mathcal{C}$. For an object $C \in \mathcal{C}$ we denote moreover $const^*C$ the constant cosimplicial object whose cofaces and codegeneracies are the identity maps of $C$.

Let now $\mathcal{C}$ be a model category. The category of cosimplicial objects $\mathcal{C}^\Delta$ can then also be endowed with a model category structure, called its \emph{Reedy model category structure}. Its weak equivalences are the maps of cosimplicial objects that are level-wise weak equivalences in $\mathcal{C}$. Its cofibrants objects are the cosimplicial objects $\pmb{C} := \{ C^n \}$ such that the latching maps $L_n \pmb{C} \rightarrow C^n$ are cofibrations in $\mathcal{C}$. We refer to chapters 15 and 16 of~\cite{hirschhorn} for a definition of latching objects and latching maps, together with a complete description of the Reedy model category structure on $\mathcal{C}^\Delta$.

Let $C \in \mathcal{C}$. A \emph{cosimplicial resolution} of $C$ is defined to be a cofibrant approximation $\pmb{C}$ of $const^*C$ in the model category $\mathcal{C}^\Delta$. In other words, it is the data of a cosimplicial object $\pmb{C} := \{ C^n \}_{n \geqslant 0 }$ of $\mathcal{C}$ together with a cosimplicial morphism $\pmb{C} \rightarrow const^*C$, such that the maps $C^n \rightarrow C$ are weak equivalences in $\mathcal{C}$ and the latching maps $L_n \pmb{C} \rightarrow C^n$ are cofibrations in $\mathcal{C}$.

\begin{lemma}[Lemma 16.5.3 of \cite{hirschhorn}] \label{alg:lemma:hirschhorne}
If $\pmb{C} \rightarrow const^*C$ is a cosimplicial resolution in $\mathcal{C}$ and $D$ is a fibrant object of $\mathcal{C}$, then the simplicial set $\mathcal{C}(\pmb{C} , D)$ is a Kan complex.
\end{lemma}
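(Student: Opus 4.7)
The plan is to reduce the Kan lifting property for $\mathcal{C}(\pmb{C},D)$ to a lifting problem inside the model category $\mathcal{C}$, where it will be solved using the fibrancy of $D$ together with a pushout-product-type property of the cosimplicial resolution $\pmb{C}$.

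First I would introduce the tensor of a simplicial set with a cosimplicial object. For $K \in \mathsf{sSet}$ and $\pmb{C} \in \mathcal{C}^\Delta$, set
\[
K \otimes \pmb{C} \; := \; \int^{[n] \in \Delta} K_n \cdot C^n \;\in\; \mathcal{C},
\]
where $K_n \cdot C^n$ denotes the $K_n$-indexed coproduct of copies of $C^n$. This construction is functorial in both variables and one checks the standard adjunction
\[
\mathcal{C}(K \otimes \pmb{C}, D) \;\cong\; \mathsf{sSet}\bigl(K, \mathcal{C}(\pmb{C},D)\bigr).
\]
Under this adjunction, the data of a horn $u : \mathsf{\Lambda}^k_n \to \mathcal{C}(\pmb{C},D)$ together with a proposed extension $\overline{u} : \Delta^n \to \mathcal{C}(\pmb{C},D)$ is equivalent to the data of a lifting problem in $\mathcal{C}$ against the object $D$
\[
\begin{tikzcd}[row sep=large, column sep = large]
\mathsf{\Lambda}^k_n \otimes \pmb{C} \arrow[d] \arrow[r] & D \\
\Delta^n \otimes \pmb{C} \arrow[ur, dashed] &
\end{tikzcd}
\]

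The second step is the technical heart of the proof: I would show that the induced map $i_{k,n} : \mathsf{\Lambda}^k_n \otimes \pmb{C} \to \Delta^n \otimes \pmb{C}$ is a trivial cofibration in $\mathcal{C}$ whenever $\pmb{C}$ is a cosimplicial resolution of $C$. The cofibration part follows from a Reedy-cofibrancy argument: since $\pmb{C}$ is Reedy cofibrant (its latching maps $L_n \pmb{C} \to C^n$ are cofibrations by hypothesis), and since the horn inclusion $\mathsf{\Lambda}^k_n \hookrightarrow \Delta^n$ is a monomorphism of simplicial sets that only adds non-degenerate simplices in dimensions $\geqslant n$, the map $i_{k,n}$ can be constructed as a transfinite composition of pushouts of the latching cofibrations of $\pmb{C}$ along the attaching maps coming from the cells of $\Delta^n \setminus \mathsf{\Lambda}^k_n$. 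This is a cosimplicial version of the usual pushout-product / cellular decomposition argument. For the weak equivalence part, one uses that each comparison map $C^m \to C$ is a weak equivalence, hence $K \otimes \pmb{C}$ computes a homotopy colimit of the constant diagram $K \otimes const^*C$ up to weak equivalence, so the cofibration $i_{k,n}$ realizes the horn inclusion at the level of homotopy types; since horn inclusions are weak equivalences of simplicial sets (the horns $\mathsf{\Lambda}^k_n$ being contractible), the induced map in $\mathcal{C}$ is a weak equivalence.

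Granted this, the proof concludes immediately: $D$ is fibrant, $i_{k,n}$ is a trivial cofibration, so the lifting $\Delta^n \otimes \pmb{C} \dashrightarrow D$ exists, and transporting back through the adjunction produces the required extension $\overline{u} : \Delta^n \to \mathcal{C}(\pmb{C},D)$. This holds for every $n \geqslant 1$ and every $0 \leqslant k \leqslant n$, so $\mathcal{C}(\pmb{C},D)$ is a Kan complex.

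The main obstacle is the second step: verifying cleanly that $i_{k,n}$ is a Reedy-cellular trivial cofibration. This requires either a careful skeletal filtration argument identifying the pushouts that build $\Delta^n \otimes \pmb{C}$ from $\mathsf{\Lambda}^k_n \otimes \pmb{C}$ in terms of latching maps of $\pmb{C}$ applied to the non-degenerate cells of the relative complex $(\Delta^n, \mathsf{\Lambda}^k_n)$, or alternatively invoking the general fact that the bifunctor $(-)\otimes(-) : \mathsf{sSet} \times \mathcal{C}^\Delta \to \mathcal{C}$ is a left Quillen bifunctor with respect to the standard Kan–Quillen structure on $\mathsf{sSet}$ and the Reedy structure on $\mathcal{C}^\Delta$. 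The remaining arguments are formal consequences of the adjunction and of the definition of fibrant object.
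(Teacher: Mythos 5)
The paper itself offers no proof of this lemma: it is quoted with a citation to Hirschhorn (Lemma 16.5.3), and your plan is in fact a reconstruction of Hirschhorn's own argument — define $K \otimes \pmb{C}$ by the coend formula, use the adjunction $\mathcal{C}(K \otimes \pmb{C}, D) \cong \mathsf{sSet}(K, \mathcal{C}(\pmb{C},D))$ to turn horn filling into a lifting problem against $D$, and reduce everything to the claim that $\mathsf{\Lambda}^k_n \otimes \pmb{C} \to \Delta^n \otimes \pmb{C}$ is a trivial cofibration. The reduction and the cofibration half are sound: one has $\Delta^n \otimes \pmb{C} \cong C^n$ and $\partial\Delta^n \otimes \pmb{C} \cong L_n\pmb{C}$, so attaching the nondegenerate cells of $\Delta^n$ not in $\mathsf{\Lambda}^k_n$ (note these are the missing $(n-1)$-face and the top $n$-cell, not only cells of dimension $\geqslant n$) exhibits $i_{k,n}$ as a composite of pushouts of latching cofibrations, exactly as you say.

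The gap is in the triviality of $i_{k,n}$, which you correctly identify as the heart of the matter but do not actually establish. Your fallback "general fact" is false: the bifunctor $\mathsf{sSet} \times \mathcal{C}^{\Delta} \to \mathcal{C}$ is \emph{not} a left Quillen bifunctor for the Kan--Quillen and Reedy structures. For instance, in $\mathcal{C} = \mathsf{sSet}$ take $C^n := \Delta^n / \mathrm{sk}_0 \Delta^n$; this cosimplicial object is Reedy cofibrant, yet $\mathsf{\Lambda}^0_1 \otimes \pmb{C} = C^0 = \ast$ while $\Delta^1 \otimes \pmb{C} = C^1 \simeq S^1$, so the pushout-product axiom fails in precisely the direction you need. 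This is exactly where the resolution (homotopy constancy) hypothesis must be used, and invoking the bifunctor statement would discard it. Your primary route does invoke homotopy constancy, but the justification is circular as written: "horn inclusions are weak equivalences of simplicial sets, hence the induced map in $\mathcal{C}$ is a weak equivalence" presupposes that $- \otimes \pmb{C}$ takes weak equivalences of simplicial sets to weak equivalences, which (via Ken Brown's lemma, all simplicial sets being cofibrant) is essentially the left Quillen property you are trying to prove. What is needed is a genuine inductive argument: using that all coface composites $C^0 \to C^m$ are weak equivalences between cofibrant objects (2-out-of-3 against the augmentations to $C$), one shows by induction over skeleta that a vertex inclusion induces a weak equivalence into $\mathsf{\Lambda}^k_n \otimes \pmb{C}$ and into $\Delta^n \otimes \pmb{C} = C^n$, and concludes by 2-out-of-3; this is the content of the relevant results in Hirschhorn's Chapter 16 (equivalently, of the theory of cosimplicial frames in Hovey). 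Either carry out that induction or cite those statements precisely; with that step repaired, the rest of your plan (fibrancy of $D$ plus the adjunction) does complete the proof.
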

\noindent Following~\cite{dwyer-kan}, the simplicial set $\mathcal{C}(\pmb{C} , D)$ is called a \emph{function complex} or \emph{homotopy function complex} from $C$ to $D$, and its homotopy type is sometimes called the \emph{derived hom space} from $C$ to $D$.

\section{The $\mathrm{HOM}$-simplicial sets $\mathrm{HOM}_{\mathsf{\Ainf -Alg}}(A,B)_{\bullet}$} \label{alg:s:hom-are-inf-gr}

\subsection{The $\mathrm{HOM}$-simplicial set $\mathrm{HOM}_{\Ainf}(A,B)_\bullet$ is a Kan complex}

The $\mathrm{HOM}$-simplicial sets $\mathrm{HOM}_{\mathsf{\Ainf -Alg}}(A,B)_{\bullet}$ provide a satisfactory framework to study the higher algebra of \Ainf -algebras thanks to the following theorem : 
\begin{theorem} \label{alg:th:infinity-gr}
For $A$ and $B$ two \Ainf -algebras, the simplicial set $\mathrm{HOM}_{\Ainf}(A,B)_\bullet$ is a Kan complex.
\end{theorem}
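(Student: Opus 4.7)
The plan is to apply Lemma~\ref{alg:lemma:hirschhorne} in the model category $\mathcal{C}$ of conilpotent dg-coalgebras equipped with the model structure of~\cite{lefevre-hasegawa}, in which every object -- and in particular $\overline{T}(sB)$ -- is fibrant. Since the cosimplicial dg-coalgebra $\pmb{\Delta}^\bullet$ is a cosimplicial object in $\mathcal{C}$, tensoring with $\overline{T}(sA)$ yields a cosimplicial object $\pmb{\Delta}^\bullet \otimes \overline{T}(sA)$ in $\mathcal{C}$, and the counits $\pmb{\Delta}^n \to \Z = \pmb{\Delta}^0$ assemble into an augmentation $\pmb{\Delta}^\bullet \otimes \overline{T}(sA) \to const^* \overline{T}(sA)$. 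It will then suffice to check that this augmentation is a cosimplicial resolution of $\overline{T}(sA)$ in $\mathcal{C}$, for then Lemma~\ref{alg:lemma:hirschhorne} gives that
\[
\mathrm{HOM}_{\Ainf}(A,B)_\bullet \;=\; \mathcal{C}\bigl(\pmb{\Delta}^\bullet \otimes \overline{T}(sA),\ \overline{T}(sB)\bigr)
\]
is a Kan complex.

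The weak equivalence half is the straightforward one. The counit $\pmb{\Delta}^n \to \Z$ is a quasi-isomorphism of dg-coalgebras, since $\pmb{\Delta}^n$ computes the normalized simplicial chain complex of the contractible simplex $\Delta^n$. Tensoring with $\overline{T}(sA)$ preserves this property, as can be seen by filtering by the coradical filtration on $\overline{T}(sA)$ and running the associated spectral sequence, whose $E_1$-term only involves tensor products of the map $\pmb{\Delta}^n \to \Z$ with the bigraded pieces of $\overline{T}(sA)$. One then concludes that $\pmb{\Delta}^n \otimes \overline{T}(sA) \to \overline{T}(sA)$ is a filtered quasi-isomorphism in the sense of~\cite{lefevre-hasegawa}, hence a weak equivalence in $\mathcal{C}$.

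The main obstacle will be verifying the cofibrancy condition on the latching maps. Concretely, for each $n$ one must show that the latching object $L_n(\pmb{\Delta}^\bullet \otimes \overline{T}(sA))$, computed from the images of the codegeneracies $\sigma_i$ of subsection~\ref{alg:sss:cosimpl-dg-cog} applied to $\pmb{\Delta}^{n-1} \otimes \overline{T}(sA)$, includes into $\pmb{\Delta}^n \otimes \overline{T}(sA)$ via a cofibration. In the Lefèvre-Hasegawa model structure, cofibrations of conilpotent dg-coalgebras are, up to technicalities on the coradical filtration, levelwise monomorphisms. Since the codegeneracies $\sigma_i$ on $\pmb{\Delta}^\bullet$ are split epimorphisms of graded $\Z$-modules by inspection, the corresponding latching inclusions are split monomorphisms of graded coalgebras, and this property is preserved by tensoring with $\overline{T}(sA)$ since the latter is a cofree conilpotent coalgebra. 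The proof is then completed by the standard Reedy machinery of cosimplicial objects recalled in~\cite{hirschhorn}; alternatively, a more combinatorial proof is possible by filling a given horn $\mathsf{\Lambda}^k_n \to \mathrm{HOM}_{\Ainf}(A,B)_\bullet$ inductively in arity $m$, using that the inclusion of the subcomplex of $\pmb{\Delta}^n$ corresponding to $\mathsf{\Lambda}^k_n$ admits a contracting homotopy, to solve the $n$-\Ainf-morphism equations of Definition~\ref{alg:def:n-morph-ainf}.
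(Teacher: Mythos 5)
Your overall strategy is exactly the paper's: exhibit $\pmb{\Delta}^\bullet \otimes \overline{T}(sA)$ as a cosimplicial resolution of $\overline{T}(sA)$ in the Lefèvre--Hasegawa model structure on cocomplete dg-coalgebras and apply Lemma~\ref{alg:lemma:hirschhorne}, with the weak-equivalence half handled via the tensor-length (coradical) filtration and the contractibility of $\pmb{\Delta}^n$ -- this part of your argument coincides with the paper's, and you correctly route it through the filtered quasi-isomorphism criterion of Lemma~\ref{alg:lemma:model-cat-dgcog}, which is needed because a plain quasi-isomorphism of cocomplete dg-coalgebras need not be a weak equivalence. However, two of your statements are wrong as written. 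First, it is not true that every object of this model category is fibrant: every object is \emph{cofibrant}, and the fibrant objects are precisely those isomorphic as graded coalgebras to tensor coalgebras (Lemma~\ref{alg:lemma:model-cat-dgcog}); your use survives only because $\overline{T}(sB)$ happens to be such a coalgebra.

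Second, and more seriously, your verification of the latching condition -- the step you yourself flag as the main obstacle -- computes the wrong object. For a \emph{cosimplicial} object the latching category at $[n]$ is indexed by the non-identity injections $[k]\rightarrowtail[n]$, i.e.\ by the cofaces; the codegeneracies belong to the inverse subcategory and enter the matching objects, not the latching objects. So the images of the codegeneracies applied to $\pmb{\Delta}^{n-1}\otimes\overline{T}(sA)$ do not compute $L_n$, and the ``split epimorphism of codegeneracies $\Rightarrow$ split monomorphism of latching maps'' step does not establish what is needed; nothing about $\overline{T}(sA)$ being cofree is relevant here either. The correct verification is in fact much easier and is what the paper does: the latching object is the colimit over the cofaces, namely $\pmb{\partial \Delta}^n \otimes \overline{T}(sA)$, and the latching map is the evident inclusion into $\pmb{\Delta}^n \otimes \overline{T}(sA)$, which is a monomorphism of underlying complexes and hence a cofibration, since cofibrations in $\mathsf{dg-Cogc}$ are exactly such monomorphisms. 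As for the combinatorial alternative you sketch at the end, be aware that the paper's explicit horn filling (Proposition~\ref{alg:prop:algebraic-infty}) is carried out only for inner horns, and the remark in subsection~\ref{alg:sss:remark-proof-inf-cat-ainf} shows that merely solving the differential equations by a contracting homotopy does not automatically produce a map compatible with the coproduct, so that route would require the full tensor-coalgebra bookkeeping rather than just a contracting homotopy of the horn.
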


\noindent The simplicial homotopy groups of this Kan complex are computed in subsection~\ref{alg:ss:homotopy-groups-HOM}. In fact, we can moreover give an explicit description of all inner horn fillers :
\begin{proposition} \label{alg:prop:algebraic-infty}
For every inner horn $\mathsf{\Lambda}^k_n \subset \Delta^n$, there is a one-to-one correspondence
\[ \left\{ \text{fillers} \ \ \ 
\begin{tikzcd}[row sep=large, column sep = large]
\mathsf{\Lambda}^k_n \arrow[d] \arrow[r] & \mathrm{HOM}_{\Ainf}(A,B)_\bullet \\
\Delta^n \arrow[ur, dashed] &
\end{tikzcd} \right\} 
\longleftrightarrow
\left\{ \begin{array}{c} \text{families of maps of degree $-n$ } \\ F^{(m)}_{\Delta^n} : (sA)^{\otimes m} \rightarrow sB , \ m \geqslant 1 \end{array} \right\} \ . \] 
In other words, the Kan complex $\mathrm{HOM}_{\Ainf}(A,B)_\bullet$ is in particular an \emph{algebraic $\infty$-category}.
\end{proposition}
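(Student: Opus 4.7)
The plan is to unwind both sides of the correspondence in terms of the operations $F^{(m)}_I$ from Definition~\ref{alg:def:n-morph-ainf}. A horn $\phi: \mathsf{\Lambda}^k_n \to \mathrm{HOM}_{\Ainf}(A,B)_\bullet$ is the datum of degree $|I|$ maps $F^{(m)}_I: (sA)^{\otimes m} \to sB$ for every face $I$ of $\Delta^n$ distinct from $\Delta^n$ itself and from $\partial_k\Delta^n$, satisfying the $n-\Ainf$ equations on each such $I$; note that every sub-face of such an $I$ is again on the horn (it has codimension at least two in $\Delta^n$), so these equations are meaningful. A filler therefore amounts to specifying two additional families, $\{F^{(m)}_{\Delta^n}\}_{m \geq 1}$ of degree $-n$ and $\{F^{(m)}_{\partial_k\Delta^n}\}_{m \geq 1}$ of degree $1-n$, subject to the two remaining $n-\Ainf$ equations indexed by $I = \Delta^n$ and $I = \partial_k\Delta^n$.

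The forward direction of the correspondence is simply the extraction $(\text{filler}) \mapsto \{F^{(m)}_{\Delta^n}\}_{m \geq 1}$. For the converse, given arbitrary degree $-n$ maps $F^{(m)}_{\Delta^n}$, I would isolate the term $(-1)^k F^{(m)}_{\partial_k\Delta^n}$ in the $n-\Ainf$ equation indexed by $\Delta^n$ and take the resulting identity as the definition of $F^{(m)}_{\partial_k\Delta^n}$. The key combinatorial observation is that whenever $\partial_k\Delta^n$ appears as a factor $I_l$ in an overlapping partition $I_1 \cup \cdots \cup I_s = \Delta^n$ with $s \geq 2$, the corresponding arity $i_l$ is strictly less than $m$, since the $s-1 \geq 1$ other factors each contribute at least one to the total arity $m$. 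Hence the formula expresses $F^{(m)}_{\partial_k\Delta^n}$ entirely in terms of the horn data together with $\{F^{(m')}_{\Delta^n}\}_{m' \leq m}$ and $\{F^{(m')}_{\partial_k\Delta^n}\}_{m' < m}$, and a straightforward induction on $m$ renders it well-defined.

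What remains is the automatic verification that this forced $F^{(m)}_{\partial_k\Delta^n}$ satisfies its own $n-\Ainf$ equation. I would argue this cleanly via the bar-construction viewpoint of Definition~\ref{alg:def:n-morph-ainf-susp}: the horn $\phi$ assembles into a dg-coalgebra morphism $\Phi: L \otimes \overline{T}(sA) \to \overline{T}(sB)$, where $L \subset \pmb{\Delta}^n$ denotes the sub-dg-coalgebra generated by the faces of the horn. By cofreeness of $\overline{T}(sB)$, extending $\Phi$ to all of $\pmb{\Delta}^n \otimes \overline{T}(sA)$ amounts to prescribing its cogenerator projection on the two missing summands, compatibly with the differential. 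The $\Delta^n$-equation is exactly the differential compatibility on the $[0<\cdots<n]$-summand, so prescribing $F^{(m)}_{\Delta^n}$ freely and defining $F^{(m)}_{\partial_k\Delta^n}$ by the recipe above realises that compatibility; the compatibility on the $\partial_k\Delta^n$-summand then follows formally by applying $D_B$ once more, using $D_B^2 = 0$ on $\overline{T}(sB)$, and invoking the already-established chain-map property of $\Phi$ on $L$. The main technical obstacle will be to translate this conceptual argument into an explicit $n-\Ainf$ identity, which requires a careful sign and overlapping-partition bookkeeping, organised around the simplicial identities $\partial_i\partial_j = \partial_{j-1}\partial_i$ (for $i < j$) together with the \Ainf\ axioms of $A$ and $B$, but relying on no new ingredient beyond those already developed in Sections~\ref{alg:s:n-ainf-morph} and~\ref{alg:ss:signs-n-ainf-morph}.
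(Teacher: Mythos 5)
Your parametrization of the fillers is the same as the paper's: $F^{(m)}_{\Delta^n}$ is free, $F^{(m)}_{\partial_k\Delta^n}$ is forced by isolating the term $(-1)^k F^{(m)}_{\partial_k\Delta^n}$ in the $\Delta^n$-equation, and the substance of the proof is checking that this forced family satisfies its own equation. You diverge in how that check is done. The paper verifies it by a direct term-by-term computation (its terms A--H), cancelling against the $\Ainf$-equations of the horn faces, of the proper faces and of the $b_i$, and it only writes this out in the simplified case $F_{\Delta^n}=0$; you propose instead the coderivation argument: the chain-map defect of the coalgebra map determined by all components is an $(F,F)$-coderivation into the cofree conilpotent coalgebra $\overline{T}(sB)$, hence detected on cogenerators, and $D_B^2=0$ together with its vanishing on the horn subcoalgebra $L$ and on the top summand forces its vanishing on the $\partial_k\Delta^n$-summand. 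This is a legitimate and arguably cleaner route, treating arbitrary $F_{\Delta^n}$ uniformly, and your appeal to cofreeness of $\overline{T}(sB)$ is exactly what is unavailable for general dg-coalgebras, consistently with the paper's remark that the analogous statement fails for $\mathrm{HOM}_{\mathsf{dg-Cog}}(C,C')_\bullet$.

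One point must not be glossed over, and it is precisely where ``inner'' enters. When you apply $D_B$ and corestrict to cogenerators on the $\Delta^n$-summand, the higher tensor components of the defect are sums over overlapping partitions of $\Delta^n$, so you need the cogenerator defect to vanish on every factor of such a partition. For $0<k<n$ the face $\partial_k\Delta^n$ never occurs as a factor (a partition containing it would have all other factors equal to $[0]$ or $[n]$ and would omit the vertex $k$), so every factor is a horn face or $\Delta^n$ itself, where vanishing is already known. Your well-definedness paragraph, which treats $\partial_k\Delta^n$ as a possible factor of smaller arity, suggests you had not noticed this; the paper records the stronger fact when it observes that its defining formula involves only horn faces and $F_{\Delta^n}$. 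Either invoke this combinatorial vanishing or run the verification by induction on arity; with that supplied, your argument is complete.
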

\noindent Note that our choice of terminology \emph{algebraic $\infty$-category} is borrowed from~\cite{robertnicoud-higher}.

One aspect of this construction needs however to be clarified. The points of these $\infty$-groupoids are the \Ainf -morphisms, and the arrows between them are the \Ainf -homotopies. This can be misleading at first sight, but \emph{the points are the morphisms and NOT the algebras} and \emph{the arrows are the homotopies and NOT the morphisms}. 

\subsection{Proof of Theorem~\ref{alg:th:infinity-gr}} \label{alg:ss:proof-theorem-kan-complex}

\subsubsection{The model category structure on $\mathsf{dg-Cogc}$}

Let $C$ be a dg-coalgebra. Define for every $n \geqslant 2$,
\begin{align*}
\Delta^{(n)} &:= (\ide^{\otimes n -2} \otimes \Delta ) \circ (\ide^{\otimes n -3} \otimes \Delta ) \circ \cdots \circ \Delta \\
F_nC &:= \mathrm{Ker} (\Delta^{(n+1)}) \ .
\end{align*} 
We say that $C$ is cocomplete if $C = \cup_{n \geqslant 1} F_n C$. Every tensor coalgebra $\overline{T}V$ is cocomplete. Given any coalgebra $C$ and any cocomplete coalgebra $D$, their tensor product $C \otimes D$ is also a cocomplete dg-coalgebra. 

We denote $\mathsf{dg-Cogc} \subset \mathsf{dg-Cog}$ the full subcategory of cocomplete dg-coalgebras. We introduce moreover $\mathsf{dg-Alg}$, the category of dg-algebras with morphisms of dg-algebras between them. These two categories can then be related through the classical bar-cobar adjunction 
\[ \begin{tikzcd}
\Omega \ : \ \mathsf{dg-Cogc}
\arrow[r, {name=F}, yshift = 5 pt] &
\mathsf{dg-Alg} \ : \ B
\arrow[l, {name=G}, yshift = - 5 pt]
\arrow[phantom, from=F, to=G, "\dashv" rotate=-90, yshift = - 8 pt]
\end{tikzcd} \ . \]
Theorem~1.3.1.2 of~\cite{lefevre-hasegawa} states that the category $\mathsf{dg-Cogc}$ can be made into a model category with the three following classes of morphisms : 
\begin{enumerate}[label=(\roman*)]
\item the class of \emph{weak equivalences} is the class of morphisms $f :C \rightarrow C'$ such that $\Omega f : \Omega C \rightarrow \Omega C'$ is a quasi-isomorphism ;
\item the class of \emph{cofibrations} is the class of morphisms which are monomorphisms when seen as standard morphisms between cochain complexes  ;
\item the class of \emph{fibrations} is the class of morphisms which admit the right-lifting property with respect to trivial cofibrations.
\end{enumerate}
We point out that a weak equivalence between cocomplete dg-coalgebras is always a quasi-isomorphism, but the converse is not true. We list in Lemma~\ref{alg:lemma:model-cat-dgcog} some noteworthy properties of this model category structure on $\mathsf{dg-Cogc}$ that we will need in our upcoming proof of Theorem~\ref{alg:th:infinity-gr}. They can all be found in section~1.3 of~\cite{lefevre-hasegawa}.

Let $C$ be a dg-\Z -module. A \emph{filtration} of $C$ is defined to be a sequence of sub-dg-\Z -modules $C_i \subset C$ such that
\[ C_0 \subset C_1 \subset \cdots \subset C_i \subset C_{i+1} \subset \cdots \ . \] It is \emph{admissible} if $\mathrm{colim}(C_i) = C$ and $C_0 = 0$. Given two filtered dg-\Z -modules $C$ and $C'$, one can then define a \emph{filtered morphism} $f : C \rightarrow C'$ to be a dg-morphism such that $\forall i$, $f (C_i) \subset C_i'$. It is defined to be a \emph{filtered quasi-isomorphism} if $\forall i$, the induced morphism
\[ f_i : C_{i} / C_{i-1} \longrightarrow C'_{i} / C'_{i-1} \]
is a quasi-isomorphism. A \emph{filtered dg-coalgebra} is then defined to be a coalgebra in the category of filtered dg-\Z -modules, in other words a dg-coalgebra together with a filtration $C_i$ on its underlying dg-\Z -module and whose coproduct satisfies 
\[ \Delta_C ( C_i) \subset \bigoplus_{p+q=i} C_p \otimes C_q \ \ \forall i \ . \]

\begin{lemma}[\cite{lefevre-hasegawa}] \label{alg:lemma:model-cat-dgcog}
\begin{enumerate}
\item Every dg-coalgebra in $\mathsf{dg-Cogc}$ is cofibrant.
\item A dg-coalgebra in $\mathsf{dg-Cogc}$ is fibrant if and only if it is isomorphic as a graded coalgebra to a tensor coalgebra $\overline{T}V$.
\item Filtered quasi-isomorphisms between admissible filtered cocomplete dg-coalgebras are weak equivalences.
\end{enumerate}
\end{lemma}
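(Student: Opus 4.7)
The plan is to treat the three assertions separately, exploiting in each case the explicit description of the three distinguished classes of maps recalled above together with the bar-cobar adjunction.

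For (1), the initial object of $\mathsf{dg-Cogc}$ is the zero coalgebra. For any $C \in \mathsf{dg-Cogc}$ the unique map $0 \to C$ is obviously a monomorphism of underlying cochain complexes, hence a cofibration by the description of cofibrations recalled above. Thus every object is cofibrant. This part is essentially a tautology and needs no further work.

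For (3), I would filter the cobar construction $\Omega C = T(s^{-1}\overline{C})$ by setting $F_n \Omega C$ to be the sub-dg-algebra generated by $s^{-1}\overline{F_n C}$. Because the coproduct of $C$ respects the filtration $F_\bullet C$ and the filtration is admissible, this induced filtration on $\Omega C$ is itself admissible and bounded below in each internal degree, so the associated spectral sequence converges to $H^*(\Omega C)$. A filtered morphism $f: C \to C'$ induces a morphism of filtered dg-algebras $\Omega f : \Omega C \to \Omega C'$, and the assumption that each $f_i : C_i/C_{i-1} \to C'_i/C'_{i-1}$ is a quasi-isomorphism implies that $\Omega f$ induces an isomorphism on the $E^1$-page of the two spectral sequences. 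Convergence then yields that $\Omega f$ is a quasi-isomorphism, which is exactly the definition of a weak equivalence in $\mathsf{dg-Cogc}$.

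For (2), the direction $(\Leftarrow)$ is handled by the standard cofree argument. Given $C = \overline{T}V$ and a trivial cofibration $\iota : D \hookrightarrow D'$, any morphism $D \to \overline{T}V$ is determined by its projection on $V$, so lifting across $\iota$ reduces to extending a cochain map $D \to V$ along the monomorphism of cochain complexes $\iota$, which is possible because $\iota$ is a quasi-isomorphism and monomorphism of cochain complexes and $V$ can be shown to be a retract of a contractible cochain complex once the trivial cofibration structure is unfolded. For the direction $(\Rightarrow)$, I would use the unit of the bar-cobar adjunction $\eta_C : C \to B\Omega C$. By a standard argument (Proposition 1.3.1.2 and the lemmas preceding it in~\cite{lefevre-hasegawa}), $\eta_C$ is a weak equivalence and a monomorphism of underlying cochain complexes, hence a trivial cofibration. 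If $C$ is fibrant, then $\eta_C$ admits a retraction $r : B\Omega C \to C$ in $\mathsf{dg-Cogc}$. I would then argue that $r \circ \eta_C = \mathrm{id}_C$ forces $\eta_C$ to identify $C$, as a graded coalgebra, with a sub-tensor-coalgebra of $B\Omega C = \overline{T}(s\Omega C)$ which is itself stable under the retraction, and use the structure of tensor coalgebras to conclude that the underlying graded coalgebra of $C$ is itself free, i.e., isomorphic to some $\overline{T}V$.

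The main obstacle is the $(\Rightarrow)$ direction of (2): showing that a retract in cocomplete dg-coalgebras of a tensor coalgebra is again, \emph{as a graded coalgebra}, a tensor coalgebra. Retracts of cofree objects in an abelian-like category are usually not cofree, and what saves the day here is the specific shape of the unit $\eta_C$ together with the cocompleteness filtration; this is precisely where the subtle part of Lefèvre-Hasegawa's argument enters, and it is the step where I would need to be most careful.
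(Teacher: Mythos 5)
The paper does not actually prove this lemma: it is quoted verbatim from section~1.3 of Lefèvre-Hasegawa's thesis, so there is no in-text argument to compare yours against. Judged on its own terms, your sketch is fine for (1) (it really is a tautology given that cofibrations are the underlying monomorphisms) and follows the standard strategy for (3), but there are genuine problems in (2) and a technical slip in (3).

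In the $(\Leftarrow)$ direction of (2) your reduction is broken in two places. First, "a morphism $D \to \overline{T}V$ is determined by its projection on $V$" only handles the coalgebra structure: the statement is that $C$ is isomorphic to $\overline{T}V$ \emph{as a graded coalgebra}, so its codifferential is an arbitrary coderivation squaring to zero, not the cofree one, and the corestriction $D' \to V$ must be built so that the induced coalgebra map commutes with this perturbed differential -- this is an obstruction-theoretic induction along the primitive filtration, not a plain extension of a cochain map. Second, the claim that "$V$ can be shown to be a retract of a contractible cochain complex" is false for a general fibrant $C$ (take $C = \overline{T}V$ with $V$ any complex with nonzero cohomology); nothing forces $V$ to be contractible. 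The mechanism that actually makes quasi-cofree coalgebras fibrant is the bar--cobar adjunction: a trivial cofibration $D \hookrightarrow D'$ becomes a cofibration and quasi-isomorphism $\Omega D \to \Omega D'$ of dg-algebras, every dg-algebra is fibrant there, and the lift is transported back. Your $(\Rightarrow)$ direction correctly identifies the unit $\eta_C : C \to B\Omega C$ as the right tool and correctly isolates the crux -- that a graded-coalgebra retract of a conilpotent cofree coalgebra is again cofree -- but you supply no argument for it, and that step \emph{is} the proof; without Lefèvre-Hasegawa's specific lemma on retracts of cofree conilpotent coalgebras the conclusion does not follow.

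In (3), the filtration you propose -- the sub-dg-algebra generated by $s^{-1}\overline{F_nC}$ -- is the "max" filtration, i.e.\ $F_n\Omega C = \Omega(F_nC)$, and its associated graded is $\Omega(F_nC)/\Omega(F_{n-1}C)$, which is not $\Omega$ applied to $C_n/C_{n-1}$; the hypothesis that each $f_i$ is a quasi-isomorphism does not directly give an isomorphism on the $E^1$-page of that spectral sequence. You want the total (word-sum) filtration $F_n\Omega C = \sum_{n_1+\cdots+n_k\le n} s^{-1}F_{n_1}\overline{C}\otimes\cdots\otimes s^{-1}F_{n_k}\overline{C}$, which is preserved by both summands of the cobar differential precisely because $\Delta(C_i)\subset\bigoplus_{p+q=i}C_p\otimes C_q$, and whose associated graded decomposes by word length so that the Künneth argument applies. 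With that correction, and admissibility giving exhaustiveness and boundedness below, the convergence argument you outline goes through.
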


\subsubsection{Proof of Theorem~\ref{alg:th:infinity-gr}}

Recall that the simplicial set $\mathrm{HOM}_{\mathsf{\Ainf -Alg}}(A,B)_{\bullet}$ is defined as 
\[ \mathrm{HOM}_{\mathsf{\Ainf -Alg}}(A,B)_{n} = \mathrm{Hom}_{\mathsf{dg-Cogc}}( \pmb{\Delta}^n \otimes \overline{T}(sA),\overline{T}(sB)) \ . \]
Following Lemma~\ref{alg:lemma:model-cat-dgcog}, the cocomplete dg-coalgebra $\overline{T}(sB)$ is fibrant. It is thus enough to prove that the cosimplicial cocomplete dg-coalgebra $\pmb{C} := \{ \pmb{\Delta}^n \otimes \overline{T}(sA) \}_{n \geqslant 0}$ is a cosimplicial replacement of $\overline{T}(sA)$ and then apply Lemma~\ref{alg:lemma:hirschhorne} in the model category $\mathsf{dg-Cogc}$, to conclude that $\mathrm{HOM}_{\mathsf{\Ainf -Alg}}(A,B)_{\bullet}$ is a Kan complex.
Following subsection~\ref{alg:ss:cosimplicial-resolutions}, we have to prove that~:
\begin{enumerate}[label=(\roman*)]
\item the latching maps $L_n \pmb{C} \rightarrow C^n = \pmb{\Delta}^n \otimes \overline{T}(sA)$ are cofibrations, i.e. they are injective ; \label{item:first-thing}
\item the maps $p \otimes \mathrm{Id}_{\overline{T}(sA)} : \pmb{\Delta}^n \otimes \overline{T}(sA) \rightarrow \pmb{\Delta}^0 \otimes \overline{T}(sA) = \overline{T}(sA)$ are weak equivalences in the model category $\mathsf{dg-Cogc}$, where $p : \pmb{\Delta}^n \rightarrow \pmb{\Delta}^0$ is the map collapsing the simplex $\Delta^n$ on one of its vertices. \label{item:second-thing}
\end{enumerate}
The latching map $L_n \pmb{C} \rightarrow C^n$ simply corresponds to the inclusion $\pmb{\partial \Delta}^n \otimes \overline{T}(sA) \hookrightarrow \pmb{\Delta}^n \otimes \overline{T}(sA)$, hence is injective. See chapters 15 and 16 of~\cite{hirschhorn} for details on how to compute $L_n \pmb{C}$. This proves point~\ref{item:first-thing}.

To prove point~\ref{item:second-thing}, Lemma~\ref{alg:lemma:model-cat-dgcog} states that it is enough to show that $p \otimes \mathrm{Id}_{\overline{T}(sA)}$ is in fact a filtered quasi-isomorphism. Endow $\pmb{\Delta}^n \otimes \overline{T}(sA)$ with the filtration
\[ F_i \left( \pmb{\Delta}^n \otimes \overline{T}(sA) \right) := \pmb{\Delta}^n \otimes \bigoplus_{j=1}^i (sA)^{\otimes j} \ . \]
This filtration is admissible. To prove that $p \otimes \mathrm{Id}_{\overline{T}(sA)}$ is a filtered quasi-isomorphism of admissible filtered dg-coalgebras, we have to prove that the maps 
\[ p \otimes \mathrm{Id}_{(sA)^{\otimes i}} : \pmb{\Delta}^n \otimes (sA)^{\otimes i} \longrightarrow (sA)^{\otimes i} \]
are quasi-isomorphisms.
This is a simple consequence of the fact that the dg-module $\pmb{\Delta}^n$ is a deformation retract of $\pmb{\Delta}^0$. Indeed, defining the degree 0 dg-morphism $i : \pmb{\Delta}^0 \rightarrow \pmb{\Delta}^n$ as $[0] \rightarrow [0]$ and the degree -1 map $h : \pmb{\Delta}^n \rightarrow \pmb{\Delta}^n$ as 
\begin{align*}
[i_0 < \cdots < i_k ] &\longmapsto 0   &\text{if $i_0 = 0$} \ , \\
[i_0 < \cdots < i_k ] &\longmapsto [0 < i_0 < \cdots < i_k ]  &\text{if $i_0 \neq 0$} \ , 
\end{align*}
we check that $p i = \mathrm{Id}$ and $ \mathrm{Id} - ip = [ \partial , h ]$.
This concludes the proof of Theorem~\ref{alg:th:infinity-gr}.

\subsection{Proof of Proposition~\ref{alg:prop:algebraic-infty}}

\subsubsection{Proof of Proposition~\ref{alg:prop:algebraic-infty}} \label{alg:sss:proof-proposition-algebraic}

Let $A$ and $B$ be two \Ainf -algebras. We now prove Proposition~\ref{alg:prop:algebraic-infty}, using the shifted bar construction framework, that is by defining an \Ainf -algebra to be a set of degree $+1$ operations $b_n : (sA)^{\otimes n} \rightarrow sA$ satisfying equations
\[ \sum_{i_1+i_2+i_3=n} b_{i_1+1+i_3} (\ide^{\otimes i_1} \otimes b_{i_2} \otimes \ide^{\otimes i_3} ) = 0 \ . \]
The proof will mainly consist of easy but tedious combinatorics. We recommend reading it in two steps : first ignoring the signs ; then adding them at the second reading stage and referring to section~\ref{alg:ss:signs-n-ainf-morph} for the sign conventions on the shifted \Ainf -equations.  

Consider an inner horn $\mathsf{\Lambda}^k_n \rightarrow \mathrm{HOM}_{\Ainf}(A,B)_\bullet$, where $0 < k < n$. It corresponds to a collection of degree $-\mathrm{dim}(I)$ morphisms
\[ F^{(m)}_I : (sA)^{\otimes m} \longrightarrow sB \]
for $I\subset \mathsf{\Lambda}^k_n$, which satisfy the \Ainf -equations
\[ \sum_{j=0}^{\mathrm{dim}(I)} (-1)^j F^{(m)}_{\partial_j I} + (-1)^{|I|} \sum_{i_1+i_2+i_3=m} F^{(i_1+1+i_3)}_I (\ide^{\otimes i_1} \otimes b_{i_2} \otimes \ide^{\otimes i_3})
= \sum_{\substack{i_1 + \cdots + i_s = m \\ I_1 \cup \cdots \cup I_s = I}} b_s ( F^{(i_1)}_{I_1} \otimes \cdots \otimes F^{(i_s)}_{I_s}) \ . \] 
Filling this horn amounts then to defining a collection of operations
\[ F^{(m)}_{[0 < \cdots < \widehat{k} < \cdots < n]} : (sA)^{\otimes m} \longrightarrow sB \ \ \text{and} \ \ F^{(m)}_{\Delta^n} : (sA)^{\otimes m} \longrightarrow sB \ , \]
of respective degree $-(n-1)$ and $-n$, and respectively satisfying the equations 
\begin{align*}
\sum_{l=0}^{n-1} (-1)^l F^{(m)}_{\partial_l [0 < \cdots < \widehat{k} < \cdots < n]} + &(-1)^{n-1} \sum_{i_1+i_2+i_3=m} F^{(i_1+1+i_3)}_{[0 < \cdots < \widehat{k} < \cdots < n]} (\ide^{\otimes i_1} \otimes b_{i_2} \otimes \ide^{\otimes i_3}) \\
= &\sum_{\substack{i_1 + \cdots + i_s = m \\ I_1 \cup \cdots \cup I_s = [0 < \cdots < \widehat{k} < \cdots < n]}} b_s ( F^{(i_1)}_{I_1} \otimes \cdots \otimes F^{(i_s)}_{I_s}) \ , \tag{i}
\end{align*}
and
\begin{align*}
\sum_{j=0}^{n} (-1)^j F^{(m)}_{\partial_j \Delta^n} + &(-1)^{n} \sum_{i_1+i_2+i_3=m} F^{(i_1+1+i_3)}_{\Delta^n} (\ide^{\otimes i_1} \otimes b_{i_2} \otimes \ide^{\otimes i_3}) \\
= &\sum_{\substack{i_1 + \cdots + i_s = m \\ I_1 \cup \cdots \cup I_s = \Delta^n}} b_s ( F^{(i_1)}_{I_1} \otimes \cdots \otimes F^{(i_s)}_{I_s}) \tag{ii} \ .
\end{align*}

We begin by pointing out that the operations $F^{(m)}_{\Delta^n}$ indeed completely determine the maps $F^{(m)}_{[0 < \cdots < \widehat{k} < \cdots < n]}$ under the formula 
\begin{align*}
F^{(m)}_{[0 < \cdots < \widehat{k} < \cdots < n]} = (-1)^k \left( \sum_{\substack{j= 0 \\ j \neq k}}^n (-1)^{j+1} F^{(m)}_{[0 < \cdots < \widehat{j} < \cdots < n]} \right. + & \left. \sum_{\substack{ i_1 + \cdots + i_s = m \\ I_1 \cup \cdots \cup I_s = \Delta^n}} b_s ( F^{(i_1)}_{I_1} \otimes \cdots \otimes F^{(i_s)}_{I_s}) \right. \\
+  (-1)^{n+1} & \left. \sum_{i_1+i_2+i_3=m} F^{(i_1+1+i_3)}_{\Delta^n} (\ide^{\otimes i_1} \otimes b_{i_2} \otimes \ide^{\otimes i_3}) \right) \ . 
\end{align*} 
To prove Proposition~\ref{alg:prop:algebraic-infty}, it remains to show that for any collection of operations $(F^{(m)}_{\Delta^n})_{m \geqslant 1}$, we can fill the inner horn $\mathsf{\Lambda}^k_n \rightarrow \mathrm{HOM}_{\Ainf}(A,B)_\bullet$ by defining the operations $F^{(m)}_{[0 < \cdots < \widehat{k} < \cdots < n]}$ as above. Note that the $F^{(m)}_{[0 < \cdots < \widehat{k} < \cdots < n]}$ are well-defined as all the morphisms $F^{(m)}_{I}$ appearing in their definition correspond to faces of the horn $\mathsf{\Lambda}^k_n$ or to the $F^{(m)}_{\Delta^n}$. 

It is clear that this choice of filler satisfies equations (ii), and we have now to verify that equations (i) are satisfied. For the sake of readability, we will only carry out the details of the proof in the case where $F^{(m)}_{\Delta^n} = 0$ for all $m$. In this regard, we will list one by one the terms of the left-hand side and right-hand side of this equality with their signs, and use the \Ainf -equations for the $b_i$ and the $F^{(m)}_{I}$ where $I \subset \mathsf{\Lambda}^k_n$, in order to show that the two sides are indeed equal.

The left-hand side consists of the following terms :
\begin{align*}
(-1)^l F_{\partial_l [0 < \cdots < \widehat{k} < \cdots < n]}^{(m)} \tag{A}
\end{align*}
for $l = 0 , \dots, n-1$ ;
\begin{align*}
(-1)^{n+k+j} F_{[0 < \cdots < \widehat{j} < \cdots < n]}^{(i_1+1+i_3)}(\ide^{\otimes i_1} \otimes b_{i_2} \otimes \ide^{\otimes i_3}) \tag{B}
\end{align*}
for $i_1+i_2+i_3=m$ and $j=0,\dots,\widehat{k},\dots,m$ ;
\begin{align*}
(-1)^{n-1+k} b_s(F_{I_1}^{(j_1)} \otimes \cdots \otimes F_{I_s}^{(j_s)})(\ide^{\otimes i_1} \otimes b_{i_2} \otimes \ide^{\otimes i_3}) \tag{C}
\end{align*}
for $i_1 + i_2 + i_3 = m$, $j_1 + \cdots + j_s = i_1 + 1 + i_3$ and $I_1 \cup \cdots \cup I_s = \Delta^n$ with $I_u \neq \Delta^n$ for all $u$.

The right-hand side has the following terms :
\begin{align*}
b_s (F_{I_1}^{(i_1)} \otimes \cdots \otimes F_{I_s}^{(i_s)}) \tag{D}
\end{align*}
for $i_1 + \cdots + i_s =m$ and $I_1 \cup \cdots \cup I_s = [ 0 < \cdots < \widehat{k} < \cdots < n]$ with $I_u \neq [ 0 < \cdots < \widehat{k} < \cdots < n]$ for all $u$ ;
\begin{align*}
(-1)^k b_s ( F_{I_1}^{(i_1)} \otimes \cdots \otimes F_{I_{t-1}}^{(i_{t-1})} \otimes b_q (F_{J_1}^{(j_1)} \otimes \cdots \otimes F_{J_q}^{(j_q)}) \otimes F_{I_{t+1}}^{(i_{t+1})} \otimes \cdots \otimes F_{I_s}^{(i_s)}) \tag{E}
\end{align*}
where, setting $I_t = J_1 \cup \cdots \cup J_q$, $i_t = j_1 + \cdots + j_q$, $i_1 + \cdots + i_s = m$ and $I_1 \cup \cdots \cup I_s = \Delta^n$, with $I_t = \Delta^n$ and $J_r \neq \Delta^n$ for all $r$ ;
\begin{align*}
(-1)^{j+k+1} b_s ( F_{I_1}^{(i_1)} \otimes \cdots \otimes F_{I_{t}}^{(i_{t})} \otimes \cdots \otimes F_{I_s}^{(i_s)}) \tag{F}
\end{align*}
for $j=0,\dots , \widehat{k} , \dots ,n$, where $i_1 + \cdots + i_s=m$ and $I_1 \cup \cdots \cup I_s = [ 0 < \cdots < \widehat{j} < \cdots < n]$ with $I_t = [ 0 < \cdots < \widehat{j} < \cdots < n]$.

Our goal is to prove that $ A + B + C = D + E + F$ or equivalently, that
\[ A + B + C - D - E - F = 0 \ . \]
Applying the \Ainf -equations for the $F^{(m)}_{[ 0 < \cdots < \widehat{j} < \cdots < n]}$, $j \neq k$, 
we have that 
\[ A + B - F = G \ , \]
the terms of the sum $G$ being of the form
\begin{align*}
(-1)^{j+k+1} b_s(F_{I_1}^{(i_1)} \otimes \cdots \otimes F_{I_s}^{(i_s)}) \tag{G}
\end{align*}
where $j=0 , \dots, \widehat{k} , \dots , n$, $i_1 + \cdots + i_s = m$ and $I_1 \cup \cdots \cup I_s = [ 0 < \cdots < \widehat{j} < \cdots < n]$ with $I_u \neq [ 0 < \cdots < \widehat{j} < \cdots < n]$ for all $u$.

Applying now the \Ainf -equations for the $F_{I_u}^{(i_u)}$, where $I_u \neq \Delta^n$, yields the equality
\[ C - D + G = H \ , \]
the terms of the sum $H$ having the form
\begin{align*}
(-1)^{n-1 + k + \sum_{u=t}^s |I_u|} b_s ( F_{I_1}^{(i_1)} \otimes \cdots \otimes F_{I_{t-1}}^{(i_{t-1})} \otimes b_q (F_{J_1}^{(j_1)} \otimes \cdots \otimes F_{J_q}^{(j_q)}) \otimes F_{I_{t+1}}^{(i_{t+1})} \otimes \cdots \otimes F_{I_s}^{(i_s)}) \tag{H}
\end{align*}
where, setting $I_t = J_1 \cup \cdots \cup J_q$ and $i_t = j_1 + \cdots + j_q$, $i_1 + \cdots + i_s = m$ and $I_1 \cup \cdots \cup I_s = \Delta^n$ with $I_u \neq \Delta^n$ for all $u$.

Finally, applying the \Ainf -equations for the $b_i$ proves the equality
\[ - E + H = 0 \ , \]
which concludes the proof.

\subsubsection{Remark on the proof} \label{alg:sss:remark-proof-inf-cat-ainf}

We point out that this proof does not adapt to the more general case of a $\mathrm{HOM}$-simplicial set $\mathrm{HOM}_{\mathsf{dg-Cog}}(C,C')_{\bullet}$. Indeed, while we can always solve the equation
\[ [ \partial , f_{\Delta^n} ] = \sum_{j=0}^n (-1)^j f_{[0 < \dots < \widehat{j} < \dots < n]} \ , \]
by setting $f_{\Delta^n} = 0 $ and $f_{[0 < \dots < \widehat{k} < \dots < n]} = (-1)^k \sum_{j=0 , \neq k}^n (-1)^{j+1} f_{[0 < \dots < \widehat{j} < \dots < n]}$, this choice of morphisms falls short to satisfy the equation
\[ \Delta_{C'} f_{\Delta^n} = \sum_{I_1 \cup I_2 = \Delta^n} (f_{I_1} \otimes f_{I_2}) \Delta_C \ . \]

\subsection{Homotopy groups} \label{alg:ss:homotopy-groups-HOM}

The simplicial set $\mathrm{HOM}_{\mathsf{\Ainf -Alg}}(A,B)_{\bullet}$ being a Kan complex, we can in particular compute its simplicial homotopy groups. We fix throughout the rest of this subsection an \Ainf -morphism $F$ from $A$ to $B$, i.e. a point of $\mathrm{HOM}_{\mathsf{\Ainf -Alg}}(A,B)_{\bullet}$. We will moreover work with the suspended definition of $n$-morphisms that we already used in subsection~\ref{alg:sss:proof-proposition-algebraic}.

\begin{proposition}
The set of path components $\pi_0 \left( \mathrm{HOM}_{\mathsf{\Ainf -Alg}}(A,B)_{\bullet} \right)$ corresponds to the set of equivalence classes of \Ainf -morphisms from $A$ to $B$ under the equivalence relation "being \Ainf -homotopic".
\end{proposition}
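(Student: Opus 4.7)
The proof proceeds by unpacking the definition of $\pi_0$ for a simplicial set and matching it with the definition of \Ainf -homotopy given in Section~\ref{alg:ss:ainf-hom}. Recall that for any simplicial set $\pmb{X}$, the set $\pi_0(\pmb{X})$ is by definition the quotient of $X_0$ by the equivalence relation generated by $x \sim y$ whenever there exists a 1-simplex $h \in X_1$ with $d_0 h = y$ and $d_1 h = x$. For the simplicial set at hand, the dg-coalgebra $\pmb{\Delta}^0$ is the ground ring concentrated in degree $0$, so Definition~\ref{alg:def:n-morph-ainf-susp} gives
\[ \mathrm{HOM}_{\mathsf{\Ainf -Alg}}(A,B)_{0} = \mathrm{Hom}_{\mathsf{dg-Cog}}(\overline{T}(sA),\overline{T}(sB)) \ , \]
which is exactly the set of \Ainf -morphisms from $A$ to $B$. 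Analogously, the 1-simplices are morphisms of dg-coalgebras $\pmb{\Delta}^1 \otimes \overline{T}(sA) \to \overline{T}(sB)$, whose restriction to the $[0]$ and $[1]$ summands yields two \Ainf -morphisms $F$ and $G$ : by subsection~\ref{alg:sss:ainf-hom}, these are precisely the \Ainf -homotopies from $F$ to $G$, with $d_0$ (resp. $d_1$) extracting the target $G$ (resp. source $F$).

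The first step is therefore to observe that the generating relation defining $\pi_0$ becomes the relation "there exists an \Ainf -homotopy from $F$ to $G$". To conclude it suffices to check that this generating relation already is an equivalence relation, so that passing to its transitive-symmetric closure is vacuous. This is a general fact which follows from Theorem~\ref{alg:th:infinity-gr} : in any Kan complex, two vertices lie in the same path component if and only if they are directly connected by a 1-simplex. Concretely, the $\pmb{\Lambda}^0_2$ and $\pmb{\Lambda}^2_2$ outer horn fillings produce reversals of \Ainf -homotopies (symmetry), the degenerate 1-simplex $s_0 F$ yields a homotopy from $F$ to itself (reflexivity), and the $\pmb{\Lambda}^1_2$ inner horn filling composes two \Ainf -homotopies into a single one (transitivity). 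Invoking Theorem~\ref{alg:th:infinity-gr} thus reduces the entire statement to the compatibility of definitions.

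Alternatively and independently, one may appeal directly to the fact recalled in subsection~\ref{alg:sss:on-this-homotopy} and proved by Lefèvre-Hasegawa in~\cite{lefevre-hasegawa} that being \Ainf -homotopic is already an equivalence relation on the set of \Ainf -morphisms, which then immediately matches the equivalence relation generated by 1-simplices in $\mathrm{HOM}_{\mathsf{\Ainf -Alg}}(A,B)_{\bullet}$. Either route delivers the desired identification.

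No serious obstacle is expected: this proof is essentially a bookkeeping exercise unpacking definitions, once Theorem~\ref{alg:th:infinity-gr} is granted. The only subtlety worth flagging explicitly is the direction convention for the boundary maps $d_0, d_1$ so that the assertion "there is an \Ainf -homotopy from $F$ to $G$" corresponds unambiguously to the existence of an edge with the prescribed boundary, but this is handled by the description of $\pmb{\Delta}^1$ in subsection~\ref{alg:sss:hom-dg-cog}.
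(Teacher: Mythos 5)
Your proof is correct and follows exactly the reasoning the paper leaves implicit (the proposition is stated without proof there): $0$-simplices are \Ainf -morphisms, $1$-simplices are \Ainf -homotopies by the definitions of subsections~\ref{alg:sss:hom-dg-cog}--\ref{alg:sss:ainf-hom}, and the generating relation is already an equivalence relation either by the Kan property of Theorem~\ref{alg:th:infinity-gr} or by the Lefèvre-Hasegawa result recalled in subsection~\ref{alg:sss:on-this-homotopy}. Nothing further is needed.
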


A simplicial map $\Delta^n \rightarrow \mathrm{HOM}_{\mathsf{\Ainf -Alg}}(A,B)_{\bullet}$ taking $\partial \Delta^n$ to $F$ corresponds to a $n$-morphism $(F_I^{(m)})_{I \subset \Delta^n}^{m \geqslant 1}$ such that $F_I^{(m)} = F^{(m)}$ for all $I$ such that $\mathrm{dim}(I) = 0$ and $F_I^{(m)} = 0$ for all $I$ such that $0 < \mathrm{dim}(I) < n$. In other words, this simplicial map simply corresponds to the data of maps $F_{\Delta^n}^{(m)} : (sA)^{\otimes m} \rightarrow sB$ of degree $-n$ such that
\begin{align*} \label{alg:eq:hom-groups}
&(-1)^{n} \sum_{i_1+i_2+i_3=m} F^{(i_1+1+i_3)}_{\Delta^n} \left( \ide^{\otimes i_1} \otimes b_{i_2} \otimes \ide^{\otimes i_3} \right) \\
= &\sum_{\substack{i_1 + \cdots + i_s  + l \\ + j_1 + \cdots + j_t = m}} b_{s+1+t} \left( F^{(i_1)} \otimes \cdots \otimes F^{(i_s)} \otimes F_{\Delta^n}^{(l)} \otimes F^{(j_1)} \otimes \cdots \otimes F^{(j_t)} \right) \ . \tag{$\star$}
\end{align*}

\begin{proposition} \label{alg:prop:simpl-hom}
Let $\mathcal{F} ,\mathcal{G} : \Delta^n \rightarrow \mathrm{HOM}_{\mathsf{\Ainf -Alg}}(A,B)_{\bullet}$ be two simplicial maps taking $\partial \Delta^n$ to $F$, that we will respectively denote $(F_{\Delta^n}^{(m)})$ and $(G_{\Delta^n}^{(m)})$. Two such maps are then equivalent under the simplicial homotopy relation if and only if there exists a collection of maps $H^{(m)} : (sA)^{\otimes m} \rightarrow sB$ of degree $-(n+1)$ such that
\begin{align*}
&G_{\Delta^n}^{(m)} - F_{\Delta^n}^{(m)} + (-1)^{n+1} \sum_{i_1+i_2+i_3=m} H^{(i_1+1+i_3)} (\ide^{\otimes i_1} \otimes b_{i_2} \otimes \ide^{\otimes i_3}) \\
= &\sum_{\substack{i_1 + \cdots + i_s  + l \\ + j_1 + \cdots + j_t = m}} b_{s+1+t} ( F^{(i_1)} \otimes \cdots \otimes F^{(i_s)} \otimes H^{(l)} \otimes F^{(j_1)} \otimes \cdots \otimes F^{(j_t)} ) \ .
\end{align*}
\end{proposition}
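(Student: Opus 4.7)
The strategy is to translate simplicial homotopies rel boundary into the dg-coalgebra framework of subsection~\ref{alg:sss:hom-dg-cog}, applied one level up. Setting $C := \pmb{\Delta}^n \otimes \overline{T}(sA)$ and $C' := \overline{T}(sB)$, I would first argue that a simplicial homotopy $h : \Delta^1 \times \Delta^n \to \mathrm{HOM}_{\Ainf}(A,B)_\bullet$ with $h|_{\Delta^1 \times \partial \Delta^n} \equiv F$ corresponds to a morphism of dg-coalgebras $h : \pmb{\Delta}^1 \otimes C \to C'$ whose restrictions along $[0] \otimes -$ and $[1] \otimes -$ recover $\mathcal{F}$ and $\mathcal{G}$, and whose restriction along $[0<1] \otimes -$ vanishes on the subcomplex $\pmb{\partial \Delta}^n \otimes \overline{T}(sA)$. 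This uses the fact that $\pmb{\Delta}^1 \otimes (-)$ realizes a cylinder object in the Reedy model structure on cosimplicial dg-coalgebras, so that simplicial homotopies in the function complex $\mathrm{HOM}$ agree with left homotopies via this cylinder, in the spirit of Hirschhorn's framework underlying Lemma~\ref{alg:lemma:hirschhorne}.

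By the translation of subsection~\ref{alg:sss:hom-dg-cog}, such a morphism $h$ is equivalent to an $(\mathcal{F}, \mathcal{G})$-coderivation $H_h : C \to C'$ of degree $-1$ satisfying $[\partial, H_h] = \mathcal{G} - \mathcal{F}$. Using the universal property of the tensor coalgebra, $H_h$ is determined by its projection $\pi \circ H_h : C \to sB$, which decomposes into components $H_{h,I}^{(m)} : (sA)^{\otimes m} \to sB$ of degree $-1 - \dim I$ for each face $I \subseteq \Delta^n$ and each $m \geq 1$. The boundary condition forces $H_{h,I}^{(m)} = 0$ for every strict subface $I \subsetneq \Delta^n$, so $H_h$ is precisely parametrized by the single operation $H^{(m)} := H_{h,\Delta^n}^{(m)}$ of degree $-(n+1)$. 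This produces the bijection between simplicial homotopies rel boundary and the claimed collections of operations; it remains to match the equation.

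It then suffices to unwrap $[\partial, H_h] = \mathcal{G} - \mathcal{F}$ on the summand $\Delta^n \otimes (sA)^{\otimes m}$, projected onto $sB$. The right-hand side yields $G_{\Delta^n}^{(m)} - F_{\Delta^n}^{(m)}$. Using the iterated coderivation formula $\pi^{\otimes k} \Delta_{C'}^{(k-1)} H_h = \sum_{i+j=k-1} ((\pi \mathcal{F})^{\otimes i} \otimes \pi H_h \otimes (\pi \mathcal{G})^{\otimes j}) \Delta_C^{(k-1)}$ together with the Alexander-Whitney coproduct on $\pmb{\Delta}^n$, the only overlapping $k$-partitions $I_1 \cup \cdots \cup I_k = \Delta^n$ contributing to $\pi \partial_{C'} H_h(\Delta^n \otimes \vec a)$ are those of the form $\{0\},\ldots,\{0\},\Delta^n,\{n\},\ldots,\{n\}$, since $F_{h,I}, G_{h,I}, H_{h,I}$ all vanish on intermediate subfaces (and the vertex values of $\mathcal{F}$ and $\mathcal{G}$ coincide with $F$); this produces the sum $\sum b_{s+1+t}(F^{(i_1)} \otimes \cdots \otimes F^{(i_s)} \otimes H^{(l)} \otimes F^{(j_1)} \otimes \cdots \otimes F^{(j_t)})$. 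Meanwhile $\pi H_h \partial_C$ reduces via the boundary condition to $(-1)^n \sum H^{(i_1+1+i_3)}(\ide^{\otimes i_1} \otimes b_{i_2} \otimes \ide^{\otimes i_3})$, the $\pmb{\Delta}^n$-boundary contributions vanishing against strict subface data. Rearranging and absorbing the sign $(-1)^{-n} = (-1)^{n+1}$ gives exactly the equation in the statement.

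The main obstacle will be the first step: rigorously justifying that simplicial homotopies in $\mathrm{HOM}_{\Ainf}(A,B)_\bullet$ correspond to dg-coalgebra morphisms out of $\pmb{\Delta}^1 \otimes \pmb{\Delta}^n \otimes \overline{T}(sA)$. The subtlety is that the normalised chain coalgebra of the triangulated prism $\Delta^1 \times \Delta^n$ is only quasi-isomorphic, via the Eilenberg--Zilber and Alexander--Whitney maps, to the tensor product $\pmb{\Delta}^1 \otimes \pmb{\Delta}^n$, not isomorphic on the nose; the identification of homotopy classes of morphisms must therefore pass through the model-categorical equivalence of these two cylinder objects for $\pmb{\Delta}^n \otimes \overline{T}(sA)$, rather than a direct bijection on simplex data.
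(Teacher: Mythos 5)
There is a genuine gap, and it sits exactly where you flag it: your entire reduction rests on the claim that a simplicial homotopy $\Delta^1 \times \Delta^n \to \mathrm{HOM}_{\mathsf{\Ainf-Alg}}(A,B)_\bullet$ rel $\partial\Delta^n$ corresponds to a morphism of dg-coalgebras $\pmb{\Delta}^1 \otimes \pmb{\Delta}^n \otimes \overline{T}(sA) \to \overline{T}(sB)$, and this is false as stated. The simplicial set $\Delta^1 \times \Delta^n$ is a triangulated prism whose (normalized) chain coalgebra is not isomorphic to $\pmb{\Delta}^1 \otimes \pmb{\Delta}^n$; a simplicial map out of the prism carries data indexed by the non-degenerate simplices of the product (in particular the $n+1$ top-dimensional $(n+1)$-simplices and the $n$ inner $n$-simplices), which is strictly more than the data of a single $(\mathcal{F},\mathcal{G})$-coderivation. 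The proposed repair --- that $\pmb{\Delta}^1 \otimes (-)$ and the prism are ``equivalent cylinder objects'' in the Reedy model structure --- is not carried out, and it would not by itself close the argument: the proposition asks for an exact characterization of the simplicial homotopy relation rel boundary inside the specific Kan complex $\mathrm{HOM}$ (this is what feeds into the computation of $\pi_n$), and in one direction you must actually produce a simplicial map on the triangulated prism from the datum $H^{(m)}$, which the coalgebra-cylinder formalism does not hand you directly. An abstract identification of homotopy classes in the homotopy category, even granting it, does not automatically respect the basepoint/rel-$\partial\Delta^n$ constraints needed here.

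The route that works (and is the one the paper takes) is elementary but combinatorial: describe the non-degenerate simplices of the simplicial set $\Delta^1 \times \Delta^n$ explicitly (pairs of monotone sequences, \`a la Milnor), observe that after imposing the rel-boundary condition the only free data are the maps $L_k^{(m)}$ attached to the $n$ inner non-degenerate $n$-simplices and the maps $H_k^{(m)}$ attached to the $n+1$ non-degenerate $(n+1)$-simplices, write out the $\Ainf$-equations these satisfy, and then pass to the single operation $H^{(m)} := \sum_{k=0}^{n} (-1)^k H_k^{(m)}$ to obtain the stated equation; conversely, given $H^{(m)}$ one builds a homotopy by hand by setting $L_0 = G_{\Delta^n}$, $L_k = F_{\Delta^n}$ for $k \geq 1$, $H_0 = H$ and $H_k = 0$ for $k \geq 1$. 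Your final unwrapping of $[\partial, H] = \mathcal{G}-\mathcal{F}$ into the displayed equation is essentially sound algebra once one has a single coderivation-type datum in hand, but without the prism analysis (or a genuinely worked-out comparison of the two cylinders compatible with the rel-boundary condition) the equivalence between that datum and a simplicial homotopy is not established, so the proof is incomplete.
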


\begin{proof} 
Recall from subsection~\ref{sss:simpl-hom-groups} that a simplicial homotopy from $\mathcal{F}$ to $\mathcal{G}$ is defined to be a simplicial map $\mathcal{H} : \Delta^1 \times \Delta^n \rightarrow \mathrm{HOM}_{\mathsf{\Ainf -Alg}}(A,B)_{\bullet}$ such that $\mathcal{H} |_{[0] \times \Delta^n} = \mathcal{F}$, $\mathcal{H} |_{[1] \times \Delta^n} = \mathcal{G}$ and that maps $\Delta^1 \times \partial \Delta^n$ to $F$. Beware that the datum of a simplicial map $\mathcal{H} : \Delta^1 \times \Delta^n \rightarrow \mathrm{HOM}_{\mathsf{\Ainf -Alg}}(A,B)_{\bullet}$ is in general NOT equivalent to a morphism of dg-coalgebras $\pmb{\Delta}^1 \otimes \pmb{\Delta}^n \otimes \overline{T}(sA) \rightarrow \overline{T}(sB)$. To understand the map $\mathcal{H}$, we first have to make explicit the non-degenerate simplices of the simplicial set $\Delta^1 \times \Delta^n$.

Recall that the $k$-simplices of the simplicial set $\Delta^m$ are the monotone sequences of integers bounded by 0 and $m$
\[ \left( \begin{array}{cccc}
         i_0 & i_1 & \cdots & i_k 
  \end{array} \right) \ \text{where} \ 0 \leqslant i_0 \leqslant i_1 \leqslant \cdots \leqslant i_k \leqslant m  \ . \] 
Following~\cite{milnor-simplicial}, the non-degenerate $k$-simplices of the simplicial set $\Delta^1 \times \Delta^n$ are then labeled by all pairs composed of a $k$-simplex $\sigma$ of $\Delta^1$ and a $k$-simplex $\sigma '$ of $\Delta^n$ such that there does not exist $0 \leqslant j < k$ such that $\sigma_j = \sigma_{j+1}$ and $\sigma'_j = \sigma'_{j+1}$. For instance, the following two pairs of sequences label non-degenerate 3-simplices of $\Delta^1 \times \Delta^3$
\begin{align*} \left( \begin{array}{cccc}
         0&0&0&1 \\
         0&1&2&3
  \end{array} \right)  &&  \left( \begin{array}{cccc}
         0&0&1&1 \\
         0&1&1&2
  \end{array} \right)  \ ,
\end{align*}
while the following pair of sequences is a degenerate 3-simplex of $\Delta^1 \times \Delta^3$
\[ \left( \begin{array}{cccc}
         0&0&0&1 \\
         0&1&1&3
  \end{array} \right)  \ . \]
  
\begin{figure}[h]
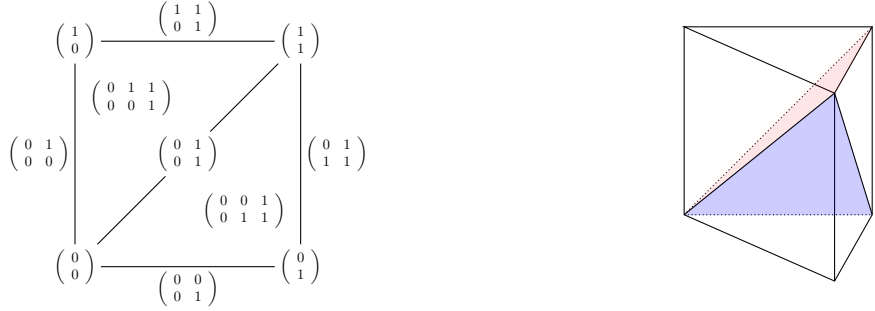
 
    \centering
    \begin{subfigure}{0.45\textwidth}
    \centering
       \triangprodunun
    \end{subfigure} ~
    \begin{subfigure}{0.45\textwidth}
    \centering
        \triangprodundeux
    \end{subfigure} 
    \caption{On the left, the labeling of the non-degenerate simplices of $\Delta^1 \times \Delta^1$. On the right, the (unlabeled) non-degenerate simplices of $\Delta^1 \times \Delta^2$. The two inner non-degenerate 2-simplices of $\Delta^1 \times \Delta^2$ are colored in red and in blue.} \label{alg:fig:triang-prod}
\end{figure}

We will use the following properties of the non-degenerate simplices of the simplicial set $\Delta^1 \times \Delta^n$ in our proof of proposition~\ref{alg:prop:simpl-hom}~: 
\begin{enumerate}[label=(\roman*)]
\item There are exactly $n+1$ non-degenerate $(n+1)$-simplices, labeled by the pairs of sequences
\[ \left( \begin{array}{ccccccccc}
         0 & 0 & 0 & \cdots & 0 & 1 & 1 & \cdots & 1 \\
         0 & 1 & 2 & \cdots & k & k & k+1 & \cdots & n
  \end{array} \right)  \ . \]
The non-degenerate $(n+1)$-simplex labeled by the above pair of sequences will be called the \emph{$k$-th non-degenerate $(n+1)$-simplex} of $\Delta^1 \times \Delta^n$.
\item All non-degenerate simplices of dimension $\leqslant n-1$ lie in $\Delta^1 \times \partial \Delta^n$.
\item All simplices of dimension $\geqslant n+2$ are degenerate.
\item There are exactly $n$ non-degenerate $n$-simplices lying in the interior of $\Delta^1 \times \Delta^n$. They are labeled by the pairs of sequences
\[ \left( \begin{array}{cccccccc}
         0 & 0 & \cdots & 0 & 1 & 1 & \cdots & 1 \\
         0 & 1 &  \cdots & k - 1 & k & k+1 & \cdots & n
  \end{array} \right) \ \text{for} \ 1 \leqslant k \leqslant n  \ . \]
The non-degenerate $n$-simplex labeled by the above pair of sequences will be called the \emph{$k$-th inner non-degenerate $n$-simplex} of $\Delta^1 \times \Delta^n$.
\end{enumerate}
We point out that taking the $l$-th face of a simplex of $\Delta^1 \times \Delta^n$ simply corresponds to deleting the $l$-th column of the array labeling it. For instance,
\begin{align*}
\partial_1 \left( \begin{array}{cccc}
         0&0&1&1 \\
         0&1&1&2
  \end{array} \right) 
  = \left( \begin{array}{ccc}
         0&1&1 \\
         0&1&2
  \end{array} \right) && \partial_3 \left( \begin{array}{cccc}
         0&0&1&1 \\
         0&1&1&2
  \end{array} \right) 
  = \left( \begin{array}{ccc}
         0&0&1 \\
         0&1&1
  \end{array} \right) \ .
\end{align*} 

A simplicial homotopy $\mathcal{H} : \Delta^1 \times \Delta^n \rightarrow \mathrm{HOM}_{\mathsf{\Ainf -Alg}}(A,B)_{\bullet}$ is equivalent to the data of maps $H_K^{(m)} : (sA)^{\otimes m} \rightarrow sB$ for every non-degenerate simplex $K$ of $\Delta^1 \times \Delta^n$, which moreover satisfy the \Ainf -equations for higher morphisms. 
According to the previous description of the non-degenerate simplices of $\Delta^1 \times \Delta^n$ :
\begin{enumerate}[label=(\roman*)]
\item The condition $\Delta^1 \times \partial \Delta^n \mapsto F$ implies that for every non-degenerate simplex $K$ of dimension $1 \leqslant \mathrm{dim}(K) \leqslant n-1$, $H_K^{(m)}=0$, and for every vertex $v$ of $\Delta^1 \times \Delta^n$, $H_v^{(m)}=F^{(m)}$. This also implies that all non-degenerate $n$-simplices $K$ lying in $\Delta^1 \times \partial \Delta^n$ are such that $H_K^{(m)}=0$.
\item The condition $\mathcal{H} |_{[0] \times \Delta^n} = \mathcal{F}$, $\mathcal{H} |_{[1] \times \Delta^n} = \mathcal{G}$ implies that the non-degenerate $n$-simplices $[0 ] \times \Delta^n$ and $[1] \times \Delta^n$ are respectively sent to the $F_{\Delta^n}^{(m)}$ and the $G_{\Delta^n}^{(m)}$.
\item For $K$ the $k$-th inner non-degenerate $n$-simplex, we will write $L_k^{(m)} := H_K^{(m)}$. For a fixed~$k$, the maps $L_k^{(m)}$ satisfy the same \Ainf -equations~(\ref{alg:eq:hom-groups}) as $F_{\Delta^n}^{(m)}$ and $G_{\Delta^n}^{(m)}$. We moreover set  $L_{n+1}^{(m)} := F_{\Delta^n}^{(m)}$ and $L_{0}^{(m)} := G_{\Delta^n}^{(m)}$.
\item Finally, we denote $H_k^{(m)}$ for the collection of maps associated to the $k$-th $(n+1)$-simplex. It satisfies the following \Ainf -equations
\begin{align*}
&(-1)^{k+1} L_{k+1}^{(m)} + (-1)^{k} L_{k}^{(m)} + (-1)^{n+1} \sum_{i_1+i_2+i_3=m} H_k^{(i_1+1+i_3)} (\ide^{\otimes i_1} \otimes b_{i_2} \otimes \ide^{\otimes i_3}) \\
= &\sum_{\substack{i_1 + \cdots + i_s  + l \\ + j_1 + \cdots + j_t = m}} b_{s+1+t} ( F^{(i_1)} \otimes \cdots \otimes F^{(i_s)} \otimes H_k^{(l)} \otimes F^{(j_1)} \otimes \cdots \otimes F^{(j_t)} ) \ .
\end{align*}
\end{enumerate}

Using this characterization of a simplicial homotopy from $\mathcal{F}$ to $\mathcal{G}$, we check that the collection of degree $-(n+1)$ maps 
\[ H^{(m)} := \sum_{k=0}^n (-1)^{k} H_k^{(m)} \]
is such that 
\begin{align*}
&G_{\Delta^n}^{(m)} - F_{\Delta^n}^{(m)} + (-1)^{n+1} \sum_{i_1+i_2+i_3=m} H^{(i_1+1+i_3)} (\ide^{\otimes i_1} \otimes b_{i_2} \otimes \ide^{\otimes i_3}) \\
= &\sum_{\substack{i_1 + \cdots + i_s  + l \\ + j_1 + \cdots + j_t = m}} b_{s+1+t} ( F^{(i_1)} \otimes \cdots \otimes F^{(i_s)} \otimes H^{(l)} \otimes F^{(j_1)} \otimes \cdots \otimes F^{(j_t)} ) \ .
\end{align*}
Conversely, we check that such a collection of maps can be arranged into a simplical homotopy from $\mathcal{F}$ to $\mathcal{G}$, by defining $L_0^{(m)} := G_{\Delta^n}^{(m)}$, $L_k^{(m)} := F_{\Delta^n}^{(m)}$ for $k \geqslant 1$, $H_0^{(m)} := H^{(m)}$ and $H_k^{(m)} := 0$ for $k \geqslant 1$. This concludes the proof of the proposition.
\end{proof}

We finally make explicit the composition law on these simplicial homotopy groups. Consider $( F_{\Delta^n}^{(m)} )^{m \geqslant 1}$ and $( G_{\Delta^n}^{(m)} )^{m \geqslant 1}$ two representatives in $\pi_n \left( \mathrm{HOM}_{\mathsf{\Ainf -Alg}}(A,B)_{\bullet} , F \right)$. Filling the cone $\phi_{ F_{\Delta^n} , G_{\Delta^n} } : \mathsf{\Lambda}_{n+1}^n \rightarrow  \mathrm{HOM}_{\mathsf{\Ainf -Alg}}(A,B)_{\bullet}$ defined in subsection~\ref{sss:simpl-hom-groups} with $\phi_{\Delta^{n+1}}^{(m)}=0$ as in the proof of Proposition~\ref{alg:prop:algebraic-infty}, we get that a representative for $[\mathcal{F}] \cdot [\mathcal{G}]$ is 
\[ \resizebox{\hsize}{!}{\begin{math} \begin{aligned}
G_{\Delta^1}^{(m)} + F_{\Delta^1}^{(m)} - \sum_{\substack{i_1 + \cdots + i_s  + l_1 \\ + j_1 + \cdots + j_t + l_2 \\ + k_1 + \cdots + k_u = m}} b_{s+t+u+2} ( F^{(i_1)} \otimes \cdots \otimes F^{(i_s)} \otimes F_{\Delta^1}^{(l_1)} \otimes F^{(j_1)} \otimes \cdots \otimes F^{(j_t)}  \otimes G_{\Delta^1}^{(l_2)} \otimes F^{(k_1)} \otimes \cdots \otimes F^{(k_u)}) \ .
\end{aligned}
\end{math}} \]
in the $n=1$ case, and
\[ G_{\Delta^n}^{(m)} + F_{\Delta^n}^{(m)} \]
if $n \geqslant 2$.
We get in particular that this composition law is indeed abelian when $n \geqslant 2$. All of our computations are summarized in the following theorem~:

\begin{theorem} \label{alg:th:simpl-hom-groups}
\begin{enumerate}[label=(\roman*)]
\item For $n \geqslant 1$, the set $\pi_n \left( \mathrm{HOM}_{\mathsf{\Ainf -Alg}}(A,B)_{\bullet} , F \right)$ corresponds to the equivalence classes of collections of degree $-n$ maps $F_{\Delta^n}^{(m)} : (sA)^{\otimes m} \rightarrow sB$ satisfying equations~\ref{alg:eq:hom-groups}, where two such collections of maps $( F_{\Delta^n}^{(m)} )^{m \geqslant 1}$ and $( G_{\Delta^n}^{(m)} )^{m \geqslant 1}$ are equivalent if and only if there exists a collection of degree $-(n+1)$ maps $H^{(m)} : (sA)^{\otimes m} \rightarrow sB$ such that
\begin{align*}
&G_{\Delta^n}^{(m)} - F_{\Delta^n}^{(m)} + (-1)^{n+1} \sum_{i_1+i_2+i_3=m} H^{(i_1+1+i_3)} (\ide^{\otimes i_1} \otimes b_{i_2} \otimes \ide^{\otimes i_3}) \\
= &\sum_{\substack{i_1 + \cdots + i_s  + l \\ + j_1 + \cdots + j_t = m}} b_{s+1+t} ( F^{(i_1)} \otimes \cdots \otimes F^{(i_s)} \otimes H^{(l)} \otimes F^{(j_1)} \otimes \cdots \otimes F^{(j_t)} ) \ .
\end{align*}
\item The composition law on $\pi_1 \left( \mathrm{HOM}_{\mathsf{\Ainf -Alg}}(A,B)_{\bullet} , F \right)$ is given by the formula
\[ \resizebox{\hsize}{!}{\begin{math} \begin{aligned}
G_{\Delta^1}^{(m)} + F_{\Delta^1}^{(m)} - \sum_{\substack{i_1 + \cdots + i_s  + l_1 \\ + j_1 + \cdots + j_t + l_2 \\ + k_1 + \cdots + k_u = m}} b_{s+t+u+2} ( F^{(i_1)} \otimes \cdots \otimes F^{(i_s)} \otimes F_{\Delta^1}^{(l_1)} \otimes F^{(j_1)} \otimes \cdots \otimes F^{(j_t)}  \otimes G_{\Delta^1}^{(l_2)} \otimes F^{(k_1)} \otimes \cdots \otimes F^{(k_U)}) \ .
\end{aligned}
\end{math}} \]
\item If $n \geqslant 2$, the composition law on $\pi_n \left( \mathrm{HOM}_{\mathsf{\Ainf -Alg}}(A,B)_{\bullet} , F \right)$ is given by the formula
\[ G_{\Delta^n}^{(m)} + F_{\Delta^n}^{(m)} \ . \]
\end{enumerate}
\end{theorem}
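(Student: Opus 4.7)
The proof splits naturally into three parts matching the three claims, and most of the work is already contained in Proposition~\ref{alg:prop:algebraic-infty} and Proposition~\ref{alg:prop:simpl-hom}; the theorem is essentially a packaging of their consequences.

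For part (i), I would combine the general definition of the simplicial homotopy groups of a Kan complex (subsection~\ref{sss:simpl-hom-groups}) with the explicit identification of simplicial maps into $\mathrm{HOM}_{\mathsf{\Ainf -Alg}}(A,B)_\bullet$. A simplicial map $\mathcal{F}:\Delta^n\to\mathrm{HOM}_{\mathsf{\Ainf -Alg}}(A,B)_\bullet$ taking $\partial\Delta^n$ to $F$ is an $n-\Ainf$-morphism $(F^{(m)}_I)$ with $F^{(m)}_I=F^{(m)}$ when $\dim(I)=0$ and $F^{(m)}_I=0$ when $0<\dim(I)<n$. The $\Ainf$-equation at level $I=\Delta^n$ then collapses: the boundary sum $\sum(-1)^jF^{(m)}_{\partial_j\Delta^n}$ vanishes, and the only nonvanishing contributions to the overlapping-partition sum are those in which a single factor is assigned $F^{(m)}_{\Delta^n}$ and all others are vertices assigned $F^{(*)}$. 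This is exactly equation~$(\star)$. The equivalence relation between two such representatives is then precisely Proposition~\ref{alg:prop:simpl-hom}.

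For parts (ii) and (iii), I would unwind the simplicial composition law. Given $\mathcal{F}=(F^{(m)}_{\Delta^n})$ and $\mathcal{G}=(G^{(m)}_{\Delta^n})$, the cone $\phi_{\mathcal{F},\mathcal{G}}:\mathsf{\Lambda}^n_{n+1}\to\mathrm{HOM}$ sends the $(n-1)$-th face to $\mathcal{F}$, the $(n+1)$-th face to $\mathcal{G}$, and the remaining faces to the basepoint, and $[\mathcal{F}]\cdot[\mathcal{G}]$ is represented by the $n$-th face of any filler. Applying the explicit filler formula of Proposition~\ref{alg:prop:algebraic-infty} with the canonical choice $\Phi^{(m)}_{\Delta^{n+1}}=0$, and substituting the boundary data, one obtains after immediate sign cancellation
\[
\Phi^{(m)}_{\partial_n\Delta^{n+1}}
= F_{\Delta^n}^{(m)} + G_{\Delta^n}^{(m)} + (-1)^n\!\!\!\!\sum_{\substack{i_1+\cdots+i_s=m\\I_1\cup\cdots\cup I_s=\Delta^{n+1}}}\!\!\!\! b_s\bigl(\Phi^{(i_1)}_{I_1}\otimes\cdots\otimes\Phi^{(i_s)}_{I_s}\bigr).
\]

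The main obstacle is the combinatorial analysis of this residual $b_s$-sum. Since all operations on proper faces of dimension strictly between $0$ and $n$ vanish and $\Phi^{(m)}_{\Delta^{n+1}}=0$, the only pieces $I_u$ that contribute are the vertices of $\Delta^{n+1}$ (assigned $F^{(*)}$) together with the two $n$-dimensional faces $\partial_{n-1}\Delta^{n+1}$ and $\partial_{n+1}\Delta^{n+1}$ (assigned $F_{\Delta^n}^{(*)}$ and $G_{\Delta^n}^{(*)}$, possibly with the roles swapped according to the chosen convention). For $n\geq 2$, I would argue that no overlapping partition of $[0<\cdots<n+1]$ exists using only these pieces: both positive-dimensional allowed pieces have minimum vertex $0$, yet any piece following them in an overlapping partition must begin at their maximum vertex ($n$ or $n+1$), and no allowed positive-dimensional piece has such a minimum. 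Hence the $b_s$-sum is empty and part (iii) follows. For $n=1$, the overlapping partitions of $[0<1<2]$ must contain exactly one copy of $[0<1]$ followed by exactly one copy of $[1<2]$, with vertex pieces interspersed at $0$, $1$, and $2$; reading these off in order and tracking the sign $(-1)^n=-1$ produces precisely the formula in part (ii). The only delicate bookkeeping left is unwinding the Koszul signs from section~\ref{alg:ss:signs-n-ainf-morph} to confirm the overall $+$ and $-$ in the stated formulas and to verify the specific ordering $F_{\Delta^1}$-before-$G_{\Delta^1}$ in the tensor product.
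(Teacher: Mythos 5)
Your proposal is correct and follows essentially the same route as the paper: part (i) is obtained by identifying based simplicial maps $\Delta^n\to\mathrm{HOM}_{\mathsf{\Ainf -Alg}}(A,B)_\bullet$ with collections satisfying equation~(\ref{alg:eq:hom-groups}) and invoking Proposition~\ref{alg:prop:simpl-hom}, while parts (ii)--(iii) come from filling the cone $\phi_{\mathcal{F},\mathcal{G}}$ with the canonical filler $\Phi^{(m)}_{\Delta^{n+1}}=0$ of Proposition~\ref{alg:prop:algebraic-infty} and reading off its $n$-th face. Your explicit analysis of which overlapping partitions of $\Delta^{n+1}$ survive (none for $n\geqslant 2$, and only $[0]^{a}\cup[0<1]\cup[1]^{b}\cup[1<2]\cup[2]^{c}$ for $n=1$) just makes precise what the paper leaves implicit, and the remaining ordering/sign bookkeeping you flag is exactly the level of detail the paper itself elides.
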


\subsection{A conjecture on the HOM-simplicial sets $\mathrm{HOM}_{\mathsf{\ombas -Alg}}(A,B)_{\bullet}$}

Given $A$ and $B$ two \ombas -algebras, we define the HOM-simplicial set
\[ \mathrm{HOM}_{\mathsf{\ombas -Alg}} (A,B)_n := \mathrm{Hom}_{\mathsf{(\ombas , \ombas )-op.
bimod.}} (n-\Omega B As - \mathrm{Morph} , \mathrm{Hom}(A,B) ) \ . \]
Drawing from Theorem~\ref{alg:th:infinity-gr}, we conjecture the following result : 
\begin{conjecture}
The simplicial sets $\mathrm{HOM}_{\mathsf{\ombas -Alg}} (A,B)_\bullet$ are $\infty$-categories.
\end{conjecture}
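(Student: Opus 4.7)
The plan is to adapt the obstruction-theoretic proof of Theorem~\ref{alg:th:infinity-gr} from the \Ainf\ setting to the \ombas\ setting, keeping careful track of the finer combinatorial structure provided by the \ombas-cell decomposition of the $n$-multiplihedra. An inner horn $\mathsf{\Lambda}^k_n \to \mathrm{HOM}_{\ombas}(A,B)_\bullet$ with $0 < k < n$ corresponds, by the universal property of the quasi-free operadic bimodule $n-\ombas-\mathrm{Morph}$, to a consistent family of operations $\Phi(t_{I,g}) : A^{\otimes m} \to B$ indexed by pairs $(I,t_g)$ in which $I$ is a face of some $(n-1)$-dimensional face of $\Delta^n$ other than the $k$-th one. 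A filler amounts to extending this family to all pairs $(I,t_g)$ with $I \subset \Delta^n$, subject to the differential equation of Definition~\ref{alg:def:op-bimod-ombas-n}.

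First I would proceed by induction on the arity $m$ and on the total degree $|I|+|t_g|$ of the generating operations, defining the missing $\Phi(t_{I,g})$ one generator at a time. At each step, the condition $\partial \Phi(t_{I,g}) = \Phi(\partial t_{I,g})$ is a coboundary equation in $\mathrm{Hom}(A^{\otimes m}, B)$ whose right-hand side is automatically a cycle thanks to $\partial^2 = 0$ on $n-\ombas-\mathrm{Morph}$ combined with the induction hypothesis. A solution exists whenever this cycle is a boundary, which is where the inner horn condition must enter.

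The hard part will be checking that this obstruction class vanishes, and I expect it to be the main obstacle. Geometrically, this amounts to showing that the subcomplex of the $n$-multiplihedron $n-J_m$ corresponding to an inner horn $\mathsf{\Lambda}^k_n \times J_m$, endowed with its \ombas-refinement, is a deformation retract of $n-J_m$ itself. This should be a consequence of the analogous contractibility statement used in Theorem~\ref{alg:th:infinity-gr}, together with the observation that refining the cell structure from \Ainf\ to \ombas\ on both $n-J_m$ and the horn does not alter the underlying homotopy type. More formally, it should reduce to the fact that the morphism of cosimplicial operadic bimodules $\{\infmorn\} \to \{n-\ombas-\mathrm{Morph}\}$ of Proposition~\ref{alg:prop:morph-op-bimod} is a quasi-isomorphism in each arity and cosimplicial degree, so that the filling obstructions on both sides agree and vanish simultaneously for inner horns.

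Alternatively, one could seek a model-categorical proof along the lines of Lefèvre-Hasegawa~\cite{lefevre-hasegawa}: equipping the category of \ombas-algebras with a model structure in which every object is fibrant, the mapping spaces between fibrant objects would automatically be $\infty$-categories, and the Kan condition may even hold as in the \Ainf\ case of Theorem~\ref{alg:th:infinity-gr}. This route would upgrade the conjecture to a full Kan complex statement, but would first require establishing the relevant model structure on \ombas-algebras, which to my knowledge has not been carried out in the literature; this is perhaps the deeper difficulty underlying the conjecture.
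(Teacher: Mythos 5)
Note first that the paper offers no proof of this statement: it is recorded as a conjecture, the only indication being the remark that the argument ``without signs'' should mimic the explicit inner-horn-filling computation used for \Ainf\ (i.e.\ the proof of Proposition~\ref{alg:prop:algebraic-infty}), with corollae replaced by stable ribbon trees and gauged stable ribbon trees, and that the genuinely open point is the signs, since there is no analogue of the shifted bar construction providing ad hoc sign conventions for \ombas . Your set-up is fine: a horn is a compatible family $\Phi(t_{I,g})$ for $I$ in $\mathsf{\Lambda}^k_n$, and a filler is an extension of this family over the generators with $I=\Delta^n$ and $I$ the missing facet. The gap lies in the mechanism you propose for producing the extension. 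Treating the new generators one at a time turns the problem into the exactness of a cocycle in $\mathrm{Hom}(A^{\otimes m},B)$, and that class has no reason to vanish if the values on the horn are regarded as fixed; the way the \Ainf\ proof escapes this is precisely by \emph{not} running obstruction theory: one chooses the top-cell operations freely, observes that the missing facet occurs with coefficient $\pm 1$ in the differential of the top generator and is therefore determined, and then verifies the facet equations by a direct computation with the \Ainf\ relations. The expected \ombas\ proof is the tree-level analogue of that verification.

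Neither of your substitutes for this computation works as stated. The retraction of $\mathsf{\Lambda}^k_n\times J_m$ inside $n-J_m$ is a purely topological statement (equally true for outer horns, so it cannot be ``where the inner horn condition enters''), and converting it into exactness of your algebraic obstruction would require exactly the relative acyclicity/filtration argument you are trying to avoid, i.e.\ essentially the explicit tree computation. The morphism of Proposition~\ref{alg:prop:morph-op-bimod} goes in the direction $\infmorn\rightarrow n-\Omega B As-\mathrm{Morph}$ and therefore only induces a comparison $\mathrm{HOM}_{\mathsf{\ombas -Alg}}(A,B)_\bullet\rightarrow\mathrm{HOM}_{\mathsf{\Ainf -Alg}}(A,B)_\bullet$ (after restricting algebra structures); to deduce fillers upstairs from fillers downstairs you would need this map to be a Kan fibration, or the horn inclusion of quasi-free $(\ombas,\ombas)$-operadic bimodules to be an acyclic cofibration in a suitable model structure on operadic bimodules, neither of which is established, and ``the obstructions agree'' does not follow from an arity-wise quasi-isomorphism alone. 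The model-categorical alternative misses for a similar reason: Theorem~\ref{alg:th:infinity-gr} is not a consequence of fibrancy of the target alone, but of exhibiting $\{\pmb{\Delta}^n\otimes\overline{T}(sA)\}_{n\geqslant 0}$ as a cosimplicial resolution in $\mathsf{dg-Cogc}$, and there is no coalgebra-level description of \ombas -morphisms on which to run that argument; a model structure on \ombas -algebras, even if constructed, would not by itself identify $\mathrm{HOM}_{\mathsf{\ombas -Alg}}(A,B)_\bullet$ with a homotopy function complex. Finally, you never engage with the signs, which the paper explicitly singles out as the main obstacle to turning the conjecture into a theorem.
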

The proof without signs should follow the same lines as the proof without signs of Theorem~\ref{alg:th:infinity-gr}, working this time with stable ribbon trees and gauged stable ribbon trees instead of corollae. The sign computations will however be much more complicated, as we did not describe a construction analogous to the shifted bar construction which would yield ad hoc sign conventions. 

\section{Higher functors and pre-natural transformations between \Ainf -categories} \label{alg:s:a-inf-ainf-a-b}

\subsection{$n$-functors between \Ainf -categories} \label{ss:n-functors}

Recall that an \emph{\Ainf -category} $\mathcal{A}$ is defined to be the data
\begin{enumerate}[label=(\roman*)]
\item of a collection of objects $\mathrm{Ob}(\mathcal{A})$ ;
\item for every $A_0,A_1 \in \mathrm{Ob}(\mathcal{A})$ of a dg-module $\mathcal{A}(A_0,A_1)$ ; 
\item for every $A_0,\dots , A_n \in \mathrm{Ob}(\mathcal{A})$ of a degree $2-n$ map 
\[ m_n : \mathcal{A}(A_0,A_1) \otimes \cdots \otimes \mathcal{A}(A_{n-1},A_n) \longrightarrow \mathcal{A}(A_0,A_n) \ , \]
such that the maps $m_n$ satisfy a categorical version of the \Ainf -equations for \Ainf -algebras.
\end{enumerate}
The maps $m_n$ are called the \emph{higher compositions} of $\mathcal{A}$ and are to be thought of as the higher homotopies encoding the lack of associativity of the composition maps $m_2$. In particular, an \Ainf -category $\mathcal{A}$ induces an ordinary category $H^*(\mathcal{A})$ in cohomology. We refer to subsection \ref{ss:two-notions} for a discussion of the existence of identity morphisms in $H^*(\mathcal{A})$.

An \emph{\Ainf -functor} between two \Ainf -categories $\mathcal{F} : \mathcal{A} \rightarrow \mathcal{B}$ is then defined to be the data 
\begin{enumerate}[label=(\roman*)]
\item of a map $ \mathcal{F} : \mathrm{Ob}(\mathcal{A}) \rightarrow \mathrm{Ob}(\mathcal{B})$ ;
\item for every $A_0,\dots , A_n \in \mathrm{Ob}(\mathcal{A})$ of a degree $1-n$ map 
\[ f_n : \mathcal{A}(A_0,A_1) \otimes \cdots \otimes \mathcal{A}(A_{n-1},A_n) \longrightarrow \mathcal{B}(\mathcal{F}(A_0),\mathcal{F}(A_n)) \ , \]
such that the maps $f_n$ satisfy a categorical version of the \Ainf -equations for \Ainf -morphisms.
\end{enumerate}
\Ainf -functors correspond to functors between \Ainf -categories that preserve the composition up to higher coherent homotopies, and induce ordinary functors $H^*(\mathcal{F}) : H^*(\mathcal{A}) \rightarrow H^*(\mathcal{B})$ between the cohomological categories.

One can then similarly define a categorical generalization of $n$-morphisms between \Ainf -algebras given by \emph{$n$-functors between \Ainf -categories}. The sets of $n$-functors between two \Ainf -categories $\mathcal{A}$ and $\mathcal{B}$ then fit into a simplicial set 
\[ \mathrm{HOM}_{\mathsf{\Ainf -Cat}}(\mathcal{A},\mathcal{B})_{\bullet} \ . \]
It is straightforward from the proof of subsection \ref{alg:sss:proof-proposition-algebraic} that these simplicial sets are again algebraic $\infty$-categories. In analogy with Theorem \ref{alg:th:infinity-gr}, we expect that these simplicial sets are Kan complexes. The proof of this statement would rely on working out the homotopy theory of dg-cocategories. 

\subsection{The \Ainf -category of \Ainf -functors $\mathrm{Func}_{\mathcal{A},\mathcal{B}}$ and the simplicial nerve functor}

Given two \Ainf -categories $\mathcal{A}$ and $\mathcal{B}$, Fukaya constructed in \cite{fukaya-floer} an \Ainf -category $\mathrm{Func}_{\mathcal{A},\mathcal{B}}$ whose objects are \Ainf -functors from $\mathcal{A}$ to $\mathcal{B}$. See also~\cite{lefevre-hasegawa} and \cite{seidel-fukaya}. 
The goal of this section is to compare the construction of \cite{fukaya-floer} to the Kan complex $\mathrm{HOM}_{\mathsf{\Ainf -Cat}}(\mathcal{A},\mathcal{B})_{\bullet}$. 

We begin by defining the \Ainf -category $\mathrm{Func}_{\mathcal{A},\mathcal{B}}$. 
The objects of $\mathrm{Func}_{\mathcal{A},\mathcal{B}}$ are \Ainf -functors $\mathcal{A} \rightarrow \mathcal{B}$.
Given two \Ainf -functors $\mathcal{F}_0= \{ f_0^{(m)} \}$ and $\mathcal{F}_1 = \{ f_1^{(m)} \}$, an element $\mathcal{F}_{01} \in \mathrm{Func}_{\mathcal{A},\mathcal{B}}(\mathcal{F}_0,\mathcal{F}_1)$ is called a \emph{pre-natural transformation} and consists of a collection of morphisms 
\[ f_{01}^{(m)} : \mathcal{A}(A_0,A_1) \otimes \cdots \otimes \mathcal{A}(A_{n-1},A_n) \longrightarrow \mathcal{B}(\mathcal{F}_0(A_0),\mathcal{F}_1(A_n)) \]
for $m \geqslant 0$, where $f_{01}^{(0)}$ corresponds to an element of $\mathcal{B}(\mathcal{F}_0(A),\mathcal{F}_1(A))$ for all $A \in \mathcal{A}$. A pre-natural transformation has degree $r$ if each morphism $f_{01}^{(m)}$ has degree $r-m$. 
The differential $m_1$ on $\mathrm{Func}_{\mathcal{A},\mathcal{B}}(\mathcal{F}_0,\mathcal{F}_1)$ is then defined as 
\[ \left( m_1 (\mathcal{F}_{01}) \right)^{(m)} := \sum_{i_1+i_2+i_3=m } \pm f_{01}^{(i_1 + 1 + i_3)} (\ide^{\otimes i_1} \otimes m_{i_2} \otimes \ide^{\otimes i_3}) + \sum_{ |\pmb{i}_0|  + l + |\pmb{i}_1| = m} \pm m_{s} ( \pmb{f}_0^{\pmb{i}_0} \otimes f_{01}^{(l)} \otimes \pmb{f}_1^{\pmb{i}_1} ) \ ,  \]
where for a list $\pmb{i}_0 := (i^1_0,\dots,i^{k_0}_0)$ of indices, we denote
\begin{align*}
|\pmb{i}_0| := \sum_{j=1}^{k_0} i^j_0 && l(\pmb{i}_0) := k_0 && \pmb{f}_0^{\pmb{i}_0} := f_0^{(i_0^1)} \otimes \cdots \otimes f_0^{(i_0^{k_0})} \ ,
\end{align*} 
and where $s := l(\pmb{i}_0) + 1 + l(\pmb{i}_1)$ in the second sum. 
The \Ainf -operation 
\[ m_n := \mathrm{Func}_{\mathcal{A},\mathcal{B}}(\mathcal{F}_0,\mathcal{F}_1) \otimes \cdots \otimes \mathrm{Func}_{\mathcal{A},\mathcal{B}}(\mathcal{F}_{n-1},\mathcal{F}_n) \rightarrow \mathrm{Func}_{\mathcal{A},\mathcal{B}}(\mathcal{F}_0,\mathcal{F}_n) \]
evaluated on an element $\mathcal{F}_{01} \otimes \cdots \otimes \mathcal{F}_{n-1,n}$ is defined as
\[ \left( m_n( \mathcal{F}_{01} , \dots , \mathcal{F}_{n-1,n}) \right)^{(m)} := \sum_{ \sum_{r=0}^n |\pmb{i}_r|  + \sum_{r=0}^{n-1} l_{r,r+1} = m   } \pm m_{s} ( \pmb{f}_0^{\pmb{i}_0} \otimes f_{01}^{(l_{01})} \otimes \pmb{f}_1^{\pmb{i}_1}  \otimes \cdots \otimes f_{n-1,n}^{(l_{n-1,n})} \otimes \pmb{f}_n^{\pmb{i}_n}) \ ,  \]
where $s:= n + \sum_{r=0}^s l(\pmb{i}_r)$. 

In~\cite{faonte-simplicial}, Faonte defines the simplicial nerve $N_{\Ainf}$ of an \Ainf -category. Given an \Ainf -category $\mathcal{C}$, the simplicial nerve $N_{\Ainf}$ of $\mathcal{C}$ is a simplicial set $N_{\Ainf}(\mathcal{C})$ which has the property of being an $\infty$-category. A $n$-simplex in this simplicial set corresponds to the data for every $0 \leqslant i \leqslant n$ of an object $f_i \in \mathcal{C}$
and for every $0 \leqslant i_0 < \cdots < i_k \leqslant n$ with $k \geqslant 1$ of an element
$f_{i_0 \dots i_k} \in \mathcal{C}(f_{i_0} , f_{i_k})$ of degree $1-k$, such that 
\[ m_1 ( f_{i_0 \dots i_k}) = \sum_{j=1}^{k-1} (-1)^j f_{i_0 \dots \hat{i_j} \dots i_k} + \sum_{\substack{0 < j_1 < \cdots < j_{s-1} < k \\ s \geqslant 2 }} \pm m_s ( f_{i_0 \dots i_{j_1}} , \dots , f_{i_{j_{s-1}} \dots i_k} ) \ .  \]
\noindent One can thereby consider the simplicial set $N_{\Ainf}(\mathrm{Func}_{\mathcal{A},\mathcal{B}})$, which is an $\infty$-category. Its $n$-simplices correspond to the data of
\begin{enumerate}[label=(\roman*)]
\item an \Ainf -functor $\mathcal{F}_{[i]} = (f^{(m)}_{[i]} )_{m \geqslant 1}$ from $\mathcal{A}$ to $\mathcal{B}$ for every $0 \leqslant i \leqslant n$,
\item and of a pre-natural transformation $\mathcal{F}_{I} = (f^{(m)}_{I} )_{m \geqslant 0}$ of degree $1-m+|I|$ for every $I \subset \Delta^n$ such that $\mathrm{dim}(I) \geqslant 1$,
\end{enumerate}
which satisfy the following equations
\[ \resizebox{\hsize}{!}{\begin{math} \begin{aligned}
\left[ \partial , f^{(m)}_I \right] =  \sum_{j=1}^{\mathrm{dim}(I)-1} (-1)^j f^{(m)}_{\partial_jI} + \sum_{\substack{i_1+i_2+i_3=m \\ i_2 \geqslant 2}} \pm f^{(i_1+1+i_3)}_I (\ide^{\otimes i_1} \otimes m_{i_2} \otimes \ide^{\otimes i_3}) + \sum_{\substack{i_1 + \cdots + i_s = m \\ I_1 \cup \cdots \cup I_s = I \\ s \geqslant 2 }} \pm m_s ( f^{(i_1)}_{I_1} \otimes \cdots \otimes f^{(i_s)}_{I_s}) \ .
\end{aligned}
\end{math}} \]

These equations are almost but not exactly identical to the \Ainf -equations for $n$-functors defined in this article. Indeed, the sum for the simplicial differential now runs over $j=1, \dots , \mathrm{dim}(I)-1$ and the operations $f^{(m)}_{I}$ defining the $n$-simplex can have arity 0 when $\mathrm{dim}(I) \geqslant 1$.
These seemingly minor differences account for the fact that the simplicial sets $\mathrm{HOM}_{\mathsf{\Ainf -Cat}}(\mathcal{A},\mathcal{B})_{\bullet}$ and $N_{\Ainf}(\mathrm{Func}_{\mathcal{A},\mathcal{B}})$ differ fundamentally. Indeed, the 1-simplices of the simplicial set $\mathrm{HOM}_{\mathsf{\Ainf -Cat}}(\mathcal{A},\mathcal{B})_{\bullet}$ correspond to \Ainf -homotopies between two \Ainf -functors and its higher simplices are to be understood as the higher coherent homotopies generalizing \Ainf -homotopies. The simplices of the simplicial set $N_{\Ainf}(\mathrm{Func}_{\mathcal{A},\mathcal{B}})$ are to be interpreted differently. The equations computed in the previous paragraph show that a 1-simplex $\mathcal{F}_{01}$ of $N_{\Ainf}(\mathrm{Func}_{\mathcal{A},\mathcal{B}})$ corresponds exactly to an \emph{\Ainf -natural transformation} between two \Ainf -functors $\mathcal{F}_0$ and $\mathcal{F}_1$. A 1-simplex $\mathcal{F}_{01}$ corresponds indeed to a collection of operations from the \Ainf -category $\mathcal{A}$ to the \Ainf -category $\mathcal{B}$, and the arity 0 and 1 part of the equations they satisfy show that $\mathcal{F}_{01}$ descends to an ordinary natural transformation $H^*(\mathcal{F}_{01}) : H^*(\mathcal{F}_0) \Rightarrow H^*(\mathcal{F}_1)$. This is also the reason why the morphisms of the \Ainf -category $\mathrm{Func}_{\mathcal{A},\mathcal{B}}$ are called pre-natural transformations. The $n$-simplices of $N_{\Ainf}(\mathrm{Func}_{\mathcal{A},\mathcal{B}})$ are then to be understood as \emph{higher \Ainf -natural transformations} between \Ainf -functors.  
This interpretation explains in particular why the simplicial set $\mathrm{HOM}_{\mathsf{\Ainf -Cat}}(\mathcal{A},\mathcal{B})_{\bullet}$ is a Kan complex while $N_{\Ainf}(\mathrm{Func}_{\mathcal{A},\mathcal{B}})$ is an $\infty$-category but not necessarily a Kan complex : homotopies should always be invertible (up to homotopy), but this has no reason to hold in general for natural transformations.

\subsection{Two notions of homotopies between \Ainf -functors} \label{ss:two-notions}

The \Ainf -category $\mathrm{Func}_{\mathcal{A},\mathcal{B}}$ provides in fact an alternative framework to define a \emph{homotopy equivalence} relation between \Ainf -functors. Following \cite{fukaya-unobstructed}, we compare in this subsection this homotopy equivalence relation to our equivalence relation induced by \Ainf -homotopies between \Ainf -functors.

Define a unital \Ainf -algebra $B$ to be an \Ainf -algebra $B$ together with an element $e \in B$ such that $\partial e = 0$, $m_2(e,\cdot) = m_2(\cdot ,e) = \ide$ and $m_n(\cdots , e , \cdots) = 0$ when $n \geqslant 3$. A unital \Ainf -algebra yields in particular a unital algebra $H^*(B)$ in cohomology.
One defines the notion of a unital \Ainf -category $\mathcal{B}$ in a similar fashion. A unital \Ainf -category yields in fact an ordinary category $H^*(\mathcal{B})$ whose identity morphisms correspond to the cohomology classes of its unit morphisms. For two \Ainf -categories $\mathcal{A}$ and $\mathcal{B}$ such that $\mathcal{B}$ is unital, one can moreover check that the \Ainf -category $\mathrm{Func}_{\mathcal{A},\mathcal{B}}$ is again unital.

We suppose in the rest of this subsection that the \Ainf -category $\mathcal{B}$ is unital.
Following \cite{fukaya-unobstructed}, define a \emph{homotopy equivalence} between two \Ainf -functors $\mathcal{F}$ and $\mathcal{G}$ to be a degree 0 pre-natural transformation $\mathcal{T} \in \mathrm{Func}_{\mathcal{A},\mathcal{B}}(\mathcal{F},\mathcal{G})$ for which there exists a degree 0 pre-natural transformation $\mathcal{T}' \in \mathrm{Func}_{\mathcal{A},\mathcal{B}}(\mathcal{G},\mathcal{F})$ such that 
\begin{enumerate}[label=(\roman*)]
\item $m_1 (\mathcal{T}) = 0$ and $m_1 (\mathcal{T}') = 0$, i.e. the pre-natural transformations $\mathcal{T}$ and $\mathcal{T'}$ are \Ainf -natural transformations as defined in the previous subsection ;
\item $m_2 ( \mathcal{T} , \mathcal{T}') - \mathrm{Id}_{\mathcal{F}} \in \mathrm{Im}(m_1)$ and $m_2 ( \mathcal{T}' , \mathcal{T}) - \mathrm{Id}_{\mathcal{G}} \in \mathrm{Im}(m_1)$, where $\mathrm{Id}_{\mathcal{F}}$ denotes the unit of $\mathrm{Func}_{\mathcal{A},\mathcal{B}}(\mathcal{F},\mathcal{F})$.
\end{enumerate}
Two \Ainf -functors $\mathcal{F}$ and $\mathcal{G}$ are said to be \emph{homotopy equivalent} if there exists a homotopy equivalence between them. The \Ainf -natural transformations $\mathcal{T}$ and $\mathcal{T}'$ then induce natural equivalences $H^*(\mathcal{T}) : H^*(\mathcal{F}) \Rightarrow H^*(\mathcal{G})$ and $H^*(\mathcal{T}') : H^*(\mathcal{G}) \Rightarrow H^*(\mathcal{F})$ which are inverse to one another. In particular, if $\mathcal{F}$ and $\mathcal{G}$ are homotopy equivalent then $H^*(\mathcal{T}) \circ H^*(\mathcal{F}) \circ H^*(\mathcal{T})^{-1} = H^*(\mathcal{G})$.

We say that two \Ainf -functors $\mathcal{F}$ and $\mathcal{G}$ are \emph{homotopic} if there exists an \Ainf -homotopy between them. Two homotopic \Ainf -functors $\mathcal{F}$ and $\mathcal{G}$ define in particular the same functor $H^*(\mathcal{F}) = H^*(\mathcal{G})$ in cohomology. 
These two notions of homotopy on \Ainf -functors are related by the following proposition proven by Fukaya in \cite{fukaya-unobstructed} :
\begin{proposition}
Let $\mathcal{B}$ be a unital \Ainf -category and $\mathcal{F},\mathcal{G} : \mathcal{A} \rightarrow \mathcal{B}$ be two \Ainf -functors. If $\mathcal{F}$ and $\mathcal{G}$ are homotopic then they are homotopy equivalent. 
\end{proposition}
\noindent The converse is however not true in general.

\section{The $\infty$-category of \Ainf -algebras ?} \label{fd:s:simplic-enrich}

Given three \Ainf -algebras $A$, $B$ and $C$ together with two $n$-morphisms going respectively from $A$ to $B$ and from $B$ to $C$, we have not yet defined a way to compose them. In other words, we have not defined a simplicial enrichment of the category $\mathsf{\Ainf -Alg}$.

\subsection{Simplicially enriched categories} \label{fd:ss:simpl-enrich-cat}

A \emph{simplicially enriched category} $\mathsf{D}$, or \emph{simplicial category} for short, is the data of
\begin{enumerate}[label=(\roman*)]
\item a collection of objects $\mathrm{Ob}(\mathsf{D})$ ;
\item for every two objects $A$ and $B$ a simplicial set of morphisms between $A$ and $B$, that we write $\mathrm{HOM}_\mathsf{D}(A,B)_{n}$ ;
\item simplicial composition maps $$\mathrm{HOM}_\mathsf{D}(A,B)_{n} \times \mathrm{HOM}_\mathsf{D}(B,C)_{n} \longrightarrow \mathrm{HOM}_\mathsf{D}(A,C)_{\bullet} \ ; $$
\end{enumerate}
which satisfy the standard axioms of an ordinary category.
Defining a \emph{simplicial enrichment} of an ordinary category $\mathsf{C}$ consists then in defining a simplicial category $\mathsf{C}_\Delta$ having the same objects as~$\mathsf{C}$ and such that the sets of vertices of its HOM-simplicial sets are exactly the sets of morphisms of~$\mathsf{C}$, in other words
\[\mathrm{HOM}_{\mathsf{C}_\Delta} (A,B)_0 = \mathrm{Hom}_\mathsf{C}(A,B) \text{ \ for each $n$} . \]

In the particular case of the category $\mathsf{C} := \mathsf{\Ainf -Alg}$ we have already constructed the HOM-simplicial sets, and we would now like to define simplicial composition maps 
\[ \mathrm{HOM}_{\mathsf{\Ainf -Alg}}(A,B)_n \times \mathrm{HOM}_{\mathsf{\Ainf -Alg}} (B,C)_n \longrightarrow \mathrm{HOM}_{\mathsf{\Ainf -Alg}}(A,C)_n \ . \]
It is enough to construct these simplicial maps for $\mathsf{dg -Cog}$, i.e. to define simplicial composition maps
\[ \mathrm{HOM}_{\mathsf{dg -Cog}}(A,B)_n \times \mathrm{HOM}_{\mathsf{dg -Cog}} (B,C)_n \longrightarrow \mathrm{HOM}_{\mathsf{dg -Cog}}(A,C)_n \ , \]
which are associative, preserve the identity and lift the composition on $\mathrm{HOM}_0 = \mathrm{Hom}$.

\subsection{A natural candidate that fails to preserve the coproduct} \label{fd:ss:nat-candidate}

Let $F : \pmb{\Delta}^n \otimes C \rightarrow C'$ and $G : \pmb{\Delta}^n \otimes C' \rightarrow C''$ be two morphisms of dg-coalgebras. The only natural candidate to construct a composition is the Alexander-Whitney coproduct $\Delta_{\pmb{\Delta}^n}$, i.e. we define $G \circ F$ to be the following composite of maps
\[ \begin{tikzcd}[column sep = large]
\pmb{\Delta}^n \otimes C \arrow{r}[above = 2pt]{\Delta_{\pmb{\Delta}^n} \otimes \ide_C} 
& \pmb{\Delta}^n \otimes \pmb{\Delta}^n \otimes C \arrow{r}[above = 2pt]{\ide_{\pmb{\Delta}^n} \otimes F} 
& \pmb{\Delta}^n \otimes C' \arrow{r}[above = 2pt]{G} 
& C''
\end{tikzcd} \ . \]
Note that we use the word "map" and not "morphism" because we have yet to check that this composite is indeed a morphism of dg-coalgebras.

Before moving on, we point out that for the composition of continuous maps of topological spaces $\Delta^n \times X \rightarrow Y$ we use the diagonal map of $\Delta^n$,
\[ \Delta^n \times X \underset{diag_{\Delta^n} \times \ide_X}{\longrightarrow} \Delta^n \times \Delta^n \times X \underset{\ide_{\Delta^n} \times F}{\longrightarrow} \Delta^n \times Y \underset{G}{\longrightarrow} Z \ . \]
This construction cannot be reproduced in our case, as the diagonal map $ \pmb{\Delta}^n \rightarrow \pmb{\Delta}^n \otimes \pmb{\Delta}^n$ does not respect the gradings, nor does it respect the differentials.

Set $\pmb{\Delta}^n_1 := \pmb{\Delta}^n$, $\pmb{\Delta}^n_2 := \pmb{\Delta}^n$ and write $\Delta_{\pmb{\Delta}^n} : \pmb{\Delta}^n \rightarrow \pmb{\Delta}^n_1 \otimes \pmb{\Delta}^n_2$ for the Alexander-Whitney map seen as a map from the dg-coalgebra $\pmb{\Delta}^n$ to the product dg-coalgebra $\pmb{\Delta}^n_1 \otimes \pmb{\Delta}^n_2$. In the previous composition, it is sufficient to prove that $\Delta_{\pmb{\Delta}^n} : \pmb{\Delta}^n \rightarrow \pmb{\Delta}^n_1 \otimes \pmb{\Delta}^n_2$ is a morphism of dg-coalgebras to prove that $G \circ F$ is a morphism of dg-coalgebras. This map does preserve the differential, but it does not preserve the coproduct !
Indeed, consider the following diagram
\[
\begin{tikzcd}
\pmb{\Delta}^n \arrow[r,"\Delta_{\pmb{\Delta}^n}"] \arrow[d,"\Delta_{\pmb{\Delta}^n}"] & \pmb{\Delta}^n_1 \otimes \pmb{\Delta}^n_2 \arrow[r,"\Delta_{\pmb{\Delta}^n_1}\otimes \Delta_{\pmb{\Delta}^n_2}"] & \pmb{\Delta}^n_1 \otimes \pmb{\Delta}^n_1 \otimes \pmb{\Delta}^n_2 \otimes \pmb{\Delta}^n_2 \arrow[d,"\ide \otimes \tau \otimes \ide "] \\
\pmb{\Delta}^n \otimes \pmb{\Delta}^n \arrow[rr,"\Delta_{\pmb{\Delta}^n} \otimes \Delta_{\pmb{\Delta}^n}"]     &        & (\pmb{\Delta}^n_1 \otimes \pmb{\Delta}^n_2) \otimes (\pmb{\Delta}^n_1 \otimes \pmb{\Delta}^n_2)
\end{tikzcd}
\ . \]
Up to specifying the correct signs, the upper composite path of the square is the map
\[ I \longmapsto \sum_{I_1 \cup I_2 \cup I_3 \cup I_4 = I } I_1 \otimes I_3 \otimes I_2 \otimes I_4 \ , \]
where $I_1 \cup I_2 \cup I_3 \cup I_4$ denotes an overlapping partition of the face $I \subset \Delta^n$, while the lower composite path of the square is the map
\[ I \longmapsto \sum_{I_1 \cup I_2 \cup I_3 \cup I_4 = I } I_1 \otimes I_2 \otimes I_3 \otimes I_4 \ . \]
These two maps are not equal, the square does not commute. 
The map $G \circ F$ is in particular not a morphism of dg-coalgebras, and as a result does not belong to $\mathrm{HOM}_{\mathsf{dg-Cog}}(A,C)_n$. It ensues that the composition fails to be lifted to higher morphisms with this naive approach.

Still, something more can be said about the previous non-commutative square. Again, up to computing the correct signs, the map
\begin{align*}
\pmb{\Delta}^n &\longrightarrow (\pmb{\Delta}^n_1 \otimes \pmb{\Delta}^n_2) \otimes (\pmb{\Delta}^n_1 \otimes \pmb{\Delta}^n_2) \\
I &\longmapsto \sum_{I_1 \cup I_2 \cup I_3 \cup I_4 \cup I_5 = I} I_1 \otimes I_3 \otimes (I_2 \cup I_4)\otimes I_5 \ ,
\end{align*}
defines a homotopy between the upper composite path and the lower composite path of the square~: it fills the square to make it homotopy-commutative.
In the language introduced in~\cite{mcclure-smith}, the upper composite path is equal to $1324$, the lower one is equal to $1234$, and the filler is equal to $13234$. Using the results of~\cite{mcclure-smith}, the author proves in~\cite{mazuir-AW} that :
\begin{theorem} \label{alg:th:aw}
The Alexander-Whitney coproduct can be lifted to an \Ainf -morphism between the dg-coalgebras $\pmb{\Delta}^n$ and $\pmb{\Delta}^n_1 \otimes \pmb{\Delta}^n_2$, whose first higher homotopy is the map $13234$.
\end{theorem}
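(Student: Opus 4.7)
} The plan is to identify the missing higher components of the $\Ainf$-morphism with operations coming from the sequence-operad formalism of McClure--Smith~\cite{mcclure-smith} acting on the normalized cochain complex $\pmb{\Delta}^n$. For every $k \geqslant 1$ I would define a map
\[ F_k \colon \pmb{\Delta}^n \longrightarrow \bigl(\pmb{\Delta}^n_1 \otimes \pmb{\Delta}^n_2\bigr)^{\otimes k} \]
of degree $1-k$ as a signed sum indexed by surjections $\{1,\dots,2k-1\} \twoheadrightarrow \{1,\dots,2k\}$ of a prescribed shape, evaluated on a face $[i_0 < \cdots < i_m] \subset \Delta^n$ by iterating the AW coproduct $2k-2$ times to produce an overlapping $(2k-1)$-partition and then distributing the resulting factors into the $2k$ tensor slots according to the sequence. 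The first two instances are $F_1 = 12$ (the AW coproduct itself) and $F_2 = 13234$, the filler already exhibited in subsection~\ref{fd:ss:nat-candidate}.

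Since both $\pmb{\Delta}^n$ and $\pmb{\Delta}^n_1 \otimes \pmb{\Delta}^n_2$ are strict dg-coalgebras, the $\Ainf$-morphism equations, read on the coalgebra side dually to Definition~\ref{alg:def:n-morph-ainf}, collapse to a single family of identities expressing $[\partial, F_k]$ as a signed sum of two types of terms: pre-compositions of $F_{k-1}$ with $\Delta_{\pmb{\Delta}^n_1 \otimes \pmb{\Delta}^n_2}$ inserted in one of the $k-1$ tensor positions, and post-compositions of $\Delta_{\pmb{\Delta}^n}$ with $F_a \otimes F_b$ for all $a+b = k+1$. I would then verify this identity by a direct combinatorial calculation: each side, evaluated on $[i_0 < \cdots < i_m]$, expands as a sum indexed by overlapping partitions, and the discrepancy between the two sides, which measures the failure of AW to be coassociative through the twist map $\tau$, is exactly cancelled modulo $[\partial, -]$ by the signed sum of sequences defining $F_k$. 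This is essentially the dual of the standard fact that McClure--Smith's surjection complex realizes the $E_\infty$-operad acting on normalized cochains.

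The main obstacle I expect is sign bookkeeping. McClure--Smith's natural sign conventions on sequences do not coincide with convention (B) fixed in subsection~\ref{alg:sss:choice-convention-ainf}, and the passage between them requires tracking the $(-1)^{\binom{k}{2}}$ suspension twists of subsection~\ref{alg:sss:recoll-bar-constr} on each generator. One must also reconcile the orientation convention on the iterated AW output, which depends on the parenthesization chosen for $\mathrm{AW}^{\circ s}$, with the orientation on tensor products of overlapping partitions used in section~\ref{alg:ss:signs-n-ainf-morph}. A secondary, more structural difficulty is the uniform-in-$k$ description of the precise set of sequences entering $F_k$: inductively these are the sequences that fill the coherence cube whose $(k-1)$-skeleton is already filled by $F_1,\dots,F_{k-1}$, and making this explicit is exactly what the McClure--Smith formalism is designed for, so I would import that combinatorial input rather than reproving it.
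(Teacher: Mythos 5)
Your overall route — realizing the higher components as operations of the McClure--Smith surjection (sequence) operad acting on $\pmb{\Delta}^n$ — is exactly the toolbox the paper points to: the theorem is not proved in this article but cited to the companion note, which is itself built on~\cite{mcclure-smith}, and your $F_1 = 12$, $F_2 = 13234$ agree with the data recorded in subsection~\ref{fd:ss:open-question-plus-result}. However, as written your proposal has a genuine gap, and it sits precisely at the crux. First, the specification of $F_k$ is internally inconsistent: there is no surjection $\{1,\dots,2k-1\} \twoheadrightarrow \{1,\dots,2k\}$, and the numerology does not match your own examples. Since $F_k \colon \pmb{\Delta}^n \to (\pmb{\Delta}^n_1 \otimes \pmb{\Delta}^n_2)^{\otimes k}$ has arity $2k$ and degree $1-k$ in the paper's grading, the relevant sequences have length $2k + (k-1) = 3k-1$ with values in $\{1,\dots,2k\}$, so they are evaluated on overlapping $(3k-1)$-partitions, i.e. on $3k-2$ iterations of $\mathrm{AW}$ — for $k=2$ this is the length-$5$ sequence $13234$ distributing a $5$-partition into $4$ slots, which already contradicts your stated recipe ($2k-2$ iterations, $2k-1$ factors into $2k$ slots).

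Second, and more importantly, the existence of the higher fillers and the verification of the $\Ainf$-relations is deferred to "the McClure--Smith formalism", but their results (the surjection operad acts on normalized (co)chains and is an $E_\infty$-operad, so each arity component is acyclic in positive degrees) do not by themselves hand you the $F_k$. You still have to do one of two things: either exhibit explicit sequences for every $k$ and check that their differential in the surjection operad equals the signed sum of the composites $(\ide^{\otimes i}\otimes \Delta_{\pmb{\Delta}^n_1\otimes\pmb{\Delta}^n_2}\otimes \ide^{\otimes k-1-i})\circ F_{k-1}$ and $(F_a\otimes F_b)\circ\Delta_{\pmb{\Delta}^n}$ for $a+b=k+1$ (these composites do lie in the surjection operad because the twist $\tau$ is absorbed by the symmetric group action — this is what makes the strategy viable), or run the obstruction-theoretic induction, checking at each stage that the obstruction is a cycle of positive degree in an acyclic arity component, with the base case $1324 - 1234 = \partial(13234)$. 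Your sentence that the discrepancy "is exactly cancelled modulo $[\partial,-]$ by the signed sum of sequences defining $F_k$" is circular as stated, since those sequences are never pinned down; the sign reconciliation you flag is a real but secondary issue, and your pre-/post-composition labels in the $\Ainf$-relations are swapped, though the intended identities are the right ones.
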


\subsection{A second approach using the tensor product of \Ainf -morphisms} \label{fd:ss:open-question-plus-result}

We proved in subsection~\ref{alg:ss:equivalent-definition} of part~\ref{p:algebra} that a $n$-morphism from $A$ to $B$ can equivalently be defined as a morphism of \Ainf -algebras $A \rightarrow \pmb{\Delta}_n \otimes B$. Using this definition, we can construct the composition of two $n$-morphisms $A \rightarrow \pmb{\Delta}_n \otimes B$ and $B \rightarrow \pmb{\Delta}_n \otimes C$ as 
\[ G \circ F := A \underset{F}{\longrightarrow} \pmb{\Delta}_n \otimes B \underset{\ide_ {\pmb{\Delta}_n}\otimes G}{\longrightarrow} \pmb{\Delta}_n \otimes \pmb{\Delta}_n \otimes C \underset{\cup \otimes \ide_c}{\longrightarrow} \pmb{\Delta}_n \otimes C \ . \]
In this composition, we write tensor products of \Ainf -morphisms $\ide_ {\pmb{\Delta}_n}\otimes G$ and $\cup \otimes \ide_c$ between tensor \Ainf -algebras. This requires some further explanations.

Given two \Ainf -algebras $A$ and $B$, it is not straightforward to define an \Ainf -algebra structure on the tensor dg-module $A \otimes B$. Indeed, if we define naively the operations $m_n^{A \otimes B}$ as \[ m_n^{A \otimes B}(a_1 \otimes b_1 , \cdots , a_n \otimes b_n) := \pm m_n^A(a_1 , \cdots , a_n) \otimes m_n^B (b_1 , \cdots , b_n) \ , \]
they fail to satisfy the \Ainf -equations and do not even have the right degree. As explained in~\cite{markl-assoc}, the definition of a natural tensor product of \Ainf -algebras can be done by constructing a morphism of operads $\Ainf \rightarrow \Ainf \otimes \Ainf$, where $\Ainf \otimes \Ainf (n) := \Ainf (n) \otimes \Ainf (n)$ denotes the Hadamard product of operads. In~\cite{masuda-diagonal-assoc}, the authors construct such a morphism of operads by constructing a polytopal diagonal on the associahedra $K_m$ and recover the formula originally computed on the dg-level by Markl and Shnider in~\cite{markl-assoc}. 
In the particular case of a dg-algebra $A$ and an \Ainf -algebra $B$, the \Ainf -structure on $A \otimes B$ deduced from a diagonal on the operad \Ainf\ is moreover exactly the one described at the beginning of subsection~\ref{alg:ss:equivalent-definition} of part~\ref{p:algebra}. The \Ainf -algebras appearing in the definition of the \Ainf -morphism $G \circ F : A \rightarrow \pmb{\Delta}_n \otimes C$ are all of this form. 

Given two \Ainf -morphisms $f^A : A_1 \rightarrow A_2$ and $f^B : B_1 \rightarrow B_2$, we would also like to define a morphism $f^A \otimes f^B : A_1 \otimes B_1 \rightarrow A_2 \otimes B_2$ between the tensor \Ainf -algebras $A_1 \otimes B_1$ and  $A_2 \otimes B_2$. This involves defining this time a morphism of operadic bimodules $\infmor \rightarrow \infmor \otimes \infmor$, compatible with the morphism of operads $\Ainf \rightarrow \Ainf \otimes \Ainf$ introduced in the previous paragraph. Guillaume Laplante-Anfossi together with the author define such a morphism in an upcoming article~\cite{masuda-diagonal-multipl}, following the method of~\cite{masuda-diagonal-assoc} by constructing an explicit polytopal diagonal on the multiplihedra $J_m$. See also the work of Lipshitz, Ozsv\'{a}th and Thurston in~\cite{lipshitz-diagonals}.

In the particular case when the \Ainf -algebras $A_1$ and $A_2$ are dg-algebras and the morphism $f_A$ is a morphism of dg-algebras, the datum of a diagonal on \infmor\ is not necessary to define the \Ainf -morphism $f^A \otimes f^B$. It can indeed simply be defined as
\[ (f_A \otimes f_B)_m (a_1 \otimes b_1 , \dots , a_m \otimes b_m) := \pm f^A_1 ( a_1 \cdot \cdots \cdot a_m) \otimes f^B_m (b_1, \dots , b_m) \ , \]
where $a_1 \cdot \cdots \cdot a_m$ denotes the product of the elements $a_1 , \dots , a_m$.
The map $\ide_ {\pmb{\Delta}_n}\otimes G$ in the composition $G \circ F$ is of this form. However, such a diagonal is necessary to define the tensor \Ainf -morphism $\cup \otimes \ide_C$, as the map $\cup$ is this time an \Ainf -morphism and not a mere morphism between dg-algebras. Here the map $\cup$ denotes indeed the \Ainf -morphism between the dg-algebras $\pmb{\Delta}_n \otimes \pmb{\Delta}_n$ and $\pmb{\Delta}_n$, deduced from the \Ainf -morphism between the dg-coalgebras $\pmb{\Delta}^n$ and $\pmb{\Delta}^n \otimes \pmb{\Delta}^n$ of theorem~\ref{alg:th:aw}.

Hence, the datum of a diagonal on the operadic bimodule \infmor , as constructed in~\cite{masuda-diagonal-multipl} or \cite{lipshitz-diagonals}, allows us to define the composition of two $n$-morphisms $A \rightarrow \pmb{\Delta}_n \otimes B$ and $B \rightarrow \pmb{\Delta}_n \otimes C$. It is however not immediately clear that this composition defines a map of simplicial sets
$$\mathrm{HOM}_\mathsf{\Ainf}(A,B)_{n} \times \mathrm{HOM}_\mathsf{\Ainf}(B,C)_{n} \longrightarrow \mathrm{HOM}_\mathsf{\Ainf}(A,C)_{\bullet} \ , $$
nor that this composition is associative.
It is thereby still an open question to know whether these $\mathrm{HOM}$-simplicial sets could fit into a simplicial enrichment of the category $\mathsf{\Ainf -Alg}$. This would then endow $\mathsf{\Ainf -Alg}$ with a structure of $\infty$-category, following Proposition 1.1.5.10. of~\cite{lurie-htt}. 
The author plans to inspect these questions in an upcoming paper.

\subsection{Enriching $\mathsf{\Ainf - Cat}$ using the \Ainf -categories $\mathrm{Func}_{\mathcal{A},\mathcal{B}}$}

In~\cite{faonte-infinity-two}, Faonte claims that the simplicial sets  $N_{\Ainf}(\mathrm{Func}_{\mathcal{A},\mathcal{B}})$ enhance the category $\mathsf{\Ainf - Cat}$ of \Ainf -categories with \Ainf -functors between them to an $(\infty , 2)$-category. The same difficulties that were tackled in this section and section~\ref{fd:ss:nat-candidate} seem however to arise when lifting the composition of \Ainf -functors to the level of the simplicial sets $N_{\Ainf}(\mathrm{Func}_{\mathcal{A},\mathcal{B}})$.

In the same vein, Lyubashenko constructs in \cite{lyubashenko} an \Ainf -bifunctor
\[ \mathrm{Func}_{\mathcal{A},\mathcal{B}} \times \mathrm{Func}_{\mathcal{B},\mathcal{C}} \longrightarrow \mathrm{Func}_{\mathcal{A},\mathcal{C}} \]
defined as the composition of \Ainf -functors on objects. We refer to his paper for a definition of an \Ainf -bifunctor and simply stress here that the notation $\mathrm{Func}_{\mathcal{A},\mathcal{B}} \times \mathrm{Func}_{\mathcal{B},\mathcal{C}}$ is a mere notation and does not refer to the tensor product of the \Ainf -categories $\mathrm{Func}_{\mathcal{A},\mathcal{B}}$ and $\mathrm{Func}_{\mathcal{B},\mathcal{C}}$. Fukaya then proves in \cite{fukaya-unobstructed} that this composition \Ainf -bifunctor is associative up to homotopy equivalence.

This suggests that the \Ainf -categories $\mathrm{Func}_{\mathcal{A},\mathcal{B}}$ should fit into an enrichment in \Ainf -categories of the category $\mathsf{\Ainf -Cat}$. This enrichment should in particular induce in cohomology the 2-category structure on the category $\mathsf{Cat}$, whose objects are ordinary categories, whose 1-morphisms are functors and whose 2-morphisms are natural transformations. The structure of a category enriched in \Ainf -categories has however not been defined to this day. Bottman is currently working on such a definition, which he calls an \emph{$(\Ainf , 2)$-category} structure. See for instance \cite{bottman-carmeli}. The \Ainf -categories $\mathrm{Func}_{\mathcal{A},\mathcal{B}}$ and the problem of defining the notion of a category enriched in \Ainf -categories arise in fact naturally in symplectic topology when considering moduli spaces of pseudo-holomorphic quilts defining operations on Fukaya categories $\mathrm{Fuk}(M)$ of symplectic manifolds $M$. See for instance \cite{mau-wehrheim-woodward}, \cite{fukaya-unobstructed}, \cite{bottman-figure-8} and \cite{bottman-witch} for more details on the subject.

\newpage

\begin{leftbar}
\part{Higher morphisms in Morse theory} \label{p:geo}
\end{leftbar}

\setcounter{section}{0}

\section{$n$-morphisms in Morse theory} \label{geo:s:n-morph-morse}

Let $M$ be a closed oriented Riemannian manifold endowed with a Morse function $f$ together with a Morse-Smale metric. In~\cite{mazuir-I}, we explored how to realize the moduli spaces of stable metric ribbon trees $\mathcal{T}_m$ and the moduli spaces of stable two-colored metric ribbon trees $\mathcal{CT}_m$ in Morse theory. It was proven that, upon choosing admissible perturbation data $\mathbb{X}^f$ on the moduli spaces $\mathcal{T}_m$ for the function $f$, the Morse cochains $C^*(f)$ can be endowed with an $\Omega B As$-algebra structure whose operations $m_t$ for $t \in SRT_m$ are defined by counting 0-dimensional moduli spaces $\mathcal{T}_t^{\mathbb{X}^f}(y ; x_1,\dots,x_m)$. Similarly, choose an additional Morse function $g$ together with admissible perturbation data $\mathbb{X}^g$ on the moduli spaces $\mathcal{T}_m$, and admissible perturbation data $\mathbb{Y}$ on the moduli spaces $\mathcal{CT}_m$ which are compatible with $\mathbb{X}^f$ and $\mathbb{X}^g$. We can then define an $\Omega B As$-morphism $\mu^\mathbb{Y} : (C^*(f),m_t^{\mathbb{X}^f}) \rightarrow (C^*(g),m_t^{\mathbb{X}^g})$, whose operations $\mu_{t_g}$ for $t_g \in SCRT_m$ are defined by counting the 0-dimensional moduli spaces $\mathcal{CT}_{t_g}^{\mathbb{Y}}(y;x_1,\dots,x_m)$.

The goal of this section is to realize the $n$-multiplihedra $n-J_m$ endowed with their $n-\ombas$-cell decomposition in Morse theory.
We first introduce the notion of $n$-simplices of perturbation data on the moduli spaces $\mathcal{CT}_m$ (definitions~\ref{geo:def:n-simpl-perturb-data}~and~\ref{geo:def:n-simpl-perturb-data-tot}), generalizing the notion of perturbation data on these moduli spaces defined in~\cite{mazuir-I}. We then use $n$-simplices of perturbation data to define the moduli spaces $\mathcal{CT}_{I,t_g}(y ; x_1,\dots,x_m)$, $I \subset \Delta^n$. Under generic assumptions on the simplices of perturbation data, these moduli spaces are orientable manifolds (Proposition~\ref{geo:prop:orientable-manifolds}). Requiring some additional compatibilities involving the maps $\mathrm{AW}_{\pmb{a}}$ on the simplices of perturbation data, the 1-dimensional moduli spaces $\mathcal{CT}_{I,t_g}(y ; x_1,\dots,x_m)$ can be compactified to 1-dimensional manifolds with boundary, whose boundary is modeled on the boundary of $\Delta^n \times \overline{\mathcal{CT}}_m$ endowed with its $n-\Omega B As$-cell decomposition (Theorems~\ref{geo:th:existence} and~\ref{geo:th:compactification}). We construct as a result a $n-\Omega B As$-morphism between the Morse cochains $C^*(f)$ and $C^*(g)$ (Theorem~\ref{geo:th:n-ombas-Morse}), by counting the signed points of the 0-dimensional oriented manifolds $\mathcal{CT}_{I,t_g}(y ; x_1,\dots,x_m)$. We finally prove a filling theorem for perturbation data parametrized by a simplicial subcomplex $S \subset \Delta^n$ (Theorem~\ref{geo:th:filler}), solving as a corollary the question that initially motivated this paper (corollary~\ref{geo:cor:initial}).

\subsection{Conventions} \label{geo:ss:conv-morse}

We will study Morse theory of the Morse function $f : M \rightarrow \R$ using its negative gradient vector field $-\nabla f$. Denote $d$ the dimension of the manifold $M$ and $\phi^s$ the flow of $-\nabla f$. For a critical point $x$ define its unstable and stable manifolds
\begin{align*}
W^U(x) &:= \{ z \in M , \ \lim_{s \rightarrow - \infty} \phi^s(z) = x  \} \\
W^S(x) &:= \{ z \in M , \ \lim_{s \rightarrow + \infty} \phi^s(z) = x  \} \ .
\end{align*}
Their dimensions are such that $\mathrm{dim} (W^U(x)) + \mathrm{dim} (W^S(x)) = d$. 
We then define the \emph{degree of a critical point $x$ }to be $|x| := \mathrm{dim} (W^S(x))$. This degree is often referred to as the \emph{coindex of $x$} in the litterature. 

We will moreover work with Morse cochains. For two critical point $x \neq y$, define 
\[ \mathcal{T} (y;x) := W^S(y) \cap W^U(x) / \R  \]
to be the moduli space of negative gradient trajectories connecting $x$ to $y$. Denote moreover $\mathcal{T}(x;x) = \emptyset$. Under the Morse-Smale assumption on $f$ and the Riemannian metric on $M$, for $x \neq y$ the moduli space $\mathcal{T} (y;x)$ has dimension $\mathrm{dim} \left( \mathcal{T} (y;x) \right) = |y| - |x| - 1$.
The Morse differential $\partial_{Morse} : C^*(f) \rightarrow C^*(f)$ is then defined to count descending negative gradient trajectories
\[  \partial_{Morse} (x) :=\sum_{|y| = |x| + 1} \# \mathcal{T} (y;x) \cdot y  \ . \]

\subsection{$n$-simplices of perturbation data on a stratum $\mathcal{CT}_m(t_g)$} \label{geo:ss:n-simpl-perturb-data}

Fix a gauged stable metric ribbon tree $T_g=(t_g,\lambda, \{ l_e \}_{e \in E(t)})$. Let $T_c=(t_c,L_{f_c})$ be its associated two-colored metric ribbon tree, $\overline{E}(t_c)$ the set of all edges of $t_c$ and $E(t_c) \subset \overline{E}(t_c)$ the set of internal edges of $t_c$. We point out that $L_{f_c}$ is a linear combination of the parameters $\lambda, \{ l_e \}_{e \in E(t)}$ and that we should in fact write $L_{f_c}(\lambda, \{ l_e \}_{e \in E(t)})$. Recall from~\cite{mazuir-I} that :

\begin{definition}[\cite{mazuir-I}]
A \emph{choice of perturbation data} on $T_g$ consists of the following data : 
\begin{enumerate}[label=(\roman*)]
\item a vector field
\[ [ 0 , L_{f_c} ] \times M \underset{\mathbb{X}_{f_c}}{\longrightarrow} T M \ , \]
that vanishes on $[ 1 , L_{f_c} -1 ]$, for every internal edge $f_c$ of $t_c$ ; 
\item a vector field 
\[ [ 0 , +\infty [ \times M \underset{\mathbb{X}_{f_0}}{\longrightarrow} T M \ , \]
that vanishes away from $[0,1]$, for the outgoing edge $f_0$ of $t_c$ ; 
\item a vector field 
\[ ] - \infty , 0 ] \times M \underset{\mathbb{X}_{f_i}}{\longrightarrow} T M \ , \]
that vanishes away from $[-1,0]$, for every incoming edge $f_i \ (1 \leqslant i \leqslant n)$ of $t_c$.
\end{enumerate}

In the rest of the paper, we will moreover write $D_{f_c}$ for all segments $[ 0 , L_{f_c} ]$, as well as for all semi-infinite segments $] - \infty , 0 ]$ and $[ 0 , +\infty [$.
\end{definition}

\begin{definition} \label{geo:def:n-simpl-perturb-data}
A \emph{$n$-simplex of perturbation data} for $T_g$ is defined to be a choice of perturbation data $\mathbb{Y}_{\delta,T_g}$ for every $\delta \in \mathring{\Delta}^n$. Equivalently, it is the datum of a vector field
\[  \mathring{\Delta}^n \times D_{f_c} \times M \underset{\mathbb{Y}_{\Delta^n,T_g,f_c}}{\longrightarrow} T M \]
for every edge $f_c \in \overline{E}(t_c)$, abiding by the previous vanishing conditions on $D_{f_c}$. We will denote it as $\mathbb{Y}_{\Delta^n,T_g} := \{ \mathbb{Y}_{\delta,T_g} \}_{\delta \in \mathring{\Delta}^n}$.
\end{definition}

Introduce the cone $C_{f_c} \subset \mathcal{CT}_m(t_g) \times \R$ defined as
\begin{enumerate}[label=(\roman*)]
\item $\{ ((\lambda, \{ l_e \}_{e \in E(t)}),s) \text{ such that } (\lambda, \{ l_e \}_{e \in E(t)}) \in \mathcal{CT}_m(t_g) \text{ and } 0 \leqslant s \leqslant L_{f_c} \}$ if $f_c$ is an internal edge~;
\item $\{ ((\lambda, \{ l_e \}_{e \in E(t)}),s) \text{ such that } (\lambda, \{ l_e \}_{e \in E(t)}) \in \mathcal{CT}_m(t_g) \text{ and } s \leqslant 0 \}$ if $f_c$ is an incoming edge~;
\item $\{ ((\lambda, \{ l_e \}_{e \in E(t)}),s) \text{ such that } (\lambda, \{ l_e \}_{e \in E(t)}) \in \mathcal{CT}_m(t_g) \text{ and } s \geqslant 0 \}$ if $f_c$ is the outgoing edge.
\end{enumerate}

\begin{definition} \label{geo:def:n-simpl-perturb-data-tot} 
A \emph{$n$-simplex of perturbation data on $\mathcal{CT}_m(t_g)$}, or \emph{choice of perturbation data on $\mathcal{CT}_m(t_g)$ parametrized by $\Delta^n$}, is defined to be the data of a $n$-simplex of perturbation data $\mathbb{Y}_{\Delta^n,T_g}$ for every $T_g \in \mathcal{CT}_m(t_g)$. A $n$-simplex of perturbation data $\mathbb{Y}_{\Delta^n,t_g}$ defines maps
\[ \mathbb{Y}_{\Delta^n,t_g,f_c} : \mathring{\Delta}^n \times D_{f_c} \times M \longrightarrow TM \ , \]
for every edge $f_c$ of $t_c$. It is said to be \emph{smooth} if all these maps are smooth.
\end{definition}

\subsection{The moduli spaces $\mathcal{CT}_{I,t_g}(y ; x_1,\dots,x_m)$} \label{geo:ss:mod-space-I-CTm-morse}

Recall from~\cite{mazuir-I} that given an admissible choice of perturbation data $\mathbb{Y}$ on the moduli spaces $\mathcal{CT}_m$, the moduli spaces $\mathcal{CT}_{t_g}^{\mathbb{Y}}(y;x_1,\dots,x_m)$ are defined as the inverse image of the thin diagonal $\Delta \subset M^{\times m +1}$ under the flow map
\[ \phi_{\mathbb{Y}_{t_g}} : \mathcal{CT}_m(t_g) \times W^S(y) \times W^U(x_1) \times \cdots \times W^U(x_m) \longrightarrow M^{\times m+1} \ . \]

\begin{definition} \label{geo:def:moduli-space-n-simplex}
Let $\mathbb{Y}_{\Delta^n,t_g}$ be a smooth $n$-simplex of perturbation data on $\mathcal{CT}_m(t_g)$. Given $y \in \mathrm{Crit}(g)$ and $x_1,\dots ,x_m \in \mathrm{Crit}(f)$, we define the moduli spaces
\[ \resizebox{\hsize}{!}{\begin{math} \begin{aligned}
\mathcal{CT}_{\Delta^n,t_g}^{\mathbb{Y}_{\Delta^n,t_g}}(y ; x_1,\dots,x_m) &:= \bigcup_{\delta \in \mathring{\Delta}^n} \mathcal{CT}_{t_g}^{\mathbb{Y}_{\delta,t_g}}(y;x_1,\dots,x_m) \\
&= \left\{\begin{array}{l}
         \text{( $\delta$ , two-colored perturbed Morse gradient tree associated to $(T_g,\mathbb{Y}_{\delta ,T_g})$ } \\
         \text{which connects $x_1,\dots,x_m$ to $y$), for  $T_g \in \mathcal{CT}_m(t_g)$ and $\delta \in \mathring{\Delta}^n$ } \\
  \end{array}\right\} \ .  
\end{aligned}
\end{math}} \]
\end{definition}

\begin{figure}[h]
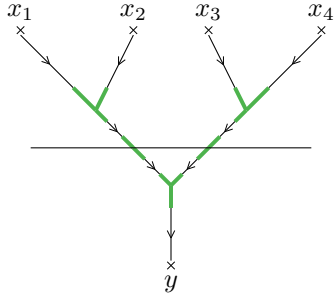

\centering
\arbredegradientbicoloreexempledelta
\caption{An example of a perturbed two-colored Morse gradient tree associated to the perturbation data $\mathbb{Y}_{\delta}$ for a $\delta \in \mathring{\Delta}^n$. The black segments above the gauge correspond to $-\nabla f$ and the green ones to $-\nabla f+\mathbb{Y}_{\delta}$. As for the segments below the gauge, replace $f$ by $g$ in these formulae.} \label{geo:fig:arbre-de-gradient-bicolore}
\end{figure}

An example of a perturbed two-colored Morse gradient tree associated to the perturbation data $\mathbb{Y}_{\delta}$ for a $\delta \in \mathring{\Delta}^n$ is represented on figure~\ref{geo:fig:arbre-de-gradient-bicolore}. 
Introduce the flow map
\[ \phi_{\mathbb{Y}_{\Delta^n ,t_g}} : \mathring{\Delta}^n \times \mathcal{CT}_m(t_g) \times W^S(y) \times W^U(x_1) \times \cdots \times W^U(x_m) \longrightarrow M^{\times m+1} \ , \]
whose restriction to every $ \delta \in \mathring{\Delta}^n$ is 
\[ \phi_{\mathbb{Y}_{\delta ,t_g}} : \mathcal{CT}_m(t_g) \times W^S(y) \times W^U(x_1) \times \cdots \times W^U(x_m) \longrightarrow M^{\times m+1} \ .  \]

\begin{proposition} \label{geo:prop:orientable-manifolds}
\begin{enumerate}[label=(\roman*)]
\item The moduli space $\mathcal{CT}_{\Delta^n,t_g}^{\mathbb{Y}_{\Delta^n,t_g}}(y ; x_1,\dots,x_m)$ can be rewritten as 
\[ \mathcal{CT}_{\Delta^n,t_g}^{\mathbb{Y}_{\Delta^n ,t_g}}(y;x_1,\dots,x_m) = \phi_{\mathbb{Y}_{\Delta^n,t_g}}^{-1}(\Delta) \ , \]
where $\Delta \subset M^{\times m +1}$ is the thin diagonal of $M^{\times m +1}$.
\item Given a $n$-simplex of perturbation data $\mathbb{Y}_{\Delta^n ,t_g}$ making $\phi_{\mathbb{Y}_{\Delta^n ,t_g}}$ transverse to $\Delta$, the moduli space $\mathcal{CT}_{\Delta^n,t_g}(y;x_1,\dots,x_m)$ is an orientable manifold of dimension \[ \dim \left( \mathcal{CT}_{\Delta^n,t_g}(y;x_1,\dots,x_m) \right) = - |t_{\Delta^n , g }| + |y| - \sum_{i=1}^m|x_i| \ . \]
\item $n$-simplices of perturbation data $\mathbb{Y}_{\Delta^n ,t_g}$ such that $\phi_{\mathbb{Y}_{\Delta^n ,t_g}}$ is transverse to $\Delta$ exist.
\end{enumerate}
\end{proposition}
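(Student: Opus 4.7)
The plan is to closely mimic the proofs of the analogous statements in~\cite{mazuir-I} for the moduli spaces $\mathcal{CT}_{t_g}^{\mathbb{Y}}(y;x_1,\dots,x_m)$, adding the extra parameter $\delta \in \mathring{\Delta}^n$ throughout. Part~(i) is a simple unraveling of definitions: a point of $\phi_{\mathbb{Y}_{\Delta^n,t_g}}^{-1}(\Delta)$ is a tuple $(\delta , T_g , z , z_1 , \dots , z_m)$ with $z \in W^S(y)$ and $z_i \in W^U(x_i)$, such that evaluating the flow of the perturbed vector fields prescribed by $\mathbb{Y}_{\delta,T_g}$ along every edge of $T_c$ produces an element of the thin diagonal, i.e.\ a coherent two-colored perturbed Morse gradient tree at parameter~$\delta$. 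This is exactly the definition of a point of $\mathcal{CT}_{t_g}^{\mathbb{Y}_{\delta,t_g}}(y;x_1,\dots,x_m)$, union over $\delta \in \mathring{\Delta}^n$.

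Part~(ii) then follows from the implicit function theorem: under the transversality assumption $\phi_{\mathbb{Y}_{\Delta^n,t_g}} \pitchfork \Delta$, the preimage $\phi_{\mathbb{Y}_{\Delta^n,t_g}}^{-1}(\Delta)$ is a smooth submanifold of $\mathring{\Delta}^n \times \mathcal{CT}_m(t_g) \times W^S(y) \times W^U(x_1) \times \cdots \times W^U(x_m)$ of codimension equal to $\mathrm{codim}(\Delta \subset M^{\times m+1}) = md$. The dimension count reads
\[
n + \dim \mathcal{CT}_m(t_g) + |y| + \sum_{i=1}^m (d - |x_i|) - md = n + \dim \mathcal{CT}_m(t_g) + |y| - \sum_{i=1}^m |x_i| ,
\]
and a direct combinatorial check (writing $j$ for the number of gauge--vertex intersections and $e(t)$ for the number of internal edges of $t$, so that $\dim \mathcal{CT}_m(t_g) = 1 + e(t) - j$ and $|t_g| = j - 1 - e(t)$) identifies this with $- |t_{\Delta^n , g}| + |y| - \sum_{i=1}^m |x_i|$, using $|I| = -n$ for $I = \Delta^n$. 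Orientability is inherited from the product orientation on $\mathring{\Delta}^n \times \mathcal{CT}_m(t_g) \times W^S(y) \times \prod W^U(x_i)$ together with a coorientation of $\Delta$ in $M^{m+1}$, exactly as in~\cite{mazuir-I}; the signs will be made fully explicit in section~\ref{geo:s:trans-or-signs}.

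Part~(iii) is the main technical step and is carried out by a standard Sard--Smale / Baire category argument, as in Abouzaid~\cite{abouzaid-plumbings} and Mescher~\cite{mescher-morse}. Following~\cite{mazuir-I}, one introduces an ambient Banach manifold $\mathcal{P}^{\Delta^n}_{t_g}$ of $n$-simplices of perturbation data of class $C^k$ (with appropriate Floer-type exponential weights on the semi-infinite edges, and vanishing near the endpoints of the internal edges), and considers the universal flow map
\[
\Phi : \mathcal{P}^{\Delta^n}_{t_g} \times \mathring{\Delta}^n \times \mathcal{CT}_m(t_g) \times W^S(y) \times \prod_{i=1}^m W^U(x_i) \longrightarrow M^{\times m+1} .
\]
The surjectivity of $d\Phi$ at a point of $\Phi^{-1}(\Delta)$ reduces, edge by edge, to showing that perturbing the vector field $\mathbb{Y}_{\delta,t_g,f_c}$ in a neighbourhood of any interior point of some edge segment $D_{f_c} \times M$ already spans enough directions in $TM$ at the corresponding endpoint; this is standard once one notes that $\delta$ ranges in the open simplex $\mathring{\Delta}^n$, so adding a bump supported near any fixed $\delta_0$ is allowed and produces no new constraints. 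Then $\Phi \pitchfork \Delta$ on the Banach manifold level, and the projection $\Phi^{-1}(\Delta) \to \mathcal{P}^{\Delta^n}_{t_g}$ is Fredholm of the index computed in part~(ii); the Sard--Smale theorem yields a comeager set of regular values, i.e.\ of admissible $n$-simplices of perturbation data. A standard $C^k$-to-$C^\infty$ argument as in Mescher~\cite{mescher-morse} upgrades the result to smooth perturbation data, and a countable intersection over the (finitely many, for bounded dimension) choices of $y , x_1 , \dots , x_m$ concludes. The main obstacle is therefore setting up the correct Banach space of $n$-simplices of perturbation data with the right functional-analytic behaviour at the interior and boundary of $\mathring{\Delta}^n$; this is postponed to section~\ref{geo:s:trans-or-signs}, where it is carried out as a direct adaptation of the corresponding construction in~\cite{mazuir-I}.
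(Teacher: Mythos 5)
Your proposal is correct and follows essentially the same route as the paper: (i) is a direct unraveling of definitions, (ii) is the implicit function theorem together with the standard dimension count and the natural orientation on $\phi_{\mathbb{Y}_{\Delta^n,t_g}}^{-1}(\Delta)$ coming from the product orientation of the domain and a coorientation of the thin diagonal, and (iii) is the parametric transversality (Sard--Smale) argument over a Banach space of $C^l$ perturbation data, upgraded to smooth data by an argument \`a la Taubes, which is exactly how the paper handles transversality in its proof of the existence theorem for admissible perturbation data. The only cosmetic deviation is your mention of Floer-type exponential weights on the semi-infinite edges, which is unnecessary here since the perturbations are supported in length-one segments near the edge endpoints.
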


Replacing $\Delta^n$ by any face $I \subset \Delta^n$, the moduli spaces $\mathcal{CT}_{I,t_g}^{\mathbb{Y}_{I ,t_g}}(y;x_1,\dots,x_m)$ can be defined in the same way and made into orientable manifolds of dimension 
\[ \dim \left( \mathcal{CT}_{I,t_g}(y;x_1,\dots,x_m) \right) = - | t_{I,g} | + |y| - \sum_{i=1}^m|x_i| \ . \]
We refer to section~\ref{geo:s:trans-or-signs} for the details on transversality and orientability.

\subsection{Compactifications} \label{geo:ss:compact-morse}

\subsubsection{The compactified moduli spaces $\overline{\mathcal{CT}}_{I,t_g}(y;x_1,\dots,x_m)$} \label{geo:sss:compact-mod-space-morse}

We now would like to compactify the 1-dimensional moduli spaces $\mathcal{CT}_{I,t_g}(y;x_1,\dots,x_m)$ to 1-dimensional manifolds with boundary.
They are defined as the inverse image in $\mathring{I} \times \mathcal{CT}_m(t_g) \times W^S(y) \times W^U(x_1) \times \cdots \times W^U(x_m)$ of the thin diagonal $\Delta \subset M^{\times m+1}$ under the flow map $\phi_{\mathbb{Y}_{I, t_g}}$. The boundary components in the compactification should come from those of $W^S(y)$, of the $W^U(x_i)$ and of $\mathring{I} \times \mathcal{CT}_m(t_g)$. However, rather than considering the boundary components coming from the separate compactifications of $\mathring{I}$ and $\mathcal{CT}_m(t_g)$, we will consider the $n-\Omega B As$-decomposition of $I \times \overline{\mathcal{CT}}_m(t_g)$ and model the remaining boundary components on this decomposition.

Choose admissible perturbation data $\mathbb{X}^f$ and $\mathbb{X}^g$ for the functions $f$ and $g$. Choose moreover smooth simplices of perturbation data $\mathbb{Y}_{I,t_g}$ for all $t_g \in SCRT_i, \ 1 \leqslant i \leqslant m$ and $I \subset \Delta^n$. We denote $(\mathbb{Y}_{I,m})_{I \subset \Delta^n} := (\mathbb{Y}_{I,t_g})_{I\subset \Delta^n}^{t_g \in SCRT_m}$, and call it a choice of perturbation data on $\mathcal{CT}_m$ parametrized by~$\Delta^n$. Fixing a two-colored stable ribbon tree type $t_g \in SCRT_m$ and $I \subset \Delta^n$ we want to compactify the moduli space $\mathcal{CT}_{I,t_g}^{\mathbb{Y}_{I,t_g}}(y;x_1,\dots,x_m)$ using the perturbation data $\mathbb{X}^f$, $\mathbb{X}^g$ and $(\mathbb{Y}_{I,k})_{I \subset \Delta^n}^{k \leqslant m}$. The boundary will be described by the following phenomena :
\begin{enumerate}[label=(\roman*)]
\item the parameter $\delta \in I$ tends towards the codimension 1 boudary of $I$ ($\partial^{sing} I$) ;
\item an external edge breaks at a critical point (Morse) ;
\item an internal edge of the tree $t$ collapses (int-collapse) :
$$\mathcal{CT}_{I,t_g'}^{\mathbb{Y}_{I,t_g'}}(y;x_1,\dots,x_m)$$ where $t_g' \in SCRT_n$ are all the two-colored trees obtained by collapsing exactly one internal edge, which does not cross the gauge ;
\item the gauge moves to cross exactly one additional vertex of the underlying stable ribbon tree (gauge-vertex) :
$$\mathcal{CT}_{I,t_g'}^{\mathbb{Y}_{I,t_g'}}(y;x_1,\dots,x_m)$$
where $t_g' \in SCRT_n$ are all the two-colored trees obtained by moving the gauge to cross exactly one additional vertex of $t$ ;
\item an internal edge located above the gauge or intersecting it breaks or, when the gauge is below the root, the outgoing edge breaks between the gauge and the root (above-break) : $$ \mathcal{CT}_{I,t^1_g}^{\mathbb{Y}_{I,t^1_g}}(y ; x_1,\dots,x_{i_1},z,x_{i_1+i_2+1},\dots,x_m ) \times \mathcal{T}^{\mathbb{X}^f_{t^2}}_{t^2}(z;x_{i_1+1},\dots,x_{i_1+i_2}) \ ,$$ where the tree resulting from grafting the outgoing edge of $t^2$ to the $i_1+1$-th incoming edge of $t^1_g$  is $t_g$ ; \label{geo:item:above-break}
\item edges (internal or incoming) that are possibly intersecting the gauge, break below it, such that there is exactly one edge breaking in each non-self crossing path from an incoming edge to the root (below-break) ;  the simplex of perturbation data $\mathbb{Y}_{I,t_g}$ then "breaks" according to the combinatorics of the Alexander-Whitney coproduct : $$\mathcal{T}^{\mathbb{X}^g_{t^0}}_{t^0}(y ; y_{1},\dots,y_{s}) \times \mathcal{CT}^{\mathbb{Y}_{I_1,t^1_g}}_{I_1,t^1_g}(y_1 ; x_1,\dots ) \times \cdots \times \mathcal{CT}^{\mathbb{Y}_{I_s,t^s_g}}_{I_s,t^s_g}(y_s ; \dots , x_m ) $$ where the tree resulting from grafting for each $r$ the outgoing edge of $t^r_g$ to the $r$-th incoming edge of $t^0$ is $t_g$, and $I_1 \cup \cdots \cup I_s = I$ is an overlapping partition of $I$. \label{geo:item:below-break}
\end{enumerate}
Note that the (Morse) boundaries are a simple consequence of the fact that external edges are Morse trajectories away from a length 1 segment. 

\subsubsection{Smooth choice of perturbation data $\mathbb{Y}_{I \subset \Delta^n ,m}$} \label{geo:sss:smooth-choice-perturb-data}

We begin by tackling the conditions coming with the ($\partial^{sing}I$), (int-collapse) and (gauge-vertex) boundaries.
Let $t_g \in SCRT_m$ and denote $coll \cup g-v (t_g) \subset SCRT_m$ the set consisting of all stable gauged trees obtained by collapsing internal edges of $t$ and/or moving the gauge to cross additional vertices of $t$. In particular, $t_g \in coll \cup g-v (t_g)$. We define 
$$\underline{\mathcal{CT}_m}(t_g) := \bigcup_{t'_g \in coll \cup g-v (t_g)} \mathcal{CT}_m(t_g') $$
for the stratum $\mathcal{CT}_m(t_g) \subset \mathcal{CT}_m$ together with its inner boundary components.  
A choice of perturbation data $(\mathbb{Y}_{I,t'_g})_{t_g' \in coll \cup g-v (t_g) }$ for a fixed $I \subset \Delta^n$ corresponds to a $\mathrm{dim}(I)$-simplex of perturbation data on $\underline{\mathcal{CT}_m}(t_g)$. Following section~\ref{geo:ss:n-simpl-perturb-data}, such a choice of perturbation data is equivalent to a map 
\[ \tilde{\mathbb{Y}}_{I,t_g,f_c} : \mathring{I} \times \tilde{C}_{f_c} \times M \longrightarrow TM \ , \]
for every edge $f_c$ of $t_c$, where $\tilde{C}_{f_c} \subset \underline{\mathcal{CT}_m}(t_g) \times \R$ is defined in a similar fashion to $C_{f_c}$.

\begin{definition} \label{geo:def:smooth-choice-of}
A choice of perturbation data $(\mathbb{Y}_{I,m})_{I \subset \Delta^n}$ is said to be \emph{smooth} if all maps  
\[ \tilde{\mathbb{Y}}_{\Delta^n,t_g,f_c} : \Delta^n \times \tilde{C}_{f_c} \times M \longrightarrow TM \ , \]
are smooth, where we extended $\mathring{\Delta}^n$ to $\Delta^n$ by defining $\tilde{\mathbb{Y}}_{\Delta^n,t_g,f_c} := \tilde{\mathbb{Y}}_{I,t_g,f_c}$ on a face $I \subset \Delta^n$. 
\end{definition}

\subsubsection{The (above-break) boundary} \label{geo:sss:above-break-bound}

The (above-break) conditions are tackled as in~\cite{mazuir-I}. Write $t_c$ for the two-colored ribbon tree associated to $t_g$. The (above-break) boundary corresponds to the breaking of an internal edge $f_c$ of $t_c$ located above the set of colored vertices. Denote $t^1_c$ and $t^2$ the trees obtained by breaking $t_c$ at the edge $f_c$, where $t^2$ is seen to lie above $t^1_c$. We have to specify for each edge $e_c \in \overline{E}(t_c)$ and each $\delta \in \mathring{I}$, what happens to the perturbation $\mathbb{Y}_{\delta,t_c,e_c}$ at the limit.
\begin{enumerate}[label=(\roman*)]
\item For $e_c \in \overline{E}(t^2)$ and $\neq f_c$, we require that $$\lim \mathbb{Y}_{\delta,t_c,e_c} = \mathbb{X}^f_{t^2,e_c} \ .$$
\item For $e_c \in \overline{E}(t^1_c)$ and $\neq f_c$, we require that $$\lim \mathbb{Y}_{\delta,t_c,e_c} = \mathbb{Y}_{\delta,t^1_c,e_c} \ .$$
\item For $f_c=e_c$, $\mathbb{Y}_{\delta,t_c,f_c}$ yields two parts at the limit : the part corresponding to the outgoing edge of $t^2$ and the part corresponding to the incoming edge of $t^1_c$. We then require that they coincide respectively with the perturbation $\mathbb{X}^f_{t^2}$ and $\mathbb{Y}_{\delta,t^1_c}$.
\end{enumerate}
An example of each case is illustrated in figure~\ref{geo:fig:perturbation-above-break}.

\begin{figure}[h]
    \centering
    \begin{subfigure}{\textwidth}
    \centering
       \exampleperturbationCTbreakundelta
       \caption*{(above-break) case (i)}
    \end{subfigure} ~
    \begin{subfigure}{\textwidth}
    \centering
        \exampleperturbationCTbreakdeuxdelta
       \caption*{(above-break) case (ii)}
    \end{subfigure} ~
    \begin{subfigure}{\textwidth}
    \centering
        \exampleperturbationCTbreaktroisdelta
       \caption*{(above-break) case (iii)}
    \end{subfigure}
    \caption{} \label{geo:fig:perturbation-above-break}
\end{figure}

\subsubsection{The (below-break) boundary} \label{geo:sss:below-break-bound}

Denote $t^1_c,\dots,t^s_c$ and $t^0$ the trees obtained by breaking $t_c$ below the gauge, where the trees $t^r_c$ for $r=1, \dots ,s$ are seen to lie above $t^0$ and are ordered from left to right. We write $i_r$ for the arity of $t_c^r$ and introduce the dividing sequence $\pmb{a}$ defined as
\[ \frac{i_1 + \cdots + i_{s-1}}{m} > \frac{i_1 + \cdots + i_{s-2}}{m} > \cdots > \frac{i_1}{m} \ , \]
as in subsection \ref{alg:sss:n-multiplihedra} of part \ref{p:algebra}. 
Consider now the map $\mathrm{AW}_{\pmb{a}} : I \rightarrow I^s $. It comes with $s$ maps $$\mathrm{pr}_r \circ \mathrm{AW}_{\pmb{a}} : I \longrightarrow I$$ for $1 \leqslant r \leqslant s$ corresponding to the projection on the $r$-th factor of $I^s$. For the sake of readability we will simply denote them $\mathrm{pr}_r$. 

We have to specify for each edge $e_c \in \overline{E}(t_c)$ and each $\delta \in \mathring{I}$, what happens to the perturbation $\mathbb{Y}_{\delta,t_c,e_c}$ at the limit. The maps $\mathrm{pr}_r$ will allow us to produce the overlapping partitions combinatorics on the parameter $\delta$.
\begin{enumerate}[label=(\roman*)]
\item For $e_c \in \overline{E}(t^r_c)$ and not among the breaking edges, we require that $$\lim \mathbb{Y}_{\delta ,t_c ,e_c} = \mathbb{Y}_{\mathrm{pr}_r(\delta),t^r_c,e_c} \ .$$
\item For $e_c \in \overline{E}(t^0)$ and not among the breaking edges, we require that $$\lim \mathbb{Y}_{\delta ,t_c ,e_c} = \mathbb{X}^g_{t^0,e_c} \ .$$
\item For $f_c$ among the breaking edges, $\mathbb{Y}_{\delta ,t_c ,f_c}$ yields two parts at the limit : the part corresponding to the outgoing edge of $t^r_c$ and the part corresponding to the incoming edge of $t^0$. We then require that they coincide respectively with the perturbations $\mathbb{Y}_{\mathrm{pr}_r(\delta),t^r_c,e_c}$ and $\mathbb{X}^g_{t^0,e_c}$.
\end{enumerate}
This is again illustrated in figure~\ref{geo:fig:perturbation-below-break}. We also point out that Proposition~\ref{alg:prop:coassoc-a-b-c} ensures that the limit condition (iii) on the perturbation $\mathbb{Y}_{\delta,t_c,e_c}$ is consistent.

\begin{figure}[h]
    \centering
    \begin{subfigure}{\textwidth}
    \centering
       \exampleperturbationCTbreakquatredelta
       \caption*{(below-break) case (i)}
    \end{subfigure} ~
    \begin{subfigure}{\textwidth}
    \centering
        \exampleperturbationCTbreakcinqdelta
       \caption*{(below-break) case (ii)}
    \end{subfigure} ~
    \begin{subfigure}{\textwidth}
    \centering
        \exampleperturbationCTbreaksixdelta
       \caption*{(below-break) case (iii)}
    \end{subfigure}
    \caption{} \label{geo:fig:perturbation-below-break}
\end{figure}

\subsubsection{Admissible $n$-simplices of perturbation data} \label{geo:sss:admissible-choice-perturb}

\begin{definition} \label{geo:def:admissible}
A smooth choice of perturbation data $(\mathbb{Y}_{I,m})_{I \subset \Delta^n}^{m \geqslant 1}$ is said to be \emph{gluing-compatible w.r.t. $\mathbb{X}^f$ and $\mathbb{X}^g$} if it satisfies the (above-break) and (below-break) conditions described in subsections~\ref{geo:sss:above-break-bound} and~\ref{geo:sss:below-break-bound}. Smooth and gluing-compatible perturbation data $(\mathbb{Y}_{I,m})_{I \subset \Delta^n}^{m \geqslant 1}$ such that all maps $\phi_{\mathbb{Y}_{I,t_g}}$ are transverse to the diagonal $\Delta$ are called \emph{admissible w.r.t. $\mathbb{X}^f$ and $\mathbb{X}^g$} or simply \emph{admissible}.
\end{definition}

\begin{theorem} \label{geo:th:existence}
Admissible choices of perturbation data $(\mathbb{Y}_{I,m})_{I \subset \Delta^n}^{m \geqslant 1}$ exist.
\end{theorem}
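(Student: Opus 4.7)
The plan is to proceed by induction on the arity $m$, and for each $m$ by a secondary induction on the pair $(\dim I, \dim \mathcal{CT}_m(t_g))$ ordered lexicographically. This strategy directly extends the construction of admissible $\mathbb{X}^f$ and $\mathbb{Y}$ carried out in~\cite{mazuir-I}, with the face $I \subset \Delta^n$ entering as a new parameter. The base case $m=1$ reduces to choosing a smooth family of vector fields along the edge of the trivial gauged tree \arbreopunmorphn\ parametrized by $\Delta^n$, satisfying only the $(\partial^{sing}I)$ condition, which is immediate.

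For the inductive step, the gluing-compatibility conditions of subsections~\ref{geo:sss:smooth-choice-perturb-data},~\ref{geo:sss:above-break-bound},~\ref{geo:sss:below-break-bound} together with the $(\partial^{sing}I)$ condition completely prescribe the data $\mathbb{Y}_{I,t_g}$ on a neighborhood of the boundary of $I \times \underline{\mathcal{CT}_m}(t_g)$, since each only involves data of strictly lower complexity: (above-break) and (below-break) invoke trees $t^1_c, t^r_c$ of arity $<m$; (int-collapse) and (gauge-vertex) invoke $t'_g$ in strictly lower-dimensional strata of $\overline{\mathcal{CT}}_m$; and $(\partial^{sing}I)$ invokes lower-dimensional faces of $I$. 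By the induction hypothesis these pieces are already admissibly defined. One extends them smoothly to the interior by a collar-extension argument: using a partition of unity subordinate to a collar neighborhood of the boundary, one interpolates between the prescribed boundary data and a freely chosen smooth extension into $\mathring{I} \times \mathcal{CT}_m(t_g)$.

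The hard part of this step is verifying that the pieces of boundary data agree on their mutual overlaps, i.e.\ on corners where several of the above conditions apply simultaneously. The most delicate case is the overlap between a $(\partial^{sing}I)$-type face and a (below-break)-type breaking: here the iterated Alexander-Whitney map $\mathrm{AW}^{\circ(s-1)}$ and its projections $\mathrm{pr}_r$ must intertwine correctly with the simplicial boundary of $I$. This is precisely the combinatorial content captured by the $n-\Omega B As$-cell decomposition of $\Delta^n \times \overline{\mathcal{CT}}_m$ introduced in subsection~\ref{alg:sss:n-ombas-subd-delta-n}, and is the main obstacle of the proof. All other compatibilities on corners follow either from the analogous verifications performed in~\cite{mazuir-I} for perturbation data on $\mathcal{CT}_m$, or from the obvious commutation of taking simplicial faces with the other tree operations.

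Once a smooth gluing-compatible extension has been constructed, transversality of $\phi_{\mathbb{Y}_{I,t_g}}$ with the thin diagonal $\Delta \subset M^{\times m+1}$ is achieved by a standard Sard--Smale argument applied to a Banach space of perturbations supported in the interior of $\mathring{I} \times \mathcal{CT}_m(t_g)$, exactly as in~\cite{abouzaid-plumbings},~\cite{mescher-morse} and the corresponding step of~\cite{mazuir-I}. Since the flow map depends smoothly on such interior perturbations and only countably many tuples $(y;x_1,\dots,x_m)$ of critical points need to be considered, a comeager subset of perturbations yields simultaneous transversality for all such tuples. Picking any such perturbation provides the desired admissible $\mathbb{Y}_{I,t_g}$ and closes the induction.
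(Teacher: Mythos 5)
Your proposal is correct and follows essentially the same strategy as the paper: an induction ordered so that the $(\partial^{sing}I)$, (int-collapse), (gauge-vertex), (above-break) and (below-break) constraints only refer to previously constructed admissible data, followed by a parametric (Sard--Smale) transversality argument for the flow maps $\phi_{\mathbb{Y}_{I,t_g}}$. The only differences are organizational: the paper inducts on the single integer $N = \mathrm{dim}\,\mathcal{CT}_m(t_g) + \mathrm{dim}(I)$ rather than on arity then $(\mathrm{dim}\,I,\mathrm{dim}\,\mathcal{CT}_m(t_g))$, and it packages your collar-extension step by taking as parametrization space the Banach affine space $\mathfrak{X}^l_{I,t_g}$ of $C^l$ gluing-compatible data (upgrading to smooth data by an argument à la Taubes), rather than interpolating with a partition of unity and perturbing only in the interior.
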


\begin{theorem} \label{geo:th:compactification}
Let $(\mathbb{Y}_{I,m})_{I \subset \Delta^n}^{m \geqslant 1}$ be an admissible choice of perturbation data. The 0-dimensional moduli spaces $\mathcal{CT}_{I,t_g}(y ; x_1,\dots,x_m)$ are compact. The 1-dimensional moduli spaces $\mathcal{CT}_{I,t_g}(y ; x_1,\dots,x_m)$ can be compactified to 1-dimensional manifolds with boundary $\overline{\mathcal{CT}}_{I,t_g}(y ; x_1,\dots,x_m)$, whose boundary is described in subsection~\ref{geo:sss:compact-mod-space-morse}.
\end{theorem}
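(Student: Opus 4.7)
The plan is to prove both theorems simultaneously by a double induction on the arity $m$ and on the dimension of $I\subset\Delta^n$, in direct analogy with the construction of admissible perturbation data on the moduli spaces $\mathcal{CT}_m$ carried out in~\cite{mazuir-I}. The inductive step for each fixed $(m,I)$ consists of two parts: first extending the existing boundary perturbation data to a smooth gluing-compatible $\mathbb{Y}_{I,t_g}$ (existence), then proving the appropriate transversality and compactness results.

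For the base cases, note that when $m=1$ and $I$ is a vertex of $\Delta^n$ the datum $\mathbb{Y}_{I,\arbreopunmorph}$ is trivial, and when $I$ is a vertex of $\Delta^n$ with arbitrary $m$ we fall back exactly on the construction of admissible perturbation data on $\mathcal{CT}_m$ from~\cite{mazuir-I}, which supplies a choice of $\mathbb{Y}_{\{v\},m}$ for every vertex $v$ of $\Delta^n$. For the inductive step, assume admissible data have been constructed for all arities $<m$, and for all proper faces of $I$ at arity $m$. The gluing-compatibility conditions listed in subsections~\ref{geo:sss:above-break-bound}~and~\ref{geo:sss:below-break-bound} prescribe the value of $\mathbb{Y}_{I,t_g}$ on neighborhoods of the (above-break) and (below-break) strata in $\underline{\mathcal{CT}_m}(t_g)$ purely from lower-arity data and from the values of $\mathbb{Y}_{J,t_g'}$ for $J\subsetneq I$ via the projection maps $\mathrm{pr}_r$ associated to $\mathrm{AW}^{\circ(s-1)}_I$; these conditions are mutually compatible because $\mathrm{AW}$ restricts to the Alexander--Whitney map on faces. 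Adding the boundary data already defined on $\partial^{sing}I$ and extending smoothly to $I\times\underline{\mathcal{CT}_m}(t_g)$ via a partition of unity produces a smooth gluing-compatible $\mathbb{Y}_{I,t_g}$. Applying the Sard--Smale theorem to the universal flow map $\phi_{\mathbb{Y}_{I,t_g}}$ over the Banach manifold of such extensions, one then perturbs $\mathbb{Y}_{I,t_g}$ away from the boundary to arrange transversality with the diagonal $\Delta\subset M^{\times m+1}$ for all critical point configurations simultaneously. This produces an admissible $\mathbb{Y}_{I,t_g}$ and completes Theorem~\ref{geo:th:existence}.

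For Theorem~\ref{geo:th:compactification}, the standard Gromov-type compactness argument for perturbed Morse gradient trees (as in~\cite{abouzaid-plumbings},~\cite{mescher-morse} and~\cite{mazuir-I}) applies with $\Delta^n$-dependence added as an extra compact parameter. A sequence in $\mathcal{CT}_{I,t_g}(y;x_1,\dots,x_m)$ consists of a sequence $(\delta_k)\subset\mathring{I}$ and a sequence of perturbed two-colored Morse gradient trees; after extracting, $\delta_k$ converges either to an interior point (in which case the limit stays in $\mathcal{CT}_{\mathring I,t_g}$ or degenerates in the tree factor exactly as in~\cite{mazuir-I}) or to a point of $\partial I$, yielding the $(\partial^{sing}I)$ stratum. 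The tree-factor degenerations are classified by the same five geometric mechanisms as in~\cite{mazuir-I}: an external edge may break producing a Morse trajectory (Morse boundary, which cancels by $\partial_{Morse}^2=0$); an internal edge length may collapse (int-collapse); the gauge parameter $\lambda$ may hit a vertex (gauge-vertex); or an internal edge length may diverge to $+\infty$, yielding above-break or below-break according to whether the breaking edge lies above or below the gauge. The novelty compared to~\cite{mazuir-I} lies in the below-break stratum, and this is the main point where the present setup differs and where the combinatorics of the $n$-multiplihedron enter.

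Indeed, the key observation is that the (below-break) gluing-compatibility condition, written via the maps $\mathrm{pr}_r\circ\mathrm{AW}^{\circ(s-1)}_I:I\to I$, forces the boundary stratum obtained from an $s$-fold below-break to be parametrized not by $I^s$ but by the image of $\mathrm{AW}^{\circ(s-1)}_I$, which by subsection~\ref{alg:sss:i-over-s-part} decomposes as the disjoint union over overlapping $s$-partitions $I_1\cup\cdots\cup I_s=I$ of the products $I_1\times\cdots\times I_s$. Each top-dimensional piece then contributes precisely the stratum
\[
\mathcal{T}^{\mathbb{X}^g_{t^0}}_{t^0}(y;y_1,\dots,y_s)\times\mathcal{CT}^{\mathbb{Y}_{I_1,t^1_g}}_{I_1,t^1_g}(y_1;\dots)\times\cdots\times\mathcal{CT}^{\mathbb{Y}_{I_s,t^s_g}}_{I_s,t^s_g}(y_s;\dots)
\]
appearing in item~\ref{geo:item:below-break} of subsection~\ref{geo:sss:compact-mod-space-morse}, matching exactly the corresponding top-dimensional cell of the $n-\Omega B As$-cell decomposition of $I\times\overline{\mathcal{CT}}_m$ described in section~\ref{alg:ss:poly-encode-n-ombas}. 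The remaining boundary types match their $n-\Omega B As$ counterparts by the same identification as in~\cite{mazuir-I}, upgraded parametrically in $\delta\in\mathring I$. The hardest step is the verification that the strata obtained from a divergent sequence of below-break breakings fit together along the $\mathrm{AW}^{\circ(s-1)}$-subdivision of $I$ into a 1-manifold with boundary, which requires a gluing theorem for perturbed Morse trees parametrized by the overlapping-partition cells; this is proven by adapting the standard pre-gluing--Newton-iteration scheme of~\cite{mazuir-I} and checking that the pre-glued perturbation data coincide with $\mathbb{Y}_{I,t_g}$ up to an exponentially small error in the gluing parameter, uniformly in $\delta$. Compactness of the 0-dimensional moduli spaces follows because all possible degenerations would produce strata of negative virtual dimension, which are empty for generic admissible perturbation data by Theorem~\ref{geo:th:existence}.
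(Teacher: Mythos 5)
Your proposal is correct and follows essentially the same route as the paper: the paper likewise reduces compactness to the parametric Gromov-type analysis of Mescher (chapter 6) with $\delta\in I$ as an extra compact parameter, identifies the below-break strata through the $\mathrm{AW}^{\circ(s-1)}$-compatibility of the perturbation data, rules out the a priori possible strata labeled by $i$-overlapping $s$-partitions with $i<s-1$ by the same negative-dimension/admissibility argument you give, and adapts the gluing maps of~\cite{mazuir-I} to obtain the 1-manifold-with-boundary structure. Only your parenthetical that the (Morse) breakings ``cancel by $\partial_{Morse}^2=0$'' is off the mark -- these strata genuinely appear in the boundary description of subsection~\ref{geo:sss:compact-mod-space-morse} -- but this does not affect your argument.
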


The proof of Theorem~\ref{geo:th:existence} is postponed to subsection~\ref{geo:sss:proof-th-three} and will proceed as in~\cite{mazuir-I}. Theorem~\ref{geo:th:compactification} is a direct consequence of the analysis carried out in chapter 6 of~\cite{mescher-morse}. For this reason, we will not give details of its proof. We only point out that all spaces 
\[ \mathcal{T}^{\mathbb{X}^g_{t^0}}_{t^0}(y ; y_{1},\dots,y_{s}) \times \mathcal{CT}^{\mathbb{Y}_{I_1,t^1_g}}_{I_1,t^1_g}(y_1 ; x_1,\dots ) \times \cdots \times \mathcal{CT}^{\mathbb{Y}_{I_s,t^s_g}}_{I_s,t^s_g}(y_s ; \dots , x_m ) \]
where $I_1 \cup \cdots \cup I_s = I$ is an $i$-overlapping $s$-partition of $I$, could a priori appear in the boudary of $\mathcal{CT}_{I,t_g}(y ; x_1,\dots,x_m)$. The assumption that our choice of perturbation data is admissible ensures however in particular that whenever $I_1 \cup \cdots \cup I_s = I$ is not an $(s-1)$-overlapping $s$-partition of $I$ the previous space is empty, as at least one of its factors then has negative dimension.

Theorem~\ref{geo:th:compactification} implies moreover the existence of gluing maps 
\begin{align*}
\#^{above-break}_{ T_{I,g}^{1,Morse} , T^{2,Morse} } : \ &[ R , + \infty ] \longrightarrow \overline{\mathcal{CT}}_{I,t_g}(y ; x_1,\dots ,x_n) \ , \\
\#^{below-break}_{ T^{0,Morse} ,T_{I_1,g}^{1,Morse}, \dots , T_{I_s,g}^{s,Morse} } : \ &[ R , + \infty ] \longrightarrow \overline{\mathcal{CT}}_{I,t_g}(y ; x_1,\dots ,x_n) \ ,
\end{align*}
whenever the perturbed Morse trees $T_{I,g}^{1,Morse} , T^{2,Morse}$ and $T^{0,Morse} ,T_{I_1,g}^{1,Morse}, \dots , T_{I_s,g}^{s,Morse}$ respectively lie in a 0-dimensional moduli space, and where notations are as in items~\ref{geo:item:above-break} and~\ref{geo:item:below-break} of subsection~\ref{geo:sss:compact-mod-space-morse}. The constructions of explicit gluing maps in subsections II.4.4.3 and II.4.5.4 of~\cite{mazuir-I} in the case of the moduli spaces $\mathcal{CT}_{t_g}(y_1 ; x_1,\dots ,x_n )$ can be adapted without problems to the present setting.

\subsection{$n-\Omega B As$-morphisms between Morse cochains} \label{geo:ss:n-ombas-morph-morse}

Let $\mathbb{X}^f$ and $\mathbb{X}^g$ be admissible choices of perturbation data for the Morse functions $f$ and $g$. Denote $(C^*(f),m_t^{\mathbb{X}^f})$ and $(C^*(g),m_t^{\mathbb{X}^g})$ the Morse cochains endowed with their $\Omega B As$-algebra structures constructed in~\cite{mazuir-I}.

\begin{theorem} \label{geo:th:n-ombas-Morse}
Let $(\mathbb{Y}_{I,m})_{I \subset \Delta^n}^{m \geqslant 1}$ be a choice of perturbation that is admissible w.r.t. $\mathbb{X}^f$ and $\mathbb{X}^g$.
Defining for every $m$ and $t_g \in SCRT_m$, and every $I \subset \Delta^n$ the operations $\mu_{I,t_g}$ as
\begin{align*}
\mu_{I,t_g} : C^*(f) \otimes \cdots \otimes C^*(f) &\longrightarrow C^*(g) \\
x_1 \otimes \cdots \otimes x_m &\longmapsto \sum_{|y|= \sum_{i=1}^m|x_i| + |t_{I,g}|} \# \mathcal{CT}_{I,t_g}^{\mathbb{Y}_{I,t_g}}(y ; x_1,\cdots,x_m) \cdot y \ ,
\end{align*} 
they fit into a $n - \ombas$-morphism $(C^*(f),m_t^{\mathbb{X}^f}) \rightarrow (C^*(g),m_t^{\mathbb{X}^g})$
\end{theorem}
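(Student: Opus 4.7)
The strategy is the standard one for operations defined via signed counts of $0$-dimensional moduli spaces: the $n-\ombas$-equations for the operations $\mu_{I,t_g}$ will be extracted from the signed boundary of the compactified $1$-dimensional moduli spaces $\overline{\mathcal{CT}}_{I,t_g}(y;x_1,\dots,x_m)$, via the principle that for an oriented compact $1$-manifold with boundary the signed count of boundary points is zero.

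First, I would verify that $\mu_{I,t_g}$ has the correct degree: by the dimension formula from Proposition~\ref{geo:prop:orientable-manifolds}, $\mathcal{CT}_{I,t_g}(y;x_1,\dots,x_m)$ is zero-dimensional exactly when $|y|-\sum_i|x_i|=|t_{I,g}|$, so $\mu_{I,t_g}$ has the required degree $j-1-e(t)-\mathrm{dim}(I)$. Next, for $|y|-\sum_i|x_i|=|t_{I,g}|+1$, Theorem~\ref{geo:th:compactification} provides a compact oriented $1$-manifold with boundary $\overline{\mathcal{CT}}_{I,t_g}(y;x_1,\dots,x_m)$ whose boundary decomposes into six families: ($\partial^{sing}I$), (Morse), (int-collapse), (gauge-vertex), (above-break), and (below-break). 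The (Morse) part, coming from breakings of external edges at intermediate critical points, accounts for the term $[\partial_{Morse},\mu_{I,t_g}]$ on the left-hand side of the $n-\ombas$-equations. The five remaining families, once reorganized, must reproduce the right-hand side of the differential $\partial t_{I,g}=t_{\partial^{sing}I,g}+\pm(\partial^{\overline{\mathcal{CT}}_m}t_g)_I$ as defined in subsection~\ref{alg:sss:def-diff-n-ombas}.

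The matching between geometric boundary strata and the algebraic $n-\ombas$-differential proceeds termwise: the ($\partial^{sing}I$)-stratum corresponds to $\sum_l(-1)^l\mu_{\partial^{sing}_lI,t_g}$; the (int-collapse) and (gauge-vertex) strata reproduce the signed sums of $\mu_{I,t_g'}$ ensuing from the corresponding top-dimensional strata of $\partial\overline{\mathcal{CT}}_m(t_g)$ recalled in subsection~\ref{alg:sss:recoll-codim-1}; the (above-break) strata yield the compositions $\mu_{I,t_g^1}\circ_i m_{t^2}^{\mathbb{X}^f}$ via the right-action of $\ombas$ on $n-\ombas-\mathrm{Morph}$; and the (below-break) strata produce the terms $m_{t^0}^{\mathbb{X}^g}(\mu_{I_1,t_g^1}\otimes\cdots\otimes\mu_{I_s,t_g^s})$ summed over overlapping partitions $I_1\cup\cdots\cup I_s=I$, precisely because the compatibility imposed in subsection~\ref{geo:sss:below-break-bound} forces the limit perturbation on the $r$-th upper component to be $\mathbb{Y}_{\mathrm{pr}_r(\delta),t_c^r}$, so that $\delta\in I$ spreads into the $s$-tuple $(\mathrm{pr}_1(\delta),\dots,\mathrm{pr}_s(\delta))$ tracing out every top-dimensional cell of the $\mathrm{AW}^{\circ(s-1)}$-subdivision of $I$. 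This is the structural reason why the overlapping partition combinatorics of the algebraic side is automatically produced by the geometry.

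The main obstacle is the verification of signs, and in particular matching the geometric orientations arising in the (below-break) strata with the sign $(-1)^{\dagger_{\Omega B As}}$ from subsection~\ref{alg:sss:def-diff-n-ombas}. I would handle this by proceeding as in~\cite{mazuir-I}: fix the canonical orientations on the moduli spaces $\mathcal{CT}_m(t_g,\omega)$, equip $\mathcal{CT}_{I,t_g}$ with the product orientation of $I$ and $\mathcal{CT}_m(t_g,\omega)$ (as in subsection~\ref{alg:sss:spaces-I-CTm}), and use the explicit gluing maps $\#^{above-break}$ and $\#^{below-break}$ provided by Theorem~\ref{geo:th:compactification} to compare the boundary orientation induced on $\partial\overline{\mathcal{CT}}_{I,t_g}$ with the product orientations of its boundary strata. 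The (above-break) signs reduce directly to the computation already performed in~\cite{mazuir-I} (tensored with $I$, contributing the $(-1)^{|I|}$ factor from the convention $\partial(K\times L)=\partial K\times L\cup(-1)^{\dim K}K\times\partial L$ of section~\ref{alg:sss:or-boundary}), while the (below-break) signs additionally require tracking how the orientation of $I$ redistributes as $I_1\times\cdots\times I_s$ under the $\mathrm{AW}^{\circ(s-1)}$-subdivision; this is precisely the content of the $n-\ombas$-cell decomposition of $\Delta^n\times\overline{\mathcal{CT}}_m$ from subsection~\ref{alg:sss:n-ombas-subd-delta-n}. The fact that the signs of this decomposition were designed to agree with the action-composition maps of subsection~\ref{alg:sss:def-action-comp-maps-n-ombas} then gives the result.
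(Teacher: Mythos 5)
Your overall skeleton is the one the paper uses: count boundary points of the compactified $1$-dimensional moduli spaces, match the ($\partial^{sing}I$), (int-collapse), (gauge-vertex), (above-break) and (below-break) strata with the terms of the $n-\ombas$-differential (with the overlapping-partition combinatorics coming from the maps $\mathrm{pr}_r\circ\mathrm{AW}^{\circ(s-1)}$ in the (below-break) limits), and import the sign analysis from~\cite{mazuir-I}. But there is a genuine gap in your orientation step, and it is exactly the point where the paper has to do real work. You propose to "equip $\mathcal{CT}_{I,t_g}$ with the product orientation of $I$ and $\mathcal{CT}_m(t_g,\omega)$"; first, the Morse moduli space $\mathcal{CT}_{I,t_g}(y;x_1,\dots,x_m)$ is not that product — it is cut out inside $\mathring{I}\times\mathcal{CT}_m(t_g)\times W^S(y)\times W^U(x_1)\times\cdots\times W^U(x_m)$ by the flow map, and the paper orients it as the preimage of the diagonal via the signed short exact sequence convention of subsection~\ref{geo:sss:or-transver}. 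Second, and more importantly, this natural orientation is then \emph{twisted} by the explicit sign of parity $\sigma(t_{I,g};y;x_1,\dots,x_m)=dm(1+|y|+|t_{I,g}|)+|t_{I,g}||y|+d\sum_{i=1}^m|x_i|(m-i)$, because the operations $m_t^{\mathbb{X}^f}$ and $m_t^{\mathbb{X}^g}$ were themselves defined in~\cite{mazuir-I} by counting the twisted-oriented spaces $\widetilde{\mathcal{T}}_t$; without reproducing this twist your signed boundary counts will not be coherent with the codomain operations, and the strata will not assemble into the stated equations over $\Z$.

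Even with the twist, the outcome is not literally the statement you set out to prove: the boundary count yields a \emph{twisted} $n-\ombas$-morphism between the twisted $\ombas$-algebras $(C^*(f),\partial_{Morse}^{Tw},\partial_{Morse})$ and $(C^*(g),\partial_{Morse}^{Tw},\partial_{Morse})$, where one of the two Morse differentials in the bracket carries the sign $(-1)^{(d+1)k}$; this is an honest $n-\ombas$-morphism only with $\Z/2$ coefficients or when $M$ is odd-dimensional, as the paper points out immediately after the theorem. Your plan, as written, asserts the untwisted conclusion and never confronts this discrepancy, so the sign verification you defer to "proceeding as in~\cite{mazuir-I}" would in fact fail to close for even-dimensional $M$ over $\Z$ unless you introduce both the orientation twist $\sigma$ and the twisted-morphism formulation (or restrict coefficients). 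The rest of your matching of strata to algebraic terms is correct and is what the paper does.
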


The proof is postponed to section~\ref{geo:ss:twisted-n-ombas-morse}. It boils down to counting the boundary points of the 1-dimensional oriented compactified moduli spaces $\overline{\mathcal{CT}}_{I,t_g}^\mathbb{Y}(y ; x_1,\cdots,x_m)$ whose boundary is described in the subsection~\ref{geo:sss:compact-mod-space-morse}.
As a matter of fact, the set of operations $\{ \mu_{I,t_g} \}$ does not exactly define a $n - \Omega B As$-morphism. One of the two differentials $\partial_{Morse}$ in the bracket $[ \partial_{Morse} , \mu_{I,t_g} ]$ appearing in the $n-\Omega B As$-equations has to be twisted by a specific sign for the $n-\Omega B As$-equations to hold. We will speak about a \emph{twisted $n - \Omega B As$-morphism} between twisted $\Omega B As$-algebras. In the case where $M$ is odd-dimensional, this twisted $n-\Omega B As$-morphism is a standard $n-\Omega B As$-morphism. 

As explained in subsection~\ref{alg:sss:n-ombas-to-n-ainf} of part~\ref{p:algebra}, if we want moreover to go back to the algebraic framework of \Ainf -algebras, a  $n-\Ainf$-morphism between the induced $\Ainf$-algebra structures on the Morse cochains can simply be obtained under the morphism of operadic bimodules $\infmorn \rightarrow n-\Omega B As - \mathrm{Morph}$. 

\subsection{Filling properties in Morse theory} \label{geo:ss:inf-cat-morse}

Consider a simplicial subcomplex $S \subset \Delta^n$. Definitions~\ref{geo:def:n-simpl-perturb-data-tot} and~\ref{geo:def:admissible} can be straightforwardly extended to define an \emph{admissible choice of perturbation data parametrized by $S$} on the moduli spaces $\mathcal{CT}_m$, that we will denote $\mathbb{Y}_{S} := (\mathbb{Y}_{I,m})_{I \subset S}^{m \geqslant 1}$ . The following theorem is proven in section~\ref{geo:ss:proofs-existence-filler} : 

\begin{theorem} \label{geo:th:filler}
For every admissible choice of perturbation data $\mathbb{Y}_{S}$ parametrized by a simplicial subcomplex $S \subset \Delta^n$, there exists an admissible $n$-simplex of perturbation data $\mathbb{Y}_{\Delta^n}$ extending $\mathbb{Y}_{S}$.
\end{theorem}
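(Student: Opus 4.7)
The plan is to mimic the inductive scheme used in the proof of Theorem~\ref{geo:th:existence}, with the modification that we only need to construct perturbation data on faces $I \not\subset S$, while reusing the data $\mathbb{Y}_S$ on the faces already parametrized. I would proceed by a double induction: an outer induction on the arity $m$ of the gauged stable ribbon trees, and, for fixed arity, an inner induction on the dimension of the faces $I \subset \Delta^n$ not lying in $S$. The base case $m = 1$ is vacuous since the trivial gauged tree \arbreopunmorph\ carries no perturbation datum.

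For the outer inductive step, assume that an admissible extension $\{ \mathbb{Y}_{I,k} \}_{I \subset \Delta^n}$ of $\mathbb{Y}_S$ has already been produced for all arities $k < m$. Fix $t_g \in SCRT_m$ and perform the inner induction. For a face $I \not\subset S$ of currently lowest dimension among those not yet treated, every proper face of $I$ is either in $S$ (where $\mathbb{Y}$ is given) or of strictly lower dimension outside $S$ (where $\mathbb{Y}$ has been constructed at the previous step of the inner induction). Similarly, the (above-break) and (below-break) boundary strata of $\overline{\mathcal{CT}}_m(t_g)$ involve only trees of arity $< m$, for which the perturbation data are fixed by the outer inductive hypothesis. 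Combining these, we obtain a smooth perturbation datum defined on $\partial\bigl(I \times \overline{\mathcal{CT}}_m(t_g)\bigr)$ (in the sense of $\partial^{sing}I$ together with the internal boundary strata of $\overline{\mathcal{CT}}_m(t_g)$), satisfying by construction the smoothness and gluing-compatibility requirements of Definition~\ref{geo:def:admissible}. Since the target consists of vector fields on $M$ parametrized by $\mathring{I} \times \tilde C_{f_c} \times M$, which is an affine space, we can smoothly extend this boundary datum to the interior using a partition of unity argument, exactly as in Theorem~\ref{geo:th:existence}.

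It remains to achieve transversality of the flow map $\phi_{\mathbb{Y}_{I,t_g}}$ with the thin diagonal $\Delta \subset M^{m+1}$. This is done by a standard Sard-Smale argument applied to the Banach manifold of smooth perturbations supported in the interior of $I \times \mathcal{CT}_m(t_g)$; such perturbations do not affect any of the boundary data already fixed, and the same transversality analysis carried out in section~\ref{geo:s:trans-or-signs} (and for Theorem~\ref{geo:th:existence}) ensures that a generic such perturbation makes $\phi_{\mathbb{Y}_{I,t_g}}$ transverse to $\Delta$. This completes the inner and outer inductive steps.

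The main delicate point is the verification that the boundary datum assembled on $\partial\bigl(I \times \overline{\mathcal{CT}}_m(t_g)\bigr)$ is consistent where several conditions meet. Specifically, where a (below-break) stratum of $\overline{\mathcal{CT}}_m(t_g)$ (governed by an overlapping partition $I_1 \cup \cdots \cup I_s = I$ via the projections $\mathrm{pr}_r \circ \mathrm{AW}^{\circ(s-1)}_I$) intersects $\partial^{sing}I$, the two prescriptions must coincide. This reduces to the fact that the polytopal Alexander-Whitney subdivision of $\Delta^n$ defined in section~\ref{alg:ss:cell-decompo-AW} is compatible with the simplicial boundary: restricting an overlapping partition of $I$ to a codimension-one face $\partial_j I$ either yields an overlapping partition of $\partial_j I$ (which matches the (below-break) prescription coming from $\partial_j I$) or produces a partition in which some $I_r$ degenerates to a face of smaller overlap, in which case the corresponding stratum lies in a lower-dimensional face of the $\ombas$-decomposition and contributes no new condition. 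Once this combinatorial compatibility is checked, exactly as in the consistency verification behind Proposition~\ref{alg:prop:n-multipl}, the inductive construction goes through and yields the required admissible $n$-simplex of perturbation data extending $\mathbb{Y}_S$.
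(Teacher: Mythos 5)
Your overall strategy (induct, assemble the already-prescribed data on the boundary, extend, then perturb generically) is the same as the paper's, which simply reruns the inductive parametric-transversality proof of Theorem~\ref{geo:th:existence} with the parametrization spaces $\mathfrak{X}^l_{I,t_g}$ cut down by the conditions prescribed by $S$. However, your induction scheme has a genuine gap: within a fixed arity $m$ you fix $t_g$ and induct only on the dimension of the faces $I$, but the boundary of $I \times \overline{\mathcal{CT}}_m(t_g)$ also contains the (int-collapse) and (gauge-vertex) strata, which are labeled by trees $t'_g \in SCRT_m$ of the \emph{same} arity with $\dim \mathcal{CT}_m(t'_g) < \dim \mathcal{CT}_m(t_g)$. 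The smoothness and gluing-compatibility requirements of Definitions~\ref{geo:def:smooth-choice-of} and~\ref{geo:def:admissible} force $\mathbb{Y}_{I,t_g}$ to extend the data $\mathbb{Y}_{I,t'_g}$ on these inner strata, and under your ordering those data need not exist yet; your claim that the relevant boundary strata ``involve only trees of arity $<m$'' covers the (above-break) and (below-break) strata but omits these. This is why the paper inducts on the single integer $N = \dim \mathcal{CT}_m(t_g) + \dim(I)$, which orders the same-arity tree types as well as the faces. Relatedly, your base case is wrong: the trivial gauged tree of arity $1$ does carry perturbation data (vector fields on its incoming and outgoing semi-infinite edges), and these are precisely what define the arity-one, continuation-type components $\mu_{I,\mathrm{triv}}$; the induction cannot start from a vacuous $m=1$ step but must construct these data (this is the $N=0$ case in the paper's scheme, with $S$-faces reused).

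A second, more minor, deviation: to achieve transversality you first extend the boundary datum by a partition of unity and then perturb only in the interior of $I \times \mathcal{CT}_m(t_g)$. Since the flow map lives over the noncompact open stratum $\mathring{I} \times \mathcal{CT}_m(t_g)$, interior-supported perturbations do not obviously control the ends, and you would need a gluing-type argument to inherit transversality there from the admissibility of the lower strata. The paper sidesteps this by taking as parametrization space the whole Banach affine space of $C^l$ data that are gluing-compatible with the previously constructed (and $S$-prescribed) data --- these are constrained only in the limit, so the parametrized evaluation map is a submersion over the entire open stratum and the parametric transversality lemma (plus the Taubes argument for smoothness) applies directly. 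Your concern about consistency of the (below-break) prescriptions with $\partial^{sing}I$ via the maps $\mathrm{pr}_r \circ \mathrm{AW}^{\circ(s-1)}_I$ is legitimate but is handled implicitly by the definition of the parametrization spaces; it is not where the difficulty lies. With the induction reorganized on $N$ and the base case corrected, your argument would match the paper's.
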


We define for every $n \geqslant 0$, 
\[ \mathrm{HOM}^{geom}_{\ombas}(C^*(f),C^*(g))_n \subset \mathrm{HOM}_{\ombas}(C^*(f),C^*(g))_n \]
to be the set of $n$-\ombas -morphisms $\mu$ from $C^*(f)$ to $C^*(g)$ for which there exists an admissible $n$-simplex of perturbation data $\mathbb{Y}_{\Delta^n}$ such that $\mu = \mu^{\mathbb{Y}_{\Delta^n}}$. 

\begin{theorem} \label{geo:th:contractible}
The sets $\mathrm{HOM}^{geom}_{\ombas}(C^*(f),C^*(g))_n$ define a simplicial subset of the simplicial set $\mathrm{HOM}_{\ombas}(C^*(f),C^*(g))_\bullet$. The simplicial set $\mathrm{HOM}^{geom}_{\ombas}(C^*(f),C^*(g))_\bullet$ has the property of being a Kan complex which is contractible.
\end{theorem}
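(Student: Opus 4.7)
The proof hinges on three successive applications of Theorem~\ref{geo:th:filler}, one for each assertion in the statement.

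First, I would verify that $\mathrm{HOM}^{geom}_{\ombas}(C^*(f),C^*(g))_\bullet$ is indeed closed under the face and degeneracy maps of $\mathrm{HOM}_{\ombas}(C^*(f),C^*(g))_\bullet$. Face preservation is immediate: given an admissible $n$-simplex $\mathbb{Y}_{\Delta^n}$ realizing a geometric $n$-morphism $\mu^{\mathbb{Y}_{\Delta^n}}$, its restriction to any face $\partial_i \Delta^n \subset \Delta^n$ is manifestly an admissible $(n-1)$-simplex of perturbation data and realizes $d_i \mu^{\mathbb{Y}_{\Delta^n}}$. For degeneracies, I would pull back $\mathbb{Y}_{\Delta^n}$ along the smooth codegeneracy map $\sigma_i: \Delta^{n+1} \to \Delta^n$; by induction on $n$ the boundary $\partial \Delta^{n+1}$ of the intended degenerate simplex consists of geometric pieces (faces either equal to $\mu^{\mathbb{Y}_{\Delta^n}}$ or to degeneracies of lower-dimensional faces, by the simplicial identities), so the filler theorem produces admissible interior data matching this boundary.

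Second, the Kan property follows directly from Theorem~\ref{geo:th:filler}. A simplicial map $\phi: \mathsf{\Lambda}^k_n \to \mathrm{HOM}^{geom}_{\ombas}(C^*(f),C^*(g))_\bullet$ corresponds, by the simplicial subset property just established and the definition of $\mathrm{HOM}^{geom}$, to an admissible choice of perturbation data $\mathbb{Y}_{\mathsf{\Lambda}^k_n}$ parametrized by the simplicial subcomplex $\mathsf{\Lambda}^k_n \subset \Delta^n$. Theorem~\ref{geo:th:filler} extends this to an admissible $n$-simplex of perturbation data $\mathbb{Y}_{\Delta^n}$, and Theorem~\ref{geo:th:n-ombas-Morse} produces from it a geometric $n$-morphism $\mu^{\mathbb{Y}_{\Delta^n}} \in \mathrm{HOM}^{geom}_n$ whose restriction to the horn recovers $\phi$.

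Third, for contractibility I would use the standard criterion that a Kan complex is contractible if and only if it is non-empty and every map $\partial \Delta^n \to X$ extends to $\Delta^n \to X$ for all $n \geq 0$. Nonemptiness follows from the existence of admissible perturbation data established in~\cite{mazuir-I}, which yields a vertex. For the extension property, observe that $\partial \Delta^n$ is again a simplicial subcomplex of $\Delta^n$; a map $\partial \Delta^n \to \mathrm{HOM}^{geom}$ thus corresponds to admissible data $\mathbb{Y}_{\partial \Delta^n}$, which Theorem~\ref{geo:th:filler} extends to the desired admissible $n$-simplex on $\Delta^n$.

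The principal technical difficulty lies in the simplicial subset property, specifically in the preservation of degeneracies: a naive pullback of an admissible datum along $\sigma_i$ may fail transversality precisely on the edge being contracted, since $\sigma_i$ is not a submersion there. However, since the boundary obtained by such a pullback is itself geometric by induction, one can always correct the pullback to a genuinely admissible datum on $\Delta^{n+1}$ via Theorem~\ref{geo:th:filler}, without altering the boundary behaviour. Once the simplicial subset property is in hand, the remaining two assertions are formal consequences of the filling theorem applied to the simplicial subcomplexes $\mathsf{\Lambda}^k_n$ and $\partial \Delta^n$ respectively.
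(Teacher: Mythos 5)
Your treatment of the Kan property and of contractibility is essentially the paper's: both are formal consequences of Theorem~\ref{geo:th:filler} applied to the subcomplexes $\mathsf{\Lambda}^k_n$ and to arbitrary simplicial subcomplexes (the paper phrases contractibility via triviality of the simplicial homotopy groups rather than via extension along $\partial\Delta^n$, but these are equivalent and rest on the same filling theorem). The problem is your handling of the degeneracy maps, which is where the actual content of the ``simplicial subset'' assertion lies, and where your argument has a genuine gap. To show that $\mathrm{HOM}^{geom}_{\ombas}(C^*(f),C^*(g))_\bullet$ is closed under $\sigma_i$ you must show that the \emph{specific} element $\sigma_i(\mu^{\mathbb{Y}_{\Delta^n}})$ of $\mathrm{HOM}_{\ombas}(C^*(f),C^*(g))_{n+1}$ --- the degenerate simplex determined algebraically by the cosimplicial structure, whose operations vanish on all faces containing the contracted edge --- is itself of the form $\mu^{\mathbb{Y}'}$ for some admissible $(n+1)$-simplex of perturbation data. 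Your argument instead fills the boundary $\partial\Delta^{n+1}$ (which is geometric by induction) using Theorem~\ref{geo:th:filler}; this produces \emph{some} geometric $(n+1)$-morphism with the prescribed faces, but nothing forces it to equal $\sigma_i(\mu^{\mathbb{Y}_{\Delta^n}})$. ``Correcting the pullback without altering the boundary behaviour'' therefore does not close the gap: agreement on the boundary does not identify two $(n+1)$-morphisms.

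The paper's proof takes the route you dismissed: with the realization $\Delta^{n+1}=\{1\geqslant z_1\geqslant\cdots\geqslant z_{n+1}\geqslant 0\}$ the codegeneracy is the affine map $s_i(z_1,\dots,z_{n+1})=(z_1,\dots,\widehat{z_i},\dots,z_{n+1})$, and one sets $\mathbb{Y}'_{\delta}:=(\mathbb{Y}_{\Delta^n})_{s_i(\delta)}$. Your worry that this pullback fails transversality ``on the edge being contracted, since $\sigma_i$ is not a submersion there'' is unfounded: $s_i$ is a linear projection, hence a submersion everywhere, each flow map for $\mathbb{Y}'$ factors as $\phi_{\mathbb{Y}_{\Delta^n,t_g}}\circ(s_i\times\mathrm{id})$, and precomposition with a submersion preserves transversality to the diagonal; gluing-compatibility is likewise inherited because the maps $\mathrm{AW}^{\circ s}$ are compatible with the codegeneracies. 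What then makes the construction work --- and what your proposal never establishes --- is that the counts for the pulled-back data reproduce exactly the algebraic degeneracy: for faces $J\subset\Delta^{n+1}$ containing the collapsed edge, the relevant $0$-dimensional moduli spaces are empty (each of their points would sweep out a $1$-parameter family in the collapsed direction, contradicting transversality), so $\mu^{\mathbb{Y}'}=\sigma_i(\mu^{\mathbb{Y}_{\Delta^n}})$ on the nose. With that identification in hand, the rest of your argument goes through as in the paper.
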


\begin{proof}
We first prove that the face and degeneracy maps of $\mathrm{HOM}_{\ombas}(C^*(f),C^*(g))_\bullet$ preserve the sets $\mathrm{HOM}^{geom}_{\ombas}(C^*(f),C^*(g))_\bullet$. This is clear for the face maps. Consider a $n$-simplex $\mu^{\mathbb{Y}_{\Delta^n}} \in \mathrm{HOM}^{geom}_{\ombas}(C^*(f),C^*(g))_n$ and a degeneracy map
\[ \sigma_i : \mathrm{HOM}_{\ombas}(C^*(f),C^*(g))_n \longrightarrow \mathrm{HOM}_{\ombas}(C^*(f),C^*(g))_{n+1} , \ 1 \leqslant i \leqslant n+1 \ . \]
We have to construct an admissible $(n+1)$-simplex of perturbation data $\mathbb{Y}'$ such that $\sigma_i(\mu^{\mathbb{Y}_{\Delta^n}}) = \mu^{\mathbb{Y}'}$. Using the realizations 
\[ \Delta^n = \{ (z_1,\dots,z_n) \in \R^n | 1 \geqslant z_1 \geqslant \cdots \geqslant z_n \geqslant 0 \} \ , \]
we define $s_i : \Delta^{n+1} \rightarrow \Delta^n$ as $s_i ( z_1, \dots, z_{n+1}) := (z_1 , \dots , \hat{z_i} , \dots, z_{n+1})$.
The $(n+1)$-simplex of perturbation data defined as $\mathbb{Y}'_{\delta} :=  (\mathbb{Y}_{\Delta^n})_{s_i(\delta)}$ for $\delta \in \Delta^{n+1}$ is then an admissible simplex of perturbation data which has the desired property.

It is clear from Theorem~\ref{geo:th:filler} that the simplicial set $\mathrm{HOM}^{geom}_{\ombas}(C^*(f),C^*(g))_\bullet$ is a Kan complex. A Kan complex is contractible if and only if all its simplicial homotopy groups are trivial. One can moreover check on the definition of the homotopy relation in subsection~\ref{sss:simpl-hom-groups} of part~\ref{p:algebra} that if a Kan complex $X_\bullet$ has the property that each simplicial subcomplex $S \subset \Delta^n$ can be filled in $X_\bullet$, then its homotopy groups are trivial. In particular, Theorem~\ref{geo:th:filler} implies that $\mathrm{HOM}^{geom}_{\ombas}(C^*(f),C^*(g))_\bullet$ has trivial homotopy groups hence is contractible.
\end{proof}

Shifting from the \ombas\ to the \Ainf\ viewpoint, we can define in a similar fashion the simplicial subset 
\[ \mathrm{HOM}^{geom}_{\Ainf}(C^*(f),C^*(g))_\bullet \subset \mathrm{HOM}_{\Ainf}(C^*(f),C^*(g))_\bullet \ . \]
The simplicial set $\mathrm{HOM}^{geom}_{\Ainf}(C^*(f),C^*(g))_\bullet$ is then again a Kan complex which is contractible. Given an admissible horn of perturbation data $\mathbb{Y}_{\mathsf{\Lambda}_n^k}$, Theorem~\ref{alg:th:infinity-gr} implies that the induced horn $\mathsf{\Lambda}_n^k \rightarrow \mathrm{HOM}_{\Ainf}(C^*(f),C^*(g))_\bullet$ can always be filled algebraically. The fact that $\mathrm{HOM}^{geom}_{\Ainf}(C^*(f),C^*(g))_\bullet$ is a Kan complex implies something stronger : this horn can not only be filled algebraically, but also geometrically. 
We moreover point out that we should in fact work with twisted $n-\Ainf$ and $n-\ombas$-morphisms, as explained in section~\ref{geo:ss:twisted-n-ombas-morse}. However, the constructions of this section still hold in that context. 

The following proposition is a direct corollary to Theorem~\ref{geo:th:contractible} and solves the motivational question formulated in the introduction :

\begin{corollary} \label{geo:cor:initial}
Let $\mathbb{Y}$ and $\mathbb{Y}'$ be two admissible choices of perturbation data on the moduli spaces $\mathcal{CT}_m$. The \ombas -morphisms $\mu^{\mathbb{Y}}$ and $\mu^{\mathbb{Y}'}$ are then \ombas -homotopic
\[ \begin{tikzcd}[row sep=large, column sep = large]
C^*(f) \arrow[bend left=20]{r}[above]{\mu^{\mathbb{Y}}}[name=U,below,pos=0.5]{}
\arrow[bend right=20]{r}[below]{\mu^{\mathbb{Y}'}}[name=D,pos=0.5]{}
& C^*(g)
\arrow[Rightarrow, from=U, to=D]
\end{tikzcd} \ . \]
\end{corollary}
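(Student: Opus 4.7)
The plan is to realize the required \ombas -homotopy as an admissible $1$-simplex of perturbation data interpolating between $\mathbb{Y}$ and $\mathbb{Y}'$, and then invoke Theorem~\ref{geo:th:n-ombas-Morse} to convert this into a twisted $1-\ombas$ -morphism between the two twisted $\ombas$ -morphisms $\mu^{\mathbb{Y}}$ and $\mu^{\mathbb{Y}'}$. In other words, I want to exhibit a $1$-simplex of the Kan complex $\mathrm{HOM}^{geom}_{\ombas}(C^*(f),C^*(g))_\bullet$ whose two boundary vertices are $\mu^{\mathbb{Y}}$ and $\mu^{\mathbb{Y}'}$. By definition (see subsection~\ref{alg:ss:ainf-hom} and its analog in the $\ombas$ -setting), such a $1$-simplex is precisely a $1-\ombas$ -morphism, i.e.\ an \ombas -homotopy, from $\mu^{\mathbb{Y}}$ to $\mu^{\mathbb{Y}'}$.

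First I would set $S = \partial \Delta^1 \subset \Delta^1$, which is the simplicial subcomplex consisting of the two vertices $[0]$ and $[1]$. I would then define a choice of perturbation data $\mathbb{Y}_S$ parametrized by $S$ by putting $\mathbb{Y}$ on the vertex $[0]$ and $\mathbb{Y}'$ on the vertex $[1]$. Since $S$ has no edges and its two vertices are disjoint, no gluing compatibility or transversality condition across strata of $S$ has to be checked; the admissibility of $\mathbb{Y}_S$ reduces to the admissibility of $\mathbb{Y}$ and $\mathbb{Y}'$ separately, which holds by hypothesis. Thus $\mathbb{Y}_S$ is an admissible choice of perturbation data parametrized by $S$ in the sense of subsection~\ref{geo:ss:inf-cat-morse}.

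Next, I would apply the filling theorem (Theorem~\ref{geo:th:filler}) with $n=1$ to this $\mathbb{Y}_S$. This yields an admissible $1$-simplex of perturbation data $\mathbb{Y}_{\Delta^1}$ on the moduli spaces $\mathcal{CT}_m$ whose restriction to $[0]$ is $\mathbb{Y}$ and whose restriction to $[1]$ is $\mathbb{Y}'$. Feeding $\mathbb{Y}_{\Delta^1}$ into Theorem~\ref{geo:th:n-ombas-Morse} with $n=1$, the count of $0$-dimensional moduli spaces $\mathcal{CT}_{I,t_g}^{\mathbb{Y}_{I,t_g}}(y;x_1,\dots,x_m)$ for $I \subset \Delta^1$ assembles into a twisted $1-\ombas$ -morphism $\mu^{\mathbb{Y}_{\Delta^1}}$ between the twisted $\ombas$ -algebras $(C^*(f),m_t^{\mathbb{X}^f})$ and $(C^*(g),m_t^{\mathbb{X}^g})$. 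By construction its vertices are $\mu^{\mathbb{Y}}$ and $\mu^{\mathbb{Y}'}$, so $\mu^{\mathbb{Y}_{\Delta^1}}$ is the sought after \ombas -homotopy and $\mu^{\mathbb{Y}_{\Delta^1}} \in \mathrm{HOM}^{geom}_{\ombas}(C^*(f),C^*(g))_1$.

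There is no real obstacle in the argument itself once Theorems~\ref{geo:th:filler} and~\ref{geo:th:n-ombas-Morse} are granted; one can alternatively deduce the corollary abstractly from Theorem~\ref{geo:th:contractible} by observing that a contractible Kan complex has $\pi_0 = *$, so that the two vertices $\mu^{\mathbb{Y}}$ and $\mu^{\mathbb{Y}'}$ of $\mathrm{HOM}^{geom}_{\ombas}(C^*(f),C^*(g))_\bullet$ lie in the same path component and are thus joined by an edge. The concrete version above has the advantage of producing the connecting $1$-simplex explicitly from the filling theorem, which is precisely how the Kan property of $\mathrm{HOM}^{geom}_{\ombas}(C^*(f),C^*(g))_\bullet$ was established in the proof of Theorem~\ref{geo:th:contractible}. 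The only point that deserves attention is the mild bookkeeping distinction between $\ombas$ -morphisms and twisted $\ombas$ -morphisms mentioned after Theorem~\ref{geo:th:n-ombas-Morse}, which in odd dimensions disappears and in general is harmless since the twist is applied uniformly to $\mu^{\mathbb{Y}}$, $\mu^{\mathbb{Y}'}$ and $\mu^{\mathbb{Y}_{\Delta^1}}$.
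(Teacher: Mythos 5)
Your proposal is correct and matches the paper's argument: the corollary is exactly the case $S=\partial\Delta^1$ of Theorem~\ref{geo:th:filler} combined with Theorem~\ref{geo:th:n-ombas-Morse}, which is precisely the mechanism behind Theorem~\ref{geo:th:contractible} from which the paper deduces the statement. Your explicit unwinding (and the abstract alternative via contractibility of the Kan complex) is the intended proof, including the remark about the uniform twist.
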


\section{Transversality, signs and orientations} \label{geo:s:trans-or-signs}

\subsection{Proof of theorems~\ref{geo:th:existence} and~\ref{geo:th:filler}}  \label{geo:ss:proofs-existence-filler}

\subsubsection{Proof of theorem~\ref{geo:th:existence}} \label{geo:sss:proof-th-three}

We detailed in section~II.3. of~\cite{mazuir-I} how to build an admissible choice of perturbation data $(\mathbb{X}_n)_{n \geqslant 2}$ on the moduli spaces $\mathcal{T}_m$. Drawing from this construction, we provide a sketch of the proof of Theorem~\ref{geo:th:existence} in this subsection : admissible $n$-simplices of perturbation data $(\mathbb{Y}_{I,m})_{I \subset \Delta^n}^{m \geqslant 1}$ on the moduli spaces $\mathcal{CT}(t_g)$ exist. The proof proceeds again by induction on the integer $N = \mathrm{dim} ( \mathcal{CT}(t_g) ) + \mathrm{dim} ( I )$.

If $N = 0$, $\mathrm{dim}(I)=0$ and the gauged tree $t_g$ is a corolla whose gauge intersects its root. Let $y \in \mathrm{Crit}(g)$ and $x_1, \cdots , x_m \in \mathrm{Crit}(f)$ and fix an integer $l$ such that 
\[ l \geqslant \mathrm{max} \left( 1 , |y| - \sum_{i=1}^m|x_i| + 1 \right) \ . \]
Define the parametrization space
\[ \mathfrak{X}_{t_{g}}^l := \{ \text{$C^l$-perturbation data $\mathbb{Y}_{t_g}$ on $\mathcal{CT}_m(t_g)$} \} \ , \]
and introduce the $C^l$-map
\[ \phi_{t_g} : \mathfrak{X}_{t_g}^l \times \mathcal{CT}_m(t_g) \times W^S(y) \times W^U(x_1) \times \cdots \times W^U(x_m) \longrightarrow M^{\times m+1} \ , \]
such that for every $\mathbb{Y}_{t_g} \in \mathfrak{X}_{t_g}^l$, $\phi_{t_g} ( \mathbb{Y}_{t_g} , \cdot ) = \phi_{\mathbb{Y}_{t_g}}$. Note that we should in fact write $\phi_{t_g}^{y,x_1,\dots,x_n}$ as the domain of $\phi_{t_g}$ depends on $y,x_1,\dots,x_n$. The space $\mathfrak{X}_{t_g}^l$ is then a Banach space and the map $\phi_{t_g}$ is a submersion. The map $\phi_{t_g}$ is in particular transverse to the diagonal $\Delta \subset M^{\times m+1}$. The parametric transversality lemma implies that there exists a subset $\mathfrak{Y}_{t_g}^{l;y , x_1 , \dots x_m} \subset \mathfrak{X}_{t_g}^l$ which is residual in the sense of Baire, and such that for every choice of perturbation data $\mathbb{Y}_{t_g} \in \mathfrak{Y}_t^{l;y , x_1 , \dots x_m}$ the map $\phi_{\mathbb{Y}_{t_g}}$ is transverse to the diagonal $\Delta \subset M^{\times m+1}$. Any $\mathbb{Y}_{t_g}$ in the intersection
\[ \mathfrak{Y}_{t_g}^l := \bigcap_{y,x_1,\dots,x_m} \mathfrak{Y}_{t_g}^{l;y , x_1 , \dots x_m} \subset \mathfrak{X}_{t_g} \]
then yields a $C^l$-choice of perturbation data on $\mathcal{CT}(t_g)$ such that all maps $\phi_{\mathbb{Y}_{t_g}}$ are transverse to the diagonal $\Delta \subset M^{\times m+1}$. Using an argument à la Taubes we prove that one can in fact construct a residual set $\mathfrak{Y}_{t_g} \subset \mathfrak{X}_{t_g}$, where $\mathfrak{X}_{t_g}$ is the Fréchet space defined by replacing "$C^l$" by "smooth" in the definition of $\mathfrak{X}_{t_g}^l$, and such that any $\mathbb{Y}_{t_g} \in \mathfrak{Y}_{t_g}$ yields a smooth choice of perturbation data such that all maps $\phi_{\mathbb{Y}_{t_g}}$ are transverse to the diagonal $\Delta \subset M^{\times m+1}$. See subsection II.3.2.2 of~\cite{mazuir-I} for more details on that last point. This wraps up the first step of the induction.

Let $N \geqslant 0$ and suppose that we have constructed an admissible choice of perturbation data $(\mathbb{Y}_{I,t_g}^0)$, where $I \subset \Delta^n$ and $t_g \in SCRT_m$ are such that $\mathrm{dim} ( \mathcal{CT}(t_g) ) + \mathrm{dim} ( I ) \leqslant N$. Let $I \subset \Delta^n$ and $t_g \in SCRT_m$ be such that $\mathrm{dim} ( \mathcal{CT}(t_g) ) + \mathrm{dim} ( I ) = N+1$. 
Let $y \in \mathrm{Crit}(g)$ and $x_1, \cdots , x_m \in \mathrm{Crit}(f)$ and fix an integer $l$ such that 
\[ l \geqslant \mathrm{max} \left( 1 , |y| - \sum_{i=1}^m|x_i| - |t_{I,g}| + 1 \right) \ . \]

We introduce the parametrization space 
\[  \mathfrak{X}_{I,t_g}^l := 
  \left\{\begin{array}{l}
         \text{$\mathrm{dim}(I)$-simplices of perturbation data $\mathbb{Y}_{I,t_g}$ on $\mathcal{CT}_m(t_g)$ such that the perturbation} \\
         \text{data $\{ \mathbb{Y}_{I,t_g} \} \cup (\mathbb{Y}_{J,t'_g}^0 )_{J \subset I}^{t_g' \in coll \cup g-v(t_g)}$ are of class $C^l$ in the sense of definition~\ref{geo:def:smooth-choice-of}, } \\
         \text{and such that $\mathbb{Y}_{I,t_g}$ is gluing-compatible w.r.t. the perturbation data $( \mathbb{Y}_{I,t_g}^0 )$}
  \end{array} \right\} \ . \]
This parametrization space is a Banach affine space.
Define again the $C^l$-map
\[ \phi_{I,t_g} : \mathfrak{X}_{I,t_g}^l \times \mathring{I} \times \mathcal{CT}_m(t_g) \times W^S(y) \times W^U(x_1) \times \cdots \times W^U(x_m) \longrightarrow M^{\times m+1} \ . \]
The map $\phi_{I,t_g}$ is then transverse to the diagonal $\Delta \subset M^{\times m+1}$. 
Applying the parametric transversality theorem and proceeding as in the case $N=0$, there exists a residual set $\mathfrak{Y}_{I,t_g}^l \subset \mathfrak{X}_{I,t_g}^l$ such that for every choice of perturbation data $\mathbb{Y}_{I,t_g} \in \mathfrak{Y}_{I,t_g}^l$ the map $\phi_{\mathbb{Y}_{I,t_g}}$ is transverse to the diagonal $\Delta \subset M^{\times m+1}$. Resorting again to an argument à la Taubes, we can prove the same statement in the smooth context. By definition of the parametrization spaces $\mathfrak{X}_{I,t_g}$ this construction yields an admissible choice of perturbation data $(\mathbb{Y}_{I,t_g})$, where the indices $I$ and $t_g$ are such that $\mathrm{dim} ( \mathcal{CT}(t_g) ) + \mathrm{dim} ( I ) \leqslant N+1$. This concludes the proof of Theorem~\ref{geo:th:existence} by induction.

\subsubsection{Proof of theorem~\ref{geo:th:filler}}

The proof of Theorem~\ref{geo:th:filler} proceeds exactly as the previous proof, by replacing the requirements in the definition of $\mathfrak{X}^l_{I,t_g}$ by the conditions prescribed by the simplicial subcomplex $S \subset \Delta^n$.

\subsection{Orientation and transversality} \label{geo:ss:or-transv}

\subsubsection{Signed short exact sequences}

Consider a short exact sequence of vector spaces
\[ 0 \longrightarrow V_2 \longrightarrow W \longrightarrow V_1 \longrightarrow 0 \ . \]
It induces a direct sum decomposition $W = V_1 \oplus V_2 $. Suppose that the vector spaces $W$, $V_1$ and $V_2$ are oriented. We denote $(-1)^{\varepsilon}$ the sign obtained by comparing the orientation on $W$ to the one induced by the direct sum $V_1 \oplus V_2$. We will then say that the short exact sequence has sign $(-1)^{\varepsilon}$. In particular, when $(-1)^{\varepsilon}= 1$, we will say that the short exact sequence is \emph{positive}.

\subsubsection{Orientation and transversality} \label{geo:sss:or-transver}

Given now two manifolds $M,N$, a codimension $k$ submanifold $S \subset N$ and a smooth map
\[ \phi : M \longrightarrow N \]
which is tranverse to $S$, the inverse image $\phi^{-1}(S)$ is a codimension $k$ submanifold of $M$. Moreover, choosing a complementary $\nu_S$ to $TS$, the transversality assumption yields the following short exact sequence of vector bundles
\[ 0 \longrightarrow T \phi^{-1} (S) \longrightarrow T M |_{ \phi^{-1}(S)} \underset{d \phi}{\longrightarrow} \nu_S \longrightarrow 0 \ . \]
Suppose now that $M$, $N$ and $S$ are oriented. The orientations on $N$ and $S$ induce an orientation on $\nu_S$. The submanifold $\phi^{-1}(S)$ is then oriented by requiring that the previous short exact sequence be positive. We will refer to this choice of orientation as the \emph{natural orientation on $\phi^{-1}(S)$}. 

For instance, the moduli space $\mathcal{T}_{t}^{\mathbb{X}}(y ; x_1 , \dots , x_m)$ is defined as the inverse image of the diagonal $\Delta \subset M^{\times m+1}$ under the map
\[ \phi_{\mathbb{X}_t} : \mathcal{T}_m(t) \times W^S(y) \times W^U(x_1) \times \cdots \times W^U(x_m) \longrightarrow M^{\times m+1} \ . \]
Orienting the domain and codomain of $\phi_{\mathbb{X}_t}$ by taking the product orientation, and orienting the diagonal $\Delta \subset M^{\times m+1}$ as $M$, defines a natural orientation on $\mathcal{T}_{t}(y ; x_1 , \dots , x_m)$.

\subsection{Algebraic preliminaries} \label{geo:ss:algebraic-prelim}

\subsubsection{Reformulating the $n-\Omega B As$-equations} \label{geo:sss:reform-ombas-eq}

We set for the rest of this section an orientation $\omega$ for each $t_g \in SCRT_n$, which endows each moduli space $\mathcal{CT}_n(t_g)$ with an orientation. We write moreover $\mu_{I,t_g}$ for the operations $(I,t_g, \omega)$ of $n- \Omega B As - \mathrm{Morph}$. The $\Omega B As$-equations for a $n-\Omega B As$-morphism then read as
\[ \resizebox{\hsize}{!}{\begin{math} \begin{aligned}
[ \partial , \mu_{I,t_g} ] = 
&\sum_{l=0}^{\mathrm{dim}(I)} (-1)^l \mu_{\partial_l^{sing} I,t_g} + (-1)^{|I|} \left( \sum_{\substack{t^0 \# (t_g^1 , \dots , t_g^s) = t_g \\ I_1 \cup \cdots \cup I_s = I}} (-1)^{\dagger_{\Omega B As}} m_{t^0} \circ (\mu_{I_1,t^1_g} \otimes \cdots \otimes \mu_{I_s,t^s_g} ) \right. \\
&\left. + \sum_{t'_g \in coll(t_g)} (-1)^{\dagger_{\Omega B As}} \mu_{I,t'_g} + \sum_{t'_g \in g-vert(t_g)} (-1)^{\dagger_{\Omega B As}} \mu_{I,t'_g} + \sum_{t^1_g \#_i t^2 = t_g} (-1)^{\dagger_{\Omega B As}} \mu_{I,t^1_g} \circ_i m_{t^2} \right) \ .
\end{aligned}
\end{math}} \]
The signs $(-1)^{\dagger_{\Omega B As}}$ need not be made explicit, but can be computed as in section I.5.2 of~\cite{mazuir-I}. 

\subsubsection{Twisted $n - \Ainf$-morphisms and twisted $n-\Omega B As$-morphisms} \label{geo:sss:twisted-n-ainf}

\begin{definition}
\begin{enumerate}[label=(\roman*)]
\item A \emph{twisted \Ainf -algebra} is a dg-\Z -module $A$ endowed with two different differentials $\partial_1$ and $\partial_2$, and a collection of degree $2-m$ operations $m_m : A^{\otimes m} \rightarrow A$ such that
\[ [ \partial , m_m ] = - \sum_{\substack{i_1+i_2+i_3=m \\ 2 \leqslant i_2 \leqslant m-1}} (-1)^{i_1 + i_2i_3} m_{i_1+1+i_3} (\ide^{\otimes i_1} \otimes m_{i_2} \otimes \ide^{\otimes i_3} ) \ , \]
where $[ \partial , \cdot ]$ denotes the bracket for the maps $ (A^{\otimes m} , \partial_1) \rightarrow (A , \partial_2)$.
\item Let $(A, \partial_1 , \partial_2 , m_m)$ and $(B, \partial_1 , \partial_2 , m_m)$ be two twisted \Ainf -algebras. A \emph{twisted $n-\Ainf$-morphism} from $A$ to $B$ is defined to be a sequence of degree $1-m+|I|$ operations $f_I^{(m)} : A^{\otimes m} \rightarrow B$ such that
\begin{align*}
\left[ \partial , f^{(m)}_I \right] = \sum_{j=0}^{\mathrm{dim}(I)} (-1)^j f^{(m)}_{\partial_jI} &+ (-1)^{|I|} \sum_{\substack{i_1+i_2+i_3=n \\ i_2 \geqslant 2}} (-1)^{i_1 + i_2i_3} f^{(i_1+1+i_3)}_I (\ide^{\otimes i_1} \otimes m_{i_2} \otimes \ide^{\otimes i_3})  \\ 
&- \sum_{\substack{i_1 + \cdots + i_s = m \\ I_1 \cup \cdots \cup I_s = I \\ s \geqslant 2 }} (-1)^{\epsilon_B} m_s ( f^{(i_1)}_{I_1} \otimes \cdots \otimes f^{(i_s)}_{I_s}) \ ,
\end{align*}
where $[ \partial , \cdot ]$ denotes the bracket for the maps $ (A^{\otimes m} , \partial_1) \rightarrow (B , \partial_2)$.
\item  A\emph{ twisted $\Omega B As$-algebra} and a \emph{twisted $n-\Omega B As$-morphism} between twisted $\Omega B As$-algebras are defined similarly. 
\end{enumerate}
\end{definition}

The explicit formulae obtained by evaluating the $n-\Omega B As$-equations of a twisted $n-\Omega B As$-morphism on $A^{\otimes m}$ then read as follows :
\begin{align*}
&- \partial_2 \mu_{I,t_g} (a_1 , \dots , a_m) + (-1)^{|I|+|t_g| + \sum_{j=1}^{i-1}|a_j|} \mu_{I,t_g} ( a_1 , \dots , a_{i-1} , \partial_1 a_i , a_{i+1} , \dots , a_m) \\
&+ \sum_{t^1_g \# t^2 = t} (-1)^{|I|+\dagger_{\Omega B As} + |t^2| \sum_{j=1}^{i_1} |a_j|} \mu_{I,t^1_g} (a_1 , \dots , a_{i_1} , m_{t^2} (a_{i_1 + 1} , \dots , a_{i_1 + i_2} ) , a_{i_1 + i_2 + 1} , \dots , a_m) \\
&+ \sum_{\substack{t^0 \# (t_g^1 , \dots , t_g^s) = t_g \\ I_1 \cup \cdots \cup I_s = I}} (-1)^{|I|+\dagger_{\Omega B As} + \dagger_{Koszul} } m_{t^0} ( \mu_{I_1,t_g^1} (a_1 , \dots ,a_{i_1} ) , \dots , \mu_{I_s,t_g^s} (a_{i_1 + \cdots + i_{s-1} +1} , \dots , a_m)) \\
&+ \sum_{t'_g \in coll(t_g)} (-1)^{|I|+\dagger_{\Omega B As}} \mu_{I,t'_g} (a_1 , \dots , a_m) + \sum_{t'_g \in g-vert(t_g)} (-1)^{|I|+\dagger_{\Omega B As}} \mu_{I,t'_g} (a_1 , \dots , a_m) \\
&+ \sum_{l=0}^{\mathrm{dim}(I)} (-1)^l \mu_{\partial_l^{sing} I,t_g} (a_1 , \dots , a_m) = 0 \ ,
\end{align*}
where
\[ \dagger_{Koszul} = \sum_{r=1}^s(|I_r| + |t_g^r|) \left( \sum_{t=1}^{r-1} \sum_{j=1}^{i_t} |a_{i_1 + \cdots + a_{i_{t-1}} + j}| \right) \ . \]
As explained in~\cite{mazuir-I}, these definitions cannot be phrased using an operadic viewpoint. However, a twisted $n-\Omega B As$-morphism between twisted $\Omega B As$-algebras still always descends to a twisted $n-\Ainf$-morphism between twisted \Ainf -algebras.

\subsection{Proof of Theorem~\ref{geo:th:n-ombas-Morse}} \label{geo:ss:twisted-n-ombas-morse}

\subsubsection{Recollections on twisted $\Omega B As$-algebra structures on the Morse cochains} \label{geo:sss:recoll-twisted-ombas-alg-Morse}

We prove in \cite{mazuir-I} that given a Morse function $f$ and an admissible choice of perturbation data $\mathbb{X}$ on the moduli spaces $\mathcal{T}_m$, the Morse cochains $C^*(f)$ can be endowed with a twisted $\Omega B As$-algebra structure by counting the 0-dimensional moduli spaces $\mathcal{T}_t^{\mathbb{X}_t}(y ; x_1,\dots,x_n)$. 

We twist to this end the natural orientation on the moduli spaces $\mathcal{T}_{t}^\mathbb{X}(y ; x_1,\dots,x_m)$ defined in subsection~\ref{geo:sss:or-transver}, by a sign of parity
\[ \sigma (t ; y ; x_1 , \dots , x_m) := dm ( 1 + |y| + |t| ) + |t| |y| + d \sum_{i=1}^m |x_i| (m-i) \ , \]
and the orientation on the moduli spaces $\mathcal{T}(y;x)$ by a sign of parity
\[ \sigma (y ; x) := 1 \ , \]
where $d$ denotes the dimension of the manifold $M$.
The moduli spaces $\mathcal{T}_{t}^\mathbb{X}(y ; x_1,\dots,x_m)$ and $\mathcal{T}(y;x)$ endowed with these new orientations are then respectively written $\widetilde{\mathcal{T}}_{t}^\mathbb{X}(y ; x_1,\dots,x_m)$ and $\widetilde{\mathcal{T}}(y;x)$. 

The operations $m_t$ and the differential on $C^*(f)$ are then defined as
\begin{align*}
m_{t} (x_1 , \dots , x_m) &=  \sum_{|y|= \sum_{i=1}^m|x_i| + |t| } \# \widetilde{\mathcal{T}}_{t}^\mathbb{X}( y ; x_1,\dots,x_m ) \cdot y \ , \\
\partial_{Morse} (x) &=  \sum_{|y|= |x|+1 } \# \widetilde{\mathcal{T}}(y;x) \cdot y \ .
\end{align*} 
Counting the signed points in the boundary of the oriented 1-dimensional manifolds $\widetilde{\mathcal{T}}_{t}( y ; x_1,\dots,x_m )$ proves that the operations $m_t$ define a twisted $\Omega B As$-algebra structure on $(C^*(f), \partial_{Morse}^{Tw},\partial_{Morse})$, where
\[ (\partial_{Morse}^{Tw})^k = (-1)^{(d+1)k}\partial_{Morse}^k \ . \]
In particular, either working with coefficients in $\Z /2$, or with coefficients in \Z\ and an odd-dimensional manifold $M$, the operations $m_t$ define an $\Omega B As$-algebra structure on the Morse cochains.

\subsubsection{Twisted $n-\Omega B As$-morphisms between the Morse cochains} \label{geo:sss:twisted-n-ombas-morph}

Let $\mathbb{X}^f$ and $\mathbb{X}^g$ be admissible choices of perturbation data for the Morse functions $f$ and $g$. Denote $(C^*(f),m_t^{\mathbb{X}^f})$ and $(C^*(g),m_t^{\mathbb{X}^g})$ the Morse cochains endowed with their $\Omega B As$-algebra structures. Given an admissible $n$-simplex of perturbation data $(\mathbb{Y}_{I,m})_{I \subset \Delta^n}^{m \geqslant 1}$, we now construct a twisted $n-\Omega B As$-morphism $$\mu_{I,t_g} : (C^*(f), \partial_{Morse}^{Tw}, \partial_{Morse}) \longrightarrow (C^*(g), \partial_{Morse}^{Tw}, \partial_{Morse}) \ , \ I \subset \Delta^n , \ t_g \in SCRT \ ,$$ 
which completes the proof of Theorem~\ref{geo:th:n-ombas-Morse}.

The moduli space $\mathcal{CT}_{I,t_g}^{\mathbb{Y}_{I,t_g}}( y ; x_1,\dots,x_m )$ is defined as the inverse image of the diagonal $\Delta \subset M^{\times m+1}$ under the map
\[ \phi_{\mathbb{Y}_{I,t_g}} : \mathring{I} \times \mathcal{CT}_m(t_g) \times W^S(y) \times W^U(x_1) \times \cdots \times W^U(x_m) \longrightarrow M^{\times m+1} \ . \]
Orienting the domain and codomain of $\phi_{\mathbb{Y}_{I,t_g}}$ with the product orientation, and orienting the diagonal $\Delta \subset M^{\times m+1}$ as $M$, defines a natural orientation on $\mathcal{CT}_{I,t_g}( y ; x_1,\dots,x_m )$ as explained in subsection~\ref{geo:sss:or-transver}.

\begin{definition}
We define $\widetilde{\mathcal{CT}}_{I,t_g}^{\mathbb{Y}_{I,t_g}}(y ; x_1,\dots,x_m )$ to be the oriented manifold $\mathcal{CT}_{I,t_g}^{\mathbb{Y}_{I,t_g}}( y ; x_1,\dots,x_m )$ whose natural orientation has been twisted by a sign of parity
\[ \sigma (t_{I,g} ; y ; x_1 , \dots , x_m) := dm ( 1 + |y| + |t_{I,g}| ) + |t_{I,g}| |y| + d \sum_{i=1}^m |x_i| (m-i) \ . \]
\end{definition}

\begin{proposition}
If the moduli space $\widetilde{\mathcal{CT}}_{I,t_g}( y ; x_1,\dots,x_m )$ is 1-dimensional, its boundary decomposes as the disjoint union of the following components
\begin{enumerate}[label=(\roman*)]
\item $(-1)^{|y| + |I| + \dagger_{\Omega B As} + |t^2| \sum_{i=1}^{i_1} |x_i|} \widetilde{\mathcal{CT}}_{I,t^1_g}(y ; x_1,\dots,x_{i_1 } , z , x_{i_1+i_2+1} , \dots , x_m ) \times \widetilde{\mathcal{T}}_{t^2}(z ; x_{i_1 +1 },\dots,x_{i_1 + i_2});$
\item $(-1)^{|y| + |I| + \dagger_{\Omega B As} + \dagger_{Koszul}} \widetilde{\mathcal{T}}_{t^0}(y ; y_1,\dots,y_s) \times \widetilde{\mathcal{CT}}_{I_1,t^1_g}(y_1 ; x_{1},\dots ) \times \cdots \times \widetilde{\mathcal{CT}}_{I_s,t^s_g}(y_s ; \dots,x_m)$ ;
\item $(-1)^{|y| + |I| + \dagger_{\Omega B As}} \widetilde{\mathcal{CT}}_{I,t'_g}(y ; x_1,\dots,x_m)$ for $t_g' \in coll(t)$ ;
\item $(-1)^{|y| + |I| + \dagger_{\Omega B As}} \widetilde{\mathcal{CT}}_{I,t'_g}(y ; x_1,\dots,x_m)$ for $t_g' \in g-vert(t)$ ;
\item $(-1)^{|y| + \dagger_{Koszul}+(m+1)|x_i|} \widetilde{\mathcal{CT}}_{I,t_g}( y ; x_1, \dots , z , \dots , x_m ) \times \widetilde{\mathcal{T}}(z ; x_i)$ where we have set $\dagger_{Koszul} = |I| + |t_g| + \sum_{j=1}^{i-1}|x_j|$ ;
\item $(-1)^{|y|+1} \widetilde{\mathcal{T}}(y ; z) \times  \widetilde{\mathcal{CT}}_{I,t_g}( z ; x_1, \dots ,  x_m )$ ;
\item $(-1)^{|y|+l} \widetilde{\mathcal{CT}}_{\partial_l^{sing}I,t_g}( y ; x_1, \dots ,  x_m )$.
\end{enumerate} 
\end{proposition}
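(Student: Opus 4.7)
The plan is to carry out the sign computation directly from the definitions, following the same strategy as in section I.5 of~\cite{mazuir-I} but taking into account the additional $I$-factor in the parametrization. The starting point is Theorem~\ref{geo:th:compactification}, which already identifies the seven types of boundary strata set-theoretically; the work is entirely about comparing the induced boundary orientation on $\widetilde{\mathcal{CT}}_{I,t_g}(y;x_1,\dots,x_m)$ to the twisted product orientation on each stratum. Recall that this moduli space is the $\phi_{\mathbb{Y}_{I,t_g}}$-preimage of $\Delta \subset M^{\times m+1}$ inside
\[ P_{I,t_g} := \mathring{I} \times \mathcal{CT}_m(t_g) \times W^S(y) \times W^U(x_1) \times \cdots \times W^U(x_m), \]
endowed with the product orientation, and its natural orientation is then twisted by $(-1)^{\sigma(t_{I,g};y;x_1,\dots,x_m)}$. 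The boundary arises from the boundaries of the five types of factors of $P_{I,t_g}$: the factor $\mathring{I}$ (giving case~(vii)), the compactification of $\mathcal{CT}_m(t_g)$ (giving cases~(i)--(iv) according to the list in subsection~\ref{alg:sss:recoll-codim-1}), and the compactifications of $W^S(y)$ and the $W^U(x_i)$ (giving~(vi) and~(v)).

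First I would handle cases~(iii), (iv), (i), (ii) by adapting the computations of section~I.5 of~\cite{mazuir-I} for $\Omega B As$-morphisms. In each, the $\mathring{I}$-factor sits at the outermost position, so passing to the $\partial\mathcal{CT}_m(t_g)$-boundary costs a factor $(-1)^{\mathrm{dim}(I)} = (-1)^{|I|}$ via the convention of subsection~\ref{alg:sss:sss:or-boundary}; the remaining sign is exactly $(-1)^{\dagger_{\Omega B As}}$ from \cite{mazuir-I}, to which one must add the contribution of the twist $\sigma$ (whose difference between the two sides of the boundary identification produces the universal $(-1)^{|y|}$ factor, exactly as in the proof of Proposition~I.5 of~\cite{mazuir-I}). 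The int-collapse and gauge-vertex cases~(iii)~(iv) are essentially the same as their analogues in~\cite{mazuir-I}, since the stratum stays in the same $I$. The above-break case~(i) involves an additional Koszul sign $(-1)^{|t^2|\sum_{j=1}^{i_1}|x_j|}$, identical to the one in~\cite{mazuir-I}, because the split keeps the full $I$ on the lower (colored) tree.

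The main obstacle will be case~(ii), the below-break boundary. The new ingredient is that the simplex $I$ splits into an overlapping partition $I_1 \cup \cdots \cup I_s = I$ via the $\mathrm{AW}^{\circ(s-1)}$ map. One has to verify, using the explicit polytopal refinement of $I$ recalled in subsection~\ref{alg:sss:label-AW-subd} and the conventions for $\dagger_{\Omega B As}$ in subsection~\ref{alg:sss:def-diff-n-ombas} that the boundary orientation of $\mathring{I} \times \partial \mathcal{CT}_m(t_g)$ restricted to the stratum $I_1 \times \cdots \times I_s \times \mathcal{T}(t^0) \times \mathcal{CT}(t_g^1) \times \cdots \times \mathcal{CT}(t_g^s)$ matches, after reordering factors so that each $I_r$ sits next to its $\mathcal{CT}_{k_r}(t_g^r)$, the product orientation of $\mathcal{T}_{t^0}(y;y_1,\dots,y_s) \times \widetilde{\mathcal{CT}}_{I_1,t_g^1}\times\cdots\times\widetilde{\mathcal{CT}}_{I_s,t_g^s}$ up to the claimed sign. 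The standard Koszul reshuffling contributes $\dagger_{Koszul} = \sum_{r=1}^s (|I_r|+|t_g^r|)\big(\sum_{t<r}\sum_{j=1}^{i_t} |x_{i_1+\cdots+i_{t-1}+j}|\big)$, which is precisely the sign needed; the remainder matches $\dagger_{\Omega B As}$ by direct comparison with the computation in \cite{mazuir-I}, the twist contributes $(-1)^{|y|}$, and the extra overall $(-1)^{|I|}$ comes, as before, from the outermost position of $\mathring{I}$.

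Finally, the Morse breaking cases~(v) and~(vi) are obtained by repeating the argument in section~I.5 of~\cite{mazuir-I} for the compatibility of the $m_t$ with the Morse differential, this time inside $P_{I,t_g}$; the $\mathring I$-factor simply commutes past the external-edge breaking and contributes no new sign, while the twists $\sigma(t_{I,g};\cdots)$ and $\sigma(t_{I,g};\cdots)$ differ by exactly the predicted amount. The singular boundary~(vii) is by construction the image of $\partial_l^{sing} I \subset \partial I$ with the orientation of subsection~\ref{alg:sss:sss:or-boundary}, yielding the sign $(-1)^{|y|+l}$ after accounting for the twist. Assembling these seven contributions and signing the boundary sum equal to zero then gives exactly the twisted $n-\Omega B As$-equations of subsection~\ref{geo:sss:twisted-n-ainf}, which completes Theorem~\ref{geo:th:n-ombas-Morse}.
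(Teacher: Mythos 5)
Your proposal takes essentially the same route as the paper, whose own argument consists of taking the set-theoretic boundary decomposition from Theorem~\ref{geo:th:compactification} and asserting that the orientation and sign checks carried out in~\cite{mazuir-I} for the operations $m_t$ transport to the $\mu_{I,t_g}$, with the extra $\mathring{I}$-factor contributing $(-1)^{|I|}$ (resp.\ $(-1)^l$ for the singular boundary) via the product-boundary convention and the $\mathrm{AW}^{\circ(s-1)}$-subdivision governing the below-break splitting of $I$. Your case-by-case accounting of where each sign (the universal $(-1)^{|y|}$ from the twist, $\dagger_{\Omega B As}$, the Koszul reshuffling in the below-break case) arises is consistent with the stated proposition, so the proposal is correct and matches the intended proof.
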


Define the operations $ \mu_{I,t_g} : C^*(f)^{\otimes m} \rightarrow C^*(g)$ as
\[ \mu_{I,t_g} (x_1 , \dots , x_m) =  \sum_{|y|= \sum_{i=1}^m|x_i| + |t_{I,g}| } \# \widetilde{\mathcal{CT}}_{I,t_g}^\mathbb{Y}(y ; x_1,\dots,x_m) \cdot y \ . \]
Counting the points in the boundary of the oriented 1-dimensional manifolds $\widetilde{\mathcal{CT}}_{I,t_g}( y ; x_1,\dots,x_m )$ finally proves that :
\begin{geo:th:n-ombas-Morse}
The operations $\mu_{I,t_g}$ define a twisted $n-\Omega B As$-morphism between the Morse cochains $(C^*(f), \partial_{Morse}^{Tw}, \partial_{Morse})$ and $(C^*(g), \partial_{Morse}^{Tw}, \partial_{Morse})$.
\end{geo:th:n-ombas-Morse}

We send the reader back to~\cite{mazuir-I} for the complete check of signs in the case of the operations $m_t$, which easily transports to the case of the operations $\mu_{I,t_g}$. Again, either working with coefficients in $\Z /2$, or with coefficients in \Z\ and an odd-dimensional manifold $M$, the operations $\mu_{I,t_g}$ fit into a standard $n-\Omega B As$-morphism between $\Omega B As$-algebras.

\section{Towards the problem of the composition} \label{geo:s:towards}

At the end of~\cite{mazuir-I} we stated two main questions. The first was the motivational question solved in this article and the second one came as follows :

\paragraph{\textbf{Problem 2.}} Given three Morse functions $f_0,f_1,f_2$, choices of perturbation data $\mathbb{X}^i$, and choices of perturbation data $\mathbb{Y}^{ij}$ defining morphisms 
\begin{align*}
\mu^{\mathbb{Y}^{01}} : (C^*(f_0),m_t^{\mathbb{X}^0}) \longrightarrow (C^*(f_1),m_t^{\mathbb{X}^1}) \ , \\
\mu^{\mathbb{Y}^{12}} : (C^*(f_1),m_t^{\mathbb{X}^1}) \longrightarrow (C^*(f_2),m_t^{\mathbb{X}^2}) \ , \\
\mu^{\mathbb{Y}^{02}} : (C^*(f_0),m_t^{\mathbb{X}^0}) \longrightarrow (C^*(f_2),m_t^{\mathbb{X}^2}) \ ,
\end{align*}
can we construct an \ombas -homotopy such that $\mu^{\mathbb{Y}^{12}} \circ \mu^{\mathbb{Y}^{01}} \simeq \mu^{\mathbb{Y}^{02}}$ through this homotopy ? That is, can the following cone be filled in the $\Omega B As$ realm
\[ \begin{tikzcd}[row sep=large, column sep = large]
C^*(f_0) \arrow{dr}[below left]{\mu^{\mathbb{Y}^{02}}}[name=U,above right,pos=0.6]{} \arrow{r}[above]{\mu^{\mathbb{Y}^{01}}}
& C^*(f_1) \arrow{d}[right]{\mu^{\mathbb{Y}^{12}}} \arrow[Rightarrow, to=U] \\
& C^*(f_2) 
\end{tikzcd} \ ? \]

The author plans to prove in an upcoming article that the answer to this question is positive. This simple problem will in fact again generalize to a wider range of constructions in Morse theory, involving the $n$-morphisms introduced in this article as well as some new interesting combinatorics.

\newpage

\bibliographystyle{alpha}

\bibliography{ha-morse-II-biblio}

\end{document}